\numberwithin{equation}{section}
\newtheorem{theorem}{Theorem}[section]
\newtheorem{lemma}[theorem]{Lemma}
\newtheorem{proposition}[theorem]{Proposition}
\newtheorem{corollary}[theorem]{Corollary}
\newtheorem{remark}[theorem]{Remark}
\theoremstyle{definition}
\renewcommand{\tilde}{\widetilde}          
\DeclareMathSymbol{\leqslant}{\mathalpha}{AMSa}{"36} 
\DeclareMathSymbol{\geqslant}{\mathalpha}{AMSa}{"3E} 
\DeclareMathSymbol{\eset}{\mathalpha}{AMSb}{"3F}     
\renewcommand{\leq}{\;\leqslant\;}                   
\renewcommand{\geq}{\;\geqslant\;}                   
\newcommand{\C}{\mathbb{C}}
\newcommand{\D}{\mathbb{D}}
\newcommand{\R}{\mathbb{R}}
\newcommand{\Z}{\mathbb{Z}}
\newcommand{\N}{\mathbb{N}}
\newcommand{\E}{\mathds{E}}
\renewcommand{\P}{\mathds{P}}
\newcommand{\hf}{\frac{_1}{^2}}
\def\bi{\begin{itemize}}
\def\ei{\end{itemize}}
\def\bnum{\begin{enumerate}}
\def\enum{\end{enumerate}}
\def\<#1{\langle #1 \rangle}
\def\z{\mathbf{z}}
\def\x{\mathbf{x}}
\newcommand{\caA}{{\mathcal A}}
\newcommand{\caF}{{\mathcal F}}
\newcommand{\caO}{{\mathcal O}}
\newcommand{\caT}{{\mathcal T}}
\begin{document}

\title[Local Conformal Structure of Liouville Quantum Gravity]{Local Conformal Structure of Liouville Quantum Gravity}

\author[Antti Kupiainen]{Antti Kupiainen$^{1}$}
\address{University of Helsinki, Department of Mathematics and Statistics,
         P.O. Box 68 , FIN-00014 University of Helsinki, Finland}
\email{antti.kupiainen@helsinki.fi}

\author[R\'emi Rhodes]{R\'emi Rhodes$^{2}$}
\address{Universit{\'e} Paris-Est Marne la Vall\'ee, LAMA, Champs sur Marne, France}
\email{remi.rhodes@wanadoo.fr}

\author[Vincent Vargas]{ Vincent Vargas$^{2}$}
\address{ENS Ulm, DMA, 45 rue d'Ulm,  75005 Paris, France}
\email{Vincent.Vargas@ens.fr}

\footnotetext[1]{Supported by the Academy of Finland,$^2$Research supported in part by ANR grant Liouville (ANR-15-CE40-0013)}


\keywords{  Liouville Quantum Gravity, quantum field theory, Gaussian multiplicative chaos, Ward identities, BPZ equations.  }

 \begin{abstract}
Liouville Conformal Field Theory (LCFT) is an essential building block of Polyakov's formulation of non critical string theory. Moreover, scaling limits of statistical mechanics models on planar maps are believed by physicists to be described by LCFT. A rigorous probabilistic formulation of LCFT based on a path integral formulation was recently given by the present authors and F. David in  \cite{DKRV}.  
In the present work, we  prove the validity of the conformal Ward identities and the Belavin-Polyakov-Zamolodchikov (BPZ) differential equations (of order $2$) for the correlation functions of LCFT. This initiates the program started in the seminal work of Belavin-Polyakov-Zamolodchikov \cite{BPZ} in a probabilistic setup for a non-trivial Conformal Field Theory.  We also prove several celebrated results on LCFT, in particular an explicit formula for the  4 point correlation functions (with insertion of a second order degenerate field)  leading to a rigorous  proof of a non trivial functional relation on the 3 point structure constants derived earlier in the physics literature by Teschner \cite{Tesc}. The proofs are based on exact identities which rely on the underlying Gaussian structure of LCFT  combined with estimates from the theory of critical Gaussian Multiplicative Chaos and a careful analysis of singular integrals (Beurling transforms and generalizations). As a by-product, we give bounds on the correlation functions when two points collide making rigorous certain predictions from physics on the so-called ``operator product expansion" of LCFT.  
 \end{abstract}

\maketitle
\tableofcontents

\section{Introduction}

Ever since the ground-breaking work of Belavin, Polyakov and Zamolodchikov (BPZ) in 1984 \cite{BPZ}, the precise mathematical structure of local conformal symmetry uncovered in that work has been a challenge to mathematicians. The Conformal Field Theories (CFT) studied in \cite{BPZ} are believed to be  limits of  probabilistic objects, namely 
scaling  limits of Gibbs measures of statistical mechanics models defined on two dimensional grids (or graphs). However, the full continuum formalism of CFT in the sense of BPZ has proven to be mathematically elusive except for a few cases among which
 the {\it Gaussian Free Field} (GFF)  \cite{KM} and (partially) the Ising model at critical temperature (see \cite{BD,BD1,CGS,CHI,Dub2,HS} for the latest developments on this). 

One of the most intriguing CFT's is the Liouville CFT (LCFT hereafter\footnote{In our previous works, we also used the terminology Liouville quantum field theory with associated abbreviation LQFT. Both terminologies (and theories) are completely equivalent since LQFT   is in fact a CFT; we have decided to the use the abbreviation LCFT in this article to stress that it is a CFT.}). It first appeared (in the context of String Theory) in Polyakov's Liouville quantum gravity theory \cite{Pol} of summation of random metrics  and then  in the 1988 work of Knizhnik, Polyakov and Zamolodchikov (KPZ) \cite{cf:KPZ} on the relations between CFT's on deterministic and random surfaces or in other terms on the relationship between statistical mechanics models on fixed grids and on random grids (random planar maps). An exact mathematical formulation of the so-called KPZ relation that appears in \cite{cf:KPZ} is given in \cite{DKRV2}. 

KPZ viewed random surfaces as a two dimensional manifold $\Sigma$ equipped with a random Riemannian metric $G$ whose law should be invariant under the group of diffeomorphisms ${\rm Diff}(\Sigma)$ of the surface. Guided by the fact that  the space of smooth metrics on $\Sigma$ is obtained as  ${\rm Diff}(\Sigma)$ orbits of  metrics of the form $e^{\sigma}\hat G$ where $\sigma$ is a real valued function on $\Sigma$ called conformal factor and $\hat G$ belongs to a finite dimensional moduli space of metrics they ended up  looking for a law for the random field  $\sigma$. They argued that the law of $\sigma$ is described by LCFT. In what follows, we will only consider the case of the Riemann sphere $\Sigma= \hat{\C}= \C \cup \lbrace \infty \rbrace$ (one could also consider other topologies like the disk where one must also take into account non trivial specific issues linked to the presence of a boundary).  In the case of the Riemann sphere,  the metric is written as $e^{\gamma\phi(z)}|dz|^2$ and the law of $\phi$ is
\begin{equation}\label{liouvlaw}
\nu(d\phi)=e^{-S_L(\phi)}\nu_0(d\phi)
 \end{equation}   
where $S_L$ is the {\it Liouville Action functional}
\begin{equation}\label{liouvlaw1}
S_L(\phi)=\frac{_1}{^{\pi}}\int_\C(|\partial_z\phi(z)|^2+\pi\mu e^{\gamma\phi(z)})d^2z
 \end{equation} 
 with $d^2z$ the standard Lebesgue measure and $\nu_0$ a putative "flat" measure on some space of maps $\phi:\C\to\R$ (in fact, there is an infinite constant hidden behind expression \eqref{liouvlaw1} that we omit in the introduction for the sake of clarity: see Section \ref{sec:backgr} for the precise formulation). One can notice that LCFT has two parameters $\gamma$ and $\mu$. According to KPZ \cite{cf:KPZ}, the parameter $\gamma$ is determined by  the particular CFT (e.g. model of statistical mechanics) which lives on the random surface. Such a CFT is characterized by its central charge $c_M$ (the $M$ in the notation stands for matter since in Liouville quantum gravity this CFT is called a matter field) and the relation between $\gamma$ and $c_M$ is $c_M=25-6Q^2$ where
 \begin{equation}\label{Qdef}
Q=\frac{2}{\gamma}+\frac{\gamma}{2}.  \footnote{The central charge $c_M$ of the CFT living on the random surface is not to be confused with the central charge $c_L$ of LCFT with parameter $\gamma$, which is also a CFT. These two CFTs are coupled independently with $c_L= 1+6 Q^2$: equivalently, one has the relation $c_M+c_L-26=0$ discovered by Polyakov in \cite{Pol}.}
 \end{equation} 
 For instance, uniform random planar maps correspond to a CFT with central charge $c_M=0$ and therefore to $\gamma=\sqrt{8/3}$, the Ising model on random planar maps corresponds to $c_M=\frac{1}{2}$ hence to $\gamma=\sqrt{3}$ and the GFF on random planar maps corresponds to $c_M=1$ hence to $\gamma=2$. The parameter $\mu>0$ is called the {\it cosmological constant} and it makes the law \eqref{liouvlaw1} non Gaussian. It turns out that various quantities in LCFT have a simple scaling behaviour in $\mu$ (called ``KPZ-scaling" in the physics literature) but we may not take $\mu$ to zero in LCFT.
 
 LCFT is supposed to be   as its name suggests also a Conformal Field Theory (CFT). Recall that this means in particular that there should exist {\it primary conformal fields} $V_\alpha(z)$ defined for $z \in \C$, i.e. random fields whose expectations in the Liouville law \eqref{liouvlaw} are conformal tensors. More precisely, if $z_1, \cdots, z_N$ are $N$ distinct points in $\C$  then for a M\"obius map $\psi(z)= \frac{az+b}{cz+d}$ (with $a,b,c,d \in \C$ and $ad-bc=1$) 
 \begin{equation}\label{KPZformula}
\langle \prod_{k=1}^N V_{\alpha_k}(\psi(z_k))    \rangle=  \prod_{k=1}^N |\psi'(z_k)|^{-2 \Delta_{\alpha_k}}     \langle \prod_{k=1}^N V_{\alpha_k}(z_k)     \rangle
\end{equation}  
where we use the physicists' notation $\langle\cdot\rangle$ for the average with respect to the measure $\nu$. The exponents $\Delta_{\alpha_k}$ are called {\it conformal weights}. In LCFT the primary fields are called {\it vertex operators} in the physics jargon and are given by
 \begin{equation}\label{KPZformula1}
V_{\alpha}(z)  = e^{\alpha\phi(z)}
\end{equation}  
for suitable $\alpha\in\C$. As $\phi$ turns out to be a distribution valued random field, this definition requires a regularization and renormalization procedure (Section \ref{sec:backgr}). In LCFT, the correlations in \eqref{KPZformula} were defined in \cite{DKRV} for $N \geq 3$ and under certain assumptions on the $(\alpha_k)_{1 \leq k \leq N}$ called the Seiberg bounds:
\begin{equation}\label{TheSeibergbounds}
 \sum_{k=1}^N\alpha_k>2Q, \quad \forall k, \; \alpha_k<Q
\end{equation}

The crucial property of a CFT with central charge $c$ is however not the global conformal transformation property \eqref{KPZformula} but rather {\it local conformal invariance}. To define this in the probabilistic setup requires considering variation of the measure $\nu$ under a change of the geometry of the surface where the fields are defined. In general in local Quantum Field Theory one expects that such a variation is encoded in a random field, the {\it stress-energy tensor} (abbreviated SE tensor hereafter). Briefly,  spelling this out in our 2d set-up, suppose the measure $\nu$ of the CFT can be defined in a set-up where the surface carries a smooth Riemannian metric $G=\sum_{i,j=1}^2 g_{ij}dx^i \otimes dx^{j}$; in this context, we will denote by $\langle \cdot  \rangle_{G}$  averages with respect to the CFT in the background metric $G$. Let $g^{ij}$ be the inverse matrix  $\sum_{j=1}^2 g^{ij}g_{jk}=\delta^i_k$. 
Consider a one parameter family $G_\epsilon$ where  $g_\epsilon^{ij}=g^{ij}+\epsilon f^{ij}$ whith $f$  a smooth function with support in $\C\setminus \cup_{k=1}^N z_k$.
 Then one expects 
\begin{equation}\label{defgen}  
 \frac{_d}{^{d\epsilon}}\mid_{\epsilon=0} \langle   \prod_{k=1}^N V_{\alpha_k}(z_k)   \rangle_{G_\epsilon}= \sum_{i,j=1}^2 \frac{1}{4\pi}   \int_{\C} f^{ij}(z)\langle T_{ij}(z) \prod_{k=1}^N V_{\alpha_k}(z_k)    \rangle_G  \, \text{vol}_G(d ^2z).
 \end{equation}
where $\text{vol}_G(d ^2z)$ is the volume form of $G$ and $T_{ij}$ is by definition the SE tensor. In CFT, the SE tensor has two nontrivial components: in the complex coordinates they are $T(z):=T_{zz}(z)$ and $\bar T(z):=T_{\bar z\bar z}(z)$. Then, BPZ argued that 
 $T(z)$ encodes { local conformal symmetries} through  the {\it Conformal Ward Identities}. The first Ward identity says that the correlation function is meromorphic in the argument $z$ of $T(z)$ with prescribed singularities: 
\begin{equation}
  \langle T(z) \prod_{k=1}^N V_{\alpha_k}(z_k)   \rangle= \sum_{i=1}^N \frac{\Delta_{\alpha_i} }{(z-z_i)^2} \langle  \prod_{k=1}^N V_{\alpha_k}(z_k)   \rangle   +\sum_{i=1}^N \frac{1}{z-z_i} \partial_{z_i}\langle  \prod_{k=1}^N V_{\alpha_k}(z_k)   \rangle  \quad\label{wardid1}.
 \end {equation}
 Note in particular that the $T$ insertion  is holomorphic. The second Ward  identity controls the singularity when two $T$-insertions come close  \begin{align}  \nonumber
    \langle T(z)T(z') \prod_{k=1}^N V_{\alpha_k}(z_k)   \rangle
&=\frac{1}{2}\frac{c }{(z-z')^4}  \langle  \prod_{k=1}^N V_{\alpha_k}(z_k)   \rangle +\frac{2 }{(z-z')^2} \langle T(z') \prod_{k=1}^N V_{\alpha_k}(z_k)   \rangle\\& +\frac{1 }{z-z'} \partial_{z'} \langle T(z') \prod_{k=1}^N V_{\alpha_k}(z_k)   \rangle+\dots\label{wardid2}
 \end {align}
where the dots refer to terms that are bounded as $z\to z'$. Recall that here $c$ is the central charge of the CFT. $\bar T(z)$ satisfies the complex conjugate  identities. 

As stressed by BPZ, these Ward identities are the fundamental structural property of a CFT and they are the starting point in showing that the CFT gives rise to a representation of the Virasoro Algebra with central charge $c$. In this paper, we define the SE tensor rigorously for LCFT and prove the Ward identities: see Theorem \ref{wardth}\footnote{In the context of LCFT, the SE tensor defined formally by \eqref {defforus} below is nothing but the quantum analog of the classical SE tensor $T(\phi_\star)= - (\partial_z \phi_\star)^2+ \frac{2}{\gamma} \partial_{zz}^2 \phi_\star$ where $\phi_\star$ minimizes  the Liouville action $S_L$ given by \eqref{liouvlaw1}. The classical SE tensor was  introduced more than a century ago by Poincar\'e in his theory of the unifomisation of Riemann surfaces: one can read the nice introduction of \cite{TaZo} on this point.}. As an output of these Ward identities, we will adress the construction of the representation of the Virasoro Algebra in a forthcoming work. 

The second set of fundamental identities for a CFT uncovered by BPZ goes under the name {\it BPZ-equations}. These are differential equations for correlation functions of the CFT that can be used to actually determine some of the correlation functions. BPZ uncovered a set of  {\it degenerate fields} 
whose insertions to correlation functions lead to differential  relations as in the case of Ward identities. In LCFT the two simplest degenerate fields are given by the vertex operators 
$V_{-\frac{\gamma}{2}}$ and $V_{-\frac{2}{\gamma}}$: we will show that they satisfy the following second order linear differential equations:
\begin{align}\label{bpzeq}
 (\frac{1}{\alpha^{2}}\partial_{z}^2   + \sum_{i=1}^N \frac{\Delta_{\alpha_i}}{(z-z_i)^2}   +  \sum_{i=1}^N \frac{1}{z-z_i}  \partial_{z_i}  )\langle   V_\alpha(z) \prod_{k=1}^N V_{\alpha_k}(z_k)   \rangle    =  0.
\end{align}
where $\alpha=-\frac{\gamma}{2}$ and  $\alpha=-\frac{2}{\gamma}$ respectively.

Furthermore, we prove that equation \eqref{bpzeq} can be uniquely solved for $N=3$ in terms of hypergeometric functions and the so-called three point structure constants of LCFT: this is because the global conformal invariance property \eqref{KPZformula} enables one to map equation \eqref{bpzeq} to a standard hypergeometric partial differential equation when $N=3$. Let us mention that a key point in our analysis of equation \eqref{bpzeq} for $N=3$ is the novel observation that assuming global conformal invariance \eqref{KPZformula} the real valued solution space to \eqref{bpzeq} is in fact (for most values of $\gamma$ and the $(\Delta_{\alpha_i})_{1 \leq i \leq N}$) a real one dimensional space: see lemma \ref{lemmaPDEsbis} in the Appendix for a precise statement on the hypergeometric partial differential equation. Now, one can notice that in a CFT the global conformal symmetry \eqref{KPZformula} fixes the three point correlation functions up to some fundamental constant $C_\gamma(\alpha_1, \alpha_2, \alpha_3)$: more precisely, one has
\begin{equation*}
 \langle      \prod_{k=1}^3 V_{\alpha_k}(z_k)  \rangle 
 =  |z_1-z_2|^{ 2 \Delta_{12}}  |z_2-z_3|^{ 2 \Delta_{23}} |z_1-z_3|^{ 2 \Delta_{13}}C_\gamma(\alpha_1,\alpha_2,\alpha_3)
\end{equation*}
with $\Delta_{12}= \Delta_{\alpha_3}-\Delta_{\alpha_1}-\Delta_{\alpha_2}$, etc. These constants $C_\gamma(\alpha_1, \alpha_2, \alpha_3)$ are called the three point structure constants and are building blocks of LCFT in the so-called conformal bootstrap approach. The bootstrap approach (see the review \cite{Rib}) aims to give a construction of LCFT based on an exact formula for $C_\gamma(\alpha_1, \alpha_2, \alpha_3)$, the celebrated DOZZ-formula (after Dorn-Otto-Zamolodchikov-Zamolodchikov \cite{DoOt,ZaZa}) and certain recursive rules to deduce from the DOZZ formula the n-point correlations for $n>3$.\footnote{In the bootstrap approach, these correlations are expressed as sums involving holomorphic (and anti holomorphic) functions and the constants $C_\gamma(\alpha_1, \alpha_2, \alpha_3)$.} Using the solution of \eqref{bpzeq} and setting $l(x)=\Gamma(x)/\Gamma(1-x)$ we prove for $\alpha_1,\alpha_2,\alpha_3$ satisfying the Seiberg bounds \eqref{TheSeibergbounds}  that
\begin{equation}\label{3pointconstanteqintro}
\frac{C_\gamma(\alpha_1+\frac{\gamma}{2},\alpha_2,\alpha_3)}{C_\gamma(\alpha_1-\frac{\gamma}{2},\alpha_2,\alpha_3)}  
= - \frac{1}{\pi \mu}\frac{l(-\frac{\gamma^2}{4})  l(\frac{\gamma \alpha_1}{2}) l(\frac{\alpha_1\gamma}{2}  -\frac{\gamma^2}{4})  l(\frac{\gamma}{4} (\bar{\alpha}-2\alpha_1- \frac{\gamma}{2}) )   }{l( \frac{\gamma}{4} (\bar{\alpha}-\frac{\gamma}{2} - 2Q)  ) l( \frac{\gamma}{4} (\bar{\alpha}-2\alpha_3-\frac{\gamma}{2} ))  l( \frac{\gamma}{4} (\bar{\alpha}-2\alpha_2-\frac{\gamma}{2} )) }. 
\end{equation}
where $\bar{\alpha}=\alpha_1+\alpha_2+\alpha_3$. This is the content of Corollary \ref{3pointconstant}. Relation \eqref{3pointconstanteqintro} was obtained earlier in the physics literature by Teschner  \cite{Tesc} using clever  but non rigorous conformal bootstrap techniques. Relation \eqref{3pointconstanteqintro} is a major first step in a rigorous proof of the DOZZ formula: see next subsection. 

Finally, the correlation functions of primary fields in CFT are singular as two points $z_i,z_j$ get together: 
\begin{align}\label{ope}
\langle   \prod_{k=1}^N V_{\alpha_k}(z_k)   \rangle\sim |z_i-z_j|^{-\eta_{ij}}
\end{align}
with $\eta_{ij} \in \R$ and $\sim$ denotes equivalence (up to logarithmic corrections) as $|z_i-z_j| \to 0$. The study of these singularities is important as it gives information about the {\it fusion rules}, another of the structures of CFT uncovered by BPZ. More generally, the so-called operator product expansion in the physics literature corresponds to studying equivalence \eqref{ope} at order 1 and higher in $|z_i-z_j|$. In Liouville theory,  the operator product expansion is quite subtle and it is expected that the asymptotics in \eqref{ope} have logarithmic corrections in $ |z_i-z_j|$\footnote{This does not mean that LCFT is a logarithmic CFT, a variant of a classical CFT.}.   In this paper, as technical building blocks in the proof of the Ward identities and the BPZ equations, we prove detailed estimates on these singularities and in particular we prove that the logarithmic corrections indeed are present: see section \ref{sectionestimates} where are stated general fusion estimates.

Let us next describe briefly the mathematical content of the paper. The Liouville term $\int_{\C} e^{\gamma\phi(z)}d^2z$ in the action functional \eqref{liouvlaw1} is an example of {\it Gaussian Multiplicative Chaos} (GMC), a random multifractal measure on $\C$. The Liouville correlation functions turn out to be nonlinear functionals of the GMC measure and the study of their regularity boils down to a careful analysis of the GMC measure. In particular a variation of the {\it freezing} phenomenon familiar in the theory of GMC enters the analysis in a essential way. The detailed regularity properties of the correlation functions are a necessary input for the proof of the Ward and BPZ identities which are based on certain exact identities involving the correlation functions and non trivial integral transforms of these correlation functions (these exact identities are obtained through Gaussian integration by parts). For instance the insertion of the SE tensor $T(z)$ in the correlation functions leads to expressions involving Beurling and more singular integral transforms of them. The Ward and BPZ identities are then the consequence of subtle cancellations of not absolutely convergent integrals and require great care.

\subsection{Perspectives}\label{Perspectives}

This paper is the first in a series of papers aiming at unifying  two approaches of LCFT in the physics literature: the path integral approach and the conformal bootstrap approach. More precisely, the above results (Ward identities, BPZ equations, relation \eqref{3pointconstanteqintro}) are essential in this direction. We believe that both approaches are equivalent though the status of their relation is still controversial in the physics literature. Indeed, there are numerous reviews and papers within the physics literature on the path integral approach of LCFT and its relation with the bootstrap approach but they offer different perspectives and conclusions (see \cite{HaMaWi,OPS,seiberg} for instance). One major step towards this unification would be the proof that $C_\gamma(\alpha_1,\alpha_2,\alpha_3)$ indeed satisfies the DOZZ formula, namely that one can analytically continue $(\alpha_1,\alpha_2,\alpha_3) \mapsto C_\gamma(\alpha_1,\alpha_2,\alpha_3)$ to a set $\C^3 \setminus \mathcal{P}_\gamma$ where $\mathcal{P}_\gamma$ is a (rather complicated) set of poles depending on $\gamma$ and such that on $\C^3 \setminus \mathcal{P}_\gamma$ we have (recall that $l(x)=\Gamma(x)/\Gamma(1-x)$)    
\begin{equation}\label{DOZZformula}
C_\gamma(\alpha_1,\alpha_2,\alpha_3)= (\pi \: \mu \:  l(\frac{\gamma^2}{4})  \: (\frac{\gamma}{2})^{2 -\gamma^2/2} )^{\frac{2 Q -\sum_i \alpha_i}{\gamma}}   \frac{\Upsilon_{\frac{\gamma}{2}}'(0)\Upsilon_{\frac{\gamma}{2}}(\alpha_1) \Upsilon_{\frac{\gamma}{2}}(\alpha_2) \Upsilon_{\frac{\gamma}{2}}(\alpha_3)}{\Upsilon_{\frac{\gamma}{2}}(\frac{\bar{\alpha}-2Q}{2}) 
\Upsilon_{\frac{\gamma}{2}}(\frac{\bar{\alpha}-\alpha_1}{2}) \Upsilon_{\frac{\gamma}{2}}(\frac{\bar{\alpha}-\alpha_2}{2}) \Upsilon_{\frac{\gamma}{2}}(\frac{\bar{\alpha}-\alpha_3}{2})   } 
\end{equation}  
%
(recall   that $\bar{\alpha}= \alpha_1+\alpha_2+\alpha_3$) and $\Upsilon_{\frac{\gamma}{2}}$ is a  special function (depending on $\gamma$): see subsection \ref{expressionUpsilon}  for an analytic expression. 
The function $\Upsilon_{\frac{\gamma}{2}}$ has zeros but no poles and so the poles $\mathcal{P}_\gamma$ of $C_\gamma(\alpha_1,\alpha_2,\alpha_3)$ can be read off formula \eqref{DOZZformula} and correspond to the zeros of the denumerator. Assuming some regularity (in $\gamma$ and $\alpha_1,\alpha_2,\alpha_3$) on $C_\gamma(\alpha_1,\alpha_2,\alpha_3)$ the DOZZ formula is in fact the only solution on $\mathcal{P}_\gamma$ of shift equation \eqref{3pointconstanteqintro} and the following dual shift equation 
\begin{equation}\label{3pointconstanteqintrodual}
\frac{C_\gamma(\alpha_1+\frac{2}{\gamma},\alpha_2,\alpha_3)}{C_\gamma(\alpha_1-\frac{2}{\gamma},\alpha_2,\alpha_3)}  
= - \frac{1}{\pi \tilde{\mu}}\frac{l(-\frac{4}{\gamma^2})  l(\frac{2 \alpha_1}{\gamma}) l(\frac{2\alpha_1}{\gamma}  -\frac{4}{\gamma^2})  l(\frac{1}{\gamma} (\bar{\alpha}-2\alpha_1- \frac{2}{\gamma}) )   }{l( \frac{1}{\gamma} (\bar{\alpha}-\frac{2}{\gamma} - 2Q)  ) l( \frac{1}{\gamma} (\bar{\alpha}-2\alpha_3-\frac{2}{\gamma} ))  l( \frac{1}{\gamma} (\bar{\alpha}-2\alpha_2-\frac{2}{\gamma} )) }. 
\end{equation}  
\vspace{0.1 cm}
with $\tilde{\mu}= \frac{(\mu \pi l(\frac{\gamma^2}{2})  )^{\frac{4}{\gamma^2} }}{ \pi l(\frac{4}{\gamma^2})}$.
Our work proves that $C_\gamma(\alpha_1,\alpha_2,\alpha_3)$ satisfies the first shift equation for restricted values of $\alpha_1,\alpha_2,\alpha_3$. In a forthcoming work \cite{KRV}, we address the problem of the analytic continuation of $C_\gamma(\alpha_1,\alpha_2,\alpha_3)$ as a function of $\alpha_1,\alpha_2,\alpha_3$ (hence showing that one can lift the restriction on the values of $\alpha_1,\alpha_2,\alpha_3$) and the problem of proving the second shift equation \eqref{3pointconstanteqintrodual}.

Let us also emphasize that the functional relations \eqref{3pointconstanteqintro} and \eqref{3pointconstanteqintrodual}  are very important relations per se for all $\gamma \in \C$ (and not just real $\gamma \in ]0,2]$)\footnote{Recall that our approach, based on a path integral, is presently restricted to the case of real $\gamma$ in the interval $]0,2]$}.  Indeed, for each fixed $\gamma \in \C$, they are used in the physics literature to construct three point structure constants and CFTs in the conformal bootstrap approach: see for instance the very recent bootstrap construction of Liouville theory by Ribault-Santachiara with purely imaginary $\gamma$ \cite{Rib1} based on the exact solution to \eqref{3pointconstanteqintro} and \eqref{3pointconstanteqintrodual} discovered in \cite{KoPe,Za}\footnote{The exact solution to \eqref{3pointconstanteqintro} and \eqref{3pointconstanteqintrodual} for purely imaginary $\gamma$ is not the analytic continuation in $\gamma$ of the DOZZ formula \eqref{DOZZformula} valid for $\gamma \in ]0,2]$}.  

Finally, let us mention that the Liouville three point structure constants $C_\gamma(\alpha_1, \alpha_2, \alpha_3)$ are also particularly interesting as they seem to have deep connections to other  mathematical objects, e.g. the Nekrasov partition functions: this is the basis of the celebrated AGT conjecture \cite{AGT}. This brings yet additional motivation to study the structure constants. On the mathematical side, the AGT conjecture has been explored recently by Maulik and Okounkov in \cite{MO}.

\subsection{History on LCFT and probabilistic approaches to CFT}

Finally we want to make some comments about  other mathematical studies of LCFT and CFTs. There is a huge physics literature on LCFT for which we refer the reader to the reviews \cite{Tesc1,nakayama,Rib} and the previous references. Takhtajan et al.  \cite{TT} studied Liouville theory in the setup of  a formal  power series in $\gamma$  (the so-called semiclassical expansion). In particular  Ward identities were established in this formal power series context. Also, independently from the work \cite{DKRV}, Duplantier-Miller-Sheffield developped a theory of quantum surfaces in \cite{DMS} which lies in some sense at the boundary of LCFT. In the case of the sphere, they work with two marked points $0$ and $\infty$ and they consider random measures defined up to dilation and rotation since two points are not sufficient to determine conformal embeddings in the sphere. From the point view of LCFT, this corresponds to the construction of the two point correlation functions which exist only in a generalized sense: this point will be explained in more detail in the forthcoming work \cite{KRV}. Their theory is based on a coupling between variants of SLE curves joining the two marked points and the full plane GFF; their framework is interesting in the study of the relation between  LCFT and random planar maps.   

On the constructive probabilistic side there are very few results on  conformal invariance and Ward identities in other CFTs. 
For the {\it Gaussian Free Field} (GFF) a complete descripition of the CFT  is developed in Kang and Makarov's monograph \cite{KM}. In particular, they derive Ward and BPZ identities for this particular CFT with central charge $c_{\mathrm{GFF}}=1$. Essentially, the GFF  corresponds to setting the cosmological constant $\mu$ to $0$. The resulting CFT has a very different structure from the Liouville case. 

For interacting field theories, following the breakthrough papers by Smirnov \cite{S1} and then Chelkak-Smirnov \cite{CS}, the scaling limit of the discrete {\it Ising model} in a domain at critical temperature is very well understood mathematically. The scaling limit of the model when the mesh size goes to zero is proven to be described by a unitary CFT, the Ising CFT, which belongs to the class of the so-called minimal models. The Ising CFT is composed of two non trivial primary fields: the spin and the energy density. The rescaled correlations of the spin were proven to converge to an explicit expression in Chelkak-Hongler-Izyurov \cite{CHI} (see also the independent work by Dub\'edat \cite{Dub2} for convergence in the plane). In fact, one can also construct the scaling limit of the spin as a random distribution: see Camia-Garban-Newman \cite{CGN}. The rescaled correlations of the energy density were proven to converge to an explicit expression in Hongler-Smirnov \cite{HS} (see also the independent work by Boutillier-De Tili\`ere \cite{BD,BD1} for convergence in the plane). Finally, a recent work \cite{CGS} shows convergence of a discrete SE tensor to a continuum limit which satisfies the associated Ward identities.

In the context of {SLE and CLE}, which are random conformally invariant curves,  one can construct partition functions or correlation functions as probabilities of events related to several SLEs or CLE. The correlation functions correspond to CFTs with central charge $c \leq 1$. In this context, the understanding of the correlation functions is quite precise though not complete: see Dub\'edat \cite{Dub1}, Bauer-Bernard-Kytol\"a \cite{BBK} for multiple SLEs and more recently Camia- Gandolfi-Kleban \cite{CGK} for a construction of primary fields in the context of CLE. In the context of CLE, Doyon \cite{Do} has proposed a construction of the stress-energy tensor but it relies on assumptions which have not been proved yet. 


\medskip

\subsection{Organization of the paper}

The rest of the paper is organized as follows. In the next section, we set the notations, recall some results of \cite{DKRV} and state the main results of the paper: the Ward and BPZ identities. In Section 3, we prove results on differentiability properties of Liouville correlation functions and based on these in Section 4 we prove the Ward and BPZ identities. Finally in Section 5 we prove detailed bounds on the correlation functions when two or three points get together. This section is the technical backbone of our paper and can be seen as a first step towards proving fusion rules and operator product expansions for LCFT.

\subsubsection*{Acknowledgements} The authors wish to thank Francois David and Sylvain Ribault for fruitful discussions on Liouville Field theory. The authors also wish to thank Colin Guillarmou for helping them handle the hypergeometric equation which arises in the study of the four point correlation function.


\section{Main results}

\subsection{Background and notations}\label{sec:backgr}
In this section, we recall the precise definition of the Liouville correlation functions  as given in  \cite{DKRV}.

\subsubsection{LCFT on $\hat\C$. } 
Recall that we denote by $\hat\C$ the Riemann sphere. A smooth conformal metric on $\hat\C$ is given by $G=g(z)|dz|^2$ where $g(z)=\caO(|z|^{-4})$ as $z\to\infty$.  It is then natural to write the metric in LCFT as $e^{\gamma\phi(z)}|dz|^2=e^{\gamma X(z)}g(z)|dz|^2$ and define  the Liouville action functional  on $\hat\C$ by
\begin{equation}\label{actionLiouville}
S_c(X,{g}):= \frac{1}{\pi}\int_{\C}\big(|\partial_zX (z) |^2+ \frac{{Q_c}}{4}g(z)R_{{g}}(z) X(z) +\pi \mu e^{\gamma X(z) }g(z)\big)\,d^2 z   .
\end{equation}
Here $R_g(z)=-4g^{-1}\partial_z\partial_{\bar z}\ln g(z)$ is the scalar curvature of the metric $g$ and $Q_c=\frac{2}{\gamma}$. 
Note that formally \eqref{actionLiouville}  agrees with \eqref{liouvlaw1} with $\phi(z)= X(z)+\frac{1}{\gamma} \ln g(z)$ up to an infinite constant which stems from the fact that  $\C$ and $\hat\C$ are not conformally equivalent. To have conformal invariance of the probabilistic theory it turns out one needs to change the parameter  $Q_c=\frac{2}{\gamma}$ in the classical action  \eqref{actionLiouville} to the "quantum" value  \eqref{Qdef}. This yields the following definition for the quantum action
\begin{equation}\label{actionLiouvilleQ}
S(X,{g}):= \frac{1}{\pi}\int_{\C}\big(|\partial_zX (z) |^2+ \frac{{Q}}{4}g(z)R_{{g}}(z) X(z) +\pi \mu e^{\gamma X(z) }g(z)\big)\,d^2 z   .
\end{equation}

We work in this paper with the round metric
 $$\hat{g}(z)=\frac{4}{(1+\bar zz)^2}
$$
in which case the scalar curvature is constant $R_{\hat g}=2$. The smooth metrics on $\hat\C$ are then given by  $e^{\varphi(z)}\hat g(z) |dz|^2$ with $\varphi(z)$ and   $\varphi(1/z)$ smooth on $\C$. In \cite{DKRV} it was shown that the change  of the Liouville correlation functions under change of $\varphi$ is explicit through the so-called Weyl anomaly formula and thus there is no loss working with $\hat g$. In the sequel, we will therefore work in the background metric $\hat{g}$.

\subsubsection{Convention and notations.} In what follows, in addition to the complex variable $z$, we will also consider variables $x,y$ in $\C$ and for integer $N \geq 3$ variables  $z_1, \cdots, z_N$ which also belong to $\C$. For these complex variables, we will use the standard notation for complex derivatives: for $x=x_1+i x_2$  we set $\partial_x= \frac{1}{2}(\partial_{x_1}- i \partial_{x_2})$ and $\partial_{\bar{x}}= \frac{1}{2} (\partial_{x_1}+ i \partial_{x_2})$. The variables $x,y$ will typically  be variables of integration: we will denote by $d^2x$ and $d^2y$ the corresponding Lebesgue measure on $\C$ (seen as $\R^2$). We will also denote $|\cdot|$ the norm in $\C$ of the standard Euclidean (flat) metric and for all $r>0$ we will denote by $B(x,r)$ the Euclidean ball of center $x$ and radius $r$. 

\subsubsection{Gaussian Free Field. } To define the measure  \eqref{liouvlaw} it is natural to start with the quadratic part of the action functional  \eqref{actionLiouville}  which naturally gives rise to a Gaussian measure, the Gaussian Free Field (GFF) (we refer to section 4 in \cite{Dub0} or \cite{She07} for an introduction to the GFF). As is well known the GFF on the plane is defined modulo a constant but in LCFT this constant has to be included as an integration variable in the measure \eqref{liouvlaw}. The way to proceed is to replace $X$ in \eqref{actionLiouville}  by $c+X$  where $c\in\R$ and $X$ is the Gaussian Free Field on $\C$ where the additive constant is fixed by $\int_\C X(x)\hat{g}(x)d^2x=0$. The covariance of $X$\footnote{The field $X$ was denoted $X_{\hat{g}}$ in the article \cite{DKRV} or the lecture notes \cite{RV}.} is given explicitely for $x,y \in \C$ by 
\begin{equation}\label{hatGformula}
\E [ X(x)X(y)] = G(x,y)=\ln\frac{1}{|x-y|}-\frac{1}{4}(\ln\hat g(x)+\ln\hat g(y))+\chi
\end{equation}
where $\chi:=\ln 2-\frac{1}{2}$.

\subsubsection{Gaussian multiplicative chaos. }
The field $X$ is distribution valued and to define its exponential a renormalization procedure is needed. We will work with a mollified regularization of the GFF, namely $X_{\epsilon}=\rho_\epsilon \ast  X   $ with $\rho_\epsilon(x)= \frac{1}{\epsilon^2}\rho (\frac{\bar xx}{\epsilon^2})$ where $\rho$ is $C^{\infty}$ non-negative with compact support in $[0,\infty[$ and such that $\pi \int_0^\infty \rho(t) dt=1$.  The variance of  $X_{\epsilon}(x)$ satisfies
 \begin{equation}\label{circlegreen}
\lim_{\epsilon\to 0} \: (\E[X_{\epsilon} (x)^2]+\ln (a\epsilon))
=-\hf\ln \hat{g}(x)
\end{equation} 
uniformly on $\C$ where the constant $a$ depends on the regularization function $\rho$. 
Define the measure\footnote{This normalization is chosen to match with the standard  physics literature.} 
\begin{equation}\label{Meps1}
M_{\gamma,\epsilon}(d^2x):=
e^{\frac{\gamma^2}{2}\chi}
e^{\gamma X_{\epsilon}(x)-\frac{\gamma^2}{2} \E[ X_{\epsilon}(x)^2] } \hat{g}(x) d^2x.
\end{equation}
Then, 
for $\gamma\in [0,2[$,   we have the convergence  in probability
\begin{equation}\label{law}
M_{\gamma}=\lim_{\epsilon\to 0}M_{\gamma,\epsilon}
 \end{equation}
 and convergence is in the sense of weak convergence of measures. This limiting measure is non trivial and is a (up to a multiplicative constant) Gaussian multiplicative chaos (GMC) of the field $X$ with respect to the measure ${\hat{g}}(x)dx$ (see \cite{Ber} for an elementary approach and references). By \eqref{circlegreen} we may also write 
 \begin{equation}\label{Meps}
M_{\gamma}(d^2x)=\lim_{\epsilon\to 0}(A\epsilon)^{\frac{\gamma^2}{2}}e^{\gamma \phi_\epsilon(x)}  \, d^2x
\end{equation}
where the constant $A=ae^\chi$, $\phi_\epsilon=\rho_\epsilon\ast\phi$  and $\phi$ is  the {\it Liouville field}
\begin{equation}\label{Liouville field}
\phi=X+\frac{_Q}{^2}\ln\hat{g}.   \footnote{This is a slight abuse of terminology as we use a slightly different convention than the article \cite{DKRV} or the lecture notes \cite{RV} where the Liouville field is rather $\phi+c$.}
\end{equation}
Note that $Q$ here is given by \eqref{Qdef} and not $Q_c$. The change is due to the renormalization in \eqref{Meps1}.

\subsubsection{Liouville measure } The Liouville measure $e^{-S(X,\hat{g})}DX$ with $S$ given by \eqref{actionLiouvilleQ}  is now defined as follows. Since $R_{\hat g}=2$ and $\int_\C X(x)\hat g(x) d^2x=0$ the linear term becomes
$
\int_\C(c+X(x)) \hat g (x) d^2x=4\pi c
$.  This leads to the
definition 
\begin{equation}\label{Liouville measure}
 \nu(dX,dc):=e^{-2Q c}     e^{- \mu e^{\gamma c} M_{\gamma}(\C)    }\P(dX)   \: dc.
\end{equation}
Here $\P(dX)$ is the measure on $H^{-1}(\hat \C)$ induced by the GFF $X$. Note that the random variable $M_{\gamma}(\C) $ is almost surely finite because $\E M_{\gamma}(\C) =e^{\frac{\gamma^2}{2}\chi}\int_\C \hat g(x) dx<\infty
$. This implies that $\nu$ is an infinite measure on $H^{-1}(\hat \C)$: $\int d\nu=\infty$ since for $M_{\gamma}(\C) <\infty$ the $c$-integral diverges at $-\infty$. However, suitable correlation functions still exist as we see next.

\subsubsection{Liouville correlation functions.} The vertex operators  \eqref{KPZformula1}  need to be regularized as well: we set
\begin{equation}\label{Vdefi}
V_{\alpha, \epsilon}(z)= (A\epsilon)^{\frac{\alpha^2}{2}}e^{\alpha (\phi_\epsilon(z)+c)}=e^{\alpha c} e^{\frac{\alpha^2 \chi}{2}}e^{\alpha X_{\epsilon}(z)-\frac{\alpha^2}{2} \E[ X_{\epsilon}(z)^2] }\hat g_\epsilon (z) ^{\Delta_\alpha}
\end{equation}
where $\hat g_\epsilon (z) = e^{(\rho_\epsilon \ast \ln \hat{g} )(z)}$. Let us denote 
averages with respect to $\nu$ by $\langle\,\cdot\,\rangle
$ and 
\begin{equation}\label{defU}
U_N= \lbrace \z=(z_1, \dots, z_N) \in \mathbb{C}^N, \:  \: z_i \not = z_j \:  \forall i \neq j \rbrace.
\end{equation}
Fix $\z\in U_N$ and 
 weights $\alpha_1,\dots,\alpha_N$. Here and below we use the notation $\langle\,\cdot\,\rangle_\epsilon$ for the regularized Liouville measure where in \eqref{Liouville measure} we use the measure $V_{\gamma, \epsilon} (x) d^2x$ instead of $M_{\gamma}$. Similarly, standard GMC theory ensures that $V_{\gamma, \epsilon} (x) d^2x$ converges in probability to $M_\gamma$ as $\epsilon\to 0$.  Now, it was shown in \cite{DKRV} that the  limit
\begin{equation}\label{lcorre}
 \langle \prod_{k=1}^N  V_{\alpha_k}(z_k) \rangle :=  4 e^{-2\chi Q^2}\: \underset{\epsilon \to 0}{\lim}  \:  \langle \prod_{k=1}^N  V_{\alpha_k,\epsilon}(z_k)  \rangle_\epsilon\footnote{The global constant $4 e^{-2\chi Q^2}$ which depends on $\gamma$ plays no role but it is included to match with the standard  physics literature which is based on the DOZZ formula \eqref{DOZZformula}. This constant was not included in the definitions in the article \cite{DKRV} or the lecture notes \cite{RV}.}
\end{equation}
exists and is finite if and only if $ \sum_{k=1}^N\alpha_k>2Q$.  Moreover, under this condition, the limit is non zero  if and only if $\alpha_k<Q$ for all $k$. These conditions are the Seiberg bounds  \eqref{TheSeibergbounds} originally introduced in \cite{seiberg}.
 {\it We assume throughout this paper that the Seiberg bounds \eqref{TheSeibergbounds} are satisfied and that $\alpha_k \neq 0$ for all $k$.}\footnote{This is no restriction since the case $\alpha_k=0$ corresponds to setting $V_{\alpha_k}(z_k)=1$.} Note that these bounds imply that for a nontrivial correlation we need at least {\it three} vertex operators; therefore, we have $N \geq 3$ in the sequel.  
The correlation function \eqref{lcorre}  satisfies the    conformal invariance property \eqref{KPZformula} and the conformal weights of the vertex operators are given by
$$\Delta_{\alpha}= \frac{\alpha}{2} (Q-\frac{\alpha}{2}).$$

\subsubsection{Reduction to Multiplicative Chaos.} In order to keep this work as self contained as possible, we remind the basics of the construction of the Liouville correlations. The main idea is that one can express these correlations as functions of GMC measures with log singularities. More precisely, using the explicit expression  \eqref{Vdefi}, we may  first integrate over the $c$-variable in the vertex operator correlation functions which yields (see  \cite{DKRV}):
  \begin{align*}
   \langle \prod_{k=1}^N  V_{\alpha_k}(z_k)  \rangle& = 4 \: e^{-2\chi Q^2}\:  \lim_{\epsilon\to 0} \int_{\R}   e^{-2Q c}  \:   \E \left [  \prod_{k=1}^N  V_{\alpha_k,\epsilon}(z_k)        e^{- \mu e^{\gamma c}  \int_{\C}  V_{\gamma, \epsilon} (x) d^2x    } \right ]   \: dc\\
 &=  \mu^{-s} 4 \: e^{\frac{\chi}{2}\sum_{k=1}^N\alpha_k^2-2\chi Q^2}\gamma^{-1}\Gamma(s)\lim_{\epsilon\to 0}\E  \left [  \prod_{k=1}^N e^{{\alpha_k} X_\epsilon(z_k)- \frac{\alpha_k^2}{2}\E X_\epsilon(z_k)^2}   \hat{g}_\epsilon (z_k)^{\Delta_{\alpha_k}}    \left (   \int_{\C} V_{\gamma, \epsilon} (x) d^2x \right )  ^{-s}    \right ].
\end{align*}
Now, using Girsanov's theorem  (see  \cite{DKRV}) we may trade the vertex operators to a shift of $X$ to obtain
\begin{equation}\label{funamental}
 \langle    \prod_{k=1}^N V_{\alpha_k}(z_k)  \rangle   =   4 \: e^{\frac{\chi}{2}\sum_{k=1}^N\alpha_k^2-2\chi Q^2}e^{\frac{1}{2}\sum_{i \not = j}^N \alpha_i \alpha_j G(z_i,z_j)}(\prod_{k=1}^N\hat{g} (z_k)^{\Delta_{\alpha_k}})\mu^{-s} \gamma^{-1}\Gamma(s)\E \left [ \left ( \int_{\C} \, e^{\sum_{k=1}^N \alpha_k G(x,z_k)}M_\gamma(d^2x) \right )^{-s}  \right ]
\end{equation}
where $s=\frac{\sum_{k=1}^N \alpha_k-2Q}{\gamma}$. Note that the Seiberg bounds ensure $s>0$.
Using formula \eqref{hatGformula} we can write this as
\begin{equation}
 \langle    \prod_{k=1}^N V_{\alpha_k}(z_k)  \rangle =B(\pmb{\alpha})\prod_{i < j} \frac{1}{|z_i-z_j|^{\alpha_i \alpha_j}} \mu^{-s} \gamma^{-1}\Gamma(s)\E \left [  \left (  \int_{\C}  F(x,{\bf z}) M_\gamma(d^2x)  \right )^{-s}  \right ]\label{Z1}
\end{equation}
where we set $\pmb{\alpha}=(\alpha_1, \cdots, \alpha_N), $
\begin{equation*}
F(x,{\bf z})=\prod_{k=1}^N \frac{ 1}{|x-z_k|^{\gamma \alpha_k}}   \hat{g}(x)^{  - \frac{\gamma}{4} \sum_{k=1}^N \alpha_k } 
\end{equation*}
and
\begin{equation}\label{Adefi}
B(\pmb{\alpha})=4 \: e^{-\frac{\chi}{2}(\sum_{k=1}^N\alpha_k-2Q)^2}.
\end{equation}
Thus, up to explicit factors the Liouville correlations are reduced to the study of the random variable $\int_{\C} F(x, {\bf z}) M_\gamma(d^2x) $. In particular, the Seiberg bounds $\alpha_k<Q$ for all $k$ are the condition of integrability of $F$ against the chaos measure  $M_\gamma$                  (see  \cite{DKRV}). 

\subsection{Ward and BPZ identities}
A formal calculation using the definition \eqref{defgen}  of the SE tensor yields the  following result in the case of LCFT 
\begin{equation}\label{defforus}
 T(z)=  Q \partial_{z}^2 \phi(z)- ( \partial_{z}\phi(z))^2+\E ((\partial_{z}X(z))^2).
 \end{equation}
 where $\phi$ is the {Liouville field}
  \eqref{Liouville field}.
  We will define this  via a regularized  version:
\begin{equation}\label{SETdefi}
T_\epsilon(z):=
Q \partial_{z}^2 \phi_\epsilon(z)-(\partial_z \phi_\epsilon(z))^2+\E( (\partial_{z} X_\epsilon(z))^2)
\end{equation}
(note that $X_\epsilon$ is smooth (a.s.)).
Here is the main theorem on the Ward identities:
\begin{theorem}\label{wardth}
{\rm (a)} The correlation functions $(z_1, \dots, z_N) \mapsto  \langle \prod_{k=1}^N  V_{\alpha_k}(z_k)  \rangle$ are $C^2$ in the set $U_N$.
\vskip 2mm
 
\noindent {\rm (b)} The limits
\begin{align}\label{Wardidentity1}
\lim_{\epsilon\to 0} \: \langle   T_{\epsilon}(z) \prod_{k=1}^N V_{\alpha_k}(z_k)   \rangle&:=\langle   T(z) \prod_{k=1}^N V_{\alpha_k}(z_k)   \rangle\\
\underset{\epsilon' \to 0}{\lim}\; \underset{\epsilon \to 0}{\lim}\;  \langle T_\epsilon(z) T_{\epsilon'}(z') \prod_{k=1}^N  V_{\alpha_k}(z_k)  \rangle&:=     \langle T(z) T(z') \prod_{k=1}^N  V_{\alpha_k}(z_k)  \rangle \label{Wardidentity2}
\end{align}
exist for all distinct $z, z', z_1, \cdots, z_N$. \eqref{Wardidentity1} is given by the first Ward identity \eqref{wardid1} and the second Ward identity \eqref{wardid2} holds in the form
\begin{align}
\langle T(z) T(z') \prod_{k=1}^N  V_{\alpha_k}(z_k)  \rangle& =\frac{1}{2} \frac{{c_L}}{(z-z')^4}  \langle \prod_{k=1}^N  V_{\alpha_k}(z_k)  \rangle+ \frac{2}{(z-z')^2} \langle T(z') \prod_{k=1}^N  V_{\alpha_k}(z_k)  \rangle
 +  \frac{1}{z-z'}\partial_{z'} \langle T(z') \prod_{k=1}^N V_{\alpha_k}(z_k)  \rangle \nonumber \\
 &+\sum_{i=1}^N\frac{\Delta_{\alpha_i}}{(z-z_i)^2} \langle T(z') \prod_{k=1}^N  V_{\alpha_k}(z_k)  \rangle+\sum_{i=1}^N\frac{1}{z-z_i} \partial_{z_i}\langle T(z') \prod_{k=1}^N V_{\alpha_k}(z_k)  \rangle  \label{Ward} 
\end{align}
where $c_L=1+6Q^2$ is the central charge of the Liouville theory.
\end{theorem}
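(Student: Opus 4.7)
The unifying strategy is to combine the reduction formula \eqref{funamental}, which expresses $\langle\prod_k V_{\alpha_k}(z_k)\rangle$ as an explicit deterministic prefactor times the GMC functional $\Phi(\z) := \E[(\int_\C F(x,\z) M_\gamma(d^2x))^{-s}]$, with Gaussian integration by parts (Cameron--Martin/Girsanov) applied to the regularized insertion $T_\epsilon(z)$. Each IBP converts a $\partial_z^k X_\epsilon(z)$-insertion into a deterministic action on the correlation function, involving derivatives of the Green function $G$, and produces either explicit pole contributions at the insertion points $z_i$ or Beurling-type singular integral transforms of the chaos measure. The Ward identities will emerge after the pole structure and cancellations are carefully identified at finite $\epsilon$ and the limit is controlled using fusion estimates.

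\textbf{Part (a).} The prefactor in \eqref{Z1} is manifestly smooth on $U_N$, so $C^2$-regularity reduces to that of $\Phi$. Formal differentiation in $z_i$ brings down $\frac{\gamma\alpha_i}{2(x-z_i)}$ inside the $M_\gamma$-integral; iterating yields second derivatives that involve $(x-z_i)^{-2}$, $(x-z_i)^{-1}(x-z_j)^{-1}$ and quadratic expressions in Cauchy-type transforms of the chaos measure. To justify differentiation under the expectation I combine the negative-moment bounds on GMC with the fusion estimates of Section \ref{sectionestimates}, which provide uniform integrability of these singular kernels against $F(x,\z) M_\gamma(d^2 x)$ on compact sub-domains of $U_N$ and continuity in $\z$.

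\textbf{Part (b), first Ward identity.} Splitting $\phi_\epsilon = X_\epsilon + \frac{Q}{2}(\rho_\epsilon*\ln\hat g)$, the random part of $T_\epsilon(z)$ is the linear piece $Q\partial_z^2 X_\epsilon(z)$ plus the Wick square $-\bigl((\partial_z X_\epsilon(z))^2-\E[(\partial_z X_\epsilon(z))^2]\bigr)$. Integrating the linear piece by parts once and the Wick square twice against the full correlator, and using $\partial_z G(z,z_k)=-\frac{1}{2(z-z_k)}+C^\infty$ and $\partial_z^2 G(z,z_k)=\frac{1}{2(z-z_k)^2}+C^\infty$, one reads off the coefficient of $(z-z_k)^{-2}$ as
\begin{equation*}
\frac{Q\alpha_k}{2}-\frac{\alpha_k^2}{4}=\frac{\alpha_k}{2}\Bigl(Q-\frac{\alpha_k}{2}\Bigr)=\Delta_{\alpha_k},
\end{equation*}
exactly matching \eqref{wardid1}. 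The remaining ingredients --- off-diagonal cross-products $\alpha_i\alpha_j/[(z-z_i)(z-z_j)]$ and Beurling-type transforms $\int (x-z)^{-1} F(x,\z) M_\gamma(d^2x)$ coming from the action of the IBP on the chaos measure --- are reassembled into $\sum_k\frac{1}{z-z_k}\partial_{z_k}\langle\prod_l V_{\alpha_l}\rangle$ via the partial-fraction identity $\frac{1}{(z-z_k)(x-z)}=\frac{1}{(z-z_k)(x-z_k)}+\frac{1}{(x-z_k)(x-z)}$, which splits each Beurling integral into a piece localized at $z_k$ (matching $\partial_{z_k}\Phi$ from part (a)) and a residual piece cancelling against the cross terms. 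Passage to $\epsilon\to 0$ is then controlled by the Section \ref{sectionestimates} fusion bounds.

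\textbf{Second Ward identity and main obstacle.} For $T_\epsilon(z)T_{\epsilon'}(z')$ the IBP expansion splits into three contributions: (i) Wick contractions between the two $T$-factors, which through $\partial_z\partial_{z'}G(z,z')\sim -\frac{1}{2(z-z')^2}$ generate the central term $\frac{c_L}{2(z-z')^4}\langle\prod_k V_{\alpha_k}\rangle$ with $c_L=1+6Q^2$ (the $1$ from the double contraction of the two Wick squares, the $6Q^2$ from the contractions involving the improvement term $Q\partial^2 X_\epsilon$); (ii) contractions of $T_\epsilon(z)$ against the surviving $T_{\epsilon'}(z')$, producing the descendant terms $\frac{2}{(z-z')^2}\langle T(z')\prod_k V_{\alpha_k}\rangle+\frac{1}{z-z'}\partial_{z'}\langle T(z')\prod_k V_{\alpha_k}\rangle$ by applying the mechanism of the first Ward identity with $z'$ in the role of an additional puncture; (iii) contractions against the vertex operators, producing $\sum_i \bigl(\frac{\Delta_{\alpha_i}}{(z-z_i)^2}+\frac{1}{z-z_i}\partial_{z_i}\bigr)\langle T(z')\prod_k V_{\alpha_k}\rangle$ exactly as in part (b)'s first identity. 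The limits are taken as $\epsilon\to 0$ first and $\epsilon'\to 0$ second, invoking the first Ward identity already proved at the inner step. \emph{The main technical obstacle} throughout is that the Beurling and higher-order singular transforms appearing in the IBP computations are only conditionally convergent: the pole cancellations must be identified at finite $\epsilon$ before passage to the limit, and this passage depends crucially on the detailed fusion bounds of Section \ref{sectionestimates}, including the logarithmic corrections alluded to in \eqref{ope}.
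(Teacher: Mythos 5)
Your overall architecture matches the paper's: reduce to the GMC functional via \eqref{funamental}, apply Gaussian integration by parts to the $T_\epsilon$-insertions, read off $\Delta_{\alpha_k}=\frac{Q\alpha_k}{2}-\frac{\alpha_k^2}{4}$ from the single contractions and $c_L=1+6Q^2$ from the double contractions, and take $\epsilon\to 0$ before $\epsilon'\to 0$, reusing the first identity at the inner step. The algebra you describe is the right algebra.

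There is however a genuine gap in part (a), and it propagates into part (b). The assertion that the fusion estimates ``provide uniform integrability of these singular kernels against $F(x,\z)M_\gamma(d^2x)$'' is false. Differentiating under the expectation produces $Y_{i,\epsilon}(\z)=\int_\C\frac{1}{(z_i-x)_{\epsilon,\epsilon}}G_\epsilon(x;\z)\,d^2x$, and near $x=z_i$ the one-insertion correlation behaves like $|x-z_i|^{-2+\zeta}$ with $\zeta=\frac{1}{2}(Q-\alpha_i)^2$ arbitrarily small, so the integrand is of order $|x-z_i|^{-3+\zeta}$: the integral is only conditionally convergent, and for second derivatives (kernel $(x-z_i)^{-2}$) the situation is worse. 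The paper resolves this with a specific device absent from your proposal: it writes the Beurling transform $-\int(z-x)^{-2}G_\epsilon(x;\z)\,d^2x$ as $\int(z-x)^{-1}\partial_xG_\epsilon(x;\z)\,d^2x$, substitutes the first-derivative formula to get the exact identity \eqref{Theexactidt}, and then averages that identity over a family of contours around $z_i$ (the bump $\chi(z-z_i)$ normalized by $\frac{\gamma\alpha_i}{2}\int\chi(z)z^{-1}d^2z=1$), killing the terms $Y_{j,\epsilon}$ for $j\neq i$ by Cauchy's theorem and solving for $Y_{i,\epsilon}$ as a sum of absolutely convergent integrals against bounded $C^1$ kernels $f_{i,\epsilon}$, $F_\epsilon$ (Lemma \ref{1stder}). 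That representation, combined with the KPZ identity \eqref{kpzid} and translation invariance (Lemma \ref{trinv}), is what yields the bound $|\partial_{z_i}G_\epsilon(\z)|\leq C\delta_i^{-1}G_\epsilon(\z)$, the $C^2$ regularity, the meaning of the limit \eqref{TheYtermnotation}, and the $C^1$ convergence of the $T$-insertion (Lemma \ref{dT'}) required to pass $\epsilon'\to0$ in the second Ward identity. You flag ``conditional convergence'' only at the end as an obstacle, but you do not supply the mechanism that overcomes it, and without it neither part (a) nor the existence of the limits in part (b) is established.
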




The result on the BPZ equations is the following:
\begin{theorem}\label{BPZTH} Let  $\chi=-\frac{\gamma}{2}$ or  $\chi=-\frac{2}{\gamma}$ and suppose $\chi+\sum_{k=1}^N\alpha_k>2Q$. Then the BPZ equation \eqref{bpzeq} holds 
 in $\mathbb{C} \setminus  \lbrace  z_1, \cdots, z_N \rbrace$.
\end{theorem}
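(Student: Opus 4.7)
The strategy is to derive the BPZ equation from the first Ward identity of Theorem \ref{wardth} together with a level-$2$ null-vector identity for the degenerate values $\alpha \in \{-\gamma/2, -2/\gamma\}$. Applying \eqref{wardid1} to the $(N{+}1)$-point correlation obtained by appending $V_\alpha(z)$ to $\prod_k V_{\alpha_k}(z_k)$ --- which is well defined under the Seiberg bound $\alpha + \sum_k \alpha_k > 2Q$ hypothesized in the theorem (together with $\alpha < 0 < Q$) --- one obtains
\begin{align*}
\langle T(w) V_\alpha(z) \prod_k V_{\alpha_k}(z_k)\rangle = \Bigl[\tfrac{\Delta_\alpha}{(w-z)^2} + \tfrac{\partial_z}{w-z} + \sum_k \tfrac{\Delta_{\alpha_k}}{(w-z_k)^2} + \sum_k \tfrac{\partial_{z_k}}{w-z_k}\Bigr] \langle V_\alpha(z)\prod_k V_{\alpha_k}(z_k)\rangle.
\end{align*}
After subtracting the principal part at $w=z$ and letting $w \to z$, the remaining regular insertion is
\begin{align*}
\mathcal{R}(z) := \sum_k \Bigl[\tfrac{\Delta_{\alpha_k}}{(z-z_k)^2} + \tfrac{\partial_{z_k}}{z-z_k}\Bigr] \langle V_\alpha(z)\prod_k V_{\alpha_k}(z_k)\rangle,
\end{align*}
and the BPZ equation \eqref{bpzeq} is equivalent to the null-vector identity $\mathcal{R}(z) + \tfrac{1}{\alpha^2}\partial_z^2 \langle V_\alpha(z)\prod_k V_{\alpha_k}(z_k)\rangle = 0$, i.e. $(L_{-2} + \alpha^{-2} L_{-1}^2) V_\alpha = 0$ inserted in correlators. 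The existence of such a level-$2$ singular vector at central charge $c_L = 1 + 6Q^2$ requires the Kac condition $\alpha^2 + \alpha Q + 1 = 0$, whose roots are precisely $\alpha = -\gamma/2$ and $\alpha = -2/\gamma$, matching the hypothesis of the theorem.

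To prove this null-vector identity probabilistically, I would work at the regularized level with $T_\epsilon$ and $V_{\alpha,\epsilon}$ and apply Girsanov's theorem to absorb $V_{\alpha,\epsilon}(z)\prod_k V_{\alpha_k,\epsilon}(z_k)$ into a shift of the GFF by $\alpha G(\cdot,z) + \sum_k \alpha_k G(\cdot,z_k)$. Under this shift the stress-energy tensor $T_\epsilon(w)$ acquires deterministic contributions built from $\alpha\partial_w G(w,z) = -\tfrac{\alpha}{2(w-z)} + (\text{smooth})$ and $\alpha\partial_w^2 G(w,z) = \tfrac{\alpha}{2(w-z)^2} + (\text{smooth})$. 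Expanding $T_\epsilon^{\mathrm{shift}}(w)$ around $w=z$, the $(w-z)^{-2}$ and $(w-z)^{-1}$ singularities reproduce the Ward expansion (consistent with Theorem \ref{wardth}), while the $(w-z)^0$ term produces, schematically, an insertion proportional to $(Q + \alpha^{-1})\,\langle \partial_z^2\phi(z)\,V_\alpha(z)\prod V_{\alpha_k}(z_k)\rangle$ together with quadratic $(\partial_z\phi(z))^2$ contributions. Using $\partial_z V_\alpha(z) = \alpha\partial_z\phi(z) V_\alpha(z)$ and rewriting the Kac condition in the form $Q + \alpha^{-1} = -\alpha$, this combination rearranges into $-\tfrac{1}{\alpha^2}\partial_z^2 \langle V_\alpha(z)\prod V_{\alpha_k}(z_k)\rangle$, which is precisely the null-vector identity. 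A parallel check can be carried out via the GMC representation \eqref{Z1}: differentiating twice in $z$ produces Beurling-type singular integrals $\int \tfrac{F(x,\cdot)}{x-z}M_\gamma(d^2 x)$ coming from $\partial_z |x-z|^{-\gamma\alpha}$, and the same singular integrals arise in $\mathcal{R}(z)$ through $\partial_{z_k}$, permitting a termwise matching.

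The main obstacle is the joint analysis of the two limits $\epsilon \to 0$ and $w \to z$. The regularized correlator $\langle T_\epsilon(w) V_{\alpha,\epsilon}(z) \prod V_{\alpha_k,\epsilon}(z_k)\rangle$ features GMC integrals against kernels that become non-integrable as $w \to z$ and near each $z_k$, and the cancellation that yields the null-vector identity depends on subtracting divergent pieces in the correct order. The differentiability results of Section $3$ (in particular the $C^2$ regularity from Theorem \ref{wardth}(a)) together with the fusion estimates of Section \ref{sectionestimates} --- which control the behavior of the correlations as $V_\alpha(z)$ collides with another vertex operator, including the logarithmic corrections --- are exactly what is needed to justify these subtractions and the convergence of the Beurling transforms at each insertion point. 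Once these analytic inputs are in place, the equation reduces to the finite algebraic cancellation which is available only at the two Kac values $\alpha \in \{-\gamma/2,-2/\gamma\}$, yielding \eqref{bpzeq}.
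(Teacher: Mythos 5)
Your proposal is, at its core, the same argument as the paper's, wrapped in an extra layer that the paper does not use. The reduction via the first Ward identity to a level-$2$ null-vector identity is a correct reformulation, but it is logically vacuous here: the "regular part at $w=z$" read off from \eqref{wardid1} is by definition $\mathcal{R}(z)$, so the null-vector identity $\mathcal{R}(z)+\alpha^{-2}\partial_z^2\langle\cdots\rangle=0$ \emph{is} the BPZ equation \eqref{bpzeq}, and the Ward identity contributes nothing to its proof. The actual proof is your second paragraph — the "parallel check via the GMC representation" — which is precisely what the paper does: it computes $\partial_z^2\langle V_\alpha(z)\prod_k V_{\alpha_k}(z_k)\rangle$ directly by Gaussian integration by parts, never routing through $T$. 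What your sketch leaves implicit, and what is the genuine engine of the paper's proof, is the exact identity obtained by writing $\frac{1}{(z-x)^2}=\partial_x\frac{1}{z-x}$ and integrating by parts against the correlation with a $V_\gamma(x)$ insertion (the relation $\bar A(z)+\bar B(z)+\bar C(z)=0$ in the paper, a specialization of \eqref{Theexactidt}). This identity is what converts the absolutely convergent Beurling-type integrals $\bar A,\bar C$ (convergent only because of the fusion bound $|\caF(x,z)|\leq C|x-z|^{\gamma^2/2}$, resp.\ $|x-z|^2$, at the degenerate weights — Lemmas \ref{appliBPZ1}, \ref{appliBPZ2}) into the differences of the a priori non-absolutely-convergent $Y$-type integrals $\int\frac{1}{z_i-x}\langle V_\alpha(z)V_\gamma(x)\prod V_{\alpha_k}\rangle d^2x$, which are then recognized as $\partial_{z_i}$-derivatives via \eqref{remindderiv}. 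Your "termwise matching" of singular integrals cannot be carried out term by term without this identity, because the individual pieces are not separately convergent; so while your outline identifies all the right ingredients (Girsanov, the Kac condition $\alpha^2+\alpha Q+1=0$ in the form $Q+\alpha^{-1}=-\alpha$, and the fusion estimates of Section \ref{sectionestimates}), the one step you would still have to supply explicitly is this integration-by-parts identity and the justification of the exchange of limits around it.
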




\subsection{Relations on  the 3 point structure constant}
We will use the BPZ equations to deduce an exact expression for the 4 point correlation function with a degenerate field  $\langle    V_{-\frac{\gamma}{2}}  (z)  \prod_{k=1}^3 V_{\alpha_k}(z_k)  \rangle$ and deduce from this expression a non trivial relation on the 3 point structure constants. This relation is usually referred to as Teschner's trick in the physics literature since it was shown by Teschner to lead to a simple heuristic derivation of the celebrated DOZZ formula for the 3 point structure constants (see section \ref{Perspectives} for the importance of these constants). 

Let us first use  M\"obius invariance \eqref{KPZformula} to simplify the three and four point functions. This fixes the three point function up to a constant
\begin{align}
  \langle      \prod_{k=1}^3 V_{\alpha_k}(z_k)  \rangle 
 & =  |z_1-z_2|^{ 2 \Delta_{12}}  |z_2-z_3|^{ 2 \Delta_{23}} |z_1-z_3|^{ 2 \Delta_{13}}C_\gamma(\alpha_1,\alpha_2,\alpha_3) \label{confinv3}
\end{align}
where we denoted $\Delta_{12}=\Delta_{\alpha_3}-\Delta_{\alpha_1}-\Delta_{\alpha_2}$ etc... Similarly, the
 four point function is fixed up to a single function depending on the cross ratio of the points. Specializing to the case we are interested in, we get
\begin{align}
  \langle    V_{-\frac{\gamma}{2}}  (z)  \prod_{k=1}^3 V_{\alpha_k}(z_k)  \rangle 
 & = |z_3-z|^{- 4 \Delta_{-\frac{\gamma}{2}}}  |z_2-z_1|^{ 2 (\Delta_3-\Delta_2-\Delta_1-\Delta_{-\frac{\gamma}{2}})  } |z_3-z_1|^{2(\Delta_2+\Delta_{-\frac{\gamma}{2}} -\Delta_3 -\Delta_1  )} \\ &\times |z_3-z_2|^{2 (\Delta_1+\Delta_{-\frac{\gamma}{2}}-\Delta_3-\Delta_2)} G\left ( \frac{(z-z_1)(z_2-z_3)}{ (z-z_3) (z_2-z_1)}  \right )  \label{confinv}
\end{align}
where $\Delta_{-\frac{\gamma}{2}}= -\frac{\gamma}{4} (Q+\frac{\gamma}{4})$ and $\Delta_k= \frac{\alpha_k}{2} (Q- \frac{\alpha_k}{2})$.
We can recover $C_\gamma$ and $G(z)$ as the following limits
\begin{align}
C_\gamma(\alpha_1,\alpha_2,\alpha_3)&=\lim_{z_3\to\infty} |z_3|^{4 \Delta_3} \langle       V_{\alpha_1}(0)  V_{\alpha_2}(1) V_{\alpha_3}(z_3) \rangle \label{Climit}\\
G(z)&= 
\lim_{ z_3 \to\infty }|z_3|^{4 \Delta_3} \langle    V_{-\frac{\gamma}{2}}  (z)  V_{\alpha_1}(0)  V_{\alpha_2}(1) V_{\alpha_3}(z_3)  \rangle   \label{Glimit}.
\end{align}

In order to state the result, we introduce
 \begin{equation}\label{Fpmdef}
F_{-}(z)= {}_2F_1({ a},{  b},{ c},z), \quad F_{+}(z)= z^{1-{ c}} {}_2F_1(1+{ a}-{ c},1+{ b}-{ c},2-{ c},z)
\end{equation}
where $_2F_1(a,b,c,z)$ are the standard hypergeometric series extended to $\mathbb{C} \setminus ]1,\infty[$ and the real parameters ${ a},{  b},{ c}$ have the following expression
\begin{equation}\label{defab}
{ a}= \frac{\gamma}{2} (\frac{\alpha_1}{2}-\frac{Q}{2}) + \frac{\gamma}{2} (\frac{\alpha_2}{2}+\frac{\alpha_3}{2}-\frac{\gamma}{2})-\frac{1}{2} \quad { b}=\frac{\gamma}{2} (\frac{\alpha_1}{2}-\frac{Q}{2}) + \frac{\gamma}{2} (\frac{\alpha_2}{2}-\frac{\alpha_3}{2})+\frac{1}{2}
\end{equation}
and
\begin{equation}\label{defc}
{ c}= 1+ \frac{\gamma}{2} (\alpha_1-Q).
\end{equation}
As always we assume the Seiberg bounds for $(-\frac{\gamma}{2},\alpha_1,\alpha_2,\alpha_3)$, i.e.  $\sum_{k=1}^3 \alpha_k>2Q+\frac{\gamma}{2}$ and $\alpha_k<Q$ for all $k$. Then:
\begin{theorem}\label{theo4point}
 Let 
 $Q-\frac{1}{\gamma}< \alpha_1< Q-\frac{\gamma}{2}$ and write $G(z)=|z|^{\frac{\gamma \alpha_1}{2}}   |z-1|^{\frac{\gamma \alpha_2}{2}}\tilde G(z)$. Then  
 \begin{equation*}\label{theo4pointexpression}
\tilde G(z)= C_\gamma(\alpha_1-\frac{\gamma}{2}, \alpha_2,\alpha_3)  |F_{-}(z)|^2  - \mu \frac{\pi}{  l(-\frac{\gamma^2}{4}) l(\frac{\gamma \alpha_1}{2})  l(2+\frac{\gamma^2}{4}- \frac{\gamma \alpha_1}{2}) } C_\gamma(\alpha_1+\frac{\gamma}{2}, \alpha_2,\alpha_3)  |F_{+}(z)|^2
\end{equation*}
where $l(x)=\frac{\Gamma (x)}{\Gamma(1-x)}$.

\end{theorem}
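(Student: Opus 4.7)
The plan is to apply the BPZ equation of Theorem \ref{BPZTH} with $\chi=-\gamma/2$ to the correlation $\langle V_{-\gamma/2}(z)\prod_{k=1}^3 V_{\alpha_k}(z_k)\rangle$ and then exploit global conformal invariance \eqref{KPZformula} to reduce the resulting PDE in four complex points to an ordinary differential equation in the single cross ratio $u=\frac{(z-z_1)(z_2-z_3)}{(z-z_3)(z_2-z_1)}$. First I would specialize to the canonical positions $z_1=0,\,z_2=1,\,z_3\to\infty$ as in \eqref{Glimit}, so that the prefactors in \eqref{confinv} become explicit and only the function $G(u)$ remains. After factoring out the expected singular behaviour $|u|^{\gamma\alpha_1/2}|u-1|^{\gamma\alpha_2/2}$ (which reflects the Coulomb-gas leading asymptotics coming from the OPE of $V_{-\gamma/2}$ with $V_{\alpha_1}$ and $V_{\alpha_2}$), a direct substitution into \eqref{bpzeq} converts the BPZ equation into a hypergeometric equation for $\tilde G$, separately in the holomorphic and antiholomorphic variables, with parameters $a,b,c$ precisely as in \eqref{defab}--\eqref{defc}.

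Next I would invoke the structural fact, recorded as lemma \ref{lemmaPDEsbis} in the appendix, that under global conformal invariance the real-valued single-valued solution space of this hypergeometric PDE is (generically) one-dimensional and spanned by $|F_-(u)|^2$ and $|F_+(u)|^2$ with $F_\pm$ given in \eqref{Fpmdef}. This pins down the form
\begin{equation*}
\tilde G(u)=\lambda_-\,|F_-(u)|^2+\lambda_+\,|F_+(u)|^2
\end{equation*}
for two real constants $\lambda_\pm$ that must be identified. Since $F_-(0)=1$ and $F_+(u)\sim u^{1-c}$ as $u\to 0$, the two terms contribute asymptotics of two distinct orders in $|u|$ as $u\to 0$, corresponding to the two fusion channels of $V_{-\gamma/2}$ with $V_{\alpha_1}$, namely $V_{\alpha_1-\gamma/2}$ and $V_{\alpha_1+\gamma/2}$. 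The range $Q-1/\gamma<\alpha_1<Q-\gamma/2$ is chosen precisely so that both shifted vertex operators satisfy the Seiberg bounds together with $\alpha_2,\alpha_3$, and so that the two channels are cleanly separated in the $|u|\to 0$ expansion.

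To compute $\lambda_-$, I would let $u\to 0$ (equivalently $z\to z_1$) and match the leading $|u|^0$ term of $\tilde G$ against the fusion estimate from Section \ref{sectionestimates}; the resulting three-point function involving $V_{\alpha_1-\gamma/2}, V_{\alpha_2}, V_{\alpha_3}$ together with the definition \eqref{confinv3} directly yields $\lambda_-=C_\gamma(\alpha_1-\gamma/2,\alpha_2,\alpha_3)$. The computation of $\lambda_+$ is the substantive part, and I expect it to be the main obstacle. The subleading channel $V_{\alpha_1+\gamma/2}$ is not visible from the naive Girsanov shift of $X$ in \eqref{funamental}; instead it arises from the region where the integration variable $x$ in the chaos integral gets close to $z_1$, producing via a careful asymptotic expansion of $\mathbb{E}[(\int F(x,\mathbf{z})M_\gamma(d^2x))^{-s}]$ a term of order $|z-z_1|^{2(1-c)}$ multiplied by a two-dimensional singular integral of a Coulomb-gas type. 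Evaluating that integral in closed form (it reduces to a standard Dotsenko--Fateev/Selberg-type $\int |x|^{-\gamma\alpha_1}|x-1|^{-\gamma\alpha_2}|x|^{\gamma^2/2}\cdots d^2x$ after rescaling $x=(z-z_1)y$) yields the explicit prefactor $-\mu\,\pi/(l(-\gamma^2/4)\,l(\gamma\alpha_1/2)\,l(2+\gamma^2/4-\gamma\alpha_1/2))$ once combined with the $\Gamma(s)$ factor and the ratio of GMC moments that identifies the residual expectation with $C_\gamma(\alpha_1+\gamma/2,\alpha_2,\alpha_3)$. The technical backbone for making this two-point fusion expansion rigorous (including the logarithmic corrections flagged in the introduction) is exactly the collision-of-two-points estimates established in Section \ref{sectionestimates}, so the proof amounts to extracting from those estimates not only the leading order but also the second-order term with its explicit coefficient.
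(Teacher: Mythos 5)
Your proposal is correct and follows essentially the same route as the paper: the BPZ equation plus M\"obius covariance reduces $\tilde G$ to a solution of the hypergeometric PDE \eqref{hypergeo}, Lemma \ref{lemmaPDEsbis} fixes the ansatz $\lambda_1|F_-|^2+\lambda_2|F_+|^2$, and the two coefficients are read off from the $z\to 0$ expansion of the GMC representation \eqref{Tdefi}, with the subleading $|z|^{2(1-c)}$ term produced exactly as you describe by a Cameron--Martin shift and the rescaling $u=|z|v$ of the region where the chaos integration variable approaches $z_1$, yielding the Selberg-type integral \eqref{selberg} (this is the paper's Lemma \ref{asyex}, where the hypothesis $Q-\frac{1}{\gamma}<\alpha_1<Q-\frac{\gamma}{2}$ enters as the $L^1$ condition on the rescaled kernel $k(v)$ needed for dominated convergence).
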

\begin{remark}
The set of $(\alpha_1,\alpha_2,\alpha_3)$ that satisfy the assumptions of Theorem \ref{theo4point} is non empty for $\gamma< \sqrt{2}$. 
The lower bound on $\alpha_1$ can be relaxed with more work hence leading to a larger set of $\gamma$ ensuring that the assumptions of Theorem \ref{theo4point} are non empty.  Since this adds additional technicalities  we will adress it in a forthcoming work \cite{KRV}. The upper bound  comes from the necessity of $\alpha_1+\frac{\gamma}{2}$ to satisfy the Seiberg bound $\alpha_1+\frac{\gamma}{2}<Q$: one can also relax this condition to some extent with additional work  \cite{KRV}.   
\end{remark}
From this we can deduce the following corollary on the 3 point structure constants:
\begin{corollary}\label{3pointconstant}
 Let $(\alpha_1,\alpha_2,\alpha_3)$ be such that we have $Q-\frac{1}{\gamma}< \alpha_1< Q-\frac{\gamma}{2}$ and such that $(-\frac{\gamma}{2},\alpha_1,\alpha_2,\alpha_3)$ satisfy the Seiberg bounds. Then we have the following relation
\begin{equation}\label{3pointconstanteq}
\frac{C_\gamma(\alpha_1+\frac{\gamma}{2},\alpha_2,\alpha_3)}{C_\gamma(\alpha_1-\frac{\gamma}{2},\alpha_2,\alpha_3)}  
= - \frac{1}{\pi \mu}\frac{l(-\frac{\gamma^2}{4})  l(\frac{\gamma \alpha_1}{2}) l(\frac{\alpha_1\gamma}{2}  -\frac{\gamma^2}{4})  l(\frac{\gamma}{4} (\bar{\alpha}-2\alpha_1- \frac{\gamma}{2}) )   }{l( \frac{\gamma}{4} (\bar{\alpha}-\frac{\gamma}{2} - 2Q)  ) l( \frac{\gamma}{4} (\bar{\alpha}-2\alpha_3-\frac{\gamma}{2} ))  l( \frac{\gamma}{4} (\bar{\alpha}-2\alpha_2-\frac{\gamma}{2} )) }
\end{equation}
where $\bar{\alpha}= \alpha_1+\alpha_2+\alpha_3$.
\end{corollary}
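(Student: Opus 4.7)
My strategy rests on the observation that the Liouville $4$-point function $\langle V_{-\gamma/2}(z)\prod_{k=1}^3 V_{\alpha_k}(z_k)\rangle$ is a single-valued function of $z$ on $\C\setminus\{z_1,z_2,z_3\}$, whereas the two ``conformal block'' contributions $|F_{-}(z)|^2$ and $|F_{+}(z)|^2$ appearing on the right-hand side of Theorem \ref{theo4point} are individually multi-valued around the branch point $z=1$ (in the normalisation $z_1=0,\,z_2=1,\,z_3=\infty$). Requiring that the monodromies cancel in the specific combination provided by Theorem \ref{theo4point} will impose exactly one linear constraint between $C_\gamma(\alpha_1-\gamma/2,\alpha_2,\alpha_3)$ and $C_\gamma(\alpha_1+\gamma/2,\alpha_2,\alpha_3)$, and this constraint is precisely \eqref{3pointconstanteq}.

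Concretely, I would apply the classical hypergeometric connection formula to re-expand both $F_{-}$ and $F_{+}$ in the (essentially) unique basis of local solutions of the hypergeometric equation at $z=1$:
\[
F_{\pm}(z) \;=\; A_{\pm}\,P(z) \;+\; B_{\pm}\,(1-z)^{c-a-b}\,R(z),
\]
with $P,R$ holomorphic at $z=1$ normalised by $P(1)=R(1)=1$ and with real coefficients $A_{-}=\Gamma(c)\Gamma(c-a-b)/[\Gamma(c-a)\Gamma(c-b)]$, $B_{-}=\Gamma(c)\Gamma(a+b-c)/[\Gamma(a)\Gamma(b)]$, $A_{+}=\Gamma(2-c)\Gamma(c-a-b)/[\Gamma(1-a)\Gamma(1-b)]$, $B_{+}=\Gamma(2-c)\Gamma(a+b-c)/[\Gamma(1+a-c)\Gamma(1+b-c)]$. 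Expanding $|F_{\pm}|^2 = F_{\pm}\overline{F_{\pm}}$ produces two single-valued diagonal contributions together with two cross-terms proportional to $(1-z)^{c-a-b}$ and $(\overline{1-z})^{c-a-b}$, which acquire a non-trivial phase under a loop around $z=1$ provided $c-a-b=1+\gamma^2/4-\gamma\alpha_2/2\notin\Z$. Setting the total coefficient of these cross-terms in $A|F_{-}|^2+B|F_{+}|^2$ to zero, where $A=C_\gamma(\alpha_1-\gamma/2,\alpha_2,\alpha_3)$ and $B$ is the prefactor of $|F_{+}|^2$ in Theorem \ref{theo4point}, yields the scalar equation $A\cdot A_{-}B_{-}+B\cdot A_{+}B_{+}=0$; the generic non-integrality assumption on $\alpha_2$ is harmless, since the exceptional locus $\alpha_2\in Q-2\Z/\gamma$ is discrete and the resulting shift equation extends by continuity of $C_\gamma$ in the charges to the full Seiberg range.

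The remaining step is a direct algebraic identification of $-A_{-}B_{-}/(A_{+}B_{+})$ with the right-hand side of \eqref{3pointconstanteq}. Using $\gamma Q = 2+\gamma^2/2$ one verifies
\[
a=\tfrac{\gamma}{4}(\bar\alpha-\tfrac{\gamma}{2}-2Q),\quad b=\tfrac{\gamma}{4}(\bar\alpha-2\alpha_3-\tfrac{\gamma}{2}),\quad c=\tfrac{\gamma}{2}(\alpha_1-\tfrac{\gamma}{2}),
\]
together with $1+a-c=\tfrac{\gamma}{4}(\bar\alpha-2\alpha_1-\tfrac{\gamma}{2})$ and $c-b=\tfrac{\gamma}{4}(\bar\alpha-2\alpha_2-\tfrac{\gamma}{2})$; then, using $l(1-x)=1/l(x)$, $\Gamma(c)/\Gamma(2-c)=l(c)/(1-c)$ and the consequent identity $l(2+\gamma^2/4-\gamma\alpha_1/2)=l(2-c)=-(1-c)^2/l(c)$, the factor $[\Gamma(c)/\Gamma(2-c)]^2$ arising in $A_{-}B_{-}/(A_{+}B_{+})$ combines with $l(2-c)$ (coming from the explicit $B$-prefactor $[l(-\gamma^2/4)\,l(\gamma\alpha_1/2)\,l(2+\gamma^2/4-\gamma\alpha_1/2)]^{-1}$) into $-l(c)=-l(\gamma\alpha_1/2-\gamma^2/4)$, and the remaining Gamma ratios collapse to the four $l$-functions on the right-hand side of \eqref{3pointconstanteq}. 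The main obstacle is precisely this final step: the mechanical but error-prone reorganisation of the hypergeometric Gamma ratios into the $l$-form of \eqref{3pointconstanteq}, which relies on nothing more than $l(x)l(1-x)=1$ together with the explicit values of $a,b,c$ in terms of $\bar\alpha,\alpha_i,Q$.
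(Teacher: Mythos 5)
Your proposal is correct and follows essentially the paper's own route: the single-valuedness constraint you impose at $z=1$ via the hypergeometric connection formulas is exactly the content of Lemma \ref{lemmaPDEsbis} (the vanishing of the cross-term coefficient $D$, i.e.\ relation \eqref{Fundrelation} — the paper detects the obstruction as a discontinuity of $\partial_y$ of $\mathrm{Re}\bigl((1-z)^{c-a-b}G_1G_2\bigr)$ across $(1,\infty)$ rather than as a monodromy phase, but the resulting linear constraint is identical), and your identifications $a=\frac{\gamma}{4}(\bar\alpha-\frac{\gamma}{2}-2Q)$, $b=\frac{\gamma}{4}(\bar\alpha-2\alpha_3-\frac{\gamma}{2})$, $c=\frac{\gamma}{2}(\alpha_1-\frac{\gamma}{2})$, $1+a-c$, $c-b$, together with $l(2-c)=-(1-c)^2/l(c)$ and $l(1+b-c)=1/l(c-b)$, reproduce the paper's algebra turning \eqref{Fundrelation} (with $\lambda_1,\lambda_2$ read off from Theorem \ref{theo4point}) into \eqref{3pointconstanteq}. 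The continuity-in-the-charges argument for the exceptional integer locus is likewise the one the paper invokes.
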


\section{Properties of Liouville Correlation Functions}
\subsection{Integrability  Properties}

In Section 5, we prove detailed estimates for the Liouville correlations as some of the points get together. For the proof of the Ward identities we need the following special cases. 
Let the weights $\alpha_1,\dots,\alpha_N$ satisfy the Seiberg bounds \eqref{TheSeibergbounds}  and $\z=(z_1,\dots,z_N)\in U_N$. From now on we use the notation
$G_\epsilon(\z)= \langle  \prod_{k=1}^N  V_{\alpha_k ,\epsilon }(z_k)\rangle_{\epsilon}$ and more generally for $\x= x_1, \cdots, x_n \in\C$
\begin{equation}\label{defgeneralG}
G_\epsilon(\x; \z):= \langle \prod_{i=1}^n  V_{\gamma ,\epsilon }(x_i) \prod_{k=1}^N  V_{\alpha_k ,\epsilon }(z_k)\rangle_{\epsilon}.
\end{equation}
This is a slight abuse of notation since we use the same notation $G_\epsilon$ for different functions: these functions depend on the number of variables $n$ but it should be clear from the context how many variables we are considering. Similarly $G(\z)$ and $G(\x; \z)$ stand respectively for the limits of $G_\epsilon(\z)$ and $G_\epsilon(\x; \z)$ as $\epsilon$ goes to $0$. 
In the following we fix $\z$ and study $G_\epsilon(\x; \z)$ as a function of $\x$. We will mainly be interested in the case of $G_\epsilon(\cdot; \z)$ as a function of one variable that we will denote $x$ or of two variables that we will denote $x,y$.  Define also 
 \begin{equation*}\bar G(x; \z):=\sup_\epsilon G_\epsilon(x; \z),\ \ \ \bar G(x,y; \z):=\sup_\epsilon G_\epsilon(x,y; \z)
  \end{equation*}
 \begin{proposition}\label{1and2point} 
 {\rm a)} Let $\epsilon>0$. Then $G_\epsilon(x; \z)$ and $G_\epsilon(x,y; \z)$ are smooth with $\|G_\epsilon(\cdot; \z)\|_\infty<\infty$ and for some $p>2$ and some constant $C_\epsilon>0$
 $$G_\epsilon(x,y; \z)\leq C_\epsilon (1+|x|)^{-p}(1+|y|)^{-p}.
 $$  
\noindent {\rm (b)}  The functions 
$\bar G(x; \z)$ and $\bar G(x,y; \z)$ belong to $L^p(\C)$ and $L^p(\C^2)$  respectively for $p\in [1,r]$ for some $r>1$ and
uniformly in $\z$ on compact subsets of $U_N$. The same holds for the functions
 \begin{equation*}|\ln|x-z_i||^k\bar G(x; \z),\ \  \ | \ln|x-y||^k\bar G(x,y; \z)
  \end{equation*}
   for all $i$ and $k>0$.

\noindent {\rm (c)}  Let $x,y\in 
\C\setminus\{z_1,\dots,z_N\}$. Then 
$$
\bar G(x,y; \z)\leq C|x-y|^{-2+\zeta}
$$
where the constant $C$ depends on $|x-z_i|, |y-z_i|$, $i=1\dots N$.
 \end{proposition}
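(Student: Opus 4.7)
The plan is to apply the integral representation \eqref{Z1} to the enlarged correlation function obtained by inserting one (resp.\ two) extra vertex operators $V_{\gamma,\epsilon}$ at $x$ (resp.\ $x,y$). Because $\gamma<Q$ for $\gamma\in (0,2)$ and $\sum_k\alpha_k>2Q$, the Seiberg bounds \eqref{TheSeibergbounds} hold for the augmented weights $(\gamma,\alpha_1,\dots,\alpha_N)$ and $(\gamma,\gamma,\alpha_1,\dots,\alpha_N)$, so these correlations are well defined. The regularized analogue of \eqref{Z1} then expresses $G_\epsilon(x;\z)$ and $G_\epsilon(x,y;\z)$ as an explicit prefactor---carrying singularities $|x-z_k|^{-\gamma\alpha_k}$, $|x-y|^{-\gamma^2}$ together with $\hat{g}_\epsilon$-powers---times a negative moment of a regularized GMC integral of the form $\E[(\int F'_\epsilon\,M_{\gamma,\epsilon})^{-(s+n)}]$. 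All three claims are read off this representation, using at the singular sets the fusion estimates of Section \ref{sectionestimates}.

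For part (a), with $\epsilon>0$ fixed, the field $X_\epsilon$ is almost surely smooth and the $\epsilon$-regularized prefactors are smooth in $x,y$; differentiation under the expectation, together with standard moment bounds on the GMC integral, yields smoothness and finiteness of $\|G_\epsilon(\cdot;\z)\|_\infty$. Decay at infinity is then immediate from $\Delta_\gamma=\gamma(Q-\gamma/2)/2=1$: the factor $\hat{g}_\epsilon(x)^{\Delta_\gamma}\sim |x|^{-4}$ produces the required $p=4>2$, and the same argument in $y$ handles the two-point case. For part (b), I would control $\bar G(x;\z)$ near each $z_i$ and at infinity uniformly in $\epsilon$: the prefactor singularity $|x-z_i|^{-\gamma\alpha_i}$ is locally $L^p$ for some $p>1$ whenever $\gamma\alpha_i<2$, i.e.\ $\alpha_i<Q-\gamma/2$; in the complementary range $\alpha_i\in [Q-\gamma/2,Q)$ the fusion $V_{\gamma}V_{\alpha_i}$ is super-Seiberg, and the GMC moment compensates via the fusion estimates of Section \ref{sectionestimates} to give an $L^p_{\mathrm{loc}}$ bound for some $p>1$. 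The logarithmic weights are absorbed by a small decrease of $p$; the two-point version, as well as uniformity in $\z$ on compacts of $U_N$, follow from the same analysis applied to the collisions $y\to z_j$ and $x\to y$, combined with continuity in $\z$ of the prefactor and of the GMC moment.

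For part (c), away from the $z_k$'s only the collision $x\to y$ matters. When $\gamma^2<2$ the prefactor $|x-y|^{-\gamma^2}$ already yields $\zeta=2-\gamma^2$; the delicate case, and the main obstacle, is $\gamma\in [\sqrt{2},2)$, where the fusion of the two $V_\gamma$ insertions is super-Seiberg ($2\gamma>Q$) and the prefactor alone is worse than $|x-y|^{-2}$. The plan here is to invoke the two-point fusion estimate of Section \ref{sectionestimates}, which through a careful analysis of the GMC moment $\E[(\int F'_\epsilon\,M_{\gamma,\epsilon})^{-(s+2)}]$ extracts a compensation strictly stronger than the prefactor singularity, delivering a strictly-better-than-$|x-y|^{-2}$ bound. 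This reflection mechanism in the GMC moment is the technical heart of the argument and the main obstacle common to the super-Seiberg cases of both (b) and (c).
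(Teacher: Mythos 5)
Your overall strategy is the one the paper follows: express $G_\epsilon(x;\z)$ and $G_\epsilon(x,y;\z)$ through the GMC representation \eqref{Z1} for the augmented weight system (the augmented Seiberg bounds do hold, as you note), read the singular behaviour off the explicit prefactor, and use the fusion estimates of Section \ref{sectionestimates} to extract the compensation coming from the negative GMC moment in the super-Seiberg regimes. Your identification of the collision $x\to y$ with $2\gamma\geq Q$ as the heart of part (c) is exactly right, and the exponent you would get from Proposition \ref{upperbound2} is $\frac{Q^2}{2}-4>-2$, as required.

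There are, however, two places where "the same analysis" as stated would not close. First, in part (b) for $\bar G(x,y;\z)$ the delicate regime is not the separate collisions $y\to z_j$ and $x\to y$ but the \emph{simultaneous} collision $x,y\to z_i$ (with $x\to y$ at the same time). Pairwise bounds of the form $\bar G\leq C|x-z_i|^{-a}$ and $\bar G\leq C|x-y|^{-b}$, each with a constant depending on the other distances, do not combine into a jointly integrable bound near the triple point; one needs a genuine three-point fusion estimate of the form $|x-y|^{a}|x-z_i|^{b}|y-z_i|^{c}$ with exponents depending on which sums of weights exceed $Q$ — this is Proposition \ref{upperbound3}, and the paper's proof of (b) consists mostly of the resulting case analysis over the orderings of the three mutual distances and the sign of the various Seiberg defects. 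Second, in part (a) the decay $p>2$ for the two-point function is \emph{not} immediate from $\hat g(x)^{\Delta_\gamma}\hat g(y)^{\Delta_\gamma}\sim|xy|^{-4}$: the interaction factor $e^{\gamma^2 G(x,y)}\sim\big(\tfrac{|xy|}{|x-y|}\big)^{\gamma^2}$ degrades the naive bound to $(1+|x|)^{-(4-\gamma^2)}(1+|y|)^{-(4-\gamma^2)}$ when $x$ and $y$ escape to infinity together, and $4-\gamma^2\leq 2$ for $\gamma\geq\sqrt2$. Here too the negative GMC moment must be made to compensate, via the fusion-at-infinity estimate Proposition \ref{2merginfinity}, which replaces the exponent $\gamma^2$ by $4-\tfrac{Q^2}{2}<\gamma^2$ and restores $p=\tfrac{Q^2}{2}>2$. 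So the architecture of your argument is the paper's, but both of these steps require additional fusion estimates (three points merging, and two points merging at infinity) that your proposal does not provide or invoke.
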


We will also need the fact that the regularized correlations are translation invariant 
\begin{lemma}\label{trinv} 
For all $y\in\C$, $G_\epsilon(z_1+y,\dots,z_N+y)=G_\epsilon(\z)$ and thus
$
\sum_{i=1}^N\partial_{z_i}G_\epsilon(\z)=0
$.
\end{lemma}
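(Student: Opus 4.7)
Plan: I would prove the translation identity $G_\epsilon(\z+y) = G_\epsilon(\z)$ by a change of variables in the path-integral representation. Writing
\[
G_\epsilon(\z+y) = \int_\R dc \, e^{-2Qc}\,\E\Bigl[\prod_k V_{\alpha_k,\epsilon}(z_k+y)\,e^{-\mu\int V_{\gamma,\epsilon}(x)\,d^2x}\Bigr],
\]
I would simultaneously substitute $x = u+y$ in the spatial integral and introduce the translated Gaussian field $Y(x):=X(x+y)$, so that $X_\epsilon(z_k+y) = Y_\epsilon(z_k)$ and $X_\epsilon(u+y) = Y_\epsilon(u)$. The field $Y$ has the law of a GFF centered with respect to the translated metric $\hat g^{(y)} := \hat g(\cdot+y)$, so the resulting expression equals the regularized correlation $G_\epsilon(\z;\hat g^{(y)})$ computed in the background metric $\hat g^{(y)}$ rather than $\hat g$.

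The remaining step is to establish $G_\epsilon(\z;\hat g^{(y)}) = G_\epsilon(\z;\hat g)$. Since $\hat g^{(y)}$ and $\hat g$ differ by a Weyl factor $e^\sigma$ with $\sigma(z)=\ln(\hat g(z+y)/\hat g(z))$, this should follow from a Weyl/KPZ argument at the regularized level: translation $\psi(z)=z+y$ is a M\"obius transformation with $|\psi'|=1$, so the KPZ covariance factors $\prod|\psi'(z_k)|^{-2\Delta_{\alpha_k}}$ trivialize, and the Weyl anomaly contribution from the conformal rescaling $\hat g^{(y)}\to\hat g$ must cancel internally, as a consequence of the diffeomorphism invariance of the classical Liouville action. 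A useful preliminary simplification is that all factors $\hat g_\epsilon(z_k)$ cancel out exactly thanks to the algebraic identity $\Delta_{\alpha_k} - \tfrac{1}{4}\alpha_k(\bar\alpha-\alpha_k) + \tfrac{\gamma\alpha_k s}{4} = 0$ (with $\bar\alpha := \sum_j \alpha_j$ and $s = (\bar\alpha-2Q)/\gamma$), which is equivalent to the Seiberg relation; after this cancellation the only surviving $\z$-dependence is through the translation-invariant regularized logarithm $L_{\epsilon\epsilon}(z_i,z_j):=-\int\!\!\int \rho_\epsilon(z_i-a)\rho_\epsilon(z_j-b)\ln|a-b|\,dadb$. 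The main obstacle is to match the residual metric factors $\hat g_\epsilon(u+y)$ inside the chaos integral with the change of covariance for $Y$ versus $X$; this reconciliation is carried out by a Girsanov/Cameron--Martin-type identity that realizes the distribution of $Y$ as a specific shift of $X$, and one checks that the shift precisely absorbs the metric discrepancy.

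The consequence $\sum_{i=1}^N \partial_{z_i} G_\epsilon(\z) = 0$ is then immediate from differentiating the identity $G_\epsilon(\z+y) = G_\epsilon(\z)$ with respect to $y$ at $y=0$ (and similarly for $\bar y$), using the smoothness in $\z$ established in part (a) of Proposition \ref{1and2point}.
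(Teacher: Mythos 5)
Your proposal is correct and follows essentially the same route as the paper: the paper likewise changes variables $x\mapsto x+y$, uses that $X(\cdot+y)$ has the law of $X-m_y(X)$ with $m_y(X)=\frac{1}{4\pi}\int_\C X(x)\hat g(x+y)\,d^2x$ (i.e. the GFF centered for the translated metric), and then applies Girsanov/Cameron--Martin, checking that the resulting shift $h(x)=\frac{1}{4}\bigl(\ln\frac{\hat g(x+y)}{\hat g(x)}-\int_\C\ln\frac{\hat g(u+y)}{\hat g(u)}\hat g(u)\,d^2u\bigr)$ exactly absorbs the metric discrepancy and that the variance constant cancels (quoting the computation from \cite{DKRV}). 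The only cosmetic difference is that the paper first reduces to the manifestly translation-invariant formula \eqref{Z1reg} so that only $\E[Z_\epsilon(\z)^{-s}]$ needs to be treated, whereas you work from the full $c$-integral representation and phrase the reduction in Weyl-anomaly language before identifying the same Cameron--Martin mechanism.
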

For the proof see Section \ref{regcor}.
\subsection{Integration by Parts Formula and a KPZ identity}\label{ippkpz}
Let $f\in C_0^\infty(\C)$ and set $X(f)=\int_{\C} X(z)f(z)d^2z$. The following identity follows by integration by parts in the Gaussian measure\footnote{Recall that for a centered Gaussian vector $(X,Y_1,\dots, Y_N)$ and a smooth function $f$ on $\R^N$, the Gaussian integration by parts yields $\E[X \,f(Y_1,\dots,Y_N)]=\sum_{k=1}^N\E[XY_k]\E[\partial_{Y_k}f(Y_1,\dots,Y_N)] $.}:
\begin{align} \nonumber
 \langle X(f)  \prod_{k=1}^N  V_{\alpha_k,\epsilon  }(z_k)\rangle_{\epsilon}  =& \sum_{i=1}^N\alpha_i\E(X(f)X_\epsilon(z_i))   \langle  \prod_{k=1}^N  V_{\alpha_k ,\epsilon }(z_k)\rangle_{\epsilon}\\  -&\mu\gamma\int_{\C} \E(X(f)X_\epsilon(x))\langle  V_{\gamma,\epsilon }(x)\prod_{k=1}^N  V_{\alpha_k ,\epsilon }(z_k)\rangle_{\epsilon}d^2x\label{ipp}.
\end{align}
By Proposition \ref{1and2point},  the $x$ integral converges since  $|\E(X(f)X_\epsilon(x))|\leq C\ln(2+|x|)$. We will need to use this formula for $f$ satisfying $\int_{\C} f(z)d^2z=0$. Then 
$$
\E(X(f)X_\epsilon(z))= (C_\epsilon\ast f)(z)-\frac{_1}{^4} \int_{\C} \ln\hat g(u)f(u)d^2u
$$
where $C_\epsilon=\rho_\epsilon\ast \ln|z|^{-1}$. 
  Recalling the definition of the Liouville field \eqref{Liouville field} we then get
\begin{align*} 
 \langle \phi(f)  \prod_{k=1}^N  V_{\alpha_k,\epsilon  }(z_k)\rangle_{\epsilon}  &= \sum_{k=1}^N\alpha_k (C_\epsilon\ast f)(z_k) G_\epsilon( \z)
 -\mu\gamma\int_{\C} (C_\epsilon\ast f)(x)G_\epsilon(x; \z)
 d^2x\\&
 +\frac{1}{4}\int_{\C}\ln\hat g(u)f(u)d^2u\big((2Q-\sum_{k=1}^N\alpha_k)G_\epsilon( \z)+\mu\gamma\int_{\C} G_\epsilon(x; \z)d^2x\big) 
\end{align*}
The metric dependent term actually vanishes due to the following remarkable identity
\begin{lemma}\label{kpzid} For all $\epsilon\geq 0$ (KPZ-identity)
\begin{equation} 
\mu\gamma\int_{\C} 
G_\epsilon(x; \z)d^2x = (\sum_{k=1}^N \alpha_k-2Q)G_\epsilon( \z).
\label{kpzid}
\end{equation}
\end{lemma}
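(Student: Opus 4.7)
The plan is to compute both sides by explicitly integrating out the ``zero mode'' $c$ in the definition of the regularized Liouville measure \eqref{Liouville measure}. Factoring each regularized vertex operator as $V_{\alpha,\epsilon}(z) = e^{\alpha c}\tilde V_{\alpha,\epsilon}(z)$ with $\tilde V_{\alpha,\epsilon}$ the $c$-independent part (cf.\ \eqref{Vdefi}), and setting $M := \int_\C \tilde V_{\gamma,\epsilon}(x)\,d^2x$, the definition of $G_\epsilon(\z)$ reads
\begin{equation*}
G_\epsilon(\z) = 4 e^{-2\chi Q^2}\,\E\Big[\prod_{k=1}^N \tilde V_{\alpha_k,\epsilon}(z_k)\int_\R e^{(\sum_k \alpha_k - 2Q)c}\,e^{-\mu e^{\gamma c} M}\,dc\Big].
\end{equation*}
The substitution $u = \mu e^{\gamma c} M$ converts the inner integral into $\gamma^{-1}\Gamma(s)(\mu M)^{-s}$, where $s = (\sum_k \alpha_k - 2Q)/\gamma > 0$ by the first Seiberg bound. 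This is exactly the derivation of \eqref{funamental} carried out at the regularized level.

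For $G_\epsilon(x;\z)$ the extra insertion $V_{\gamma,\epsilon}(x) = e^{\gamma c}\tilde V_{\gamma,\epsilon}(x)$ contributes one more $e^{\gamma c}$ factor, so the same substitution now produces $\gamma^{-1}\Gamma(s+1)(\mu M)^{-s-1}$ in place of $\gamma^{-1}\Gamma(s)(\mu M)^{-s}$. Integrating in $x$ and swapping order of integration (by Fubini, using the $L^1$ bound of Proposition \ref{1and2point}(b)) together with the tautological identity $\int_\C \tilde V_{\gamma,\epsilon}(x)\,d^2x = M$ gives
\begin{equation*}
\int_\C G_\epsilon(x;\z)\,d^2x = 4 e^{-2\chi Q^2}\,\gamma^{-1}\Gamma(s+1)\,\mu^{-s-1}\,\E\Big[\prod_{k=1}^N \tilde V_{\alpha_k,\epsilon}(z_k)\,M^{-s}\Big].
\end{equation*}
Comparing with the corresponding formula for $G_\epsilon(\z)$ and using $\Gamma(s+1) = s\,\Gamma(s)$ collapses everything to $\int_\C G_\epsilon(x;\z)\,d^2x = (s/\mu)\,G_\epsilon(\z)$, which, multiplied by $\mu\gamma$, is exactly the claimed identity for $\epsilon > 0$.

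For the unregularized case $\epsilon = 0$ we pass to the limit on both sides: $G_\epsilon(\z) \to G(\z)$ by the definition \eqref{lcorre}, while $\int_\C G_\epsilon(x;\z)\,d^2x \to \int_\C G(x;\z)\,d^2x$ by dominated convergence, using the uniform $L^1$ bound on $\bar G(\,\cdot\,;\z)$ from Proposition \ref{1and2point}(b). The only (mild) obstacles are these Fubini and dominated convergence steps, both supplied by Proposition \ref{1and2point}; conceptually, the identity is nothing but the $\mu^{-s}$ scaling of Liouville correlations in disguise, reflecting the fact that the cosmological constant enters \eqref{Liouville measure} only through $\mu\,e^{\gamma c}M_{\gamma,\epsilon}(\C)$ and so can be absorbed into a shift of the zero mode $c$.
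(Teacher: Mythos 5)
Your proof is correct and is in essence the paper's own argument: both rest on the observation that $\mu$ and the zero mode $c$ enter only through the combination $\mu e^{\gamma c}$, so that the correlation scales as $\mu^{-s}$ (this is exactly the reduction \eqref{funamental}). The paper extracts the identity by the shift $c'=c+\gamma^{-1}\ln\mu$ followed by differentiation in $\mu$, whereas you integrate out $c$ explicitly and invoke $\Gamma(s+1)=s\,\Gamma(s)$; the two computations are interchangeable, and the Tonelli/dominated-convergence justifications you supply (nonnegativity for fixed $\epsilon>0$, Proposition \ref{1and2point} for $\epsilon\to 0$) match the paper's.
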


\proof Recalling the $c$-dependence in \eqref{Vdefi} we get by a simple change of variables $\gamma^{-1}\ln \mu+c=c'$ that
\begin{align*} 
  \langle  \prod_{k=1}^N  V_{\alpha_k,\epsilon  }(z_k)\rangle_{\epsilon,\mu}  = &\int_{\R}   e^{-2Q c}  \:   \E \left [   \prod_{k=1}^N  V_{\alpha_k, \epsilon}(z_k)        e^{- \mu   \int_{\C}V_{\gamma,\epsilon}(x)\,d^2x    } \right ]   \: dc\\
  = &\mu^{-\frac{\sum_{k=1}^N\alpha_k-2Q}{\gamma}} \int_{\R}   e^{-2Q c'}  \:   \E \left [   \prod_{k=1}^N  V_{\alpha_k, \epsilon}(z_k)        e^{-   \int_{\C}V_{\gamma,\epsilon}(x)\,d^2x    } \right ]   \: dc'.
\end{align*}
The identity follows by differentiating in $\mu$. The limit as $\epsilon\to 0$ follows in virtue of Proposition \ref{1and2point}.
\qed

\medskip
Now, we give some definitions which will be used throughout the rest of the paper. Recall that $\rho_\epsilon(x)= \frac{1}{\epsilon^2} \rho (\frac{\bar{x}x}{\epsilon^2}) $ where $\rho$ is $C^{\infty}$ non-negative with compact support in $[0,\infty[$ and such that $\pi \int_0^\infty \rho(t) dt=1$.Then we set 
for $\epsilon, \epsilon' \geq 0$ \begin{equation}\label{defregula1}
\frac{1}{(z)}_{\epsilon,\epsilon'}:=\rho_\epsilon\ast\rho_{\epsilon'}\ast \frac{1}{z}
\end{equation}
and more generally for $n \geq 0$
\begin{equation}\label{defregula2}
\frac{1}{(z)_{\epsilon,\epsilon'}^{n+1}}:=  \partial_{z}^{n} \:   \frac{1}{(z)}_{\epsilon,\epsilon'} 
\end{equation}
If $\epsilon'=0$, we simply set $\frac{1}{(z)}_{\epsilon}: = \frac{1}{(z)}_{\epsilon,0}$ and similarly for $\frac{1}{(z)_{\epsilon}^{n+1}}$.

We then get the integration by parts formula that will be used repeatedly in this paper:
\begin{lemma}\label{ippid} For all $\epsilon\geq 0$ and $\epsilon'>0$ and $n\geq 0$
\begin{align} 
 \langle \partial_z^{n+1}\phi_{\epsilon'}(z)  \prod_{k=1}^N  V_{\alpha_k,\epsilon  }(z_k)\rangle_{\epsilon}  &=-\frac{1}{2} \sum_{i=1}^N\alpha_i \partial_z^{n}\frac{1}{(z-z_i)}_{\epsilon,\epsilon'}G_\epsilon( \z) 
 +\frac{\mu\gamma}{2}\int_{\C}  \partial_z^{n}\frac{1}{(z-x)}_{\epsilon,\epsilon'}G_\epsilon(x; \z)
 d^2x.\label{ipp1}
\end{align}
\end{lemma}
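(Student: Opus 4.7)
The strategy is to view $\partial_z^{n+1}\phi_{\epsilon'}(z)$ as $\phi$ tested against an explicit mean-zero test function, and then invoke the integration-by-parts formula already set up in Section \ref{ippkpz} (in which the metric-dependent contribution has been eliminated by the KPZ identity of Lemma \ref{kpzid}).

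First I would write
\begin{equation*}
\partial_z^{n+1}\phi_{\epsilon'}(z)=\int_{\C}\phi(u)\,f_z(u)\,d^2u=\phi(f_z),\qquad f_z(u):=\partial_z^{n+1}\rho_{\epsilon'}(z-u),
\end{equation*}
which is meaningful because $\rho_{\epsilon'}$ is smooth and compactly supported. Since $n\geq 0$, $\int_{\C} f_z(u)\,d^2u=\partial_z^{n+1}\int_{\C}\rho_{\epsilon'}(z-u)\,d^2u=\partial_z^{n+1}(1)=0$, so $f_z$ has zero mean. Consequently the general formula derived in Section \ref{ippkpz} just before the statement of Lemma \ref{kpzid} applies, and after the metric-dependent bracket is eliminated by the KPZ identity it reduces to
\begin{equation*}
\langle \phi(f_z)\prod_{k=1}^N V_{\alpha_k,\epsilon}(z_k)\rangle_\epsilon =\sum_{k=1}^N\alpha_k\,(C_\epsilon\ast f_z)(z_k)\,G_\epsilon(\z)-\mu\gamma\int_{\C}(C_\epsilon\ast f_z)(x)\,G_\epsilon(x;\z)\,d^2x.
\end{equation*}

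Next I would compute $C_\epsilon\ast f_z$ explicitly. Using radiality of $\rho$ one gets $(C_\epsilon\ast f_z)(y)=\partial_z^{n+1}K(y-z)$ with $K:=\rho_\epsilon\ast\rho_{\epsilon'}\ast\ln|\cdot|^{-1}$. A chain-rule computation gives $\partial_z^{n+1}K(y-z)=(-1)^{n+1}K^{(n+1)}(y-z)$, and since $\partial\ln|w|^{-1}=-\tfrac{1}{2w}$ one has $K^{(n+1)}(w)=-\tfrac{1}{2}\partial^{n}\tfrac{1}{(w)}_{\epsilon,\epsilon'}=-\tfrac{1}{2}\tfrac{1}{(w)^{n+1}_{\epsilon,\epsilon'}}$ by the notation \eqref{defregula2}. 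Finally the oddness of $1/w$ and the evenness of $\rho_\epsilon,\rho_{\epsilon'}$ yield $\tfrac{1}{(y-z)^{n+1}_{\epsilon,\epsilon'}}=(-1)^{n+1}\tfrac{1}{(z-y)^{n+1}_{\epsilon,\epsilon'}}$, so that
\begin{equation*}
(C_\epsilon\ast f_z)(y)=-\tfrac{1}{2}\,\tfrac{1}{(z-y)^{n+1}_{\epsilon,\epsilon'}}=-\tfrac{1}{2}\,\partial_z^{n}\tfrac{1}{(z-y)}_{\epsilon,\epsilon'}.
\end{equation*}
Inserting this into the displayed identity above gives exactly the right-hand side of \eqref{ipp1}.

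For the bookkeeping I would justify: (i) interchange of $\partial_z^{n+1}$ with the Gaussian expectation, which follows from Gaussian hypercontractivity together with the smoothness of $\rho_{\epsilon'}$; (ii) integrability of the $x$-integral, which follows from the bound $|(C_\epsilon\ast f_z)(x)|\leq C\langle x\rangle^{-1}$ (for $n\geq 0$ the regularised Cauchy kernel decays at infinity) combined with the $L^1$ control on $G_\epsilon(x;\z)$ provided by Proposition \ref{1and2point}(b); and (iii) applicability of the KPZ identity for $\epsilon\geq 0$, which is already granted by Lemma \ref{kpzid}.

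The only genuinely delicate point is the vanishing of the metric-dependent term. That cancellation is not a local computation; it is precisely the KPZ identity, and this is why the statement fundamentally requires $\int f_z=0$ (which in turn is why we need $n\geq 0$, i.e.\ at least one derivative on $\phi_{\epsilon'}$). Everything else is a straightforward chain-rule manipulation on convolutions and the Gaussian integration by parts already performed in \eqref{ipp}.
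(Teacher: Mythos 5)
Your proposal is correct and follows exactly the route the paper intends: Lemma \ref{ippid} is stated as an immediate consequence of the Gaussian integration-by-parts formula \eqref{ipp} applied to the mean-zero test function $f_z=\partial_z^{n+1}\rho_{\epsilon'}(z-\cdot)$, with the metric-dependent bracket killed by the KPZ identity of Lemma \ref{kpzid} and the kernel identified via $(C_\epsilon\ast f_z)(y)=-\tfrac12\partial_z^n\tfrac{1}{(z-y)}_{\epsilon,\epsilon'}$. Your sign and parity bookkeeping for the double mollification is accurate, so nothing further is needed.
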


As an application, let us compute the derivative of the regularized correlations:
\begin{equation}
 \partial_{z_i}G_\epsilon( \z) =
  \alpha_i\langle \partial_{z_i} \phi_\epsilon(z_i)\prod_{k=1}^N  V_{\alpha_k,\epsilon  }(z_k)\rangle_{\epsilon}=-\frac{1}{2}\sum_{j; j\neq i}^N \frac{\alpha_i \alpha_j}{(z_i-z_j)_{\epsilon,\epsilon}}G_\epsilon( \z) 
  + \frac{\alpha_i\mu\gamma}{2} Y_{i,\epsilon}(\z) \label{derivativecorr1}
\end{equation}
with
\begin{equation}
 Y_{i,\epsilon}(\z) =\int_\C\frac{1}{(z_i-x)_{\epsilon,\epsilon}}G_\epsilon(x; \z)d^2x.
 \label{Ydefi}
\end{equation}
\vskip 2mm
\noindent{\bf Remark}. {\it Note that  Proposition \ref{1and2point} does not allow us to control the limit as  $\epsilon$ tends to zero of expression \eqref{Ydefi} since it guarantees only integrability of a logarithmic singularity in $z_i-x$. Indeed, depending on $\gamma$ and $\alpha_i$ the singularity of correlation function $G_\epsilon(x; \z)$ is $|z_i-x|^{-2+\zeta}$ with $\zeta$ arbitrarily close to $0$. Hence the integral in \eqref{Ydefi} is not always absolutely convergent and its analysis is rather subtle. }

\subsection{Differentiability of  Correlation Functions}

This Section is devoted to the proof of Theorem \ref{wardth} (a).
We start with a convenient representation of the first derivative $ \partial_{z_i}G_\epsilon( \z)$; indeed, as mentioned in the previous remark, the $Y_{i,\epsilon}(\z) $ term in \eqref{derivativecorr1} is difficult to study directly hence we give an expression for the $Y_{i,\epsilon}(\z) $ term which is easier to study. Indeed, we get the following lemma:

\begin{lemma}\label{1stder} 
We can write $\partial_{z_i}G_\epsilon (\z)$ under the following form
\begin{equation} 
\partial_{z_i}G_\epsilon (\z)=
-\frac{1}{2}\sum_{j;j\neq i}^N \frac{\alpha_i \alpha_j}{(z_i-z_j)_{\epsilon,\epsilon}}G_\epsilon (\z)+\int_{\C} f_{i,\epsilon}(x,\z)G_\epsilon (x;\z)dx+ \int_{\C} \int_{\C} F_{\epsilon}(x,y,z_i)G_\epsilon (x,y;\z)d^2x d^2y
\end{equation}
where $f_{i,\epsilon}$ and $F_{\epsilon}$ are bounded smooth functions which converge in $C^1$ as $\epsilon\to 0$.  Moreover
\begin{equation} 
\|\partial^nf_{i,\epsilon}(\cdot, \z)\|_\infty, \|\partial^nF_{\epsilon} (\cdot, z_i)\|_\infty\leq C\delta_i^{-n-1}\label{deribnd}
\end{equation}
for  $n=0,1$
uniformly in $\epsilon$ where  $\delta_i:=\min_{j; j\neq i}|z_i-z_j|$ and $C$ is some global constant (which depends only on $\mu,\gamma$ and $\alpha_i$). 
\end{lemma}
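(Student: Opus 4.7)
Starting from equation \eqref{derivativecorr1}, the diagonal sum is already in the required form, so the task reduces to rewriting $Y_{i,\epsilon}(\z)=\int_\C\frac{1}{(z_i-x)_{\epsilon,\epsilon}}G_\epsilon(x;\z)\,d^2x$ as a one-point integral plus a two-point integral against $G_\epsilon(x,y;\z)$ with (essentially) bounded kernels. As emphasized in the Remark after \eqref{Ydefi}, the Cauchy singularity of $1/(z_i-x)$ combined with the $|x-z_i|^{-\gamma\alpha_i}$ singular behavior of $G_\epsilon(x;\z)$ makes this integral only conditionally convergent in the limit $\epsilon\to 0$, and a single naive integration by parts produces an unbounded logarithmic weight. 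The plan is to combine a smooth cutoff at the geometric scale $\delta_i$ with an \emph{iterated} integration by parts in $x$ whose resulting series sums to a bounded effective kernel, together with a use of the integration-by-parts identity Lemma \ref{ippid} applied to the $(N+1)$-point correlation to bring in the two-point function $G_\epsilon(x,y;\z)$.

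Fix a smooth $z_i$-dependent cutoff $\chi=\chi(x,z_i)$ with $\chi\equiv 1$ on $\{|x-z_i|\le\delta_i/4\}$, supported in $\{|x-z_i|\le\delta_i/2\}$ and satisfying $|\partial^n\chi|\le C\delta_i^{-n}$. The far-field piece, with kernel $(1-\chi)/(z_i-x)_{\epsilon,\epsilon}$, is smooth, bounded by $C\delta_i^{-1}$, and converges in $C^1$ as $\epsilon\to 0$; it is absorbed into $f_{i,\epsilon}$. For the near-field part, use the identity $\frac{1}{(z_i-x)_{\epsilon,\epsilon}}=-\partial_x M_\epsilon(x-z_i)$ where $M_\epsilon(u):=\rho_\epsilon\ast\rho_\epsilon\ast\ln|u|^2$, and integrate by parts in $x$ (boundary terms at infinity vanish thanks to the strong decay of $G_\epsilon(x;\z)$ coming from the conformal weight $\Delta_\gamma=1$ of the insertion at $x$). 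The boundary contribution from $\partial_x\chi$ is supported in an annulus where $M_\epsilon$ is bounded, and is also absorbed into $f_{i,\epsilon}$. The main remaining term is $\int\chi(x)\,M_\epsilon(x-z_i)\,\partial_x G_\epsilon(x;\z)\,d^2x$, into which one substitutes the analogue of \eqref{derivativecorr1} for the $(N+1)$-point correlation:
$$\partial_x G_\epsilon(x;\z)=-\tfrac{\gamma}{2}\sum_{k=1}^N\frac{\alpha_k}{(x-z_k)_{\epsilon,\epsilon}}G_\epsilon(x;\z)+\tfrac{\gamma^2\mu}{2}\int_\C\frac{G_\epsilon(x,y;\z)}{(x-y)_{\epsilon,\epsilon}}d^2y.$$

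The $k=i$ summand combines with $M_\epsilon(x-z_i)$ into a perfect derivative, $M_\epsilon\cdot(x-z_i)_{\epsilon,\epsilon}^{-1}=\tfrac12\partial_x M_\epsilon^2(x-z_i)$, allowing a second integration by parts. Iterating this procedure produces the algebraic identity $M_\epsilon^n\cdot(x-z_i)_{\epsilon,\epsilon}^{-1}=\tfrac{1}{n+1}\partial_x M_\epsilon^{n+1}(x-z_i)$ at each step, and summing the resulting geometric-factorial series (whose $M_\epsilon^n$ coefficient is $(\gamma\alpha_i/2)^{n-1}/n!$) yields the closed form
$$\phi_\epsilon(x):=\sum_{n\ge 1}\frac{(\gamma\alpha_i/2)^{n-1}}{n!}\,M_\epsilon(x-z_i)^n=\frac{2}{\gamma\alpha_i}\bigl(e^{\gamma\alpha_i M_\epsilon(x-z_i)/2}-1\bigr)$$
as the effective weight replacing $M_\epsilon$. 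The crucial observation is that on $\mathrm{supp}(\chi)$ the function $\phi_\epsilon$ is bounded uniformly in $\epsilon$: it converges pointwise as $\epsilon\to 0$ to $\tfrac{2}{\gamma\alpha_i}(|x-z_i|^{\gamma\alpha_i}-1)$, a continuous function bounded on $\mathrm{supp}(\chi)$ for $\alpha_i>0$. The exponential of the logarithmic divergence of $M_\epsilon$ at $z_i$ thus produces a bounded power factor, which matches (and cancels) the Girsanov-type singular behavior of $G_\epsilon(x;\z)$. Assembling pieces, the one-point terms (from the $k\neq i$ summands, with kernel $\chi(x)\phi_\epsilon(x)/(x-z_k)_{\epsilon,\epsilon}$ bounded by $C\delta_i^{-1}$ on $\mathrm{supp}(\chi)$ since $|x-z_k|\ge\delta_i/2$ there) combine with the far-field and boundary contributions to form $f_{i,\epsilon}$, while the double-integral term built from $\chi\phi_\epsilon$ produces $F_\epsilon$. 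The main obstacle is the convergence of the iterated series and the control of the various boundary and singular contributions so as to obtain uniform bounds and $C^1$-convergence as $\epsilon\to 0$; these follow from the explicit factorial damping, the smoothness of $\chi$ and of the regularized kernels $1/(\cdot)_{\epsilon,\epsilon}$, and the uniform boundedness of $\phi_\epsilon$ on $\mathrm{supp}(\chi)$, with the scale $\delta_i$ entering through the geometry of $\chi$ to yield the stated bounds $\|\partial^n(\cdot)\|_\infty\le C\delta_i^{-n-1}$.
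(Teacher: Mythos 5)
Your starting point (isolate $Y_{i,\epsilon}$ from \eqref{derivativecorr1}) and your resummation identity are algebraically correct: iterating the integration by parts against $M_\epsilon^n$ with the $k=i$ term does produce the coefficients $(\gamma\alpha_i/2)^{n-1}/n!$, and the series sums to $\phi_\epsilon=\frac{2}{\gamma\alpha_i}\bigl(e^{\gamma\alpha_i M_\epsilon(x-z_i)/2}-1\bigr)\to\frac{2}{\gamma\alpha_i}(|x-z_i|^{\gamma\alpha_i}-1)$. But the kernels this produces do not have the properties the lemma asserts, and those properties are exactly what is used downstream (Lemma \ref{derconv}). Concretely: (i) the accumulated two-point term has kernel $\chi(x)\phi_\epsilon(x)\tfrac{1}{(x-y)_{\epsilon,\epsilon}}$, which retains a genuine $1/(x-y)$ singularity as $\epsilon\to0$ and so is not a bounded $F_\epsilon$; symmetrizing in $x,y$ trades this for the Lipschitz constant of $\chi\phi_\epsilon$, but $\partial_x\phi_\epsilon\sim|x-z_i|^{\gamma\alpha_i-1}$ is unbounded at $z_i$ whenever $\gamma\alpha_i<1$, so the symmetrized kernel is still unbounded. (ii) For the same reason the one-point kernels $\chi\phi_\epsilon/(x-z_k)_{\epsilon,\epsilon}$, while bounded for $\alpha_i>0$, fail the $n=1$ bound in \eqref{deribnd} and do not converge in $C^1$. (iii) Your construction requires $\alpha_i>0$: for $\alpha_i<0$ the limit $\frac{2}{\gamma\alpha_i}(|x-z_i|^{\gamma\alpha_i}-1)$ blows up at $z_i$, yet the lemma is invoked in the paper precisely with a negative weight $\alpha_i=-\tfrac{\gamma}{2}$ (and $-\tfrac{2}{\gamma}$) in the derivation of the BPZ equations (see \eqref{remindderiv}), so this restriction is not acceptable.

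The paper circumvents all three problems by keeping the singular integration away from $z_i$ rather than resumming at $z_i$. It takes the Beurling-transform identity \eqref{Theexactidt} at a point $z$, writes $\frac{1}{z-x}=\frac{1}{z-z_j}\bigl(\frac{x-z_j}{z-x}+1\bigr)$ to expose $Y_{j,\epsilon}$, and then averages the whole identity over $z$ against $\chi(z-z_i)=\theta(|z-z_i|)\frac{z-z_i}{|z-z_i|}$ with $\theta$ supported in the annulus $[\delta_i/4,\delta_i/2]$ and normalized so that $\frac{\gamma\alpha_i}{2}\int_{\C}\chi(z)\frac{1}{z}\,d^2z=1$. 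Cauchy's theorem kills the $Y_{j,\epsilon}$ with $j\neq i$ and cancels the would-be singularities of $f_{i,\epsilon}$ at $x=z_j$, while the two-point kernel becomes $(x-y)\tfrac{1}{(x-y)_{\epsilon,\epsilon}}\int_{|x-z_i|}^{|y-z_i|}\theta(r)\,dr$, whose $O(\delta_i^{-1}|x-y|)$ factor absorbs the Cauchy singularity. Because everything is evaluated at distance $\gtrsim\delta_i/4$ from the insertion, the kernels are smooth with derivatives controlled by $\|\theta\|_\infty\leq C\delta_i^{-1}$, $\|\partial\theta\|_\infty\leq C\delta_i^{-2}$, uniformly in $\epsilon$ and for either sign of $\alpha_i$. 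If you want to salvage your approach you would need an analogous mechanism to push the effective kernel off the point $z_i$; as written, the proof does not establish the lemma.
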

%
%
%

\begin{proof}
The key idea behind this proof is to use an exact identity (i.e. identity \eqref{Theexactidt} below) to deduce a new and easier to study expression for $Y_{i,\epsilon}(\z) $. Recall that $x \mapsto G_\epsilon (x;\z)$ is smooth. Hence ($\pi$ times) the Beurling transform
\begin{align*}
A_{\epsilon}(z;\z)=-\int_{\C} \frac{1}{(z-x)^2}G_\epsilon (x;\z)d^2x:=-\lim_{\epsilon' \to 0}\int_{\C} \frac{1}{(z-x)^2}1_{|z-x|\geq\epsilon'}G_\epsilon (x;\z)d^2x
\end{align*}
is defined and satisfies the following key identity
\begin{equation}\label{Theexactidt}
A_{\epsilon}(z;\z)=
\int_{\C} \frac{1}{z-x} \partial_x G_\epsilon (x;\z)d^2x=B_{\epsilon}(z;\z)+C_{\epsilon}(z;\z)    
\end{equation}
where  by \eqref{derivativecorr1}
 \begin{align*}
 B_{\epsilon}(z;\z)&=  \frac{\gamma}{2} \sum_{j=1}^N \alpha_j  \int_{\C} \frac{1}{z-x} \frac{1}{(z_j-x)_{\epsilon,\epsilon}} G_\epsilon (x;\z)
\,dx   \\
C_{\epsilon}(z;\z)&=-
\frac{\mu\gamma^2}{2}
 \int_{\C} \frac{1}{z-x}   \int_{\C} \frac{1}{(x-y)_{\epsilon,\epsilon}} 
G_\epsilon (x,y;\z) d^2x d^2y.
\end{align*}
Using  $\frac{1}{z-x}=\frac{1}{z-z_j}\Big(\frac{x-z_j}{z-x}+1\Big) $ we write
\begin{equation}\label{decompB}
 \int_{\C} \frac{1}{z-x} \frac{1}{(z_j-x)_{\epsilon,\epsilon}} G_\epsilon (x;\z)dx=
 \frac{1}{z-z_j} (Y_{j,\epsilon}(\z)+D_{j,\epsilon}(z;\z))
\end{equation}
where 
\begin{align*}
D_{j,\epsilon}(z;\z) =&-  \int_{\C} \frac{z_j-x}{z-x}  \frac{1}{(z_j-x)_{\epsilon,\epsilon}}G_\epsilon (x;\z) \,d^2x.
\end{align*}

We want to deduce from expression \eqref{Theexactidt} (valid if $z \neq z_j$ for all $j$) an expression for $Y_{i,\epsilon}(\z)$. A natural way to do so is to integrate expression \eqref{Theexactidt} with respect to a small contour  around $z_i$ to get rid of the  $Y_{j,\epsilon}(\z)$ terms for $j \neq i$. However, contour integrals lack smoothness so we will use an equivalent but more smooth procedure which  boils down to taking an average over contour integrals of  \eqref{Theexactidt}. 
More precisely, let  $\theta(x)$ be a smooth bump with support on $]\frac{\delta_i}{4},\frac{\delta_i}{2}[$  and $\chi(z)=\theta(|z|)\frac{z}{|z|}$. So $\chi$ is supported inside the annulus around the origin with radii $\frac{\delta_i}{4}$ and $ \frac{\delta_i}{2}$. We normalize it so that  $\frac{\gamma \alpha_i}{2} \int_{\C}  \chi(z)\frac{1}{z}d^2z=1$. Then, for all continuous functions $f$
\begin{equation*}
\int_{\C} \chi(z) f(z) d^2z= \frac{1}{\sqrt{-1} }\int_{\frac{\delta_i}{4}}^{\frac{\delta_i}{2}}  \theta(r)\: \left (\oint_{|z|=r}   f(z)dz \right ) dr.
\end{equation*}
Therefore, for all $j \neq i$ we have $\int_{\C} \frac{\chi(z)}{z-z_j} d^2z=0$ and hence we get 
 $$
 \int_{\C} \chi(z-z_i)B_{\epsilon}(z,\z)d^2z=Y_{i,\epsilon}(\z)+\frac{\gamma}{2} \int_{\C} \chi(z-z_i) \Big ( \sum_{j=1}^N  \frac{\alpha_j}{z-z_j} D_{j,\epsilon}(z,\z) \Big ) d^2z.
 $$
 Using the key identity \eqref{Theexactidt}, this leads by Fubini (applied to the measure $d^2x d^2z$) to 
\begin{equation}\label{Ysol}
 Y_{i,\epsilon}(\z)=\int_{\C}  f_{i,\epsilon}(x;\z) G_\epsilon (x;\z) \,d^2x+ \int_{\C}  \int_{\C}  F_{\epsilon}(x,y,z_i) G_\epsilon (x,y;\z) \,d^2x d^2y 
\end{equation}
where
\begin{equation}\label{frep}
 f_{i,\epsilon}(x;\z)=\int_{\C}\chi(z-z_i)(-\frac{1}{(z-x)^2}+  \frac{\gamma}{2} \sum_{j=1}^N  \alpha_j \frac{1}{z-z_j} \frac{z_j-x}{z-x}  \frac{1}{(z_j-x)_{\epsilon,\epsilon}})d^2z
\end{equation}
and 
$ F_{\epsilon}$ is given after symmetrizing by 
\begin{equation}\label{Frep}
 F_{\epsilon}(x,y,z_i)= -
\frac{\mu\gamma^2}{4}
  (x-y) \frac{1}{(x-y)_{\epsilon,\epsilon}} \int_{\C}  \chi(z-z_i) \frac{1 }{z-x} \frac{1 }{z-y}d^2z=C \cdot(x-y) \frac{1}{(x-y)_{\epsilon, \epsilon}} \int_{|x-z_i|}^{|y-z_i|} \theta(r)dr
\end{equation}
for $|x-z_i|\leq|y-z_i|$. The last  integral is $\caO(|x-y|)$. Hence $ F_{\epsilon}(x,y,z_i)$ converges in $C^1$ and satisfies the bounds \eqref{deribnd} .
Now, we take care of $f_{i,\epsilon}$. The integral 
$$
\int_{\C}  \chi(z-z_i)\frac{1}{z-z_j} \frac{1}{z-x} d^2z=\frac{1}{\sqrt{-1}}\int_{\frac{\delta_i}{4}}^{\frac{\delta_i}{2}}  \theta(r) \left (\oint_{|z|=r}\frac{1}{z-(z_j-z_i)} \frac{1}{z-(x-z_i)}dz \right ) dr
$$
vanishes if $|z_j-x|<\frac{\delta_i}{4}$ (for all $j$) by Cauchy. Indeed, if $j \neq i $ and  $|z_j-x|<\frac{\delta_i}{4}$ then for all $r \in ]\frac{\delta_i}{4},\frac{\delta_i}{2}[$ the function $z \mapsto \frac{1}{z-(z_j-z_i)} \frac{1}{z-(x-z_i)}$ has no poles inside the circle of radius $r$ and if $|z_i-x|<\frac{\delta_i}{4}$ then for all $r \in ]\frac{\delta_i}{4},\frac{\delta_i}{2}[$ the function $z \mapsto \frac{1}{z} \frac{1}{z-(x-z_i)}$ has two poles which compensate in the residue formula for the contour integral $\oint_{|z|=r}dz$ (except for the case $x=z_i$: in this case, it is obvious that  for all $r \in ]\frac{\delta_i}{4},\frac{\delta_i}{2}[$ we have $\oint_{|z|=r}   \frac{1}{z^2}dz=0$).

From the above considerations, we can conclude that $ f_{i,\epsilon}(x;\z)$ converges together with all its derivatives. Since  $ \frac{1}{2} \gamma \alpha_i\int_{\C}  \chi(z)\frac{1}{z}d^2z=1$ we may assume 
$$
\|\theta\|_\infty\leq C\delta_i^{-1},\ \ \ \|\partial\theta\|_\infty\leq C\delta_i^{-2}.
$$
Hence the bounds \eqref{deribnd} hold for $f_{i,\epsilon}$.
%
\end{proof}
This lemma leads to the following corollary which gives a control of the $\partial_{z_i}$ derivative with respect to $\delta_i=\min_{j; j\neq i}|z_i-z_j|$:

\begin{corollary}\label{derest} $G_\epsilon (\z)$ is $C^1$ in $U_N$ and for all
$\epsilon\geq 0$
$$
|\partial_{z_i}G_\epsilon (\z)|\leq C(\min_{j; j\neq i}|z_i-z_j|)^{-1}G_\epsilon (\z).
$$
\end{corollary}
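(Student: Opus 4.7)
The plan is to feed the representation from Lemma \ref{1stder} into the estimate and bound each of the three terms separately by $C\delta_i^{-1}G_\epsilon(\z)$, where $\delta_i=\min_{j\neq i}|z_i-z_j|$. The key input beyond the lemma itself is the KPZ identity \eqref{kpzid}, which we will apply once for the single-integral term and twice iteratively for the double-integral term. Throughout we use that $G_\epsilon(\z)$ and its cousins $G_\epsilon(x;\z), G_\epsilon(x,y;\z)$ are positive.

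For the finite sum, observe that the regularized Cauchy kernel satisfies
$\bigl|\tfrac{1}{(w)_{\epsilon,\epsilon}}\bigr|\leq C/\max(|w|,\epsilon)$
(this follows from the compact support of $\rho_\epsilon\ast\rho_\epsilon$ and the explicit form away from zero), so in particular $|(z_i-z_j)_{\epsilon,\epsilon}^{-1}|\leq C/|z_i-z_j|\leq C/\delta_i$ uniformly in $\epsilon$. Combined with $|\alpha_i\alpha_j|\leq C$, this bounds the first term by $C\delta_i^{-1}G_\epsilon(\z)$.

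For the single integral, the sup-norm bound $\|f_{i,\epsilon}(\cdot,\z)\|_\infty\leq C\delta_i^{-1}$ from \eqref{deribnd} together with Lemma \ref{kpzid} gives
\begin{equation*}
\Bigl|\int_\C f_{i,\epsilon}(x,\z)G_\epsilon(x;\z)\,d^2x\Bigr|\leq C\delta_i^{-1}\int_\C G_\epsilon(x;\z)\,d^2x=\frac{C(\sum_k\alpha_k-2Q)}{\mu\gamma}\,\delta_i^{-1}G_\epsilon(\z).
\end{equation*}
For the double integral, I view $G_\epsilon(y;\z)$ as the regularized $(N+1)$-point correlation with the additional weight $\gamma$, which still satisfies the Seiberg bounds (since $\gamma<2<Q$ and $\sum_k\alpha_k+\gamma>2Q$), and apply Lemma \ref{kpzid} to this correlation in the variable $x$:
\begin{equation*}
\mu\gamma\int_\C G_\epsilon(x,y;\z)\,d^2x=\Bigl(\sum_k\alpha_k+\gamma-2Q\Bigr)G_\epsilon(y;\z).
\end{equation*}
Integrating over $y$ and reapplying \eqref{kpzid} yields
$(\mu\gamma)^2\!\int\!\!\int G_\epsilon(x,y;\z)\,d^2x\,d^2y=(\sum_k\alpha_k+\gamma-2Q)(\sum_k\alpha_k-2Q)G_\epsilon(\z)$, finite and non-negative. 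Combined with $\|F_\epsilon(\cdot,\cdot,z_i)\|_\infty\leq C\delta_i^{-1}$ from \eqref{deribnd}, this controls the third term by $C\delta_i^{-1}G_\epsilon(\z)$.

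For the $C^1$ statement: when $\epsilon>0$ everything in \eqref{funamental} is smooth in $\z$ and Proposition \ref{1and2point}(a) permits differentiation under the expectation, so $G_\epsilon\in C^\infty(U_N)$. For $\epsilon=0$, the continuity of $\partial_{z_i}G(\z)$ on $U_N$ follows from passing to the $\epsilon\to 0$ limit in the representation of Lemma \ref{1stder}: the $C^1$ convergence of $f_{i,\epsilon}$ and $F_\epsilon$, the pointwise convergence $G_\epsilon(\cdot;\z)\to G(\cdot;\z)$, $G_\epsilon(\cdot,\cdot;\z)\to G(\cdot,\cdot;\z)$, and the uniform integrability estimates of Proposition \ref{1and2point}(b)--(c) together justify dominated convergence in each term. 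The main obstacle is really in the third term, where the kernel $F_\epsilon$ is only bounded (not integrable at infinity on its own) and one must use the KPZ identity twice to absorb the lack of integrability of $F_\epsilon$ into the combined integral of the two-point correlation; this step relies crucially on the preservation of the Seiberg bounds after adding an insertion of weight $\gamma$.
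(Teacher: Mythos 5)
Your proof is correct and follows essentially the same route as the paper: insert the representation from Lemma \ref{1stder}, use the sup-norm bounds \eqref{deribnd} on $f_{i,\epsilon}$ and $F_\epsilon$, and absorb the integrals of $G_\epsilon(x;\z)$ and $G_\epsilon(x,y;\z)$ via the KPZ identity \eqref{kpzid} (applied twice for the double integral, which is exactly what the paper's ``similarly for the $F_\epsilon$ term'' abbreviates). Your extra checks — that the regularized kernel obeys $|(z_i-z_j)_{\epsilon,\epsilon}^{-1}|\leq C/|z_i-z_j|$ and that the Seiberg bounds survive the additional $\gamma$-insertion — are the right details to verify and are consistent with the paper's argument.
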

\begin{proof}
We use the expression for the derivative  given by Lemma \ref{1stder}. By the KPZ-identity \eqref{kpzid} we get
\begin{equation} 
|\int_{\C} f_{i,\epsilon}(x;\z)G_\epsilon (x;\z)d^2x |\leq C\delta_i^{-1}\int_{\C} G_\epsilon (x;\z)d^2x=C\delta_i^{-1}G_\epsilon (\z).
\end{equation}
and similarly for the $F_{\epsilon}$ term, completing the proof.
\end{proof}

Define
\begin{equation} 
G(f;\z):=\int_{\C} f(x)G (x;\z)d^2x,\ \ \ G(F;\z):=\int_{\C} \int_{\C} F(x,y)G (x,y;\z)d^2xd^2y\label{phiins}
\end{equation}
and $G_\epsilon(f;\z)$ and $G_\epsilon(F;\z)$ similarly.  Note that by the KPZ-identity \eqref{kpzid} these are well defined if $f,F$ are bounded:
$$|G(f;\z)|\leq C\|f\|_\infty G(\z),\ \ \ |G(F;\z)|\leq C\|F\|_\infty G(\z)
$$ 
The following Lemma will be used repeatedly in what follows:

\begin{lemma}\label{derconv} Let $f,F$ be bounded, $C^1$ and with bounded derivatives. Then
$G(f;\z)$ and $G(F;\z)$ are $C^1$ on $U_N$ and
\begin{align*}
\partial_{z_i}G(f;\z)&=\int_{\C} f(x)\partial_{z_i}G (x;\z)d^2x=\lim_{\epsilon\to 0}\int_{\C} f(x)\partial_{z_i}G_\epsilon (x;\z)d^2x\\
\partial_{z_i}G(F;\z)&=\int_{\C} \int_{\C} F(x,y)\partial_{z_i}G (x,y;\z)d^2x d^2y=
\lim_{\epsilon\to 0}\int_{\C} \int_{\C}  F(x,y)\partial_{z_i}G_\epsilon (x,y;\z)d^2xd^2y,
\end{align*}
where the notations $f \mapsto \int_{\C} f(x)\partial_{z_i}G (x;\z)d^2x$ and  $F \mapsto \int_{\C} \int_{\C} F(x,y)\partial_{z_i}G (x,y;\z)d^2xd^2y$ stand for continuous linear functionals on $C^1$. 
Furthermore if $f_n\to f$
and $F_n\to F$ together with their derivatives uniformly on $\C$ and $\C^2$  then $\partial_{z_i}G(f_n;\z)$ and $\partial_{z_i}G(F_n;\z)$ converge to $\partial_{z_i}G(f;\z)$ and $\partial_{z_i}G(F;\z)$.
\end{lemma}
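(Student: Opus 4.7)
The plan is to establish the statement first for the regularized quantities $G_\epsilon(f;\z), G_\epsilon(F;\z)$ at fixed $\epsilon>0$ and then pass to the limit $\epsilon\to 0$. At fixed $\epsilon>0$, Proposition \ref{1and2point}(a) provides smoothness and polynomial decay of $G_\epsilon(x;\z)$ and $G_\epsilon(x,y;\z)$ uniformly on compact subsets of $U_N$, so standard differentiation under the integral sign yields
$$\partial_{z_i}G_\epsilon(f;\z)=\int_\C f(x)\partial_{z_i}G_\epsilon(x;\z)\,d^2x,\qquad \partial_{z_i}G_\epsilon(F;\z)=\iint_{\C^2}F(x,y)\partial_{z_i}G_\epsilon(x,y;\z)\,d^2x\,d^2y.$$

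The key step is to apply Lemma \ref{1stder} to the $(N+1)$-point correlation $G_\epsilon(x;\z)$ (and analogously to $G_\epsilon(x,y;\z)$), viewing the extra insertions as carrying a vertex operator $V_\gamma$. This produces
\begin{align*}
\partial_{z_i}G_\epsilon(x;\z)={}&-\frac{\alpha_i\gamma}{2(z_i-x)_{\epsilon,\epsilon}}G_\epsilon(x;\z)-\frac{1}{2}\sum_{j\neq i}\frac{\alpha_i\alpha_j}{(z_i-z_j)_{\epsilon,\epsilon}}G_\epsilon(x;\z)\\
&+\int_\C \tilde f_{i,\epsilon}(u;x,\z)G_\epsilon(u,x;\z)\,d^2u+\iint_{\C^2}\tilde F_\epsilon(u,v;x,\z)G_\epsilon(u,v,x;\z)\,d^2u\,d^2v,
\end{align*}
where the kernels $\tilde f_{i,\epsilon}, \tilde F_\epsilon$ converge in $C^1$ as $\epsilon\to 0$ and satisfy bounds of the form \eqref{deribnd}. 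Substitution into the previous display followed by Fubini reduces the problem to proving $\epsilon\to 0$ convergence of a finite sum of such integrals tested against $f$ (respectively $F$), uniformly on compact subsets of $U_N$.

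For the terms with kernels bounded on a neighbourhood of $z_i$, dominated convergence driven by the $L^p$-bounds of Proposition \ref{1and2point}(b)--(c) (including the logarithmic refinements) gives the desired convergence. The main obstacle is the term $\int f(x)(z_i-x)_{\epsilon,\epsilon}^{-1}G_\epsilon(x;\z)\,d^2x$, which is not absolutely convergent in the limit (as noted in the remark preceding this subsection): to handle it I would combine it with the $\tilde f_{i,\epsilon}$-integral and exploit the contour-average cancellation built into the proof of Lemma \ref{1stder} to obtain an absolutely convergent sum. Once locally uniform convergence of $\partial_{z_i}G_\epsilon(f;\z)$ to a continuous function of $\z$ is established, the pointwise convergence $G_\epsilon(f;\z)\to G(f;\z)$ from Proposition \ref{1and2point}(b) combined with a standard passage to the limit in $G_\epsilon(f;\z+te_i)-G_\epsilon(f;\z)=\int_0^t\partial_{z_i}G_\epsilon(f;\z+se_i)\,ds$ shows that $G(f;\z)$ is $C^1$ with derivative equal to the limit. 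The continuity under $C^1$-uniform convergence $f_n\to f$, $F_n\to F$ then follows because the limit formula depends linearly and continuously on $(f,\nabla f)$ with norm bounded uniformly on compact subsets of $U_N$.
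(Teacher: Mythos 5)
Your overall scaffolding (differentiate under the integral at fixed $\epsilon$, expand $\partial_{z_i}G_\epsilon(x;\z)$, pass to the limit, and recover $C^1$-regularity via the fundamental theorem of calculus) is reasonable away from $x=z_i$, but the treatment of the region $x\approx z_i$ --- which is the whole difficulty --- does not close as proposed. When you apply Lemma \ref{1stder} to the $(N+1)$-point function $G_\epsilon(x;\z)$ in the variable $z_i$, the relevant distance $\delta_i$ becomes $\min\bigl(|x-z_i|,\min_{j\neq i}|z_i-z_j|\bigr)$, so by \eqref{deribnd} the kernels $\tilde f_{i,\epsilon},\tilde F_\epsilon$ themselves blow up like $|x-z_i|^{-1}$ as $x\to z_i$, not only the explicit term $\frac{1}{(z_i-x)_{\epsilon,\epsilon}}$ that you single out. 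After the KPZ identity \eqref{kpzid}, the best bound the whole expansion yields is $|\partial_{z_i}G_\epsilon(x;\z)|\leq C|x-z_i|^{-1}G_\epsilon(x;\z)$ --- this is exactly Corollary \ref{derest} --- and since $G_\epsilon(x;\z)$ can be as singular as $|x-z_i|^{-2+\zeta}$, the product is genuinely non-integrable near $z_i$. The ``contour-average cancellation built into the proof of Lemma \ref{1stder}'' cannot rescue this: that device re-expresses the specific quantity $Y_{i,\epsilon}(\z)=\int_\C (z_i-x)_{\epsilon,\epsilon}^{-1}G_\epsilon(x;\z)\,d^2x$ (the $V_\gamma$-insertion integral with weight $1$, via Cauchy's theorem applied to $\frac{1}{z-z_j}$-type kernels) and does not produce any cancellation for $\int_\C f(x)(z_i-x)_{\epsilon,\epsilon}^{-1}G_\epsilon(x;\z)\,d^2x$ with a general $C^1$ weight $f$; moreover the Fubini step you invoke before identifying any cancellation already presupposes the absolute integrability that is missing.

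The missing idea is translation invariance. The paper localizes with a bump $\rho_i$ at $z_i$ and, on the support of $\rho_i$, uses Lemma \ref{trinv} to write $\partial_{z_i}G_\epsilon(x;\z)=-\partial_xG_\epsilon(x;\z)-\sum_{j\neq i}\partial_{z_j}G_\epsilon(x;\z)$; the $\partial_x$ term is integrated by parts onto the smooth function $f\rho_i$, and Lemma \ref{1stder} is applied only to the derivatives $\partial_{z_j}$ with $j\neq i$, for which the relevant distance $\min(\delta_j,|x-z_j|)$ stays bounded below on ${\rm supp}\,\rho_i$, so that everything is dominated by $C\,G_\epsilon(x;\z)$ and Proposition \ref{1and2point} applies. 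In the two-variable case there is an additional point your ``analogously'' glosses over: in the mixed region ($x$ near $z_i$, $y$ far) one must write $F(x,y)=F(z_i,y)+(F(x,y)-F(z_i,y))$ and use $|F(x,y)-F(z_i,y)|\leq C|x-z_i|$ to absorb the $|x-z_i|^{-1}$ coming from Corollary \ref{derest}. Without these steps the proof has a genuine gap at its central point.
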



\begin{remark}
The functions $x \mapsto \partial_{z_i}G (x;\z)$ and $(x,y)  \mapsto G (x,y;\z)$ do not necessarily belong to $L^1$ hence one can not necessarily consider the integrals $\int_{\C} f(x)\partial_{z_i}G (x;\z)d^2x$ and $\int_{\C} \int_{\C} F(x,y)\partial_{z_i}G (x,y;\z)d^2xd^2y$ in the classical sense. This is actually why the analysis of $G(f;\z)$ and $G(F;\z)$ is delicate and non trivial.     
\end{remark}
\begin{proof}Let $\rho_i$ be a smooth bump supported in a small enough ball $B_i$ around $z_i$  and $\rho_i=1$ in a neighborhood of $z_i$: typically, $\rho_i(x)=\tilde{\rho}(x-z_i)$ where  $\tilde{\rho}$ is a smooth bump supported in a small neighborhood of origin with $\tilde{\rho}(x)=1$ for $|x|$ small. We decompose
\begin{equation}\label{decompose}
\int_{\C} f(x)\partial_{z_i}G_\epsilon (x;\z)d^2x=\int_{\C} f(x)\partial_{z_i}G_\epsilon (x;\z)\rho_i(x)d^2x+\int_{\C} f(x)\partial_{z_i}G_\epsilon (x;\z)(1-\rho_i(x))d^2x.
\end{equation}
By corollary \ref{derest} (applied to $(x;\z)$), we have for all $x$
\begin{equation}\label{boundwithy}
|\partial_{z_i}G_\epsilon (x;\z) |\leq C (\min (|x-z_i|, \delta_i))^{-1}G_\epsilon (x;\z)
\end{equation}
where $\delta_i= \min_{j; j\neq i}|z_i-z_j|$. By Proposition \ref{1and2point}  the function $ x \mapsto G_\epsilon (x;\z)$ is dominated by an $L^1$ function hence so is $\partial_{z_i}G_\epsilon (x;\z)(1-\rho_i(x))$ by inequality \eqref{boundwithy}. Therefore, by the dominated convergence theorem, the integral $\int_{\C} f(x)\partial_{z_i}G_\epsilon (x;\z)(1-\rho_i(x))d^2x$ converges to $\int_{\C} f(x)\partial_{z_i}G (x;\z)(1-\rho_i(x))d^2x$ as $\epsilon$ goes to $0$.

\noindent
We now consider the first term on the right hand side of \eqref{decompose}.  By Lemma \ref{trinv} (translation invariance), we get
\begin{align}\label{decamp}
\partial_{z_i}G_\epsilon(x;\z)&=-\sum_{j; j\neq i}^N\partial_{z_j}G_\epsilon(x;\z)-\partial_{x}G_\epsilon(x;\z)
\end{align}
hence the first term becomes
\begin{align*}
\int_{\C}  f(x)\partial_{z_i}G_\epsilon (x;\z)\rho_i(x)d^2x=\int_{\C} \partial_x (f(x) \rho_i(x)  )G_\epsilon (x;\z) d^2x-\sum_{j; j\neq i}^N\int_{\C} f(x) \partial_{z_j}G_\epsilon (x;\z)\rho_i(x)d^2x.
\end{align*}
The first term on the right hand side of the above equality converges (by using Proposition \ref{1and2point} and the dominated convergence theorem) hence we focus on the $\sum_{j; j\neq i}$ term.
We may now use Lemma \ref{1stder} for each $j \neq i$ 
\begin{align}
\partial_{z_j}G_\epsilon (x;\z)&=-\frac{\alpha_j}{2}(\sum_{k; k\neq j}^N\frac{\alpha_k }{(z_j-z_k)_{\epsilon,\epsilon}}+\frac{\gamma }{(z_j-x)_{\epsilon,\epsilon}} )G_\epsilon (x;\z) \nonumber\\
&+\int_{\C} \bar{f}_{j,\epsilon}(x,y,\z)G_\epsilon (x,y;\z)d^2y+ \int_{\C} \int_{\C} \bar{F}_{j,\epsilon}(x, y,y',\z)G_\epsilon (x,y,y';\z)d^2yd^2y'  \label{usefulldecomp} 
\end{align}
where the $C^1$ norms of the functions $y \mapsto \bar{f}_{j,\epsilon}(x,y,\z)$ and $(y,y') \mapsto \bar{F}_{j,\epsilon}(x,y,y',\z)$ depend on $\min(\delta_j,|x-z_j|)$ with $\delta_j=\min_{k; k \neq j}|z_j-z_k|$. There exists $\delta>0$ such that for $x\in B_i$ we have $\min(\delta_j,|x-z_j|) \geq \delta$ and therefore using the KPZ identity \eqref{kpzid} we get from decomposition \eqref{usefulldecomp} that
\begin{equation*}
\rho_i(x) \: |\partial_{z_j}G_\epsilon (x;\z)  |  \leq C  \rho_i(x) \: G_\epsilon (x;\z),
\end{equation*}
for some  constant $C>0$ (which depends on $B_i$ and $\z$). We then get convergence of $\int_{\C}  f(x)\partial_{z_i}G_\epsilon (x;\z)\rho_i(x)d^2x$ again by Proposition \ref{1and2point}  along with the dominated convergence theorem.

%



\vspace{0.3 cm}

Consider the second claim. We localize the $x,y$-integrals again by inserting $1=\rho_{i}+(1-\rho_{i})$. 
There are three cases to consider. 
\vskip 2mm

\noindent (a) The first case is
\begin{align*}
 \int_{\C} \int_{\C} F(x,y)\partial_{z_i}G_\epsilon (x,y;\z)\rho_{i}(x)\rho_{i}(y)d^2xd^2y&= \int_{\C} \int_{\C} G_\epsilon (x,y;\z)(\partial_{x}+\partial_{y})(F(x,y)\rho_{i}(x)\rho_{i}(y))d^2xd^2y\\&-\sum_{j; j\neq i}^N \int_{\C} \int_{\C} F(x,y)\partial_{z_j}G_\epsilon (x,y;\z)\rho_{i}(x)\rho_{i}(y)dxdy
\end{align*}
where we used translation invariance.
The terms with $\partial_{z_j}G_\epsilon$ can be treated in a similar way than the terms $\int_{\C} f(x)\partial_{z_j}G_\epsilon (x;\z)\rho_i(x)d^2x$ in the previous proof by combining Lemma  \ref{1stder} and Proposition \ref{1and2point} .
\vskip 2mm

\noindent (b)
The second case with insertion $(1-\rho_{i}(x))(1-\rho_{i}(y))$ can also be dealt with in a similar way as the $\int_{\C} f(x)\partial_{z_i}G_\epsilon (x;\z)(1-\rho_i(x))d^2x$ term in the previous proof by using Proposition \ref{1and2point}.
 \vskip 2mm

\noindent (c)
The third case is
\begin{align*}
 \int_{\C} \int_{\C} F(x,y)\partial_{z_i}G_\epsilon (x,y;\z)\rho_{i}(x)(1-\rho_{i}(y))d^2xd^2y.
 \end{align*}
Write $F(x,y)=F(z_i,y)+F(x,y)-F(z_i,y)$. Then by the KPZ identity \eqref{kpzid}
\begin{align*}
 \int_{\C} \int_{\C} F(z_i,y)\partial_{z_i}&G_\epsilon (x,y;\z)\rho_{i}(x)(1-\rho_{i}(y))d^2xd^2y=C\int_{\C}  F(z_i,y)\partial_{z_i}G_\epsilon (y;\z)(1-\rho_{i}(y)))d^2y\\&- \int_{\C} \int_{\C} F(z_i,y)\partial_{z_i}G_\epsilon (x,y;\z)(1-\rho_{i}(x))(1-\rho_{i}(y))d^2xd^2y
 \end{align*}
so we are left with estimating
\begin{align}\label{final}
 \int_{\C} \int_{\C} (F(x,y)-F(z_i,y))\partial_{z_i}G_\epsilon (x,y;\z)\rho_{i}(x)(1-\rho_{i}(y))d^2xd^2y.
 \end{align}
Since $|F(x,y)-F(z_i,y)|\leq C|z_i-x|$ and by Corollary \ref{derest} 
$$
|\partial_{z_i}G_\epsilon (x,y;\z)|\leq C|z_i-x|^{-1}G_\epsilon (x,y;\z)
$$
for some constant (depending on $B_i$ and $\z$) we obtain
 \begin{equation*}
 |F(x,y)-F(z_i,y)||\partial_{z_i}G_\epsilon (x,y;\z) |\rho_{i}(x)(1-\rho_{i}(y))  \leq C G_\epsilon (x,y;\z) \rho_{i}(x)(1-\rho_{i}(y).
 \end{equation*}
 We then conclude by using Proposition \ref{1and2point}. 
  \end{proof}
  By combining Lemmas  \ref{1stder}  and \ref{derconv} and one can easily show
{\begin{corollary}
 $G  (\z)$ is $C^2$ in $U_N$.
\end{corollary}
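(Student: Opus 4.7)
The plan is to start from the formula for $\partial_{z_i}G(\z)$ obtained by passing to the limit $\epsilon \to 0$ in Lemma \ref{1stder}, and to show that each term is $C^1$ in $\z$ by a second application of Lemma \ref{derconv} and a product rule in $\z$. Concretely, I would fix $\z_0\in U_N$ and restrict attention to a neighborhood $V\subset U_N$ where $\delta:=\min_{k}\min_{j\neq k}|z_k-z_j|$ is bounded below uniformly. In $V$, the cutoff $\chi$ entering \eqref{frep} may be chosen uniformly, and inspection of \eqref{frep}--\eqref{Frep} shows that $f_{i,\epsilon}(x,\z)$ and $F_\epsilon(x,y,z_i)$ are jointly smooth in $(x,y,\z)$ with $C^1$-norms in $x$ (and in $\z$) bounded uniformly in $\epsilon$ and in $\z\in V$. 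Combined with Proposition \ref{1and2point} and the KPZ identity \eqref{kpzid}, letting $\epsilon\to 0$ yields
\begin{equation*}
\partial_{z_i}G(\z)=-\frac{1}{2}\sum_{j\neq i}\frac{\alpha_i\alpha_j}{z_i-z_j}G(\z)+G(f_i(\cdot,\z);\z)+G(F(\cdot,\cdot,z_i);\z),
\end{equation*}
where the notations $G(f;\z)$, $G(F;\z)$ are as in \eqref{phiins}.

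Next I would differentiate this expression in $z_k$. The first term is $C^1$ by Corollary \ref{derest} since $G$ itself is $C^1$. For the second term I would use a product rule
\begin{equation*}
\partial_{z_k}G(f_i(\cdot,\z);\z)=G(\partial_{z_k}f_i(\cdot,\z);\z)+\bigl[\partial_{z_k}G(f;\z)\bigr]_{f=f_i(\cdot,\z)},
\end{equation*}
where the first piece is controlled by the KPZ identity together with the uniform boundedness of $\partial_{z_k}f_i$, and the second piece is exactly the output of Lemma \ref{derconv} applied to the test function $f_i(\cdot,\z)$. The third term $G(F(\cdot,\cdot,z_i);\z)$ is handled identically (with the simplification that $F$ depends on $\z$ only through $z_i$). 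Continuity in $\z$ of each of these derivatives follows from the continuity statement in Lemma \ref{derconv}, applied with $f_n=f_i(\cdot,\z_n)\to f_i(\cdot,\z)$ in $C^1$ (and similarly for $F$) as $\z_n\to\z$ in $V$.

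The main technical obstacle is that Lemma \ref{derconv} is stated for test functions that do not depend on $\z$, whereas here $f_i$ and $F$ do. The fix is to extract a uniform version from the proof of Lemma \ref{derconv}: all estimates there depended only on $\|f\|_\infty$, $\|\partial f\|_\infty$ and $\delta_i$, and for $\z\in V$ the quantities $\|\partial_\z^\alpha f_i(\cdot,\z)\|_\infty$ and $\|\partial_x\partial_\z^\alpha f_i(\cdot,\z)\|_\infty$ (for $|\alpha|\leq 1$) are uniformly bounded by the explicit formula \eqref{frep}, and similarly for $F$. With this uniform control the product rule above is justified, shows that $\partial_{z_k}\partial_{z_i}G(\z)$ exists and is continuous on $V$, and therefore $G$ is $C^2$ on $U_N$.
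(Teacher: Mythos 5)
Your proposal is correct and follows essentially the same route as the paper: differentiate the representation of $\partial_{z_j}G$ from Lemma \ref{1stder}, split by the product rule into terms where the derivative hits the explicit kernels $f_{j,\epsilon},F_\epsilon$ (controlled by the uniform $C^1$ bounds \eqref{deribnd}) and terms where it hits the correlations (controlled by Lemma \ref{derconv}). The ``technical obstacle'' you flag is already resolved by the final continuity clause of Lemma \ref{derconv}, which you correctly invoke; the paper merely differs cosmetically in that it differentiates $G_\epsilon$ first and then passes to the limit $\epsilon\to 0$.
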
}
\noindent{\it Proof of  Theorem \ref{wardth} (a).}
 We have
\begin{align}
\partial_{z_i}\partial_{z_j}G_\epsilon (\z)&=\partial_{z_i}(-\frac{1}{2}\sum_{k; k \neq j}^N \frac{\alpha_j \alpha_k}{(z_j-z_k)_{\epsilon,\epsilon}}G_\epsilon (\z))+\int_{\C} \partial_{z_i}f_{j,\epsilon}(x;\z)G_\epsilon (x;\z)d^2x+ \int_{\C} \int_{\C} \partial_{z_i}F_{\epsilon}(x,y,{z_j})G_\epsilon (x,y;\z)d^2xd^2y\nonumber\\&+\int_{\C} f_{j,\epsilon}(x;\z)\partial_{z_i}G_\epsilon (x;\z)d^2x+ \int_{\C} \int_{\C} F_{\epsilon}(x,y,{z_j})\partial_{z_i}G_\epsilon (x,y;\z)d^2xd^2y.\label{2ndder}
\end{align}
Convergence of the last two terms follows from Lemma \ref{derconv}  and  the first three from Lemma \ref{1stder}.
\qed

\vspace{0.2 cm}

Now, we set notations which will be used in the sequel of the paper. As a consequence of the previous considerations, we have shown that $ Y_{i,\epsilon}(\z)$ given by \eqref{Ydefi} converges as $\epsilon$ goes to $0$ and we will naturally set the following notation
\begin{equation}\label{TheYtermnotation}  
\int_\C\frac{1}{(z_i-x)}G(x; \z)d^2x:= \underset{\epsilon \to 0}{\lim} \int_\C\frac{1}{(z_i-x)_{\epsilon,\epsilon}}G_\epsilon(x; \z)d^2x
\end{equation}

We have also shown that $\langle \prod_{k=1}^N  V_{\alpha_k} (z_k)\rangle$ is $C^1$ and the $\partial_{z_i}$ derivative has the following expression
\begin{equation}\label{notationderivsum}
 \partial_{z_i} \langle \prod_{k=1}^N  V_{\alpha_k} (z_k)\rangle =-\frac{1}{2}\sum_{j;j\neq i}^N\frac{\alpha_i \alpha_j}{z_i-z_j}G( \z) 
  + \frac{\alpha_i\mu\gamma}{2} \int_\C\frac{1}{z_i-x}G(x; \z)d^2x.
\end{equation}

\section{Proof of the Ward and BPZ Identities}

\subsection{Proof of the first Ward Identity}

We prove the first statement of Theorem \ref{wardth} (b). Recall that $\frac{1}{(z)_\epsilon}$ denotes $\rho_\epsilon\ast \frac{1}{z}$. 
Using the integration by parts formula \eqref{ipp1}  we get
 \begin{align*}
 \langle\partial_{z}^2\phi_\epsilon(z) \prod_{k=1}^N V_{\alpha_k}(z_k)   \rangle&=-
\frac{1}{2}\sum_{i=1}^N\alpha_i \partial_{z} \frac{1}{(z-z_i)_\epsilon}G(\z)+\frac{\mu\gamma}{2}
   \int_{\C} \partial_{z} \frac{1}{(z-x)_\epsilon} G(x;\z)d^2x.
 \end{align*}
Along the same lines as the proof of Lemma \ref{ippid} (using  Gaussian integration by parts) one can 
prove
 \begin{align*}
  \langle( (\partial_z \phi_{\epsilon}(z))^2-& \E[ (\partial_z X_{\epsilon}(z))^2  ]) \prod_{k=1}^N V_{\alpha_k}(z_k)   \rangle
=
\frac{1}{4} \sum_{i,j=1}^N \alpha_i \alpha_j \frac{1}{(z-z_i)_\epsilon} \frac{1}{(z-z_j)_\epsilon}G(\z)
 \\& -\frac{\mu \gamma }{2}     \sum_{i=1}^N \alpha_i  \frac{1}{(z-z_i)_\epsilon} \int_{\C} \frac{1}{(z-x)_\epsilon}G(x;\z)d^2x -\frac{\mu \gamma^2  }{4}      \int_{\C}  (  \frac{1}{(z-x)_\epsilon})^2G(x;\z)d^2x\\&+\frac{\mu^2 \gamma^2}{4}  \int_{\C} \int_{\C} \frac{1}{(z-x)_\epsilon} \frac{1}{(z-y)_\epsilon}G(x,y;\z)d^2x d^2y.
 \end{align*}
 Combining we get (recall \eqref{SETdefi})
\begin{align}
\langle T_\epsilon(z)\prod_{k=1}^N V_{\alpha_k}(z_k)\rangle&=
(-\frac{Q}{2}\sum_{i=1}^N\alpha_i \partial_z \frac{1}{(z-z_i)_\epsilon} -\frac{1}{4} \sum_{i,j} \alpha_i \alpha_j \frac{1}{(z-z_i)_\epsilon} \frac{1}{(z-z_j)_\epsilon})G(\z)
\nonumber \\& + \frac{\mu \gamma}{2}     \sum_{i=1}^N \alpha_i  \frac{1}{(z-z_i)_\epsilon} \int_{\C} \frac{1}{(z-x)_\epsilon}G(x;\z)d^2x +R_\epsilon(z;\z)\label{tcontra1}
 \end{align}
 with
  \begin{align}
R_\epsilon(z;\z)&:=  \frac{\mu\gamma}{2} \int_{\C} (Q \partial_{z} \frac{1}{(z-x)_\epsilon} + \frac{\gamma}{2}(  \frac{1}{(z-x)_\epsilon})^2)G(x;\z)d^2x   
 -\frac{\mu^2 \gamma^2}{4} \int_{\C} \int_{\C} \frac{1}{(z-x)_\epsilon} \frac{1}{(z-y)_\epsilon}G(x,y;\z)d^2x d^2y.\label{tcontra2}
 \end{align} 
Integrating $\partial_z=-\partial_x$ by parts we have
 \begin{equation*}
 \int_{\C} \partial_{z} \frac{1}{(z-x)_\epsilon} G(x;\z)d^2x =-\frac{\gamma}{2}\sum_{i=1}^N \alpha_i\int_{\C}  \frac{1}{(z-x)_\epsilon}  \frac{1}{x-z_i} G(x;\z)d^2x
 + \frac{\mu \gamma^2}{2} \int_{\C} \int_{\C} \frac{1}{(z-x)_\epsilon}\frac{1}{x-y}G(x,y;\z)d^2x d^2y
 \end{equation*} 
where  the last integral is defined as the limit  
$$\lim_{\epsilon\to 0}\int_{\C} \int_{\C} \frac{1}{(z-x)_\epsilon}\frac{1}{(x-y)_{\epsilon'}}G_{\epsilon'}(x,y;\z)d^2x d^2y.
$$  By exploiting the fact that the function $(x,y) \mapsto G_{\epsilon'}(x,y;\z)$ is symmetric we get 
\begin{equation*}
\frac{\mu \gamma^2}{2}  \int_{\C} \int_{\C} \frac{1}{(z-x)_\epsilon}\frac{1}{x-y}G(x,y;\z)d^2x d^2y= \frac{\mu \gamma^2}{4}  \int_{\C} \int_{\C} (\frac{1}{(z-x)_\epsilon}- \frac{1}{(z-y)_\epsilon})\frac{1}{x-y}G(x,y;\z)d^2x d^2y
\end{equation*}
 Therefore, we get
  \begin{align*}
 \int_{\C} \partial_{z} \frac{1}{(z-x)_\epsilon} G(x;\z)d^2x&=-\frac{\gamma}{2} \sum_{i=1}^N \alpha_i \int_{\C}  \frac{1}{(z-x)_\epsilon}  \frac{1}{x-z_i} G(x;\z)d^2x
 +\frac{\mu \gamma^2}{2}  \int_{\C} \int_{\C} \frac{1}{(z-x)_\epsilon}\frac{1}{x-y}G(x,y;\z)d^2x d^2y\\
 &=-\frac{\gamma}{2}\sum_{i=1}^N \alpha_i \frac{1}{z-z_i} \left ( \int_{\C}  \frac{1}{(z-x)_\epsilon} G(x;\z)d^2x+ \int_{\C} h_\epsilon(z-x) \frac{1}{x-z_i} G(x;\z)d^2x \right )\\
 & + \frac{\mu \gamma^2}{4}  \int_{\C} \int_{\C} (\frac{1}{(z-x)_\epsilon}- \frac{1}{(z-y)_\epsilon})\frac{1}{x-y}G(x,y;\z)d^2x d^2y
\end{align*} 
where   $ h_\epsilon(x)=x\frac{1}{(x)_\epsilon} $. Thus recalling that $Q=\frac{\gamma}{2}+\frac{2}{\gamma}$ we arrive at
 \begin{align}
 R_\epsilon(z;\z)&= -\frac{\mu \gamma}{2}\sum_{j=1}^N \alpha_j \frac{1}{(z-z_j)} \left ( \int_{\C}  \frac{1}{(z-x)_\epsilon} G(x;\z)d^2x+ \int_{\C} h_\epsilon(z-x) \frac{1}{x-z_j} G(x;\z)d^2x \right ) \nonumber\\
 &+ \frac{\mu\gamma^2}{4}\int_{\C} ( \partial_{z} \frac{1}{(z-x)_\epsilon} + (  \frac{1}{(z-x)_\epsilon})^2)G(x;\z)d^2x\nonumber\\
 &
  -\frac{\mu^2 \gamma^2}{4}  \int_{\C} \int_{\C} ( \frac{1}{(z-x)_\epsilon} \frac{1}{(z-y)_\epsilon}-(\frac{1}{(z-x)_\epsilon}- \frac{1}{(z-y)_\epsilon})\frac{1}{x-y})G(x,y;\z)d^2x d^2y\label{Rfinal}. 
 \end{align}
 In fact one can show that the last two terms in the right-hand side of the above expression converge to $0$ as $\epsilon\to 0$. In the proof of Lemma \ref{dT'} below, we will prove a more technical result involving the derivatives of this term. We leave as an exercise for the reader to adapt the proof and show this convergence to $0$.

Finally we have
 \begin{align}
  \int_{\C} h_\epsilon(z-x) \frac{1}{x-z_j} G(x;\z)d^2x=  -h_\epsilon(z-z_j)Y_j(\z)
  +  \int_{\C} (h_\epsilon(z-x)-h_\epsilon(z-z_j)) \frac{1}{x-z_j} G(x;\z)d^2x\to -Y_j(\z)\label{yjterm}
  \end{align}
 since $ h_\epsilon(x)\to 1$ for $x\neq 0$ and since $(h_\epsilon(z-x)-h_\epsilon(z-z_j)) \frac{1}{x-z_j}$ is bounded, uniformly in $\epsilon$, and converges to zero for $x\neq z$.
 To summarize
 \begin{align}
 \lim_{\epsilon\to 0}R_\epsilon(z;\z)&= -\frac{\mu\gamma}{2}\sum_{j=1}^N \frac{\alpha_j }{z-z_j}  (\int_{\C}  \frac{1}{z-x} G(x;\z)d^2x-Y_j(\z))\nonumber
 \end{align}
 and then
 \begin{align}
\lim_{\epsilon\to 0}\langle T_\epsilon(z)\prod_{k=1}^N V_{\alpha_k}(z_k)\rangle&=
\Big ( \frac{Q}{2}\sum_{i=1}^N \frac{\alpha_i }{(z-z_i)^2} -\frac{1}{4} \sum_{i,j} \frac{ \alpha_i \alpha_j}{(z-z_i)(z-z_j)} \Big )G(\z)
 +\frac{\mu\gamma}{2} \sum_{i=1}^N  \frac{\alpha_i}{z-z_i} Y_i(\z)\nonumber \\& 
 =\sum_{i=1}^N \frac{\Delta_{\alpha_i }}{(z-z_i)^2}G(\z)+\sum_{i=1}^N \frac{1}{z-z_i}\partial_{z_i}G(\z)\label{w1conv}
 \end{align}
 by \eqref{derivativecorr1}. \qed

\vspace{0.2 cm}

Now, we need the following elaboration of this result in the proof of the second Ward identity:

\begin{lemma}\label{dT'} The convergence in \eqref{w1conv} takes place in $C^1$ i.e.
\begin{align*}
\lim_{\epsilon\to 0}\partial_{z_i}\langle T_\epsilon(z)\prod_{k=1}^N V_{\alpha_k}(z_k)\rangle=\partial_{z_i}\langle T(z)\prod_{k=1}^N V_{\alpha_k}(z_k)\rangle
 \end{align*}
 \end{lemma}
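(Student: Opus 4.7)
The plan is to retrace every manipulation used to establish the first Ward identity while simultaneously tracking one derivative $\partial_{z_k}$, and to show that each step converges not merely in $C^0$ but in $C^1$ with respect to $\z$. From \eqref{tcontra1}--\eqref{tcontra2}, \eqref{Rfinal} and \eqref{yjterm} we decompose
\begin{equation*}
\langle T_\epsilon(z) \prod_{k=1}^N V_{\alpha_k}(z_k)\rangle_\epsilon = \mathsf{Main}_\epsilon(z;\z) + \mathsf{Err}_\epsilon(z;\z),
\end{equation*}
where $\mathsf{Main}_\epsilon$ collects the terms already identified as convergent (a combination of $G_\epsilon(\z)$ and of $Y_{i,\epsilon}(\z)$ multiplied by mollified rational functions of $z,\z$) and $\mathsf{Err}_\epsilon$ gathers the residual pieces known to vanish pointwise: the last two integrals of \eqref{Rfinal} and the correction $\int_\C (h_\epsilon(z-x)-h_\epsilon(z-z_j))\tfrac{1}{x-z_j} G_\epsilon(x;\z)d^2x$ from \eqref{yjterm}. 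Proving the lemma reduces to showing (i) that $\partial_{z_k} \mathsf{Main}_\epsilon$ converges to $\partial_{z_k}$ of \eqref{w1conv}, and (ii) that $\partial_{z_k} \mathsf{Err}_\epsilon \to 0$ as $\epsilon \to 0$, uniformly on compact subsets of $\{z \neq z_i\}$.

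Part (i) is straightforward from the already established regularity. The mollified kernels $\tfrac{1}{(z-z_i)_\epsilon}$, $\partial_z \tfrac{1}{(z-z_i)_\epsilon}$ and their products converge in $C^\infty$ on compacts of $\{z \neq z_i\}$, so differentiating them against $z_k$ commutes with the limit. Theorem \ref{wardth}\,(a) gives $G \in C^2(U_N)$, hence $\partial_{z_k} G_\epsilon(\z) \to \partial_{z_k} G(\z)$ continuously. Finally, $Y_{i,\epsilon}(\z)$ is rewritten via \eqref{derivativecorr1} as an explicit affine combination of $G_\epsilon(\z)$ and $\partial_{z_i} G_\epsilon(\z)$ with mollified coefficients; by the $C^2$ regularity of $G$, this passes to a $C^1$ limit and its $z_k$-derivative commutes with $\epsilon \to 0$. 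Assembling these pieces yields $\partial_{z_k} \mathsf{Main}_\epsilon \to \partial_{z_k}\big(\sum_i \tfrac{\Delta_{\alpha_i}}{(z-z_i)^2}G(\z) + \sum_i \tfrac{1}{z-z_i}\partial_{z_i}G(\z)\big)$.

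The heart of the proof is part (ii). Each error term has the form $\int_\C K_\epsilon(z,x) G_\epsilon(x;\z) d^2x$ or $\int_\C\int_\C K_\epsilon(z,x,y) G_\epsilon(x,y;\z) d^2xd^2y$ with a kernel depending only on $z,x,y$, so $\partial_{z_k}$ only hits the correlation factor. Split each integral with a smooth cutoff $\rho$ supported near the $x=z$ (and $y=z$, $x=y$) singularities of the kernel. Away from the singularity, $K_\epsilon$ is bounded in $C^1$ uniformly in $\epsilon$, so Lemma \ref{derconv} lets one push $\partial_{z_k}$ inside and then pass to the limit, using the already-known pointwise vanishing together with the dominated convergence theorem, the integrability from Proposition \ref{1and2point}, and the bound $|\partial_{z_k} G_\epsilon(\cdot;\z)| \leq C \min(|{\cdot}-z_k|,\delta_k)^{-1} G_\epsilon(\cdot;\z)$ of Corollary \ref{derest}. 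Near the singularity one repeats the cancellation argument that produced vanishing in the undifferentiated case: use translation invariance (Lemma \ref{trinv}) to rewrite $\partial_{z_k} G_\epsilon(x;\z) = -\partial_x G_\epsilon(x;\z) - \sum_{j \neq k} \partial_{z_j} G_\epsilon(x;\z)$, integrate the $\partial_x$ by parts to move it onto the kernel, and observe that the algebraic cancellations between $\partial_z \tfrac{1}{(z-x)_\epsilon}$ and $\tfrac{1}{(z-x)_\epsilon^2}$ (resp.\ between $\tfrac{1}{(z-x)_\epsilon}\tfrac{1}{(z-y)_\epsilon}$ and the $\tfrac{1}{x-y}$ difference kernel in \eqref{Rfinal}) persist after differentiation, since they are identities at the level of the kernels. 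The remaining $\partial_{z_j} G_\epsilon$ terms with $j \neq k$ are handled by Lemma \ref{1stder}, which expresses them as $G_\epsilon(x;\z)$, $G_\epsilon(x,y;\z)$ and $G_\epsilon(x,y,y';\z)$ integrated against $C^1$ kernels of norm $\lesssim \min(\delta_j,|x-z_j|)^{-1}$: in the cutoff region this is uniformly bounded in $\epsilon$.

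The principal obstacle is the double-integral residual from the last line of \eqref{Rfinal}: after differentiation in $z_k$, one must control the three-point collision behaviour of $\partial_{z_k} G_\epsilon(x,y;\z)$ when $x,y$ simultaneously approach $z$ and $z_k$. The factor $|x-z_k|^{-1}$ or $|y-z_k|^{-1}$ from Corollary \ref{derest} pushes the integrand to the edge of absolute integrability, and one must exploit the full strength of Proposition \ref{1and2point}\,(c) (the $|x-y|^{-2+\zeta}$ bound with $\zeta > 0$) together with the algebraic cancellation between the two terms of the difference kernel to obtain an integrable dominating function. Once this integrability is established, the pointwise vanishing already proved implies $\partial_{z_k}\mathsf{Err}_\epsilon \to 0$ via dominated convergence, completing the proof.
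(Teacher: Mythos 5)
Your proposal is correct and follows essentially the same route as the paper: differentiate the first-Ward-identity computation term by term, treat the already-regular pieces via the $C^2$ regularity of $G$ and the $C^1$ regularity of $Y_i$, and kill the residual kernels ($h_\epsilon$-corrections, the Beurling-type term handled by isotropy, and the $\chi_\epsilon$ double integral) by localizing with smooth cutoffs, using translation invariance to trade $\partial_{z_i}$ for $-\partial_x-\sum_{j\neq i}\partial_{z_j}$, and invoking Lemma \ref{derconv}, Lemma \ref{1stder}, Corollary \ref{derest} and Proposition \ref{1and2point} with dominated convergence. The only cosmetic difference is your Main/Err bookkeeping versus the paper's direct analysis of $\partial_{z_i}R_\epsilon$.
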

 \begin{proof}

It suffices to study the convergence of  $\partial_{z_i}R_\epsilon(z;\z)$ where $R_\epsilon(z;\z)$ is given by \eqref{Rfinal}. By Lemma  \ref{derconv},
 $R_\epsilon(z;\z)$ is $C^1$ (as a function of the $\z$ variable).  To study the $\epsilon\to 0$ limit let us consider the   terms individually. First consider the second term i.e. \eqref{yjterm}.
 The first term in  \eqref{yjterm} converges in  $C^1$ since  $ h_\epsilon$ converges in $C^1$ in the complement of the origin and since $Y_i$ is $C^1$. For the second term in  \eqref{yjterm} let $\tilde{\rho}$ be a smooth bump supported in a small neighborhood of origin with $\tilde{\rho}(x)=1$ for $|x|$ small. Insert $1=\tilde{\rho}(z-x)+1-\tilde{\rho}(z-x)$ in the integral. By  Lemma  \ref{derconv}
  \begin{align*}
   \partial_{z_i}\int_{\C} &(h_\epsilon(z-x)-h_\epsilon(z-z_j)) \frac{1}{x-z_j} (1-\tilde{\rho}(z-x))G(x;\z)d^2x=\\&   \int_{\C} (h_\epsilon(z-x)-h_\epsilon(z-z_j)) \frac{1}{x-z_j} (1-\tilde{\rho}(z-x)) \partial_{z_i}G(x;\z)d^2x
\\&+ \int_{\C}  \partial_{z_i}((h_\epsilon(z-x)-h_\epsilon(z-z_j)) \frac{1}{x-z_j}) (1-\tilde{\rho}(z-x))G(x;\z)d^2x\to 0
   \end{align*}
as $\epsilon\to 0$ since $(h_\epsilon(z-x)-h_\epsilon(z-z_j)) \frac{1}{x-z_j}(1-\tilde{\rho}(z-x))$ is smooth and bounded and converges to $0$ in $C^1$.
 
 For the $\tilde{\rho}(z-x)$ case we note that $x \mapsto G(x;\z)$ is $C^1$ on the support of $\tilde{\rho}(z-x)$ (provided the support of $\tilde{\rho}$ is small enough) hence this term too vanishes  as $\epsilon\to 0$. Summarizing, we obtained
  \begin{align*}
 \lim_{\epsilon\to 0} \partial_{z_i} \int_{\C} h_\epsilon(z-x) \frac{1}{x-z_j} G(x;\z)d^2x=  -\partial_{z_i}Y_j(\z).
  \end{align*}
 The convergence of $ \int_{\C}  \frac{1}{(z-x)_\epsilon}\partial_{z_i} G(x;\z)d^2x$  follows in the same way by localizing at $z$ and its complement and in the same way we also get
 \begin{align}
 \lim_{\epsilon\to 0}  \int_{\C} &( \partial_{z} \frac{1}{(z-x)_\epsilon} + (  \frac{1}{(z-x)_\epsilon})^2) \partial_{z_i}G(x;\z)d^2x\nonumber \\=&\lim_{\epsilon\to 0}  \int_{\C} ( \partial_{z} \frac{1}{(z-x)_\epsilon} + (  \frac{1}{(z-x)_\epsilon})^2) (\partial_{z_i}G(x;\z)-\partial_{z_i}G(z;\z))d^2x =0\label{conve1}
  \end{align} 
where we have added the term involving $\partial_{z_i}G(z;\z)$ above as its contribution vanishes by isotropy and then used the fact that  $\partial_{z_i}G(x;\z)$ is of class $C^2$ in the neighborhood of $z$.

The proof will be completed provided we show
\begin{equation}\label{basicconv}
\lim_{\epsilon\to 0}   \partial_{z_i}  \int_{\C} \int_{\C} \chi_\epsilon(x,y,z)
G(x,y;\z)d^2x d^2y=\lim_{\epsilon\to 0}    \int_{\C} \int_{\C} \chi_\epsilon(x,y,z)
 \partial_{z_i}G(x,y;\z)d^2x d^2y=0
\end{equation}
where
\begin{equation*}
\chi_\epsilon(x,y,z):=
 \frac{1}{(z-x)_\epsilon} \frac{1}{(z-y)_\epsilon}-(\frac{1}{(z-x)_\epsilon}- \frac{1}{(z-y)_\epsilon})\frac{1}{x-y}.
\end{equation*}
We used  Lemma \ref{derconv} in the first equality in \eqref{basicconv}. \eqref{basicconv} is proved  by localizing the $x,y$ integrals by inserting $1= \tilde{\rho} (x-z)+1- \tilde{\rho} (x-z)$ or $1= \tilde{\rho} (x-z_i)+1- \tilde{\rho} (x-z_i)$ and similarly with the $y$ variable. This results to insertion of $f (x-u)  g (y-v)$ in the integrand where $f,g\in\{\tilde{\rho}, ,1-\tilde{\rho}\} $ and $u,v\in \{z_i,z\}$. We consider in detail the cases where $f=g=\tilde\rho$ the others being similar but easier. 
There are three cases to consider taking into account symmetry in $x$ and $y$: 

\vskip 2mm

\noindent (a) The first case is the insertion of $  \tilde{\rho} (x-z_i)  \tilde{\rho} (y-z_i)$.
By translation invariance 
\begin{align*}
 &  \int_{\C} \int_{\C} \chi_\epsilon(x,y,z) 
  \tilde{\rho} (x-z_i)  \tilde{\rho} (y-z_i)  \partial_{z_i}G(x,y;\z)d^2x d^2y  \\
& =-\sum_{j; j\neq i}^N  \int_{\C} \int_{\C} \chi_\epsilon(x,y,z) 
  \tilde{\rho} (x-z_i)  \tilde{\rho} (y-z_i) \partial_{z_j}  G(x,y;\z)d^2x d^2y   \\ 
& - \int_{\C} \int_{\C} \chi_\epsilon(x,y,z) 
  \tilde{\rho} (x-z_i)  \tilde{\rho} (y-z_i)  (\partial_x+\partial_y) G(x,y;\z)d^2x d^2y.
\end{align*}
For the first term we note that 
$\chi_\epsilon$ tends to zero in sup-norm  on the support of the integrand. The limit vanishes  using  
Corollary \ref{derest} and then Proposition \ref{1and2point} along with the dominated convergence theorem.  Integrating by parts  the second term equals
$$
 \int_{\C} \int_{\C} (\partial_x+\partial_y)(\chi_\epsilon(x,y,z) 
  \tilde{\rho} (x-z_i)  \tilde{\rho} (y-z_i) )  G(x,y;\z)d^2x d^2y.
$$
Also $(\partial_x+\partial_y)\chi_\epsilon$  tends to zero in sup-norm  on the support of the integrand and then Proposition \ref{1and2point} along with the dominated convergence theorem gives the claim.

\vskip 2mm

\noindent (b) As the second case we consider the insertion $  \tilde{\rho} (x-z)  \tilde{\rho} (y-z_i) $.
Proceeding as in (a) we need to study the convergence of
\begin{align*}
& -\sum_{j; j\neq i}^N  \int_{\C} \int_{\C} \chi_\epsilon(x,y,z) 
  \tilde{\rho} (x-z)  \tilde{\rho} (y-z_i) \partial_{z_j}  G(x,y;\z)d^2x d^2y   \\ 
& - \int_{\C} \int_{\C} \chi_\epsilon(x,y,z)   \tilde{\rho} (x-z)  \tilde{\rho} (y-z_i)  (\partial_x+\partial_y) G(x,y;\z)d^2x d^2y.
\end{align*}
The  first term is dealt as in (a) case. For the second  term we use the fact that $ x,y \mapsto G(x,y;\z)$ is $C^1$ on the support of $  x,y \mapsto \tilde{\rho} (x-z)  \tilde{\rho} (y-z_i)$ and conclude by dominated convergence theorem.

\vskip 2mm

\noindent (c) The third case is given by
\begin{align*}
   \int_{\C} \int_{\C} \chi_\epsilon(x,y,z)   \tilde{\rho} (x-z)  \tilde{\rho} (y-z)  \partial_{z_i}G(x,y;\z)d^2x d^2y   \end{align*}
 This tends to zero thanks to Corollary \ref{derest} and then Proposition \ref{1and2point} along with the dominated convergence theorem.

\end{proof}

\subsection{Proof of the Second Ward Identity}

We will study the limit 
\begin{equation}\label{Wardidentity}
  \langle T(z) T(z') \prod_{k=1}^N  V_{\alpha_k}(z_k)  \rangle:= \underset{\epsilon' \to 0}{\lim}\; \underset{\epsilon \to 0}{\lim}\;  \langle T_\epsilon(z) T_{\epsilon'}(z') \prod_{k=1}^N  V_{\alpha_k}(z_k)  \rangle     .
\end{equation}
The proof consists of lengthy but straightforward calculations using the integration by parts formula  \eqref{ipp1} together with a careful analysis of the $\epsilon,\epsilon'$  limits as many integrals will not be absolutely convergent.

The first step consists of using the integration by parts formula twice to get rid of the $T_\epsilon(z)$
\begin{align}
& \langle T_\epsilon(z) T_{\epsilon'}(z') \prod_{k=1}^N  V_{\alpha_k}(z_k)  \rangle = ( -\frac{1}{2}  ( \frac{1}{(z'-z)_{\epsilon,\epsilon'}^2}  )^2+3Q^2\frac{1}{(z'-z)_{\epsilon,\epsilon'}^4} )G(\z)\nonumber\\&+2Q\frac{1}{(z'-z)_{\epsilon,\epsilon'}^3}\langle(\partial_{z'}\phi_{\epsilon'}(z')-\partial_z \phi_\epsilon(z))\prod_{k=1}^N  V_{\alpha_k}(z_k)  \rangle -2\frac{1}{(z'-z)_{\epsilon,\epsilon'}^2}\langle(\partial_{z'}\phi_{\epsilon'}(z')\partial_z\phi_\epsilon(z)\prod_{k=1}^N  V_{\alpha_k}(z_k)  \rangle \nonumber\\&+
F_{\epsilon,\epsilon'}(z,z';\z)\label{Feps}
  \end{align} 
  where the contractions to the $V_{\alpha_k}$ can be read from \eqref{tcontra1},  \eqref{tcontra2}:
 \begin{align}
F_{\epsilon,\epsilon'}(z,z';\z)&=
(\frac{Q}{2}\sum_{i=1}^N\alpha_i  \frac{1}{(z-z_i)^2_\epsilon}  -\frac{1}{4} \sum_{i,j=1}^N \alpha_i \alpha_j \frac{1}{(z-z_i)_\epsilon} \frac{1}{(z-z_j)_\epsilon}) \langle  T_{\epsilon'}(z') \prod_{k=1}^N  V_{\alpha_k}(z_k)  \rangle \nonumber
 \\& +\frac{ \mu \gamma}{2}     \sum_{i=1}^N \alpha_i  \frac{1}{(z-z_i)_\epsilon} \int_{\C}  \frac{1}{(z-x)_\epsilon} \langle  T_{\epsilon'}(z')V_\gamma(x) \prod_{k=1}^N  V_{\alpha_k}(z_k)  \rangle 
 d^2x\nonumber \\&+  \frac{\mu \gamma }{2}  \int_{\C} ( \frac{\gamma}{2}   (  \frac{1}{(z-x)_\epsilon})^2-Q  \frac{1}{(z-x)^2_\epsilon} ) \langle  T_{\epsilon'}(z')V_\gamma(x) \prod_{k=1}^N V_{\alpha_k}(z_k)  \rangle 
 d^2x \nonumber
  \\&
 -\frac{\mu^2 \gamma^2}{4}  \int_{\C} \int_{\C} \frac{1}{(z-y)_\epsilon} \frac{1}{(z-x)_\epsilon} \langle  T_{\epsilon'}(z') V_\gamma(x)V_\gamma(y) \prod_{k=1}^N  V_{\alpha_k}(z_k)  \rangle \label{epward}
 d^2x d^2y . 
 \end{align}
 Note that the correlations appearing in $F_{\epsilon,\epsilon'}(z,z';\z)$ are of the form appearing in Lemma \ref{derconv}. This allows us to take the $\epsilon\to 0$ limit in \eqref{epward}. Consider in particular the Beurling transform type terms, i.e. the third term in the above right-hand side. Since $x\mapsto \langle  T_{\epsilon'}(z')V_\gamma(x) \prod_{k=1}^NV_{\alpha_k}(z_k)  \rangle $ is $C^1$ in a neighborhood of $z$ these terms converge to
 $$ - \mu \int_{\C} \frac{1}{(z-x)^2}   \langle  T_{\epsilon'}(z') V_\gamma(x) \prod_{k=1}^N V_{\alpha_k}(z_k)  \rangle 
 d^2x. $$
 Writing $\frac{1}{(z-x)^2}=\partial_x\frac{1}{(z-x)}$ and then using twice  integration by parts  (first with respect to $\partial_x$ and then Gaussian integration by parts), we get
 \begin{align*}
 - \mu \int_{\C} \frac{1}{(z-x)^2} & \langle  T_{\epsilon'}(z') V_\gamma(x) \prod_{k=1}^N V_{\alpha_k}(z_k)  \rangle 
 d^2x =-\frac{\mu \gamma}{2} \sum_{i=1}^N \alpha_i  \frac{1}{z-z_i} \int_{\C} \frac{1}{z-x} \langle  T_{\epsilon'}(z')V_\gamma(x) \prod_{k=1}^N  V_{\alpha_k}(z_k)  \rangle d^2x \\&-\frac{\mu \gamma}{2} \sum_{i=1}^N \alpha_i  \frac{1}{z-z_i} \int_{\C} \frac{1}{x-z_i} \langle  T_{\epsilon'}(z')  V_\gamma(x) \prod_{k=1}^N V_{\alpha_k}(z_k)  \rangle d^2x   \\&+\frac{ \mu^2 \gamma^2}{4}  \int_{\C} \int_{\C} \frac{1}{z-y} \frac{1}{z-x} \langle  T_{\epsilon'}(z')V_\gamma(x)V_\gamma(y)  \prod_{k=1}^N  V_{\alpha_k}(z_k)  \rangle 
 d^2x d^2y \\&+\mu\gamma Q \int_{\C}  \frac{1}{(z'-x)^3_{\epsilon'}} \frac{1}{z-x} \langle V_\gamma(x)  \prod_{k=1}^N V_{\alpha_k}(z_k)  \rangle d^2x \\&+\mu\gamma\int_{\C}  \frac{1}{(z'-x)^2_{\epsilon'}} \frac{1}{z-x} \langle\partial_{z'}\phi_{\epsilon'}(z') V_\gamma(x)  \prod_{k=1}^N V_{\alpha_k}(z_k)  \rangle  d^2x.
\end{align*}
 As in the proof of the 1st Ward identity, we next compare these expressions with
\begin{align*}
\partial_{z_i}\langle  T_{\epsilon'}(z') \prod_{k=1}^N  V_{\alpha_k}(z_k)  \rangle&=-\frac{\alpha_i}{2} \sum_{j; j\neq i}^N\frac{\alpha_j}{z_i-z_j} \langle  T_{\epsilon'}(z') \prod_{k=1}^N  V_{\alpha_k}(z_k)  \rangle+\alpha_iQ \frac{1}{(z'-z_i)^3_{\epsilon'}}\langle  \prod_{k=1}^N V_{\alpha_k}(z_k)  \rangle\\&+\frac{\alpha_i\mu\gamma}{2} \int_{\C} \frac{1}{z_i-x} \langle  T_{\epsilon'}(z') V_\gamma(x) \prod_{k=1}^N V_{\alpha_k}(z_k)  \rangle d^2x
+\alpha_i \frac{1}{(z'-z_i)^2_{\epsilon'}}\langle \partial_{z'} \phi_{\epsilon'}(z')\prod_{k=1}^N V_{\alpha_k}(z_k)  \rangle.
\end{align*}
 Some calculation gives 
\begin{align}
\lim_{\epsilon\to 0}F_{\epsilon,\epsilon'}(z,z',\z)&=
\sum_{i=1}^N \frac{\Delta_{\alpha_i} }{(z-z_i)^2}  \langle  T_{\epsilon'}(z') \prod_{k=1}^N  V_{\alpha_k}(z_k)  \rangle +\sum_{i=1}^N  \frac{1}{z-z_i}\partial_{z_i}  \langle  T_{\epsilon'}(z') \prod_{k=1}^N  V_{\alpha_k}(z_k)  \rangle\nonumber
 \\& -\sum_{i=1}^N \frac{Q\alpha_i }{z-z_i}  \frac{1}{(z'-z_i)^3_{\epsilon'}} \langle \prod_{k=1}^N  V_{\alpha_k}(z_k)  \rangle -\sum_{i=1}^N \frac{{\alpha_i} }{z-z_i}  \frac{1}{(z'-z_i)^2_{\epsilon'}} \langle\partial_{z'}\phi_{\epsilon'}(z') \prod_{k=1}^N  V_{\alpha_k}(z_k)  \rangle\nonumber
 \\&+\mu\gamma Q \int_{\C}  \frac{1}{(z'-x)^3_{\epsilon'}} \frac{1}{z-x} \langle V_\gamma(x)  \prod_{k=1}^N V_{\alpha_k}(z_k)  \rangle d^2 x
 \nonumber \\&+\mu\gamma\int_{\C}  \frac{1}{(z'-x)^2_{\epsilon'}} \frac{1}{z-x} \langle\partial_{z'}\phi_{\epsilon'}(z') V_\gamma(x)  \prod_{k=1}^N V_{\alpha_k}(z_k)  \rangle d^2x,\label{Flimit}
\end{align}
 after recalling that $\Delta_{\alpha_i}= \frac{\alpha_i}{2}(Q-\frac{\alpha_i}{2})$. Let us now address the $\epsilon'\to 0$ limit. Lemma \ref{dT'}  takes care of the second term in \eqref{Flimit}.  The $\partial\phi$ terms  in \eqref{Feps}  converge as  $\epsilon'\to 0$ respectively  to
 \begin{align*}
\frac{2Q}{(z'-z)^3}\langle(\partial_{z'}\phi(z')-\partial_z\phi(z))\prod_l  V_{\alpha_l}(z_l)  \rangle&= \frac{Q}{(z'-z)^2}\sum_{i=1}^N \frac{{\alpha_i} }{(z'-z_i)(z-z_i)}\langle\prod_{k=1}^N  V_{\alpha_k}(z_k)  \rangle\\&-\mu\gamma\frac{Q}{(z'-z)^2}\int_{\C} \frac{1}{z'-x}\frac{1}{z-x}\langle V_\gamma(x)\prod_{k=1}^N  V_{\alpha_k}(z_k)  \rangle d^2x\\
\frac{-2}{(z'-z)^2}\langle \partial_{z'}\phi(z')\partial_{z}\phi(z) \prod_{k=1}^N  V_{\alpha_k}(z_k)  \rangle&=  \frac{1}{(z'-z)^4} \langle \prod_{k=1}^N  V_{\alpha_k}(z_k)  \rangle 
-\frac{1}{2}\frac{1}{(z'-z)^2}\sum_{i,j} \frac{{\alpha_i\alpha_j} }{(z'-z_i)(z-z_j)}\langle\prod_{k=1}^N  V_{\alpha_k}(z_k)  \rangle\\&+\frac{\mu\gamma}{2(z'-z)^2}\sum_{i=1}^N \int_{\C} (\frac{{\alpha_i} }{z'-z_i}\frac{1}{z-x}+\frac{{\alpha_i} }{z-z_i}\frac{1}{z'-x})\langle V_\gamma(x)\prod_{k=1}^N  V_{\alpha_k}(z_k)  \rangle d^2x\\&
-\frac{\mu^2\gamma^2}{2}\frac{1}{(z'-z)^2}\int_{\C} \int_{\C} \frac{1}{z'-y}\frac{1}{z-x}\langle V_\gamma(x)V_\gamma(y)\prod_{k=1}^N V_{\alpha_k}(z_k)  \rangle d^2xd^2y\\&
+\frac{\mu\gamma^2}{2} \frac{1}{(z'-z)^2}\int_{\C} \frac{1}{z'-x}\frac{1}{z-x}\langle V_\gamma(x)\prod_{k=1}^N  V_{\alpha_k}(z_k)  \rangle d^2x.
  \end{align*} 
Next, consider the last two terms in \eqref{Flimit}. First we integrate by parts the last one:
  \begin{align}
\mu\gamma &\int_{\C}  \frac{1}{(z'-x)^2_{\epsilon'}} \frac{1}{z-x}
 \langle\partial\phi_{\epsilon'}(z') V_\gamma(x)  \prod_{k=1}^N V_{\alpha_k}(z_k)  \rangle d^2x =\\&-\frac{\mu\gamma}{2} \sum_{i=1}^N \frac{\alpha_i }{(z'-z_i)_{\epsilon'}}\int_{\C}  \frac{1}{(z'-x)^2_{\epsilon'}} \frac{1}{z-x}
\langle V_\gamma(x)\prod_{k=1}^N  V_{\alpha_k}(z_k)  \rangle d^2x\\&
-\frac{\mu\gamma^2}{2} \int_{\C}  \frac{1}{(z'-x)^2_{\epsilon'}}\frac{1}{(z'-x)_{\epsilon'}} \frac{1}{z-x}\langle V_\gamma(x)\prod_{k=1}^N V_{\alpha_k}(z_k)  \rangle d^2x\nonumber \\&+\frac{\mu^2\gamma^2}{2}
\int_{\C} \int_{\C} \frac{1}{(z'-y)_{\epsilon'}} \frac{1}{(z'-x)^2_{\epsilon'}} \frac{1}{z-x}\langle  V_\gamma(x)V_\gamma(y)\prod_{k=1}^N  V_{\alpha_k}(z_k)  \rangle d^2xd^2y
\label{Flimit1}.
  \end{align}  
Since   $x \mapsto \frac{1}{z-x}\langle V_\gamma(x)\prod_{k=1}^N  V_{\alpha_k}(z_k)  \rangle$ is $C^2$ in a neighborhood of $z'$  terms in \eqref{Flimit} and \eqref{Flimit1} combine to the following limit:
   \begin{align*}
 \lim_{\epsilon'\to 0}&\int_{\C}  (\mu\gamma Q\frac{1}{(z'-x)^3_{\epsilon'}}   -\hf\mu\gamma^2\frac{1}{(z'-x)^2_{\epsilon'}}\frac{1}{(z'-x)_{\epsilon'}})\frac{1}{z-x}\langle V_\gamma(x) \prod_{k=1}^N V_{\alpha_k}(z_k)  \rangle  d^2x=\\&
2\mu \int_{\C} \frac{1}{(z'-x)^3} \frac{1}{z-x}\langle V_\gamma(x) \prod_{k=1}^N V_{\alpha_k}(z_k)  \rangle d^2x
\end{align*}
where for a $C^2$ function $f$ we denote 
$$\int_{\C} \frac{1}{(z-x)^3}f(x)d^2x=\lim_{\epsilon\to 0}\int_{\C} \frac{1}{(z-x)^3}1_{|z-x|\geq \epsilon}f(x)d^2x.
$$
The proof of existence of the limit \eqref{Wardidentity} is now completed and we proceed to analyze the result. We have obtained:
\begin{align}%
  \langle T(z) T(z') \prod_l  V_{\alpha_l}(z_l)  \rangle&=\frac{1}{2}\frac{ c_L}{(z'-z)^4}G(\z)+
 \sum_{i=1}^N \frac{\Delta_{\alpha_i}}{(z-z_i)^2}\langle T(z') \prod_{k=1}^N V_{\alpha_k}(z_k)  \rangle\nonumber\\& +\sum_{i=1}^N\frac{1}{z-z_i}\partial_{z_i}\langle T(z') \prod_{k=1}^N V_{\alpha_k}(z_k)  \rangle+R_1G(\z)+R_2+R_3\label{Wardidentity0}
\end{align}
with
\begin{align}%
 R_1&=\frac{1}{(z'-z)^2}( \sum_{i=1}^N\frac{Q\alpha_i}{(z'-z_i)(z-z_i)}-\frac{1}{2}\sum_{i,j=1}^N \frac{\alpha_i\alpha_j}{(z'-z_i)(z-z_j)})\nonumber\\&+\frac{1}{2}\sum_{i,j=1}^N\frac{\alpha_i\alpha_j}{(z'-z_i)(z'-z_j)^2(z-z_j)}-\sum_{i=1}^N\frac{Q\alpha_i}{(z'-z_i)^3(z-z_i)}\nonumber \\&
 =(\frac{1}{(z'-z)^2}-\frac{1}{2}\frac{1}{z'-z}\partial_{z'})(Q\sum_{i=1}^N\frac{\alpha_i}{(z'-z_i)^2}-\frac{1}{2}(\sum_{i=1}^N\frac{\alpha_i}{z'-z_i})^2)\label{Wardidentity00}
\end{align}
and the terms involving integrals of correlations
\begin{align*}
 R_2&=\frac{\mu\gamma}{2}\frac{1}{(z'-z)^2}\sum_{i=1}^N \int_{\C}(\frac{{\alpha_i} }{z'-z_i}\frac{1}{z-x}+\frac{{\alpha_i} }{z-z_i}\frac{1}{z'-x}) G(x;\z) d^2x\\& - \frac{\mu \gamma}{2} \sum_{i=1}^N \frac{\alpha_i}{z-z_i}  \frac{1}{(z'-z_i)^2}  \int_{\C} \frac{1}{z'-x} G(x;\z)d^2x
 \end{align*}
and
\begin{align*}
 R_3&=-\frac{2 \mu}{(z'-z)^2}\int_{\C} \frac{1}{(z'-x)(z-x)}G(x;\z)d^2x-\frac{\mu^2 \gamma^2}{2} \frac{1}{(z'-z)^2}\int_{\C} \int_{\C} \frac{1}{(z'-y)(z-x)}G(x,y;\z)d^2x d^2y \\
 & + 2 \mu \int_{\C} \frac{1}{(z'-x)^3 (z-x)}G(x;\z)d^2x  -\frac{\mu \gamma}{2}   \sum_{i=1}^N  \frac{\alpha_i}{z'-z_i} \int_{\C} \frac{1}{(z'-x)^2 (z-x)}G(x;\z)d^2x \\
 & +\frac{\mu^2 \gamma^2}{2}\int_{\C} \int_{\C} \frac{1}{(z'-y)(z-x)(z'-x)^2}G(x,y;\z)  d^2x d^2y. 
 \end{align*}

Now, we use the identity 
\begin{equation*}
\frac{1}{(z'-y)(z-x)}  (\frac{1}{(z'-x)^2}  - \frac{1}{(z'-z)^2})= - \frac{1}{(z'-z)} \frac{1}{(z'-x)^2(z'-y)}-\frac{1}{(z'-z)^2} \frac{1}{(z'-x)(z'-y)} 
\end{equation*}
to get
\begin{align*}
 R_3&=-\frac{2 \mu}{(z'-z)^2}\int_{\C} \frac{1}{(z'-x)(z-x)}G(x;\z)d^2x-\frac{\mu^2 \gamma^2}{2} \frac{1}{(z'-z)}\int_{\C} \int_{\C} \frac{1}{(z'-x)^2(z'-y)}G(x,y;\z)d^2x d^2y \\
 & + 2 \mu \int_{\C} \frac{1}{(z'-x)^3 (z-x)}G(x;\z)d^2x  -\frac{\mu \gamma}{2}   \sum_{i=1}^N  \frac{\alpha_i}{z'-z_i} \int_{\C} \frac{1}{(z'-x)^2 (z-x)}G(x;\z)d^2x \\
 & -\frac{\mu^2 \gamma^2}{2} \frac{1}{(z'-z)^2}\int_{\C} \int_{\C} \frac{1}{(z'-x)(z'-y)}G(x,y;\z) d^2x d^2y .
 \end{align*}
We use the following identity to get rid of the $\int_{\C} \int_{\C} \frac{1}{(z'-x)^2(z'-y)}G(x,y;\z)  d^2x d^2y$ term 
\begin{align*}
 \frac{2 \mu}{z'-z} \int_{\C} \frac{1}{(z'-x)^3}G(x;\z)d^2x & =  \frac{\mu \gamma}{2}  \frac{1}{z'-z} \sum_{i=1}^N \alpha_i  \int_{\C} \frac{1}{(z'-x)^2 (x-z_i)}   G(x;\z)d^2x \\
 & -  \frac{\mu^2 \gamma^2}{2}  \frac{1}{z'-z}   \int_{\C} \int_{\C} \frac{1}{(z'-x)^2(z'-y)} G(x,y;\z) d^2x d^2y \\
\end{align*}
This leads to 
\begin{align*}
 R_3&=-\frac{2 \mu}{(z'-z)^2}\int_{\C} \frac{1}{(z'-x)(z-x)}G(x;\z)d^2x-  \frac{\mu \gamma}{2}  \frac{1}{z'-z} \sum_i \alpha_i  \int_{\C} \frac{1}{(z'-x)^2 (x-z_i)}   G(x;\z)d^2x \\
 & + 2 \mu \frac{1}{z'-z} \int_{\C} \frac{1}{(z'-x)^2 (z-x)}G(x;\z)d^2x  -\frac{\mu \gamma}{2}   \sum_{i=1}^N  \frac{\alpha_i}{z'-z_i} \int_{\C} \frac{1}{(z'-x)^2 (z-x)}G(x;\z)d^2x \\
 & -\frac{\mu^2 \gamma^2}{2} \frac{1}{(z'-z)^2}\int_{\C} \int_{\C} \frac{1}{(z'-x)(z'-y)}G(x,y;\z) d^2x d^2y.
 \end{align*}
Since
\begin{equation*}
2 \mu \frac{1}{z'-z} \int_{\C} \frac{1}{(z'-x)^2 (z-x)}G(x;\z)d^2x -\frac{2 \mu}{(z'-z)^2}\int_{\C} \frac{1}{(z'-x)(z-x)}G(x;\z)d^2x= -\frac{2 \mu }{(z'-z)^2}   \int_{\C} \frac{1}{(z'-x)^2}G(x;\z)d^2x
\end{equation*}
we finally get that 
\begin{align*}
 R_3&=-\frac{2 \mu }{(z'-z)^2}   \int_{\C} \frac{1}{(z'-x)^2}G(x;\z)d^2x-  \frac{\mu \gamma}{2}  \frac{1}{z'-z} \sum_{i=1}^N \alpha_i  \int_{\C} \frac{1}{(z'-x)^2 (x-z_i)}   G(x;\z)d^2x \\
 &   -\frac{\mu \gamma}{2}   \sum_{i=1}^N  \frac{\alpha_i}{z'-z_i} \int_{\C} \frac{1}{(z'-x)^2 (z-x)}G(x;\z)d^2x  -\frac{\mu^2 \gamma^2}{2} \frac{1}{(z'-z)^2}\int_{\C} \int_{\C} \frac{1}{(z'-x)(z'-y)}G(x,y;\z) d^2x d^2y.
 \end{align*}
The key identity \eqref{Theexactidt} gives us once again that
\begin{align*}
& -\frac{2 \mu }{(z'-z)^2}   \int_{\C} \frac{1}{(z'-x)^2}G(x;\z)d^2x -\frac{\mu^2 \gamma^2}{2} \frac{1}{(z'-z)^2}\int_{\C} \int_{\C} \frac{1}{(z'-x)(z'-y)}G(x,y;\z) d^2x d^2y  \\
& = -\mu \gamma  \frac{1}{(z'-z)^2} \sum_{i=1}^N \alpha_i \int_{\C} \frac{1}{(z'-x)(x-z_i)} G(x;\z)d^2x.
\end{align*}
This enables to simplify $R_3$ and therefore at the end we get the following expression for $R_2+R_3$
\begin{align*}
 R_2 +R_3& =\frac{\mu\gamma}{2}\frac{1}{(z'-z)^2}\sum_{i=1}^N \int_{\C} (\frac{{\alpha_i} }{z'-z_i}\frac{1}{z-x}+\frac{{\alpha_i} }{z-z_i}\frac{1}{z'-x}) G(x;\z) d^2x  \\
&    -  \frac{\mu \gamma}{2}  \frac{1}{z'-z} \sum_{i=1}^N \alpha_i  \int_{\C} \frac{1}{(z'-x)^2 (x-z_i)}   G(x;\z)d^2x   -\frac{\mu \gamma}{2}   \sum_{i=1}^N  \frac{\alpha_i}{z'-z_i} \int_{\C} \frac{1}{(z'-x)^2 (z-x)}G(x;\z)d^2x \\
 &    -\mu \gamma  \frac{1}{(z'-z)^2} \sum_{i=1}^N \alpha_i \int_{\C} \frac{1}{(z'-x)(x-z_i)} G(x;\z)d^2x - \frac{\mu \gamma}{2} \sum_{i=1}^N \frac{\alpha_i}{z-z_i}  \frac{1}{(z'-z_i)^2}  \int_{\C} \frac{1}{z'-x} G(x;\z)d^2x.
 \end{align*}
In the above expression, by a tedious (but rather straightforward) computation, we can simplify the fractions that appear in the integrands: this yields that
\begin{equation*}
 R_2 +R_3= \frac{\mu \gamma}{(z'-z)^2}  \sum_{i=1}^N  \frac{\alpha_i}{z'-z_i}  \int_{\C} \frac{1}{z_i-x} G(x;\z)d^2x+  \frac{\mu \gamma}{2(z'-z)}  \sum_{i=1}^N  \frac{\alpha_i}{(z'-z_i)^2}  \int_{\C} \frac{1}{z_i-x} G(x;\z)d^2x.
 \end{equation*} 

We can now get the final result by using identity \eqref{notationderivsum}.
\qed

\subsection{Holomorphic identity for BPZ}

To prepare for the proof of Theorem  \ref{BPZTH}, we will consider once again the key identity \eqref{Theexactidt} when one of the  insertion points  has  weight $-\frac{\gamma}{2}$ and is evaluated at $z$. In order to highlight the special role played by the insertion at point $z$, we will not use in this proof the shorthand  notation $G$ for the correlations. 
Also, we will slightly change the definitions behind  equality \eqref{Theexactidt} by adding in $A_\epsilon$ the term in $B_\epsilon$ which depends on the insertion $-(\frac{\gamma}{2},z)$. 
This leads to the following definitions: 
\begin{align} 
\bar{A}_\epsilon(z) & = - \int_{\C} \frac{1}{(z-x)^2}  \Big(  \langle V_{-\frac{\gamma}{2},\epsilon}(z)  V_{\gamma,\epsilon}(x)  \prod_{k=1}^N  V_{\alpha_k, \epsilon}(z_k)  \rangle_{ \epsilon} - 1_{|x-z| \leq 1}\langle  V_{-\frac{\gamma}{2},\epsilon}(z) V_{\gamma,\epsilon}(z)\prod_{k=1}^N  V_{\alpha_k, \epsilon}(z_k)  \rangle_{ \epsilon}\Big) d^2x \nonumber  \\
& +\frac{\gamma^2}{4} \int_{\C}  \frac{1}{(z-x)} \frac{1}{(z- x)_\epsilon}  \langle V_{-\frac{\gamma}{2},\epsilon}(z) V_{\gamma,\epsilon}(x)\prod_{k=1}^N  V_{\alpha_k,\epsilon}(z_k)  \rangle_{\epsilon}\,d^2x ,\label{defAbar}\\
\bar{B}_\epsilon(z)  
&=  \frac{\gamma}{2} \sum_{i=1}^N \alpha_i \:  \int_{\C} \frac{1}{(z-x)} \frac{1}{(x-z_i)_\epsilon}  \langle  V_{-\frac{\gamma}{2},\epsilon}(z) V_{\gamma,\epsilon}(x)\prod_{k=1}^N  V_{\alpha_k,\epsilon}(z_k)  \rangle_{\epsilon}\,d^2x, \\
\bar{C}_\epsilon (z)&= - \frac{\mu \gamma^2}{4}  \int_{\C} \int_{\C}   \frac{x-y}{(z-y)(z-x)}   \frac{1}{(x-y)_\epsilon} \langle V_{-\frac{\gamma}{2},\epsilon}(z)  V_{\gamma,\epsilon }(x)V_{\gamma,\epsilon}(y)\prod_{k=1}^N  V_{\alpha_k,\epsilon }(z_k)  \rangle_{\epsilon}\,d^2x  d^2y.
\end{align}
In \eqref{defAbar} we have written the Beurling transform for the smooth function directly without writing explicitly the limiting procedure. Notice that the first integral in the definition of $\bar{A}_\epsilon$ is the sum of two terms and that the contribution of the term involving $1_{|x-z| \leq 1}\dots $ is null by isotropy. 
 The key identity \eqref{Theexactidt} now reads
\begin{equation}\label{Thesameexactidt}
\bar{A}_{\epsilon}(z)+\bar{B}_{\epsilon}(z)+\bar{C}_{\epsilon}(z)=0.
\end{equation}  
 The $\epsilon\to 0$ limits are covered by the following Lemmas proven in Section \ref{sectionestimates}:

\begin{lemma}\label{appliBPZ1}
The function $x \mapsto \frac{1}{(z-x)^2}  \langle V_{-\frac{\gamma}{2}}(z)  V_{\gamma}(x)  \prod_{k=1}^N  V_{\alpha_k}(z_k)  \rangle   $ is integrable and we have
\begin{equation*}
\lim_{\epsilon\to 0}\bar{A}_{\epsilon}(z)=\bar{A}(z):=(\frac{\gamma^2}{4}-1) \int_{\C} \frac{1}{(z-x)^2}   \langle V_{-\frac{\gamma}{2}}(z)  V_{\gamma}(x)  \prod_{k=1}^N  V_{\alpha_k}(z_k)  \rangle d^2x   
\end{equation*}
uniformly  on compact subsets of $\mathbb{C} \setminus \lbrace z_1, \cdots z_n \rbrace$. \end{lemma}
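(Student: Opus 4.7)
The plan is to combine the two integrals defining $\bar A_\epsilon(z)$ into a single integrand, establish a dominant that is absolutely integrable and uniform in $\epsilon$, then pass to the limit by dominated convergence, and finally simplify via isotropy. Denote for brevity $H_\epsilon(x):=\langle V_{-\frac{\gamma}{2},\epsilon}(z)V_{\gamma,\epsilon}(x)\prod_kV_{\alpha_k,\epsilon}(z_k)\rangle_\epsilon$ and $H(x)$ its $\epsilon\to 0$ limit. Since $\frac{1}{(z-x)(z-x)_\epsilon}=\rho_\epsilon\ast\rho_\epsilon\ast\frac{1}{(z-x)^2}$ converges pointwise to $\frac{1}{(z-x)^2}$ for $x\neq z$ and is bounded by $C|z-x|^{-2}$ on $\{|z-x|\geq\epsilon\}$, the sum of the two integrals in $\bar A_\epsilon(z)$ should converge to
\begin{align*}
\bar A(z)=-\int_\C\frac{H(x)-1_{|x-z|\leq 1}H(z)}{(z-x)^2}d^2x+\frac{\gamma^2}{4}\int_\C\frac{H(x)}{(z-x)^2}d^2x=\Big(\frac{\gamma^2}{4}-1\Big)\int_\C\frac{H(x)}{(z-x)^2}d^2x,
\end{align*}
where the last equality uses the isotropy identity $\int_{|x-z|\leq1}\frac{d^2x}{(z-x)^2}=0$.

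The core of the argument consists in establishing two uniform-in-$\epsilon$ bounds. Near the diagonal, the $V_{-\frac{\gamma}{2}}V_{\gamma}\to V_{\frac{\gamma}{2}}$ fusion rule suggests the estimate
\[|H_\epsilon(x)|\leq C|x-z|^{\gamma^2/2}\quad\text{for }|x-z|\leq\delta,\]
in which the exponent $\gamma^2/2=2\Delta_{\frac{\gamma}{2}}-2\Delta_{-\frac{\gamma}{2}}-2\Delta_{\gamma}$ is exactly what is needed to dominate $|z-x|^{-2}|H_\epsilon(x)|$ by the integrable function $C|x-z|^{-2+\gamma^2/2}$, and to certify both that the $\epsilon>0$ counterterm is genuinely an isotropy-free correction and that the limiting integrand is absolutely integrable near $z$. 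Away from the diagonal, the kernel is bounded, and Proposition~\ref{1and2point}(b) applied to the $(N{+}1)$-point correlation with the extra insertion $V_{-\frac{\gamma}{2}}(z)$ yields an $L^p$-dominant ($p>1$) for $\sup_\epsilon H_\epsilon(x)$, together with polynomial decay at infinity. Splitting $\C$ into a small disc around $z$, small discs around each $z_k$, and the complement, and invoking Proposition~\ref{1and2point}(a) for the uniform convergence of $H_\epsilon\to H$ on compact subsets of $\C\setminus\{z,z_1,\dots,z_N\}$, dominated convergence delivers the claim on each piece. All bounds depend on $z$ only through quantities continuous in $z\in\C\setminus\{z_1,\dots,z_N\}$, which yields the uniform convergence on compact subsets stated in the lemma.

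The substantive obstacle is the fusion estimate $|H_\epsilon(x)|\leq C|x-z|^{\gamma^2/2}$ uniform in $\epsilon$. Heuristically, one extracts the Coulomb-gas prefactor $|x-z|^{\gamma^2/2}$ from \eqref{Z1} and must then bound uniformly the residual GMC negative moment $\E\big[(\int F(\cdot;z,x,\z)\,M_\gamma)^{-s}\big]$ as $x\to z$. The integrand $F$ develops a pair of colliding log-singularities of weights $-\gamma/2$ and $\gamma$ that combine into the weight-$\gamma/2$ singularity predicted by fusion, and controlling the negative moment requires a Girsanov shift together with fine multiscale/freezing estimates on Gaussian multiplicative chaos --- precisely the content of the fusion estimates of Section~\ref{sectionestimates}. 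Granted those estimates as a black box, the rest of the proof is the splitting-plus-dominated-convergence argument sketched above.
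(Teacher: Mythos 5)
Your architecture is the same as the paper's: the fusion bound with exponent $\gamma^2/2$ near $z$ (Proposition \ref{upperbound2} with $\beta_1=\gamma$, $\beta_2=-\frac{\gamma}{2}$), bounds $C|x-z_i|^{-2+\zeta}$ near the $z_i$ and decay at infinity (Propositions \ref{upperbound2} and \ref{lpinfty}), a splitting of $\C$ into small neighbourhoods of the singular points and their complement, uniform convergence of the correlations on the complement, and the isotropy identity $\int_{|x-z|\le 1}(z-x)^{-2}\,d^2x=0$ to discard the counterterm in the limit. (A minor slip: $\frac{1}{(z-x)}\frac{1}{(z-x)_\epsilon}$ is the product of the exact kernel with a single mollification, not $\rho_\epsilon\ast\rho_\epsilon\ast\frac{1}{(z-x)^2}$; the properties you actually use of this kernel are nonetheless correct. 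Also, the uniform convergence of $\caF_\epsilon(\cdot,z)$ on compacts away from $\{z,z_1,\dots,z_N\}$ is not the content of Proposition \ref{1and2point}(a), which concerns smoothness at fixed $\epsilon$.)

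There is, however, a genuine gap in your treatment of the first integral in \eqref{defAbar} at scale $|x-z|\le\epsilon$. The uniform-in-$\epsilon$ fusion estimate is $|\caF_\epsilon(x,z)|\le C|x-z|_\epsilon^{\gamma^2/2}$ with the regularized distance $|x-z|_\epsilon=|x-z|\vee\epsilon$; the bound $C|x-z|^{\gamma^2/2}$ that you state cannot hold for $\epsilon>0$, since $\caF_\epsilon(z,z)=(A\epsilon)^{\frac{\gamma^2}{2}}\langle V_{\frac{\gamma}{2},\epsilon}(z)\prod_{k=1}^NV_{\alpha_k,\epsilon}(z_k)\rangle_\epsilon>0$. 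Consequently, on $\{|x-z|\le\epsilon\}$ the numerator $|\caF_\epsilon(x,z)-\mathbf{1}_{|x-z|\le1}\caF_\epsilon(z,z)|$ is only $O(\epsilon^{\gamma^2/2})$, and $\epsilon^{\gamma^2/2}|z-x|^{-2}$ is not integrable there; no $\epsilon$-uniform integrable dominant of the form $C|x-z|^{-2+\gamma^2/2}$ exists, so dominated convergence does not close the argument on that region. The missing ingredient is the Lipschitz estimate of Proposition \ref{holdercorrel}, namely $|\caF_\epsilon(x,z)-\caF_\epsilon(z,z)|\le C\epsilon^{\frac{\gamma^2}{2}-1}|x-z|$ for $|x-z|\le\epsilon$, which (after using isotropy to remove the $\caF_\epsilon(z,z)$ part) shows that the contribution of $\{|x-z|\le\epsilon\}$ is $O(\epsilon^{\gamma^2/2})\to0$; this, combined with the cut-off at scale $\delta$ and the estimate $\sup_\epsilon|a_\epsilon(z)-a_{\epsilon,\delta}(z)|\le C_K\delta^b$, is how the paper concludes. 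Your ``black box'' from Section \ref{sectionestimates} must therefore include Proposition \ref{holdercorrel} and not only the fusion bound of Proposition \ref{upperbound2}.
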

\medskip

\begin{lemma}\label{appliBPZ2}
The function $(x,y) \mapsto \frac{1}{(z-y)(z-x)}  \langle V_{-\frac{\gamma}{2}}(z)  V_{\gamma}(x) V_{\gamma}(y)  \prod_{k=1}^N  V_{\alpha_k}(z_k)  \rangle  $ is  integrable and
\begin{equation*}
\lim_{\epsilon\to 0}\bar{C}_{\epsilon}(z)=\bar{C}(z):=- \frac{\mu \gamma^2}{4} \int_{\C} \frac{1}{(z-y)(z-x)}    \langle V_{-\frac{\gamma}{2}}(z)  V_{\gamma}(x) V_{\gamma}(y) \prod_{k=1}^N  V_{\alpha_k}(z_k)  \rangle d^2xd^2y   
\end{equation*}
uniformly  on compact subsets of $\mathbb{C} \setminus \lbrace z_1, \cdots z_n \rbrace$.

\end{lemma}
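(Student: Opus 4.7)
The approach is to apply the dominated convergence theorem after securing an $\epsilon$-uniform integrable majorant for the integrand in the definition of $\bar{C}_\epsilon(z)$. Pointwise convergence of the integrand (away from the singular locus) follows from standard GMC theory, as already invoked repeatedly earlier in the paper; the heart of the argument is obtaining the majorant.

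First I would observe that $h_\epsilon(u) := u/(u)_\epsilon$ is uniformly bounded in $\epsilon$ (a direct computation on the mollification $\rho_\epsilon \ast (1/z)$) and converges to $1$ pointwise away from $u=0$, so the factor $h_\epsilon(x-y)$ appearing in the integrand of $\bar{C}_\epsilon$ contributes no obstruction. The main step is then to extract from the fusion estimates of Section~\ref{sectionestimates} an $\epsilon$-uniform bound of the form
\begin{equation*}
\langle V_{-\gamma/2,\epsilon}(z)\, V_{\gamma,\epsilon}(x)\, V_{\gamma,\epsilon}(y) \prod_{k=1}^N V_{\alpha_k,\epsilon}(z_k)\rangle_\epsilon \leq C\,\Phi(x,y;z,\mathbf{z}),
\end{equation*}
with $\Phi$ controlled by the expected OPE exponents: $\Phi \lesssim |x-y|^{-\gamma^2+\eta}$ as $x\to y$; $\Phi \lesssim |x-z|^{\gamma^2/2-\eta}$ as $x\to z$, which is a positive power since the negative weight $-\gamma/2$ at $z$ couples attractively with $V_\gamma$, and similarly for $y\to z$; $\Phi \lesssim |x-z_k|^{-\gamma\alpha_k+\eta}$ as $x\to z_k$, and similarly for $y\to z_k$; together with polynomial decay at infinity dictated by the Seiberg bound $\sum_k \alpha_k-\gamma/2 > 2Q$. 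A region-by-region check then shows that $(x,y)\mapsto \frac{1}{|z-x||z-y|}\,\Phi(x,y;z,\mathbf{z})$ lies in $L^1(\C^2)$: near the diagonal $x\approx y$ the exponent stays strictly above $-2$; near each $z_k$ the exponent likewise stays above $-2$; near $z$ the integrand is in fact bounded; and the Seiberg decay beats the mild growth contributed by $\frac{1}{|z-x||z-y|}$.

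Dominated convergence then delivers $\bar{C}_\epsilon(z) \to \bar{C}(z)$ for each fixed $z \in \C \setminus \{z_1,\ldots,z_N\}$. For uniformity in $z$ on a compact set $K \subset \C \setminus \{z_1,\ldots,z_N\}$, the same argument works verbatim after replacing $\frac{1}{|z-x||z-y|}$ by $\sup_{z\in K}\frac{1}{|z-x||z-y|}$; this remains integrable against $\Phi$ since $K$ is a positive distance away from every $z_k$, so a single majorant dominates the integrand for all $z\in K$ simultaneously.

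The main obstacle is the diagonal behavior $x \to y$: the naive exponent $-\gamma^2$ approaches the critical value $-2$ in the parameter range where Theorem~\ref{theo4point} is applied, leaving little margin, and any logarithmic corrections must be absorbed into an arbitrarily small power. Producing a majorant with exponent strictly better than $-2$ and controlling the corrections uniformly in $\epsilon$ is exactly what the estimates of Section~\ref{sectionestimates} are designed to deliver, and this is the technical crux on which the lemma rests. Once those estimates are in hand, the remainder of the argument is a routine application of dominated convergence together with the standard convergence of regularized vertex operator correlations.
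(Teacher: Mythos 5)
Your overall strategy---an $\epsilon$-uniform integrable majorant extracted from the fusion estimates of Section \ref{sectionestimates}, followed by dominated convergence, with uniformity on a compact $K$ obtained by taking the supremum of $\frac{1}{|z-x||z-y|}$ over $z\in K$---is exactly the route the paper takes, and the treatment of the diagonal, of the insertions $z_k$, and of infinity (via Propositions \ref{upperbound2}, \ref{lpinfty} and \ref{2merginfinity}) matches the paper's proof.

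The one place where your argument as written would not go through is the region where $x$ and $y$ collide with $z$ \emph{simultaneously}. There you posit a majorant in product form, $|x-y|^{-\gamma^2+\eta}\,|x-z|^{\gamma^2/2-\eta}\,|y-z|^{\gamma^2/2-\eta}$, i.e.\ the product of the pairwise OPE factors $|x_i-x_j|^{-\beta_i\beta_j}$. That bound is only what the fusion estimates deliver in the regime where all partial sums of the weights stay below $Q$ (case (d) of Proposition \ref{upperbound3}), which for $\beta_1=\beta_2=\gamma$, $\beta_3=-\gamma/2$ requires $2\gamma<Q$, i.e.\ $\gamma^2<4/3$. For $2\gamma\geq Q$ the exponents freeze: the diagonal factor becomes $|x-y|^{Q^2/2-4}$ and, depending on the ordering of $|x-y|,|x-z|,|y-z|$ and on whether $3\gamma/2\geq Q$, the compensating positive powers attach to $|y-z|$ or $|x-z|$ in a way that is \emph{not} a product of two-point factors (e.g.\ the bound $|x-y|_\epsilon^{Q^2/2-4}\,|y-z|_\epsilon^{1+\gamma^2/4}/(|x-z||y-z|)$ in the case $|x-y|\leq|x-z|\leq|y-z|$ carries no positive power of $|x-z|$ at all). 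Note also that dropping the positive factors and keeping only the diagonal one fails outright for $\gamma^2>2$, since $|x-y|^{-\gamma^2}/(|x-z||y-z|)$ scales as $r^{2-\gamma^2}$ near the triple point; so the three-point structure of the bound is genuinely needed, and the ordering-based case analysis of Proposition \ref{upperbound3} is the content of the paper's proof rather than a detail one can defer. (A small further slip: near $x\to z$ the integrand $|x-z|^{\gamma^2/2-1}$ is integrable but not bounded when $\gamma<\sqrt2$.) With the correct frozen exponents the region-by-region integrability check still closes, so the conclusion stands, but the verification must be run against the actual statements of Proposition \ref{upperbound3} rather than the product ansatz.
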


We can now take the limit in \eqref{Thesameexactidt}: this yields for all $z\in \mathbb{C} \setminus \lbrace z_1, \cdots z_n \rbrace$
\begin{equation*}
\bar{A}(z)+\bar{B}(z)+\bar{C}(z)=0
\end{equation*}  
where $\bar{A}$, $\bar{C}$ have been defined in the previous two lemmas and $\bar{B}$ is given by the following expression
\begin{equation}
\bar B(z) =  \frac{\gamma}{2\pi} \sum_{i=1}^N \frac{\alpha_i}{z-z_i} (  \int_{\C} \frac{1}{z-x}   \langle V_{-\frac{\gamma}{2}}(z)  V_{\gamma}(x) \prod_{k=1}^N  V_{\alpha_k}(z_k)  \rangle d^2x   -\int_{\C} \frac{1}{z_i-x}   \langle V_{-\frac{\gamma}{2}}(z)  V_{\gamma}(x) \prod_{k=1}^N  V_{\alpha_k}(z_k)  \rangle d^2x)\label{barB}
\end{equation}
where recall that
\begin{equation*}
\int_{\C} \frac{1}{z_i-x}   \langle V_{-\frac{\gamma}{2}}(z)  V_{\gamma}(x) \prod_{k=1}^N  V_{\alpha_k}(z_k)  \rangle d^2x= \underset{\epsilon \to 0}{\lim} \int_{\C} \frac{1}{(z_i-x)_{\epsilon,\epsilon}}   \langle V_{-\frac{\gamma}{2},\epsilon}(z)  V_{\gamma,\epsilon}(x) \prod_{k=1}^N  V_{\alpha_k,\epsilon}(z_k)  \rangle_\epsilon d^2x
\end{equation*}
and that the above limit exists by lemma \ref{1stder}. Also recall that by lemma \ref{1stder} we get  
\begin{align}
\partial_{z_i}   \langle  V_{-\frac{\gamma}{2}} (z) \prod_{k=1}^N  V_{\alpha_k  }(z_k)\rangle&=\frac{\gamma\alpha_i}{4} \frac{1}{z_i-z} - \frac{1}{2}\sum_{j; j \not = i}^N  \frac{ \alpha_i \alpha_j}{z_i-z_j} \langle V_{-\frac{\gamma}{2}} (z) \prod_{k=1}^N  V_{\alpha_k}(z_k)  \rangle\nonumber\\
&+ \frac{\mu \gamma \alpha_i}{2}   \int_{\C} \frac{1}{z_i-x}  \langle V_{-\frac{\gamma}{2}}(z)  V_{\gamma}(x) \prod_{k=1}^N  V_{\alpha_k}(z_k)  \rangle d^2x  \label{remindderiv}.
\end{align}

\subsection{Proof of Theorem \ref{BPZTH}}

From eq. \eqref{notationderivsum} we get for $z\in  \mathbb{C} \setminus \lbrace z_1, \cdots z_n \rbrace$
\begin{align}
 \partial_{z}\langle   V_{- \frac{\gamma}{2}}(z) \prod_{k=1}^N V_{\alpha_k}(z_k)   \rangle &=\caF_1(z)   - \frac{\mu \gamma^2}{4}\caF_2 (z)\label{derco}
 \end{align}
 with
\begin{align*}  &\caF_1(z)  := \frac{\gamma}{4}\sum_{i=1}^N  \frac{\alpha_i}{(z-z_i)}  \langle   V_{- \frac{\gamma}{2}}(z) \prod_{k=1}^N V_{\alpha_k}(z_k)\rangle,
\\
 &\caF_2 (z)  :=
  \int_{\C}   \frac{1}{z- x}   \langle   V_{- \frac{\gamma}{2}}(z) V_{\gamma} (x)\prod_{k=1}^N V_{\alpha_k}(z_k)   \rangle d^2x 
\end{align*} 
$\caF_1$  is $C^1$ in $\mathbb{C} \setminus \lbrace z_1, \cdots z_n \rbrace$ and using \eqref{notationderivsum}  its derivative is given by
\begin{align*}
\partial_z\caF_1(z)&=  -\frac{\gamma}{4}\sum_{i=1}^N  \frac{\alpha_i}{(z-z_i)^2}  \langle   V_{- \frac{\gamma}{2}}(z) \prod_{k=1}^N V_{\alpha_k}(z_k)   \rangle ,\\
& +  \frac{\gamma^2}{16}  \sum_{i,j=1}^N  \frac{\alpha_i\alpha_j}{(z-z_i)(z-z_j)}\langle  V_{- \frac{\gamma}{2}}(z) \prod_{k=1}^N V_{\alpha_k}(z_k)   \rangle \\
& - \frac{\mu \gamma^3}{16} \sum_{i=1}^N  \frac{\alpha_i}{(z-z_i)}  \int_{\C} \frac{1}{z-x}   \langle   V_{- \frac{\gamma}{2}}(z)V_\gamma(x) \prod_{k=1}^N V_{\alpha_k}(z_k)   \rangle d^2x.
\end{align*}
Let
\begin{equation*}
 \caF_{2,\epsilon} (z) =
  \int_{\C}   \frac{1}{(z- x)_{\epsilon,\epsilon}}   \langle   V_{- \frac{\gamma}{2},\epsilon}(z) V_{\gamma,\epsilon} (x)\prod_{k=1}^N V_{\alpha_k,\epsilon}(z_k)   \rangle_\epsilon d^2x
\end{equation*}  
Then    $\caF_{2,\epsilon}$ and  $\partial_z\caF_{2,\epsilon}$ converge to  $\caF_2$ and $\partial_z\caF_{2}$  uniformly in compact subsets of $ \mathbb{C} \setminus \lbrace z_1, \cdots z_n \rbrace$.  Differentiating and using \eqref{derivativecorr1} we obtain
\begin{align*}
\partial_z  \caF_{2,\epsilon} (z)&=     \int_{\C} \partial_z \frac{1}{(z-x)_{\epsilon,\epsilon}}     \langle    V_{- \frac{\gamma}{2},\epsilon}(z)  V_{\gamma,\epsilon} (x) \prod_{k=1}^N V_{\alpha_k,\epsilon}(z_k)   \rangle_\epsilon  d^2x\\
&+ \frac{ \gamma^2}{4}   \int_{\C} (\frac{1}{(z-x)_{\epsilon,\epsilon}})^2     \langle    V_{- \frac{\gamma}{2},\epsilon}(z)  V_{\gamma,\epsilon} (x) \prod_{k=1}^N V_{\alpha_k,\epsilon}(z_k)   \rangle_\epsilon  d^2x\\
&+\frac{ \gamma}{4}\sum_{i=1}^N\frac{\alpha_i}{(z-z_i)_{\epsilon,\epsilon}}\int_{\C} \frac{1}{(z-x)_{\epsilon,\epsilon}}     \langle    V_{- \frac{\gamma}{2},\epsilon}(z)  V_{\gamma,\epsilon} (x) \prod_{k=1}^N V_{\alpha_k,\epsilon}(z_k)   \rangle_\epsilon  d^2x\\
&  - \frac{\mu  \gamma^2}{4} \int_{\C}   \int_{\C}  \frac{1}{(z-y)_{\epsilon,\epsilon}}  \frac{1}{(z-x)_{\epsilon,\epsilon}}    \langle    V_{- \frac{\gamma}{2},\epsilon}(z)  V_{\gamma,\epsilon} (x)V_{\gamma,\epsilon} (y) \prod_{k=1}^N V_{\alpha_k,\epsilon}(z_k)   \rangle_\epsilon  d^2x  d^2y.
\end{align*}
In the same way as in Lemmas \ref{appliBPZ1} and \ref{appliBPZ2} we get 
\begin{equation*}
\lim_{\epsilon\to 0}\partial_z  \caF_{2,\epsilon}(z)= \bar A(z)+\bar C(z)+\frac{ \gamma}{4}\sum_{i=1}^N\frac{\alpha_i}{z-z_i}\int_{\C} \frac{1}{z-x}     \langle    V_{- \frac{\gamma}{2}}(z)  V_{\gamma} (x) \prod_{k=1}^N V_{\alpha_k}(z_k)   \rangle  d^2x.
\end{equation*}
 In conclusion, we get the following for  $ z\in\mathbb{C} \setminus \lbrace z_1, \cdots z_n \rbrace$:
 \begin{align*}
&  \frac{4}{\gamma^2}   \partial_{zz}^2 \langle    V_{- \frac{\gamma}{2}}(z)   \prod_{k=1}^N V_{\alpha_k}(z_k)   \rangle  \\
& =  -\frac{1}{\gamma} \sum_{i=1}^N \frac{\alpha_i}{(z-z_i)^2} \langle    V_{- \frac{\gamma}{2}}(z)   \prod_{k=1}^N V_{\alpha_k}(z_k)   \rangle  +\frac{1}{4}\sum_{i,j=1}^N  \frac{\alpha_i \alpha_j}{(z-z_i) (z-z_j) }  \langle  V_{- \frac{\gamma}{2}}(z) \prod_{k=1}^N V_{\alpha_k}(z_k)   \rangle  \\ 
& -\frac{\mu \gamma}{2}  \sum_{i=1}^N  \frac{\alpha_i}{z-z_i}\int_{\C} \frac{1}{z-x} \langle  V_{- \frac{\gamma}{2}}(z) V_{\gamma}(x) \prod_{k=1}^N V_{\alpha_k}(z_k)   \rangle    d^2x  -\mu (\bar A(z)+\bar C(z))
\end{align*} 
Now using $ -\mu(\bar A(z)+\bar C(z))=\mu\bar B(z)$ and recalling \eqref{barB} and \eqref{remindderiv} and collecting terms the BPZ equation holds. 

.\qed

\subsection{Proof of Theorem \ref{theo4point}}

Combining \eqref{Z1}  with  \eqref{Climit} we get the following expression for the three point structure constant:
\begin{equation*}
C_\gamma(\alpha_1,\alpha_2,\alpha_3)=B(\alpha_1,\alpha_2,\alpha_3)\mu^{-s}  \gamma^{-1}\Gamma(s)  \E (\rho(\alpha_1,\alpha_2,\alpha_3)^{-s})
\end{equation*}
where
\begin{equation*}
\rho(\alpha_1,\alpha_2,\alpha_3)= e^{\frac{\chi \gamma^2}{2}} \int_{\C} e^{\gamma X(x)- \frac{\gamma^2}{2} \E[X(x)^2] } \frac{1}{ |x|^{\gamma \alpha_1}  |x-1|^{\gamma \alpha_2}  }  \hat{g}(x)^{1 - \frac{\gamma}{4} \sum_{k=1}^3 \alpha_k } d^2x.
\end{equation*}
and $B(\alpha_1,\alpha_2,\alpha_3)$ is given by \eqref{Adefi}. Similarly, the function $G$ defined in  \eqref{Glimit} becomes
 $$
  G(z)=|z|^{\frac{\gamma \alpha_1}{2}}   |z-1|^{\frac{\gamma \alpha_2}{2}}  \mathcal{T}(z) 
  $$ 
  where $ \mathcal{T}(z)$ is given by 
\begin{align}\label{Tdefi}
  \mathcal{T}(z) &=B(-\frac{_\gamma}{^2},\alpha_1,\alpha_2,\alpha_3) \mu^{-s+\frac{1}{2}}  \gamma^{-1}\Gamma(s-\frac{1}{2})  \E [R(z)^{\frac{1}{2}-s}]
 \end{align}
and
\begin{equation*}
R(z)= e^{\frac{\chi \gamma^2}{2}} \int_{\C} e^{\gamma X(x)- \frac{\gamma^2}{2} \E[X(x)^2] } \frac{|x-z|^{\frac{\gamma^2}{2}}}{ |x|^{\gamma \alpha_1}  |x-1|^{\gamma \alpha_2}  }  \hat{g}(x)^{1+\frac{\gamma^2}{8} - \frac{\gamma}{4} \sum_{k=1}^3 \alpha_k } d^2x.
\end{equation*}
The function $\caT$ is positive and $C^2$ on $\C\setminus\{0,1\}$.
Note also that 
\begin{equation}\label{T(0)}
\mathcal{T}(0)=C_\gamma(\alpha_1-\frac{_\gamma}{^2},\alpha_2,\alpha_3).
\end{equation}
The BPZ equation  applied to the expression \eqref{confinv} yields the following equation for $G$
\begin{equation*}
\frac{4}{\gamma^2} \partial_{z}^2 G(z)  - (\frac{1}{z}+\frac{1}{z-1})\partial_z G(z) + \left ( \frac{\Delta_1}{z^2}+\frac{\Delta_2}{(z-1)^2} + \frac{\Delta_3-\Delta_2-\Delta_1-\Delta_{-\frac{\gamma}{2}}}{z(z-1)}   \right ) G(z)=0.
\end{equation*}
 and, after a bit of calculus, we see that the function $\mathcal{T}$ is a   solution of a PDE version of the Gauss hypergeometric equation
\begin{equation}\label{hypergeo}
\partial_{z}^2 \caT(z)+  \frac{({ c}-z({ a}+{ b}+1))}{z(1-z)}\partial_z \caT(z) -\frac{{ a}{ b} }{z(1-z)}\caT(z)=0
\end{equation}
where ${ a},{ b},{ c}$ are given by \eqref{defab} and \eqref{defc}. In the Appendix we prove
\begin{lemma}\label{lemmaPDEsbis}
Let $a,b,c$ be real parameters such that
\begin{equation}\label{Condparameters}
c \in \R \setminus \Z, \quad c-a-b \in  \R \setminus \Z
\end{equation}
Then, the every solution of equation \eqref{hypergeo} in $\C \setminus \lbrace 0,1 \rbrace$ 
can be written as
\begin{equation*}
\caT(z)= \lambda_1  | F_{-}(z) |^2+ \lambda_2  | F_{+}(z) |^2
\end{equation*}
where $\lambda_1, \lambda_2$ are related by the following relation
\begin{equation}\label{Fundrelation}
\lambda_1   \frac{\Gamma(c)^2}{   \Gamma(c-a)  \Gamma(c-b)  \Gamma (a)  \Gamma(b) }+ \lambda_2   \frac{\Gamma(2-c)^2}{   \Gamma(1-a)  \Gamma(1-b)  \Gamma(a-c+1)  \Gamma(b-c+1) }=0
\end{equation}
In particular, the solution space is one dimensional if one of the coefficients in front of the $\lambda_i$ is non zero.
\end{lemma}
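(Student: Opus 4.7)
The plan is to exploit the holomorphicity of equation (\ref{hypergeo}), which only involves $\partial_z$: for each fixed $\bar z$ it is a second-order ODE in $z$ whose solution space is two-dimensional, spanned by $F_-(z)$ and $F_+(z)$. Since the lemma is applied to the real-valued function $\caT$ of (\ref{Tdefi}) (reality being implicit in the LCFT context), taking the complex conjugate of (\ref{hypergeo}) and using $a,b,c \in \R$ shows that $\caT$ also satisfies the analogous equation in $\bar z$. Consequently $\caT$ lies in the four-dimensional tensor-product solution space, and I would write it globally in the form
\begin{equation*}
\caT(z, \bar z) = \sum_{i, j \in \{-, +\}} c_{ij}\, F_i(z) F_j(\bar z),
\end{equation*}
where the condition $\caT = \overline{\caT}$, combined with $\overline{F_k(z)} = F_k(\bar z)$ for real parameters, imposes $c_{ij} = \overline{c_{ji}}$.

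First I would impose single-valuedness around $z = 0$. A loop $z \mapsto e^{2\pi i} z$ sends $\bar z \mapsto e^{-2\pi i} \bar z$, under which $F_-$ remains invariant while the prefactor $z^{1-c}$ in $F_+$ picks up $e^{2\pi i(1-c)}$ and $\bar z^{1-c}$ in $F_+(\bar z)$ picks up the opposite phase. The diagonal terms $|F_\pm|^2$ are thus monodromy-invariant, while the cross terms $F_-(z) F_+(\bar z)$ and $F_+(z) F_-(\bar z)$ acquire phases $e^{\mp 2\pi i (1-c)} \neq 1$ by $c \notin \Z$. Linear independence of the four basis functions $F_i(z) F_j(\bar z)$ forces $c_{-+} = c_{+-} = 0$, and reality makes $\lambda_1 := c_{--}$ and $\lambda_2 := c_{++}$ real. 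Thus $\caT = \lambda_1 |F_-(z)|^2 + \lambda_2 |F_+(z)|^2$ in a punctured neighborhood of $0$, and by analytic continuation the identity propagates to all of $\C \setminus \{0, 1\}$.

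Finally, I would extract (\ref{Fundrelation}) from single-valuedness around $z = 1$. Introducing the Kummer basis $G_-(z) = {}_2F_1(a, b, a+b-c+1, 1-z)$ and $G_+(z) = (1-z)^{c-a-b}\, {}_2F_1(c-a, c-b, c-a-b+1, 1-z)$, together with the classical connection formulas $F_\pm(z) = C_{\pm, -}\, G_-(z) + C_{\pm, +}\, G_+(z)$ whose coefficients are the Gamma-function ratios
\begin{equation*}
C_{--} = \tfrac{\Gamma(c)\Gamma(c-a-b)}{\Gamma(c-a)\Gamma(c-b)},\ \ C_{-+} = \tfrac{\Gamma(c)\Gamma(a+b-c)}{\Gamma(a)\Gamma(b)},\ \ C_{+-} = \tfrac{\Gamma(2-c)\Gamma(c-a-b)}{\Gamma(1-a)\Gamma(1-b)},\ \ C_{++} = \tfrac{\Gamma(2-c)\Gamma(a+b-c)}{\Gamma(1+a-c)\Gamma(1+b-c)},
\end{equation*}
I substitute into $\lambda_1 |F_-|^2 + \lambda_2 |F_+|^2$ and regroup in the basis $\{|G_-|^2, |G_+|^2, G_-\overline{G_+}, G_+\overline{G_-}\}$. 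A loop around $z = 1$ gives opposite phases $e^{\mp 2\pi i (c-a-b)} \neq 1$ to the two cross terms, so their common coefficient $\lambda_1 C_{--} C_{-+} + \lambda_2 C_{+-} C_{++}$ must vanish. Factoring out the nonzero scalar $\Gamma(c-a-b)\Gamma(a+b-c)$ (nonzero since $c-a-b \notin \Z$) yields exactly (\ref{Fundrelation}).

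The main conceptual hurdle is the initial reduction to the finite-dimensional tensor-product space, which rests on invoking reality (equivalently, the second PDE in $\bar z$). If one wishes to avoid reality a priori, the reduction can be carried out by working locally with the general solution $\caT = A(\bar z) F_-(z) + B(\bar z) F_+(z)$, imposing single-valuedness at $z = 0$ to force $B(\bar z) = \bar z^{1-c} \tilde B(\bar z)$ with $A, \tilde B$ holomorphic, and then expanding in power series in $\bar z$ and matching coefficients of $z^i \bar z^j$ under the reality condition to identify $A(\bar z) = \lambda_1 F_-(\bar z)$ and $\tilde B(\bar z) = \lambda_2\, {}_2F_1(1+a-c, 1+b-c, 2-c, \bar z)$.
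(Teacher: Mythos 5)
Your argument is correct, and its endgame coincides with the paper's: both pass to the Kummer basis at $z=1$ via the same connection coefficients and force the cross-term coefficient $\lambda_1 C_{--}C_{-+}+\lambda_2 C_{+-}C_{++}$ to vanish, which after dividing by $\Gamma(c-a-b)\Gamma(a+b-c)$ (finite and nonzero by \eqref{Condparameters}) is exactly \eqref{Fundrelation}; you phrase the obstruction as monodromy invariance, the paper as continuity of the solution and of its normal derivative across the cuts $(-\infty,0]$ and $[1,\infty)$, and these are equivalent. Where you genuinely diverge is the reduction to the four-dimensional space. The paper argues a priori: it upgrades distributional solutions to real-analytic ones by noting that $\partial_{\bar z}^2$ applied to \eqref{hypergeo} gives an analytically hypoelliptic system with symbol $\Delta^2$, then splits \eqref{hypergeo} into real and imaginary parts to get a hyperbolic system along $\R$ whose Cauchy data $(u(x,0),u_y(x,0))$ are confined to a $(1+3)$-dimensional space, and only afterwards exhibits four explicit solutions and discards two of them by the continuity check on $\R_-$. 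You instead use the holomorphic factorization: since \eqref{hypergeo} involves only $\partial_z$ with coefficients holomorphic in $z$, the Wronskian trick gives locally $\caT=A(\bar z)F_-(z)+B(\bar z)F_+(z)$ with $A,B$ anti-holomorphic, and reality of $\caT$ (hence the conjugate equation in $\bar z$) pins $A,B$ into the conjugate two-dimensional solution space. Your route is the standard CFT degenerate-field argument; it is more structural, identifies the four basis solutions as the products $F_i(z)\overline{F_j(z)}$ from the start, and generalizes verbatim to higher-order BPZ equations, whereas the paper's dimension count avoids any talk of ``fixing $\bar z$'' at the price of a more ad hoc computation. Two points you should tighten: the opening phrase ``for each fixed $\bar z$ it is an ODE'' is not literally meaningful for a non-analytic function of two real variables — the correct statement is the Wronskian reduction you sketch at the end, which requires $\caT\in C^2$ (available here for the $\caT$ of \eqref{Tdefi}, and in general supplied by hypoellipticity as in the paper); and you should state explicitly that the representation in the basis $\{F_i(z)\overline{F_j(z)}\}$ is unique (nonvanishing Wronskians), since that is what lets single-valuedness act on the coefficients and kill the cross terms.
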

In order to determine  $\lambda_1, \lambda_2$ we study $\caT(z)$ near the origin. From the behaviour of  $F_\pm$ we get
\begin{equation*}
 \lambda_1 |F_{-}(z)|^2 +\lambda_2  |F_{+}(z)|^2  = \lambda_1+ \lambda_2 |z|^{2(1-c)} + o(|z|^{2(1-c)}  ) .
\end{equation*}
On the other hand we have
\begin{lemma}\label{asyex}
The function $\mathcal{T}(z)$ has the following expansion around $z=0$
\begin{equation}
\mathcal{T}(z)= C_\gamma(\alpha_1-\frac{\gamma}{2},\alpha_2,\alpha_3) -\mu  \frac{\pi}{  l(-\frac{\gamma^2}{4}) l(\frac{\gamma \alpha_1}{2})  l(2+\frac{\gamma^2}{4}- \frac{\gamma \alpha_1}{2}) } C_\gamma(\alpha_1+\frac{\gamma}{2}, \alpha_2,\alpha_3) |z|^{2(1-c)}+ o(|z|^{2(1-c)})\label{z=0exp}.
\end{equation}
\end{lemma}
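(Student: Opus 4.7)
Lemma~\ref{asyex} is an operator-product-expansion statement for the collision $V_{-\gamma/2}(z)V_{\alpha_1}(0)$ as $z\to 0$. Starting from the probabilistic formula \eqref{Tdefi} for $\caT(z)$, I would isolate the singular contribution to $\caT(z)-\caT(0)$ coming from the region where the GMC variable $x$ is close to $0$, show that it scales exactly like $|z|^{2(1-c)}=|z|^{\gamma(Q-\alpha_1)}$, and identify the resulting coefficient as a product of an explicit Dotsenko--Fateev integral with $C_\gamma(\alpha_1+\gamma/2,\alpha_2,\alpha_3)$.

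\textbf{Localization.} Fix a small $\delta>0$ and split $R(z)=R^{\rm out}_\delta(z)+R^{\rm in}_\delta(z)$ according to $\{|x|\geq\delta\}$ versus $\{|x|<\delta\}$. On the outer region the factor $|x-z|^{\gamma^2/2}$ is smooth in $z$ near $0$, uniformly in $x$, so $R^{\rm out}_\delta(z)-R^{\rm out}_\delta(0)=O(|z|)$ almost surely. The hypothesis $\alpha_1>Q-\frac{1}{\gamma}$ is equivalent to $2(1-c)<1$, so once inserted into $\caT(z)-\caT(0)$ the bulk piece only contributes to the $o(|z|^{2(1-c)})$ remainder.

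\textbf{Rescaling the inner part.} In $R^{\rm in}_\delta(z)$ I would substitute $x=zu$. The Jacobian $|z|^2$, the factor $|x-z|^{\gamma^2/2}=|z|^{\gamma^2/2}|u-1|^{\gamma^2/2}$ and the factor $|x|^{-\gamma\alpha_1}=|z|^{-\gamma\alpha_1}|u|^{-\gamma\alpha_1}$ combine to produce exactly $|z|^{2+\gamma^2/2-\gamma\alpha_1}=|z|^{2(1-c)}$. Because one is really analyzing the difference $R(z)-R(0)$, the surviving integrand contains $(|u-1|^{\gamma^2/2}-|u|^{\gamma^2/2})$, which decays like $|u|^{\gamma^2/2-1}$ at infinity; this is integrable against $|u|^{-\gamma\alpha_1}$ precisely under $\alpha_1>Q-\frac{1}{\gamma}$. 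The remaining integrand then converges, as $z\to 0$, to the product of the deterministic kernel $(|u-1|^{\gamma^2/2}-|u|^{\gamma^2/2})|u|^{-\gamma\alpha_1-\gamma^2/2}$ and the full-plane GMC integrand that defines $C_\gamma(\alpha_1+\gamma/2,\alpha_2,\alpha_3)$.

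\textbf{Identification of the constant and main obstacle.} The $u$-integral of the deterministic kernel is a classical Dotsenko--Fateev one-variable integral which evaluates to a product of $\Gamma$-ratios; combining this with the prefactors $B(\ldots)$, $\Gamma(s\pm\frac12)$ and $\mu$-powers from the definitions of $\caT$ and $C_\gamma$, and using the reflection identity $l(x)l(1-x)=1$ to clean up the $\Gamma$-factors, reproduces the coefficient $-\mu\pi/\bigl(l(-\gamma^2/4)\,l(\gamma\alpha_1/2)\,l(2+\gamma^2/4-\gamma\alpha_1/2)\bigr)$ in \eqref{z=0exp}, together with the factor $C_\gamma(\alpha_1+\gamma/2,\alpha_2,\alpha_3)$. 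The technical heart of the argument is to transfer this pointwise asymptotic for $R(z)-R(0)$ into an asymptotic for the nonlinear functional $\E[R(z)^{1/2-s}]$. Since $u\mapsto u^{1/2-s}$ is neither bounded nor Lipschitz on $\R_+$ and $R(0)$ has nontrivial probability of being arbitrarily small, I would Taylor expand $R(z)^{1/2-s}$ around $R(0)$ and bound the remainder by the negative-moment estimates of Proposition~\ref{1and2point} together with the sharp fusion estimates of Section~\ref{sectionestimates}; controlling the cross term between the outer and inner pieces of $R(z)-R(0)$ is the main analytic obstacle, and it is precisely at this step that the restriction $\alpha_1>Q-\frac{1}{\gamma}$ is essential.
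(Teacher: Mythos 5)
Your skeleton is right in several places: the outer region does contribute only $O(|z|)=o(|z|^{2(1-c)})$ since $0<2(1-c)<1$ on the stated range of $\alpha_1$; the change of variables $x=zu$ does produce the exponent $2+\frac{\gamma^2}{2}-\gamma\alpha_1=2(1-c)$; the kernel $|u-1|^{\gamma^2/2}-|u|^{\gamma^2/2}$ is integrable against $|u|^{-\gamma\alpha_1}$ exactly when $Q-\frac{1}{\gamma}<\alpha_1<Q-\frac{\gamma}{2}$; and the constant is indeed a one-variable complex Selberg/Dotsenko--Fateev integral, evaluated in the paper by analytic continuation of \eqref{formuleint} to \eqref{formuleint2}. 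But the step you yourself flag as ``the main analytic obstacle'' is precisely the step you do not supply, and it is where the actual content of the lemma lies. The paper does not Taylor expand $u\mapsto u^{1/2-s}$ and estimate a remainder; it writes the exact interpolation identity $\E[R(z)^{\frac12-s}]-\E[R(0)^{\frac12-s}]=(\frac12-s)\int_0^1\E[(R(z)-R(0))R_t(z)^{-\frac12-s}]\,dt$ and then applies the Cameron--Martin/Girsanov transform to the GMC factor $e^{\gamma X(u)-\frac{\gamma^2}{2}\E X(u)^2}$ sitting inside $R(z)-R(0)$. This converts the correlated product $\E[(R(z)-R(0))R_t(z)^{-\frac12-s}]$ into $\int_\C m_z(u)\,\E[R_t(z,u)^{-\frac12-s}]\,d^2u$, i.e.\ a deterministic integral against a family of shifted negative moments that is uniformly bounded (via the lower bound by the random variable $W$) and converges pointwise. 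Without this decoupling, your plan gives no mechanism for actually computing the coefficient: fusion estimates and negative-moment bounds control sizes, not exact constants.

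Two further points where your argument would fail as written. First, the claim that the rescaled inner integrand ``converges, as $z\to0$, to \dots the full-plane GMC integrand that defines $C_\gamma(\alpha_1+\gamma/2,\dots)$'' cannot be an almost-sure statement: the GMC measure on $B(0,|z|)$ rescaled by $x=zu$ obeys only a scaling relation \emph{in law} (with a lognormal multiplier), so the limit must be taken at the level of expectations — which is again what the Girsanov step accomplishes, since $\E[R_t(re^{i\theta},vr)^{-\frac12-s}]$ converges to the negative moment of $\rho(\alpha_1+\frac{\gamma}{2},\alpha_2,\alpha_3)$. Second, and relatedly, your write-up gives no account of \emph{why} the weight at the origin shifts from $\alpha_1$ to $\alpha_1+\frac{\gamma}{2}$ in the limiting three-point constant: in the paper this comes out of the Girsanov shift $e^{\gamma^2 G(u,x)}$ with $u\to0$, which adds $\frac{\gamma^2}{2}$ to the log-singularity at the origin, i.e.\ turns $|x|^{-\gamma\alpha_1}$ into $|x|^{-\gamma(\alpha_1+\gamma/2)}$. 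So the proposal identifies the correct exponent and the correct deterministic integral, but the identification of the factor $C_\gamma(\alpha_1+\frac{\gamma}{2},\alpha_2,\alpha_3)$ — the heart of the lemma and of Teschner's shift relation — is asserted rather than derived, and the route you sketch for it does not go through.
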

Theorem  \ref{theo4point} and Corollary  \ref{3pointconstant} follow from Lemmas \ref{lemmaPDEsbis} and \ref{asyex} under the condition \eqref{Condparameters}. From \eqref{defab} and \eqref{defc} we infer  $c=1+\frac{\gamma}{2}(\alpha_1-Q)$ and $c-a-b=\hf(1-\alpha_3)$. The function $\caT$ is continuous in $\alpha_1$ and $\alpha_3$. Therefore the Theorems hold in general.
 
\noindent{\it Proof of Lemma \ref{asyex}}.
We have
\begin{align*}
  \E [R(z)^{\frac{1}{2}-s}]- \E [R(0)^{\frac{1}{2}-s}]  &= (\hf-s) \int_0^1    \E[  (R(z)-R(0))R_t(z)^{-\frac{1}{2}-s}    ]      dt   
\end{align*}
where $R_t(z)=tR(0)+(1-t)R(z)$. We set 
$$m_z(u)=   \frac{|u-z|^{\frac{\gamma^2}{2}} -|u|^{\frac{\gamma^2}{2}} }{ |u|^{\gamma \alpha_1}  |u-1|^{\gamma \alpha_2}  } \hat{g}(u)^{1+\frac{\gamma^2}{8} - \frac{\gamma}{4} \sum_{k=1}^3 \alpha_k } 
$$ 
and
\begin{equation*}
R(z,u)= e^{\frac{\chi\gamma^2}{2}}  \int_{\C} e^{\gamma X(x)- \frac{\gamma^2}{2} \E[X(x)^2] } \frac{|x-z|^{\frac{\gamma^2}{2}}}{ |x|^{\gamma \alpha_1}  |x-1|^{\gamma \alpha_2} }  e^{\gamma^2G(u,x) }\hat{g}(x)^{1+\frac{\gamma^2}{8} - \frac{\gamma}{4} \sum_{k=1}^3 \alpha_k } d^2x.
\end{equation*}
Then, we have by Cameron-Martin's transformation  
\begin{align}
 \E[  (R(z)-R(0))R_t(z)^{-\frac{1}{2}-s}    ]   =  e^{\frac{\chi\gamma^2}{2}} \int_{\C}  m_z(u)  \E[  R_t(z,u)^{-\frac{1}{2}-s} ]d^2u   \label{ineqT} 
\end{align}
where $R_t(z,u)=tR(0,u)+(1-t)R(z,u)$. Since the Green function $G$ is bounded from below, we have the estimate
\begin{equation*}
\inf_{|z| \leq \frac{1}{2}, u \in \C} R(z,u)  \geq C\int_{   2 \leq |x| \leq 3}  e^{\gamma X(x)- \frac{\gamma^2}{2} \E[X(x)^2]}d^2x:=W 
\end{equation*} 
so that
\begin{align}
\sup_u \E[  R_t(z,u)^{-\frac{1}{2}-s} ] \leq C \E[ W^{-\frac{1}{2}-s} ] <\infty. \label{ineqT1} 
\end{align}
Hence
\begin{align}
  \int_{\C}  m_z(u)  1_{|u|  \geq \frac{1}{2}}  \E[  R_t(z,u)^{-\frac{1}{2}-s} ]d^2u    \leq  C \int_{\C}  |m_z(u)|  1_{|u|  \geq \frac{1}{2}} du \leq C|z|. \label{ineqT2} 
\end{align}
Therefore, in the equality \eqref{ineqT} and up to a $O(|z|)$ term, we can put the indicator $1_{|u| \leq \frac{1}{2}}$ in the integral.  By the change of variables  $u=|z| v $ we then get
\begin{align*}
 \int_{\C}  m_z(u) 1_{|u| \leq \frac{1}{2}} \E[  R_t(z,u)^{-\frac{1}{2}-s} ]d^2u =
  |z|^{2+\frac{\gamma^2}{2}-\gamma\alpha_1}  \int_{\C}  n_z(v) 1_{|v| \leq \frac{1}{2|z|}} \E[  R_t(z,v|z|)^{-\frac{1}{2}-s} ]d^2v
\end{align*}
with
$$
n_z(v):= \frac{|v-\frac{z}{|z|}|^{\frac{\gamma^2}{2}} -|v|^{\frac{\gamma^2}{2}} }{ |v|^{\gamma \alpha_1} |1-v|z||^{\gamma \alpha_2}   } \hat{g}(|z|v)^{1+\frac{\gamma^2}{8} - \frac{\gamma}{4} \sum_{k=1}^3 \alpha_k }.
$$
We have then
\begin{equation*}
 n_z(v)1_{|v| \leq \frac{1}{2|z|}}   \leq C \sup_{|w|=1}  \frac{||v-w |^{\frac{\gamma^2}{2}} -|v|^{\frac{\gamma^2}{2}}| }{ |v|^{\gamma \alpha_1}  } \equiv k(v) .
\end{equation*}
 Let $Q-\frac{1}{\gamma}< \alpha_1<Q-\frac{\gamma}{2}$. Then $k\in L^1(\C)$. Combining this with
 \eqref{ineqT1} implies that we may apply dominated convergence to conclude
\begin{align*}
\lim_{r\to 0} \int_{\C}  n_{re^{i\theta}}(v) 1_{|v| \leq \frac{1}{2r}} \E[  R_t(re^{i\theta},vr)^{-\frac{1}{2}-s} ]d^2v=\int_{\C} 
    \frac{|v-e^{i\theta}|^{\frac{\gamma^2}{2}} -|v|^{\frac{\gamma^2}{2}}} { |v|^{\gamma \alpha_1}  } \lim_{r\to 0} \E[  R_t(re^{i\theta},\frac{_v}{^r})^{-\frac{1}{2}-s} ]d^2v
\end{align*}
 Also, the following convergence holds almost surely 
\begin{equation*}
R_t(re^{i\theta},vr) \underset{r \to 0}{\rightarrow}e^{\gamma^2\chi} e^{\frac{\chi\gamma^2}{2}} \int_{\C} e^{\gamma X(x)- \frac{\gamma^2}{2} \E[X(x)^2] } \frac{1}{ |x|^{\gamma (\alpha_1+\frac{\gamma}{2})}  |x-1|^{\gamma \alpha_2} } \hat{g}(x)^{1-\frac{\gamma^2}{8} - \frac{\gamma}{4} \sum_{k=1}^3 \alpha_k } d^2x .
\end{equation*}
We conclude  that
\begin{align*}
 \E (R(z))^{\hf-s}= \E (R(0))^{\frac{1}{2}-s} + (\frac{1}{2}-s) |z|^{2(1-c)}e^{-\gamma^2\chi s}   \rho(\alpha_1+\frac{_\gamma}{^2},\alpha_2,\alpha_3)\int_{\C} 
    \frac{|v-1|^{\frac{\gamma^2}{2}} -|v|^{\frac{\gamma^2}{2}}} { |v|^{\gamma \alpha_1}  }d^2v + o(|z|^{2(1-c)}).
\end{align*}
    Combining this with \eqref{Tdefi} and \eqref{T(0)} we obtain the claim \eqref{z=0exp}
since 
$$
e^{-\gamma^2\chi s} \frac{B(-\frac{\gamma}{2},\alpha_1,\alpha_2,\alpha_3)}{B(\alpha_1+\frac{\gamma}{2},\alpha_2,\alpha_3)}=1
$$
and 
\begin{equation}\label{selberg}
 \int_{\C}     \frac{|v-1|^{\frac{\gamma^2}{2}} -|v|^{\frac{\gamma^2}{2}} }{ |v|^{\gamma \alpha_1}  } d^2v=  \frac{\pi}{  l(-\frac{\gamma^2}{4}) l(\frac{\gamma \alpha_1}{2})  l(2+\frac{\gamma^2}{4}- \frac{\gamma \alpha_1}{2}) } 
\end{equation}
which is a consequence of formula \eqref{formuleint2}  in the appendix.\qed
\section{Estimates on the correlation functions} \label{sectionestimates}
In this section, we state general estimates for  the correlation functions when two or three  insertion points get close: these estimates are called fusion estimates in the physics literature. Using  these estimates, we prove Proposition \ref{1and2point} 
and \ref{appliBPZ2}. In the whole section  $X_\epsilon$ stands for the $\epsilon$-regularization of the Free Field in terms of circle average or mollification (it does not matter).


\subsection{Fusion Estimates}

 Recall that we need to bound correlations of the form \eqref{defgeneralG} where the points $\z$ are fixed and non coinciding in a bounded region; in this context, the variables $x_1$ and $x_2$ may get together or close to one of the $z_i$ and one must estimate the corresponding asymptotic of the correlation \eqref{defgeneralG}. Also, we need to get  decay as $x_1$ or $x_2$ goes to infinity. We fix a constant $\delta>0$ so that the quantity $\delta$ will measure the minimal distance between the points not getting together and  the quantity $\delta^{-1}$ the minimal distance from the origin of the points going to infinity. The constants $C$ in the bounds will be $\delta$ dependent and all the weights $\alpha$ of the vertex operators satisfy $\alpha<Q$.

We will first give general bounds in the  propositions \ref{upperbound2} and \ref{upperbound3} below for correlations of the form 
\begin{equation}\label{estim1}
\langle V_{\beta_1,\epsilon}(x_1)V_{\beta_2,\epsilon}(x_2)  \prod_{k=r+1}^N  V_{\alpha_k,\epsilon  }(z_k)\rangle_{\epsilon}
\end{equation}
and
\begin{equation}\label{estim2}
\langle V_{\beta_1,\epsilon}(x_1)V_{\beta_2,\epsilon}(x_2) V_{\beta_3,\epsilon}(x_3)  \prod_{k=r+1}^N  V_{\alpha_k,\epsilon  }(z_k)\rangle_{\epsilon}
\end{equation}
where $r \in \lbrace 0,1 \rbrace$, $\beta_1, \beta_2,\beta_3$ and the sequence $(\alpha_i)_{r+1 \leq i \leq N}$ will satisfy certain conditions.


First let us consider correlation functions   when two points $x_1$ and $x_2$ get together in a ball of radius $\delta^{-1}$. Define for $r \in \lbrace 0,1 \rbrace$ 
$$ U_{N-r,\delta}=\{ \z_{N-r}=(z_{r+1}, \cdots, z_N) \in \C^{N-r}\ |\ \min_{ i\not= j} |z_i-z_j|\geq \delta\}$$
and  the corresponding set   
\begin{equation*}
O_{N-r,\delta}( \z_{N-r})= \C \setminus \cup_{k=r+1}^N  B(z_k,\delta)
\end{equation*}
 Set
$$
|z|_\epsilon:=\epsilon\vee |z|.
$$

Then we have the following fusion estimates:
\begin{proposition}
\label{upperbound2} Let $\z_{N-r}\in U_{N-r,\delta}$ and $x_1,x_2\in O_{N-r,\delta}(\z_{N-r})$ with $x_1,x_2,z_{r+1},\dots,z_N\in B(0,\delta^{-1})$. Let the weights 
$\beta_1,\beta_2,(\alpha_k)_{1+r \leq k \leq N}$ satisfy 
$$
\beta_1+\beta_2\geq Q\ \ {\rm and}\ \   \sum_{k=r+1}^N \alpha_k>Q\ \ \ {\rm or}\ \ \  \beta_1+\beta_2< Q\ \ {\rm and}\ \  \beta_1+\beta_2+\sum_{k=r+1}^N \alpha_k>2Q.
$$
 Then 
\begin{equation}\label{upperbound2eq}
\langle V_{\beta_1,\epsilon}(x_1)V_{\beta_2,\epsilon}(x_2)  \prod_{k=r+1}^N  V_{\alpha_k,\epsilon  }(z_k)\rangle_{\epsilon} \leq C_\delta \hat{g}(z_{r+1})^{\Delta_{\alpha_2}}|x_1-x_2|_\epsilon^{2\Delta_{(\beta_1+ \beta_2)\wedge Q}-2\Delta_{\beta_1}-2\Delta_{\beta_2}} |\ln (|x_1-x_2|_\epsilon )|^{-c_{\beta_1,\beta_2}}
\end{equation}
where $c_{\beta_1,\beta_2}=\frac{3}{2}$ if $ \beta_1+\beta_2>Q$,  $c_{\beta_1,\beta_2}=\frac{1}{2}$ if $ \beta_1+\beta_2=Q$ and $c_{\beta_1,\beta_2}=0$ if $ \beta_1+\beta_2<Q$.
\end{proposition}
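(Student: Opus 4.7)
The plan is to use the Girsanov reduction embodied in \eqref{Z1}, which expresses the correlation as an explicit prefactor times an inverse moment of a GMC functional. Applied to the configuration with the two special insertions $x_1,x_2$, this yields
\begin{equation*}
 \langle V_{\beta_1,\epsilon}(x_1)V_{\beta_2,\epsilon}(x_2)\prod_{k=r+1}^N V_{\alpha_k,\epsilon}(z_k)\rangle_\epsilon \;=\; P_\epsilon(x_1,x_2,\mathbf{z})\, \mu^{-s}\gamma^{-1}\Gamma(s)\, \E[Z_\epsilon^{-s}]
\end{equation*}
with $s=(\beta_1+\beta_2+\sum_k\alpha_k-2Q)/\gamma>0$ (positive under either alternative in the hypotheses), $Z_\epsilon=\int F_\epsilon(x)M_{\gamma,\epsilon}(d^2x)$, and $F_\epsilon(x)\sim |x-x_1|^{-\gamma\beta_1}|x-x_2|^{-\gamma\beta_2}\prod_k|x-z_k|^{-\gamma\alpha_k}$ times bounded smooth factors. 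The prefactor $P_\epsilon$ produces the diagonal Green-function contribution $|x_1-x_2|_\epsilon^{-\beta_1\beta_2}$ together with terms that stay uniformly bounded in the admissible geometric regime. An algebraic identity $2\Delta_{(\beta_1+\beta_2)\wedge Q}-2\Delta_{\beta_1}-2\Delta_{\beta_2}=-\beta_1\beta_2$ (respectively $-\beta_1\beta_2+(\beta_1+\beta_2-Q)^2/2$) when $\beta_1+\beta_2\leq Q$ (resp.\ $\geq Q$) reduces the proposition to controlling $\E[Z_\epsilon^{-s}]$ by a constant in the first regime and by $r^{(\beta_1+\beta_2-Q)^2/2}|\ln r|^{-c_{\beta_1,\beta_2}}$ in the second, where $r=|x_1-x_2|_\epsilon$.

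For the subcritical regime $\beta_1+\beta_2<Q$ the fused singularity $|x-x_1|^{-\gamma(\beta_1+\beta_2)}$ still satisfies the Seiberg condition and is integrable against $M_\gamma$; the reduced configuration converges to the well-defined $(N-r+1)$-point correlation of \cite{DKRV} with an effective insertion at weight $\beta_1+\beta_2$, so a direct comparison (dominating $F_\epsilon$ by its value at $x_1=x_2$) gives uniform boundedness of $\E[Z_\epsilon^{-s}]$, with $c_{\beta_1,\beta_2}=0$.

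For the critical and supercritical regimes, the main tool is a radial decomposition of $X$ around $x_1$ at scale $r$: write $X=Y_r+X^{<r}$, where $Y_r$ is the circle-average process (a Brownian motion in the logarithmic time $t=\ln(1/r)$) and $X^{<r}$ is the finer-scale residual, approximately independent of $Y_r$. Restricting $Z_\epsilon$ to the annulus $\{|x-x_1|\sim r\}$ and changing variables $x=x_1+ry$ gives the lower bound
\begin{equation*}
 Z_\epsilon\;\geq\; r^{\,2+\gamma^2/2-\gamma(\beta_1+\beta_2)}\,e^{\gamma Y_r(x_1)}\,W_\epsilon,
\end{equation*}
where $W_\epsilon$ is a well-behaved GMC functional with negative moments of all orders, controlled via Kahane's comparison and the multifractal estimates recalled in \cite{Ber}. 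Taking $s$-th negative moments and performing a Cameron–Martin shift on the Gaussian $Y_r(x_1)$ recasts the bound as the computation of an exponential moment of a Brownian motion with effective drift $\gamma((\beta_1+\beta_2)-Q)/2$ up to time $t=\ln(1/r)$, restricted to the event that the path remains on the favorable side so that $Z_\epsilon$ does not collapse. At $\beta_1+\beta_2=Q$ this drift vanishes, and the classical reflection/first-passage estimate produces the $|\ln r|^{-1/2}$ factor; for $\beta_1+\beta_2>Q$ the tilted Brownian conditioned to stay non-negative yields the $|\ln r|^{-3/2}$ decay characteristic of ``freezing.''

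The hardest step will be extracting the \emph{sharp} logarithmic exponent $c_{\beta_1,\beta_2}$: one needs both a matching upper Brownian estimate and a uniform lower bound on the residual chaos factor $W_\epsilon$, together with a precise handling of the (not quite independent) coupling between $Y_r$ and $X^{<r}$. Kahane's convexity inequality, applied after introducing an exactly independent Gaussian decomposition of the same covariance up to a bounded error, is the key device that lets one reduce to the idealized model where the Brownian and the residual chaos are genuinely independent. The assumption that the points stay at distance at least $\delta$ (and lie in $B(0,\delta^{-1})$) ensures that the scales separate uniformly and all constants can be taken independent of $\epsilon$ in the admissible region.
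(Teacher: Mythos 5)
Your overall strategy---Girsanov reduction to a negative moment of a GMC integral, radial (Brownian) decomposition around $x_1$ in logarithmic time, and first-passage estimates to produce the logarithmic corrections---is exactly the route the paper takes. But the concrete reduction you display does not deliver the claimed bound, and the failure occurs precisely at the ``hardest step'' you defer. Your lower bound $Z_\epsilon\geq r^{\,2+\gamma^2/2-\gamma(\beta_1+\beta_2)}e^{\gamma Y_r(x_1)}W_\epsilon$ retains only the single annulus at scale $r=|x_1-x_2|_\epsilon$. Writing $\sigma=\beta_1+\beta_2$ and using $2+\gamma^2/2=\gamma Q$, this gives $\E[Z_\epsilon^{-s}]\leq r^{\,s\gamma(\sigma-Q)}\,\E[e^{-s\gamma Y_r(x_1)}W_\epsilon^{-s}]$, and since $Y_r(x_1)$ has variance $\ln(1/r)$ the free Gaussian moment contributes $r^{-s^2\gamma^2/2}$, for a total exponent $s\gamma(\sigma-Q)-(s\gamma)^2/2$. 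This is \emph{strictly less} than the required $(\sigma-Q)^2/2$ whenever $s\gamma\neq\sigma-Q$, i.e.\ whenever $\sum_k\alpha_k\neq Q$; so under the standing hypothesis $\sum_k\alpha_k>Q$ the power of $r$ already comes out wrong, before one even discusses the logarithm. The restriction to ``the event that the path remains on the favorable side,'' which is what repairs this, cannot be introduced starting from your lower bound: for an upper bound on $\E[Z_\epsilon^{-s}]$ the event constraint must be forced by the chaos mass carried by \emph{all} the intermediate scales between $r$ and $1$, which your localization has discarded.

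The fix---and what the paper does---is to keep the full annulus $\caA(x_1,r)$ with inner radius $r$ and \emph{outer radius} $1$, so that after the radial decomposition one must bound $\E\big[\big(\int_0^{\ln(1/r)}\int_0^{2\pi}e^{\gamma y_s}\mu_Y(ds,d\sigma)\big)^{-q}\big]$ with $y_s=x_s+(\sigma-Q)s$. One then partitions on the running maximum of $y$ over $[0,\ln(1/r)]$ lying in $[n-1,n]$, uses the conditional lower bound on the chaos mass in a unit log-time window at the first-passage time (the estimate $\E[I(a)^{-q}\mid y_{a+1}-y_a]\leq C(e^{-\gamma q(y_{a+1}-y_a)}+1)$ from \cite{DKRV2}), and invokes the drifted first-passage bounds $\P(\sup_{u\leq t}(B_u+\alpha u)\leq\beta)\lesssim \beta e^{\alpha\beta}e^{-\alpha^2t/2}t^{-3/2}$ for $\alpha>0$ and $\lesssim\beta t^{-1/2}$ for $\alpha=0$; this is where both the sharp power $r^{(\sigma-Q)^2/2}$ and the exponents $3/2$, $1/2$ come from. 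The resulting sum over $n$ converges precisely because $\sum_k\alpha_k>Q$ in the regime $\sigma\geq Q$---a point your proposal does not engage with, since you only use $s>0$. Your treatment of the subcritical case and the algebraic identities relating the exponents to the conformal weights are correct.
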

{\begin{remark}
With a bit of work, one could turn inequality \eqref{upperbound2eq} into a precise asymptotics as $|x_1-x_2| \to 0$ with $x_1$ fixed say. Estimate \eqref{upperbound2eq} is consistent with the physicist's discussion of  fusion rules in LCFT. The fusion rules in CFT  describe the singularity of correlation functions as two primary field insertion points $x_1,x_2$ come together. The asymptotics is supposed to be given by the following rule as $|x_1-x_2| \to 0$ with $x_1$ fixed
$$
V_{\beta_1}(x_1)V_{\beta_2}(x_2)\sim\sum_{\beta_3}|x_1-x_2|^{2\Delta_{\beta_3}-2\Delta_{\beta_1}-2\Delta_{\beta_2}}V_{\beta_3}(x_1).
$$
In LCFT if $\beta_1+\beta_2<Q$,  it is expected  that the leading singularity is given by that $\beta_3$ which occurs in the sum with the largest conformal weight $\Delta_{\beta_3}$, which turns out to be given by $\beta_3=\beta_1+\beta_2$ and this is consistent with what we prove. If $\beta_1+\beta_2\geq Q$ the leading asymptotics is believed to involve an integral over $P\in\R$ with $\beta_3=Q+iP$. This leads to the logarithmic corrections $|\ln (|x_1-x_2|)|^{-\hf}$ for  $\beta_1+\beta_2= Q$ and $|\ln (|x_1-x_2|)|^{-\frac{3}{2}}$ for  $\beta_1+\beta_2> Q$ exactly as we prove.
\end{remark}}
Next we merge three vertex operators. Note that we need this only when all the insertions are in a bounded region. The next proposition reflects the above fusion rules: we first fuse the closest pair of points and then the third point is fused to the result.

\begin{proposition}\label{upperbound3}
Assume  that $x_1,x_2,x_3 \in O_{N-r,\delta}(\mathbf{z}_{N-r})\cap B(0,\delta^{-1})$ and let  $|x_1-x_2|\leq |x_1-x_3|\leq |x_2-x_3|$. Then 

\medskip

\noindent {\rm (a)} If $\sum_{k=r+1}^N\alpha_k>Q$, $\beta_1+\beta_2\geq Q$  and $\beta_3\geq 0$ then
\begin{align*}
   \langle V_{\beta_1,\epsilon}(x_1)V_{\beta_2,\epsilon}(x_2)V_{\beta_3,\epsilon}(x_3)  \prod_{k=r+1}^N  V_{\alpha_k,\epsilon  }(z_k)\rangle_{\epsilon} &\leq C_\delta|x_1-x_2|_\epsilon^
{\frac{Q^2}{2}-2\Delta_{\beta_1}-2\Delta_{\beta_2}} |x_1-x_3|_\epsilon^{-2\Delta_{\beta_3}}.
   \end{align*}
   
\medskip

\noindent {\rm (b)} If $\beta_3+\sum_{k=r+1}^N\alpha_k>Q$, $\beta_1+\beta_2\geq Q$ and $\beta_3<0$  then
\begin{align*}
   \langle V_{\beta_1,\epsilon}(x_1)V_{\beta_2,\epsilon}(x_2)V_{\beta_3,\epsilon}(x_3) \prod_{k=r+1}^N  V_{\alpha_k,\epsilon  }(z_k)\rangle_{\epsilon} \leq C_\delta |x_1-x_2|_\epsilon^{\frac{Q^2}{2}
  -2\Delta_{\beta_1}-2\Delta_{\beta_2}} |x_2-x_3|_\epsilon^{-\beta_3Q}.
   \end{align*}

\medskip

\noindent {\rm (c)} If $\sum_{k=r+1}^N\alpha_k>Q$, $\beta_1+\beta_2< Q$ and $\beta_1+\beta_2+\beta_3\geq Q$   then
\begin{align*}
   \langle V_{\beta_1,\epsilon}(x_1)V_{\beta_2,\epsilon}(x_2)V_{\beta_3,\epsilon}(x_3)  \prod_{k=r+1}^N  V_{\alpha_k,\epsilon  }(z_k)\rangle_{\epsilon} \leq C_\delta |x_1-x_2|_\epsilon^{-\beta_1\beta_2} |x_1-x_3|_\epsilon
^{\hf(\beta_1+\beta_2+\beta_3-Q)^2-\beta_1\beta_3}|x_2-x_3|_\epsilon^{-\beta_2\beta_3}.
   \end{align*}

\medskip

\noindent {\rm (d)} If $\sum_{k=1}^3\beta_k+\sum_{k=r+1}^N\alpha_k>2Q$ $\beta_1+\beta_2< Q$ and $\beta_1+\beta_2+\beta_3< Q$   then
\begin{align*}
   \langle V_{\beta_1,\epsilon}(x_1)V_{\beta_2,\epsilon}(x_2)V_{\beta_3,\epsilon}(x_3) \prod_{k=r+1}^N  V_{\alpha_k,\epsilon  }(z_k)\rangle_{\epsilon} \leq C_\delta |x_1-x_2|_\epsilon^{-\beta_1\beta_2} |x_1-x_3|_\epsilon
^{-\beta_1\beta_3}|x_2-x_3|_\epsilon^{-\beta_2\beta_3}.
   \end{align*}

\end{proposition}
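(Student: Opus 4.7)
My plan is to reduce the correlation to a negative moment of a GMC-type random variable and then iterate Proposition \ref{upperbound2}. By the fundamental formula \eqref{funamental}, up to bounded factors depending only on $\delta$ and on the mollifier,
\[
\langle V_{\beta_1,\epsilon}(x_1)V_{\beta_2,\epsilon}(x_2)V_{\beta_3,\epsilon}(x_3)\prod_{k=r+1}^{N}V_{\alpha_k,\epsilon}(z_k)\rangle_\epsilon \;\asymp\; \Big(\prod_{i<j}|x_i-x_j|^{-\beta_i\beta_j}\Big)\, \E[N_\epsilon^{-s}],
\]
with $s=\gamma^{-1}(\beta_1+\beta_2+\beta_3+\sum_{k}\alpha_k-2Q)>0$ and $N_\epsilon \approx \int_\C\prod_i|x-x_i|^{-\gamma\beta_i}\prod_k|x-z_k|^{-\gamma\alpha_k}\,M_{\gamma,\epsilon}(d^2x)$. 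The Gaussian prefactor is already the bound of case (d), so the whole task is to estimate $\E[N_\epsilon^{-s}]$ in terms of the distances $|x_i-x_j|$.

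Case (d) is handled directly: the subcriticality of the sub-sums $\beta_1+\beta_2$ and $\beta_1+\beta_2+\beta_3$, together with the Seiberg bounds on the $\alpha_k$, ensures that each singular weight in $N_\epsilon$ is locally integrable against $M_\gamma$ in a uniform fashion, and standard negative-moment estimates for subcritical GMC (via Kahane's convexity inequality and the superlogarithmic concentration of $M_\gamma$, as in \cite{DKRV}) give $\E[N_\epsilon^{-s}]\leq C_\delta$ uniformly in $\mathbf{x}$. For cases (a)--(c) I would iterate Proposition \ref{upperbound2}: setting $\beta':=(\beta_1+\beta_2)\wedge Q$, a first application to $(x_1,x_2)$ treats the pair $V_{\beta_1}(x_1)V_{\beta_2}(x_2)$ as an effective vertex $V_{\beta'}(x_1)$ and yields the factor $|x_1-x_2|^{2\Delta_{\beta'}-2\Delta_{\beta_1}-2\Delta_{\beta_2}}$, which equals $|x_1-x_2|^{Q^2/2-2\Delta_{\beta_1}-2\Delta_{\beta_2}}$ in (a), (b) and $|x_1-x_2|^{-\beta_1\beta_2}$ in (c). A second application to $V_{\beta'}(x_1)V_{\beta_3}(x_3)$ supplies the remaining factor: $|x_1-x_3|^{-2\Delta_{\beta_3}}$ in (a) (since $\beta'+\beta_3\geq Q$); the natural $|x_1-x_3|^{-\beta_3 Q}$ in (b), then upgraded via $|x_1-x_3|\leq|x_2-x_3|$ and $\beta_3<0$ to the stated $|x_2-x_3|^{-\beta_3 Q}$; and the analogous mixed factor in (c).

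The main obstacle is that the constant $C_\delta$ in Proposition \ref{upperbound2} depends on the minimal distance between the fused pair and the remaining insertions, so a naive iteration only controls the regime $|x_1-x_2|\ll|x_1-x_3|$. To treat the regime of comparable distances I would use a scaling/Cameron--Martin argument on the scale $r=|x_1-x_3|$: rescale the chaos variable as $x=x_1+ry$, exploit the logarithmic conformal covariance of the GFF, and absorb the factors $|x-x_i|^{-\gamma\beta_i}$ by a Girsanov shift. The rescaled negative moment is uniformly bounded by Seiberg-type integrability (as in Proposition \ref{1and2point}), while the scaling produces precisely the KPZ exponent $\tfrac12(\beta_1+\beta_2+\beta_3-Q)^2$ of case (c) and the matching factors of cases (a), (b). Gluing the two regimes by splitting into $|x_1-x_2|\leq\tfrac12|x_1-x_3|$ and $|x_1-x_2|>\tfrac12|x_1-x_3|$ then closes the argument. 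The delicate technical point will be to ensure that the iterated two-point estimate does not accumulate parasitic logarithmic factors and that the scaling reproduces the exponents in exactly the stated form.
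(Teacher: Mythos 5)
Your reduction to the negative moment $\E[N_\epsilon^{-s}]$ with the Gaussian prefactor $\prod_{i<j}|x_i-x_j|^{-\beta_i\beta_j}$, and your treatment of case (d) by uniform boundedness of that moment, match the paper (its cases {\bf 5} and the starting bound \eqref{key1}). The heuristic OPE iteration also predicts the correct exponents. But the central step of your argument for (a)--(c) has a genuine gap: ``a first application \ldots treats the pair $V_{\beta_1}(x_1)V_{\beta_2}(x_2)$ as an effective vertex $V_{\beta'}(x_1)$'' is not a deduction available from Proposition \ref{upperbound2}. That proposition bounds a specific correlation function by a number; it does not provide an operator-level domination $V_{\beta_1}(x_1)V_{\beta_2}(x_2)\lesssim |x_1-x_2|^{2\Delta_{\beta'}-2\Delta_{\beta_1}-2\Delta_{\beta_2}}V_{\beta'}(x_1)$ that can be re-inserted into another correlation function and then fused with $V_{\beta_3}(x_3)$. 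Such a statement is precisely a rigorous fusion rule, i.e.\ the kind of result the proposition itself is meant to establish, so the iteration is circular. Your fallback (rescaling by $r=|x_1-x_3|$ plus Girsanov) only resolves the single-scale regime $|x_1-x_2|\asymp|x_1-x_3|$; in the regime $|x_1-x_2|\ll|x_1-x_3|$ one must control the multifractal behaviour of the chaos measure at \emph{two} scales simultaneously, and neither half of your gluing does that.

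The paper's actual mechanism is a one-pass, two-scale version of the proof of Proposition \ref{upperbound2}: restrict the GMC integral to an annulus $\caA(x_1,|x_1-x_2|_\epsilon)$ (or a half-annulus, or an annulus centred at $x_3$, depending on the signs and on whether $\beta_1+\beta_2\geq Q$), pass to the radial decomposition of Lemma \ref{decompmeas}, and observe that the resulting drifted Brownian motion $y_u=x_u+F(u)$ has a \emph{piecewise-linear} drift $F(u)=\alpha u+\tilde\alpha\,(u\wedge s)$ with $\alpha=\beta_1+\beta_2-Q$, $\tilde\alpha=\beta_3$ and kink at $s=\ln|x_1-x_3|_\epsilon^{-1}$, because the insertion at $x_3$ only contributes to the drift at radial scales larger than $|x_1-x_3|$. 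Items 3 and 4 of Lemma \ref{corner} (built on Lemma \ref{warrior1}) are exactly the estimate on $\sup_u y_u$ for such a two-slope drift, and they produce the product $|x_1-x_2|_\epsilon^{(\beta_1+\beta_2-Q)^2/2}\,|x_1-x_3|_\epsilon^{\cdots}$ in one stroke, after summing over the partition $M_n$ of the values of the maximum. This two-slope drift is the rigorous substitute for your ``iteration,'' and it is the ingredient your proposal is missing; without it, or an honest OPE statement with an effective insertion, the argument for (a)--(c) does not close.
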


\begin{remark}
In the above proposition we did not bother proving optimal bounds in the sense that we did not focus on the logarithmic corrections. We believe that  the power law we get is sharp.
\end{remark}   
  
Now we focus on the behaviour of the correlation functions close to $\infty$. First, we have for one point tending to infinity:

\begin{proposition}\label{lpinfty}
Let the weights satisfy $\sum_{k=r+1}^N\alpha_k>2Q$. Let $\mathbf{z}_{N-r}\in U_{N-r,\delta}$ and $z_k\in B(0,\delta^{-1})$ for all $k$.  Let $x\in B(0,\delta^{-1})^c\cap O_{N-r,\delta}(\z_{N-r})$. Then 
\begin{equation*}
\langle V_{\gamma,\epsilon}(x)\prod_{k=r+1}^N  V_{\alpha_k,\epsilon}(z_k)  \rangle_{\hat{g},\epsilon}\leq C_\delta|x|^{-4}.
\end{equation*}
\end{proposition}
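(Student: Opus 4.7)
The key algebraic fact is $\Delta_\gamma=\frac{\gamma}{2}(Q-\frac{\gamma}{2})=1$ (using $2\gamma Q=4+\gamma^2$). Hence $\hat g_\epsilon(x)^{\Delta_\gamma}\sim 4|x|^{-4}$ as $|x|\to\infty$, and the $|x|^{-4}$ decay in the statement will come entirely from this factor: the plan is to show every other source of $x$-dependence remains uniformly bounded.

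Integrating out the zero mode $c$ and applying the Cameron--Martin shift exactly as in the derivation of \eqref{funamental}, but at the regularized level, yields
\begin{equation*}
\langle V_{\gamma,\epsilon}(x)\prod_{k=r+1}^N V_{\alpha_k,\epsilon}(z_k)\rangle_\epsilon
=C_0\,\hat g_\epsilon(x)^{\Delta_\gamma}\,e^{\gamma\sum_k\alpha_k\E[X_\epsilon(x)X_\epsilon(z_k)]}\,H_\epsilon(\z_{N-r})\,\mu^{-s}\gamma^{-1}\Gamma(s)\,\E[Z_\epsilon^{-s}],
\end{equation*}
with $s=(\gamma+\sum_k\alpha_k-2Q)/\gamma>0$, $H_\epsilon(\z_{N-r})$ a bounded factor depending only on the $z_k$'s, and
\begin{equation*}
Z_\epsilon=\int_\C e^{\gamma^2\E[X_\epsilon(x)X_\epsilon(y)]+\gamma\sum_k\alpha_k\E[X_\epsilon(z_k)X_\epsilon(y)]}\,M_{\gamma,\epsilon}(d^2y).
\end{equation*}

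From \eqref{hatGformula} and $\ln\hat g(w)=-4\ln|w|+O(1)$ at infinity, one has the cancellation $-\ln|x-y|-\tfrac14\ln\hat g(x)=O(1)$ as $|x|\to\infty$ uniformly in $y$ bounded. Thus $\E[X_\epsilon(x)X_\epsilon(y)]$ stays uniformly bounded for $y\in B(0,\delta^{-1})$, $|x|\geq\delta^{-1}$ and small $\epsilon$. In particular the prefactor $e^{\gamma\sum_k\alpha_k\E[X_\epsilon(x)X_\epsilon(z_k)]}$ is bounded, and the integrand of $Z_\epsilon$ is uniformly bounded below on any compact set separated from the $z_k$'s. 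Picking a ball $K\subset B(0,\delta^{-1}/2)\setminus\bigcup_k B(z_k,\delta/4)$ of positive Lebesgue measure (which exists for $\delta$ small by an elementary volume count), we get $Z_\epsilon\geq c_\delta\,M_{\gamma,\epsilon}(K)$, hence $\E[Z_\epsilon^{-s}]\leq c_\delta^{-s}\E[M_{\gamma,\epsilon}(K)^{-s}]\leq C_\delta$. Collecting the $x$-factors into $\hat g_\epsilon(x)^{\Delta_\gamma}\sim 4|x|^{-4}$ produces the claim.

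The only mildly delicate point is the uniform-in-$\epsilon$ negative moment bound $\sup_\epsilon\E[M_{\gamma,\epsilon}(K)^{-s}]<\infty$, but this is a classical fact about subcritical GMC (for instance via Kahane's convexity inequality reducing to an exact scale-invariant GMC, or by controlling exponential moments of $-\ln M_{\gamma,\epsilon}(K)$), and is already used implicitly in \cite{DKRV} to define the correlation functions in the first place. All the other steps are essentially bookkeeping around the identity $\Delta_\gamma=1$, which encodes the fact that $V_\gamma$ transforms like a volume form under the round metric $\hat g$ at infinity.
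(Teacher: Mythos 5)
Your proof is correct and follows essentially the same route as the paper: both arguments reduce to \eqref{funamental}/\eqref{Z1}, lower-bound the chaos integral by its restriction to a fixed compact region separated from all insertion points, and invoke a uniform negative moment bound for GMC. The only difference is bookkeeping — the paper works in flat coordinates, where the $|x|^{-4}$ arises from combining the prefactor $|x|^{-\gamma\sum_k\alpha_k}$ with the $|x|^{-\gamma^2 }$ scaling of the integrand on an annulus raised to the power $-s$, while you keep the round-metric Green function (bounded at infinity) so that the entire decay comes from $\hat g(x)^{\Delta_\gamma}$ with $\Delta_\gamma=1$; the two computations agree via \eqref{hatGformula}.
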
 

and then for two points fusing near $\infty$:

\begin{proposition}\label{2merginfinity}
 Let $\mathbf{z}_{N-r} \in U_{N-r,\delta}$ and $z_k\in B(0,\delta^{-1})$ for all $k$.   Let $x_1, x_2\in B(0,\delta^{-1})^c\cap O_{N-r,\delta}(\z_{N-r})$. Let the weights 
$\beta_1,\beta_2,(\alpha_k)_{1+r \leq k \leq N}$ satisfy 

\begin{enumerate}
\item either   $\beta_1+\beta_2\geq Q$ and $\sum_{k=r+1}^N\alpha_k>Q$ then 
$$ \langle V_{\beta_1,\epsilon}(x_1)V_{\beta_2,\epsilon}(x_2)  \prod_{k=r+1}^N  V_{\alpha_k,\epsilon  }(z_k)\rangle_{\epsilon} \leq C_\delta
 |x_1|^{-4\Delta_{\beta_1}} |x_2|^{-4\Delta_{\beta_2}}1\vee \big(\frac{|x_1x_2|}{|x_1-x_2|_\epsilon}\big)^{
2 \Delta_{\beta_1}+2\Delta_{\beta_2}-\frac{1}{2} Q^2}.
$$
\item or  $\beta_1+\beta_2<Q$ and $\beta_1+\beta_2+\sum_{k=r+1}^N\alpha_k>2Q$ then
$$ \langle V_{\beta_1,\epsilon}(x_1)V_{\beta_2,\epsilon}(x_2)  \prod_{k=r+1}^N  V_{\alpha_k,\epsilon  }(z_k)\rangle_{\epsilon} \leq C_\delta
 |x_1|^{-4\Delta_{\beta_1}} |x_2|^{-4\Delta_{\beta_2}}1\vee \big(\frac{|x_1x_2|}{|x_1-x_2|_\epsilon}\big)^{\beta_1\beta_2}.
$$
\end{enumerate}
\end{proposition}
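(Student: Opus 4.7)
The plan is to apply the Möbius covariance of LCFT to reduce Proposition \ref{2merginfinity} to the fusion estimate of Proposition \ref{upperbound2}. I will choose a reference point $z_0 \in B(0, 2\delta^{-1}) \setminus \bigcup_{k=r+1}^N B(z_k, c_\delta)$ for a suitable $c_\delta > 0$; such a $z_0$ exists by a pigeonhole argument since the $z_k$ are finitely many points in a bounded region. Applying the Möbius map $\psi(z) := 1/(z - z_0)$ and setting $\tilde x_i := \psi(x_i)$ and $\tilde z_k := \psi(z_k)$, we have $|\tilde x_i| \leq C_\delta |x_i|^{-1}$, the points $\tilde z_k$ lie in a bounded annulus uniformly separated from each other, and
\[
|\tilde x_1 - \tilde x_2| = \frac{|x_1 - x_2|}{|x_1 - z_0|\,|x_2 - z_0|} \asymp \frac{|x_1 - x_2|}{|x_1|\,|x_2|}.
\]

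For the limit correlation \eqref{lcorre}, the KPZ transformation law \eqref{KPZformula} applied to $\psi$ will give
\[
\langle V_{\beta_1}(x_1) V_{\beta_2}(x_2) \prod_k V_{\alpha_k}(z_k) \rangle = \prod_{i=1,2} |x_i - z_0|^{-4\Delta_{\beta_i}} \prod_k |z_k - z_0|^{-4\Delta_{\alpha_k}} \langle V_{\beta_1}(\tilde x_1) V_{\beta_2}(\tilde x_2) \prod_k V_{\alpha_k}(\tilde z_k) \rangle,
\]
with $|z_k - z_0|^{-4\Delta_{\alpha_k}} \leq C_\delta$ and $|x_i - z_0|^{-4\Delta_{\beta_i}} \asymp |x_i|^{-4\Delta_{\beta_i}}$. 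Applying Proposition \ref{upperbound2} to the right-hand correlation (whose hypotheses are satisfied with a modified $\delta' = \delta'(\delta)$) will then yield the bound $C_\delta |\tilde x_1 - \tilde x_2|^{2\Delta_{(\beta_1+\beta_2) \wedge Q} - 2\Delta_{\beta_1} - 2\Delta_{\beta_2}}$. Using the elementary identity $2\Delta_{\beta_1+\beta_2} - 2\Delta_{\beta_1} - 2\Delta_{\beta_2} = -\beta_1 \beta_2$ in case (2) and $2\Delta_Q = Q^2/2$ in case (1), and substituting the value of $|\tilde x_1 - \tilde x_2|$, produces exactly the exponents in the statement; the $1 \vee (\cdot)$ in the conclusion accounts for the fact that these exponents may have either sign, in which case the Gaussian contraction alone already gives the bound $|x_1|^{-4\Delta_{\beta_1}} |x_2|^{-4\Delta_{\beta_2}}$.

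The main technical issue is that \eqref{KPZformula} is an identity for the limit correlation, whereas the conclusion (like that of Proposition \ref{upperbound2}) requires bounds on the $\epsilon$-regularized correlations uniformly in $\epsilon$. To obtain these uniform bounds I will work directly with the Gaussian/GMC representation \eqref{Z1} of $G_\epsilon(x_1, x_2; \mathbf{z})$: the explicit contraction factors will produce the decay $|x_i|^{-4\Delta_{\beta_i}}$ (via $\hat g(x_i) \sim 4|x_i|^{-4}$ at infinity) together with the ratio $e^{\beta_1 \beta_2 G_\epsilon(x_1, x_2)} \asymp (|x_1||x_2|/|x_1-x_2|_\epsilon)^{\beta_1\beta_2}$ from \eqref{hatGformula}. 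The remaining GMC expectation will be analyzed by performing the change of variable $x = \psi^{-1}(\tilde x)$ inside the chaos integral and invoking the same multifractal/freezing estimates that underlie the proof of Proposition \ref{upperbound2}.

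The hardest part will be case (1) ($\beta_1 + \beta_2 \geq Q$), where the GMC expectation must contribute an additional decay in $|x_1||x_2|/|x_1-x_2|_\epsilon$ to produce the sharper exponent $2\Delta_{\beta_1} + 2\Delta_{\beta_2} - Q^2/2$ in place of the naive $\beta_1\beta_2$ coming from the Gaussian contraction alone. This reflects exactly the freezing transition at $\beta_1 + \beta_2 = Q$ already manifest in Proposition \ref{upperbound2}, and will be handled by the same scale-by-scale decomposition of the GMC measure, now applied to the inverted configuration.
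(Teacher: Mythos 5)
Your proposal is correct and follows essentially the same route as the paper: invert via a M\"obius map sending the neighbourhood of infinity into a bounded region, control the regularized correlations directly through the GMC representation (the paper does this via Kahane convexity plus exact M\"obius covariance of the Liouville field, estimating the transformed Green function $G^\psi_\epsilon$ uniformly in $\epsilon$), and then rerun the radial-decomposition/freezing machinery of Proposition \ref{upperbound2} on the annulus around $1/x_1$ with inner radius $|1/x_1-1/x_2|_\epsilon$. The only point to make explicit is that the $\epsilon$-regularization does not commute with the inversion, which is handled by the lower bound \eqref{estgreeninf2} on the mollified pulled-back Green function; your plan of changing variables inside the chaos integral amounts to the same thing.
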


Now, from these propositions, we deduce:

\subsection{Proof of Proposition  \ref{1and2point} }

Let us start by studying  $\bar{G} (x;\z)$. Proposition \ref{lpinfty} with $r=0$ ensures integrability of $ (1+ |\ln  |x-z_i|  |)^k \bar{G}(x;\z) $ around infinity for all $k$ and all $i$. Consider next local integrability near the insertion with weight $\alpha_1$ at $z_1$ (the other insertions with weight $\alpha_i$ at point $z_i$ can be dealt with similarly). We claim that for $\mathbf{z}_{N-1} \in U_{N-1,\delta} \cap B(0,\delta^{-1})^{N-1} $ and $x,z_1 \in B(0,\delta^{-1})\cap O_{N-1,\delta}(\z_{N-1})$
\begin{equation}\label{fepz}
 \bar{G}(x;\z) \leq 
C_\delta |x-z_1|^{-2+\zeta}
\end{equation}  
with $\zeta>0$. Let first  $\gamma+\alpha_1\geq Q$. Then proposition \ref{upperbound2} with $r=1$ and $\beta_1=\gamma, \beta_2=\alpha_1$  gives 
$\zeta=2 +2 \Delta_Q  -2\Delta_\gamma-2\Delta_{\alpha_1}  =\frac{1}{2}(Q-\alpha_1)^2>0$ 
since $\alpha_1< Q$. If $\gamma+\alpha_1\leq Q$,  proposition \ref{upperbound2} (with $r=1$ and $\beta_1=\gamma, \beta_2=\alpha_1$) gives $\zeta=2 +2 \Delta_{\gamma+\alpha_1}  -2\Delta_\gamma-2\Delta_{\alpha_1} = 2-\alpha_1 \gamma>0$.


%

We now consider $\bar{G} (x,y;\z)$. The claim (c) along with integrability in the region $x\in O_{N,\delta}(\z), y\in O_{N,\delta}(\z)^c$ follows from the same methods as the study of $\bar{G} (x;\z)$. We are thus left with the case where both
$x$ and $y$ are close to one insertion point, say $z_1$ and the case where  both $x$ and $y$ are outside a large ball.

Let us start with the case  when $x,y$ are close to an insertion place, say $z_1$. We consider only the case when $\alpha_1>0$ since the case  $\alpha_1<0$ is less singular. By the symmetry in exchanging $x$ and $y$ we have three cases to consider when applying  proposition \ref{upperbound3} with $r=1$ and  $\lbrace \beta_1,\beta_2,\beta_3 \rbrace = \lbrace \gamma, \alpha_1 \rbrace$:

\medskip

$\bullet$  $|x-y|\leq |x-z_1|\leq |y-z_1|$:
$$\bar{G} (x,y;\z)\leq 
 \left\{\begin{array}{ll} 
 C_\delta|x-y|^{\frac{Q^2}{2}-4}|x-z_1|^{-2\Delta_{\alpha_1}} ,&\text{ if }\quad   2\gamma+\alpha_1\geq Q \text{ and } 2\gamma\geq Q , \\
 C_\delta|x-y|^{-\gamma^2} |x-z_1|^{\hf Q^2-2\Delta_{\alpha_1}-2}    ,&\text{ if }\quad 2\gamma+\alpha_1\geq Q \,\,\text{ and } 2\gamma<Q,\\
   C_\delta|x-y|^{-\gamma^2}                       |x-z_1|^{-2\alpha_1\gamma}               ,&\text{ if }\quad  2\gamma+\alpha_1< Q  \,\,\text{ and } 2\gamma<Q                                                                              
 \end{array}\right.
. $$
 To discuss local  integrability, consider the first case. Suppose first $\Delta_{\alpha_1}<1$. Then $|x-z_1|^{-2\Delta_{\alpha_1}} $ is locally integrable and since $Q^2>4$ we get that $\bar{G} (x,y;\z) |x-y]^{-\xi}$ is locally integrable for some $\xi>0$. If $\Delta_{\alpha_1}\geq 1$ then 
\begin{align*}
&\int|x-y|^{\frac{Q^2}{2}-4-\xi}|x-z_1|^{-2\Delta_{\alpha_1}}1_{|x-y|\leq |x-z_1|}1_{|x-z_1|,|y-z_1|<1}dxdy\leq\\
&C\int|u|^{\frac{Q^2}{2}-2-\xi-2\Delta_{\alpha_1}}1_{|u|<2}du=
C\int|u|^{-2+\hf(Q-\alpha_i)^2-\xi}1_{|u|<2}du<\infty
 \end{align*} 
 for some $\xi>0$. For the two other cases the two factors are separately integrable.
 
 \medskip

$\bullet$ $  |x-z_1|\leq |y-z_1|\leq |x-y| $ :
$$\bar{G} (x,y;\z)\leq 
 \left\{\begin{array}{ll} 
 C_\delta|x-z_1|^{\frac{Q^2}{2}-2-2\Delta_{\alpha_1}}|y-z_1|^{-2} ,&\text{ if }\quad   2\gamma+\alpha_1\geq Q  \text{ and }\gamma+\alpha_1\geq Q, \\
 C_\delta|x-z_1|^{-\gamma\alpha_1}|y-z_1|^{\frac{Q^2}{2}-2\Delta_{\alpha_1}-4+\gamma\alpha_1}       ,&\text{ if }\quad 2\gamma+\alpha_1\geq Q\text{ and }\gamma+\alpha_1< Q ,\\
 C_\delta  |x-z_1|^{-\gamma \alpha_1}   |y-z_1|^{-\gamma (\alpha_1+\gamma)} 
                              ,&\text{ if }\quad   2\gamma+\alpha_1< Q  \text{ and }\gamma+\alpha_1< Q    .                                                                  
 \end{array}\right.
 $$
 For integrability use $\frac{Q^2}{2}-2\Delta_{\alpha_1}=\hf(Q-\alpha_i)^2>0$ in the first case and
 $\gamma (\alpha_1+\gamma)<\gamma (Q-\gamma)=2-\hf\gamma^2$ in the last case. For the second case, if $\frac{Q^2}{2}-2\Delta_{\alpha_1}-4+\gamma\alpha_1<-2$, integrating over that factor produces $|x-z_1|^{\frac{Q^2}{2}-2\Delta_{\alpha_1}-2} $ which is integrable.

 \medskip

$\bullet$  $  |x-z_1| \leq |x-y| \leq |y-z_1|$:
$$\bar{G} (x,y;\z)\leq 
 \left\{\begin{array}{ll} 
 C_\delta|x-z_1|^{\frac{Q^2}{2}-2-2\Delta_{\alpha_1}}|x-y|^{-2} ,&\text{ if }\quad   2\gamma+\alpha_1\geq Q\text{ and }\gamma+\alpha_1\geq Q , \\
C_\delta|x-z_1|^{-\gamma\alpha_1}|x-y|^{ \frac{Q^2}{2}-4-2\triangle_{\alpha_1}+\gamma\alpha_1}    ,&\text{ if }\quad  2\gamma+\alpha_1\geq Q,\,\, \gamma+\alpha_1< Q
,\\
     C_\delta  |x-z_1|^{-\alpha_1\gamma}     |x-y|^{- \gamma(\gamma+\alpha_1)}    
                ,&\text{ if }\quad  2\gamma+\alpha_1< Q \text{ and }\gamma+\alpha_1<Q .                                                                                         
 \end{array}\right.
 $$
Local  integrablity follows as in the previous case.

Finally, let $|x|,|y|> \frac{1}{\delta}$, we use Proposition \ref{2merginfinity} with $r=0$ and $\beta_1=\beta_2=\gamma$. Since $\Delta_\gamma=1$ it is readily seen that 
$$
\bar{G} (x,y;\z)\leq C_\delta |xy|^{-2-\zeta}|x-y|^{-2+\eta}
$$
for $\zeta,\eta>0$ whereby the bounds in Proposition \ref{1and2point} follow.\qed

\subsection{ H\"older estimates}

Now we turn to the H\"older  estimates for the correlation functions. We define, for $\epsilon>0$    the functions 
\begin{align*}
\caF_\epsilon(x,z)=
\langle V_{\gamma,\epsilon}(x)V_{-\frac{\gamma}{2},\epsilon}(z)   \prod_{k=1}^N  V_{\alpha_i,\epsilon  }(z_i)\rangle_{\epsilon},\\
\bar{\caF}_\epsilon(x,z)=\langle V_{\gamma,\epsilon}(x)V_{-\frac{2}{\gamma},\epsilon}(z)   \prod_{k=1}^N  V_{\alpha_i,\epsilon  }(z_i)\rangle_{\epsilon}.
\end{align*}
By the definition \eqref{Vdefi}  
\begin{equation*}
\caF_\epsilon(z,z)=(A\epsilon)^{\frac{\gamma^2}{2}}\langle  V_{\frac{\gamma}{2},\epsilon}(z)   \prod_{k=1}^N  V_{\alpha_i,\epsilon  }(z_i)\rangle_{\epsilon}
\end{equation*}
and
\begin{equation*}
\bar{\caF}_\epsilon(z,z)=(A\epsilon)^{2}\langle  V_{\gamma- \frac{2}{\gamma},\epsilon}(z)   \prod_{k=1}^N  V_{\alpha_i,\epsilon  }(z_i)\rangle_{\epsilon}.
\end{equation*}
Let $\caF(x,z)=\lim_{\epsilon\to 0}\caF_\epsilon(x,z)$ and $\bar{\caF}(x,z)=\lim_{\epsilon\to 0}\bar{\caF}_\epsilon(x,z)$ which are defined and continuous  in $(x,z)\in(\C\setminus\cup z_i)^2\setminus D $ where $D$ is the diagonal $\{(z,z)| z\in\C\}\subset\C^2$.
By  Proposition \ref{upperbound2} with $r=0$ and $\beta_1=\gamma, \beta_2=-\frac{\gamma}{2}$ we have for all  $x,z\in B(0,\delta^{-1})\cap O_{N,\delta}(\z)$
\begin{equation}\label{holderb}
|\caF(x,z)|\leq C_\delta  |x-z|^{\frac{\gamma^2}{2}}, \quad |\bar{\caF}(x,z)|\leq C_\delta |x-z|^{2}
\end{equation}
 and thus the functions $\caF$ and $\bar{\caF}$ extend continuously to $(\C\setminus\cup_{k=1}^N z_k)^2$  by setting $\caF(z,z)=\lim_{\epsilon\to 0}\caF_\epsilon(z,z)=0$ and $\bar{\caF}(z,z)=\lim_{\epsilon\to 0}\bar{\caF}_\epsilon(z,z)=0$. 

We have the following bound for $\caF_\epsilon$:
\begin{proposition}\label{holdercorrel} For any $\delta>0$, there exists a constant $C_\delta>0$ s.t. for all  $x,z\in B(0,\delta^{-1})\cap O_{N,\delta}(\z)$
$$ |\caF_\epsilon(x,z)-\caF_\epsilon(z,z)|\leq \left\{\begin{array}{ll}C_\delta \epsilon^{ \frac{\gamma^2}{2}-1}
|x-z|, & \text{if }\quad |x-z|\leq \epsilon \\  C_\delta |x-z|^{\frac{\gamma^2}{2}}&  \text{if }\quad |x-z|\geq \epsilon\end{array}\right..$$
\end{proposition}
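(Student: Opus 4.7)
The proof splits according to whether $|x-z|\geq\epsilon$ or $|x-z|\leq\epsilon$. In the first regime the bound is a direct consequence of the fusion estimates already established in this section. Applying Proposition \ref{upperbound2} with $r=0$, $\beta_1=\gamma$ and $\beta_2=-\gamma/2$ (so that $\beta_1+\beta_2=\gamma/2<Q$ and no logarithmic factor appears), a short computation gives $2\Delta_{\gamma/2}-2\Delta_\gamma-2\Delta_{-\gamma/2}=\gamma^2/2$, hence $|\caF_\epsilon(x,z)|\leq C_\delta|x-z|^{\gamma^2/2}$. For the subtracted term one has $\caF_\epsilon(z,z)=(A\epsilon)^{\gamma^2/2}\langle V_{\gamma/2,\epsilon}(z)\prod_{k=1}^N V_{\alpha_k,\epsilon}(z_k)\rangle_\epsilon$, whose residual correlation is uniformly bounded in $\epsilon$ on the compact set of interest since its Seiberg bounds $\gamma/2+\sum_k\alpha_k>2Q$ and $\gamma/2<Q$ are satisfied. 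Hence $|\caF_\epsilon(z,z)|\leq C_\delta\epsilon^{\gamma^2/2}\leq C_\delta|x-z|^{\gamma^2/2}$, and the triangle inequality closes this regime.

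For $|x-z|\leq\epsilon$ I would use the two-dimensional mean value inequality
\begin{equation*}
|\caF_\epsilon(x,z)-\caF_\epsilon(z,z)|\leq |x-z|\sup_{|u-z|\leq|x-z|}\bigl(|\partial_u\caF_\epsilon(u,z)|+|\partial_{\bar u}\caF_\epsilon(u,z)|\bigr),
\end{equation*}
reducing everything to the bound $|\nabla_u\caF_\epsilon(u,z)|\leq C_\delta\epsilon^{\gamma^2/2-1}$ for $|u-z|\leq\epsilon$. Applying the integration by parts identity \eqref{ipp1} with $V_{\gamma,\epsilon}(u)$ treated as an insertion of weight $\gamma$ yields
\begin{equation*}
\partial_u\caF_\epsilon(u,z)=\left(\frac{\gamma^2/4}{(u-z)_{\epsilon,\epsilon}}-\frac{\gamma}{2}\sum_{k=1}^N\frac{\alpha_k}{(u-z_k)_{\epsilon,\epsilon}}\right)\caF_\epsilon(u,z)+\frac{\mu\gamma^2}{2}\int_\C\frac{\caG_\epsilon(v;u,z)}{(u-v)_{\epsilon,\epsilon}}\,d^2v,
\end{equation*}
where $\caG_\epsilon(v;u,z):=\langle V_{\gamma,\epsilon}(v)V_{\gamma,\epsilon}(u)V_{-\gamma/2,\epsilon}(z)\prod_k V_{\alpha_k,\epsilon}(z_k)\rangle_\epsilon$. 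The contact term $\frac{\gamma^2/4}{(u-z)_{\epsilon,\epsilon}}\caF_\epsilon(u,z)$ is bounded by $C_\delta\epsilon^{\gamma^2/2-1}$ upon combining the first-regime bound $|\caF_\epsilon(u,z)|\leq C_\delta\epsilon^{\gamma^2/2}$ with $|(u-z)_{\epsilon,\epsilon}^{-1}|\leq C/\epsilon$, which is the exact order required. The residual sum over $k$ yields a contribution of order $\epsilon^{\gamma^2/2}$ since $|u-z_k|\geq\delta/2$.

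The main obstacle is the integral term, which I would control at order $\epsilon^{\gamma^2/2}$ by decomposing $\C$ into four regions: a small disk $\{|v-u|\leq 2\epsilon\}$ around the cluster; a bulk region where $v$ is separated from $u,z$ and from every $z_k$ by at least $2\epsilon$; a union of small balls about the $z_k$; and a far-field $\{|v|\geq 2/\delta\}$. In the small disk all three insertions $V_\gamma(v),V_\gamma(u),V_{-\gamma/2}(z)$ sit inside an $O(\epsilon)$ cluster, and Proposition \ref{upperbound3} (the relevant subcase being selected by the signs of $2\gamma-Q$ and $3\gamma/2-Q$) bounds $\caG_\epsilon(v;u,z)$ by an integrable product of distance powers; after multiplication by the $O(1/\epsilon)$ kernel and integration over a disk of area $O(\epsilon^2)$, the total contribution is $O(\epsilon^{\gamma^2/2})$. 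In the bulk region, Proposition \ref{upperbound2} applied to the pair $(V_{\gamma,\epsilon}(u),V_{-\gamma/2,\epsilon}(z))$ already produces the factor $\epsilon^{\gamma^2/2}$, while the integral of $1/|u-v|$ against the remaining four-point correlation converges uniformly in $\epsilon$; the two-point fusion near each $z_k$ is treated analogously via Proposition \ref{upperbound2}, and the far-field via Proposition \ref{lpinfty}. Assembling these estimates gives the desired gradient bound and therefore the proposition by the mean value inequality.
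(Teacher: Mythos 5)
Your argument is a genuinely different route from the paper's. The paper never differentiates the correlation function by Gaussian integration by parts here: it writes $\caF_\epsilon(0,z)=Ce^{-\frac{\gamma^2}{2}G_\epsilon(z)}e^{-\frac{\gamma}{2}\sum_k\alpha_kG_\epsilon(z-z_k)}h_\epsilon(z)$ with $h_\epsilon$ a negative moment of a chaos integral, disposes of the regime $|x-z|\geq\epsilon$ via $h_\epsilon\leq C$ and $e^{-G_\epsilon(z)}\asymp|z|_\epsilon$, and for $|x-z|\leq\epsilon$ differentiates this product directly, using a Girsanov shift to absorb the extra chaos factor and obtaining $|\partial_zh_\epsilon|\leq C(\epsilon^{1-\gamma^2/2}+1)$ against $|\partial_ze^{-\frac{\gamma^2}{2}G_\epsilon(z)}|\leq C\epsilon^{\gamma^2/2-1}$. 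That computation is short and avoids any multi-point fusion input. Your version trades this for the correlation-level identity \eqref{ipp1} plus the fusion estimates; your first regime is correct (and is essentially the same two-point bound the paper uses), and you correctly identify the dominant contribution in the second regime as the contact term $\frac{\gamma^2/4}{(u-z)_{\epsilon,\epsilon}}\caF_\epsilon(u,z)=O(\epsilon^{\gamma^2/2-1})$.

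The incomplete step is the tail integral $\int_\C\frac{1}{(u-v)_{\epsilon,\epsilon}}\caG_\epsilon(v;u,z)\,d^2v$. In the annulus $2\epsilon\leq|v-u|\lesssim 1$ you cannot apply Proposition \ref{upperbound2} ``to the pair $(V_{\gamma,\epsilon}(u),V_{-\gamma/2,\epsilon}(z))$'' with $v$ as a spectator: that proposition's constant $C_\delta$ presupposes the spectator insertions are at a fixed distance $\delta$ from the merging pair, and it degenerates as $v$ approaches the cluster. This region genuinely requires the three-point estimates of Proposition \ref{upperbound3}, which you invoke only on $|v-u|\leq 2\epsilon$. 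Likewise, when $v$ lies near some $z_k$ while $(u,z)$ merge elsewhere, you have two disjoint clusters, a configuration not literally covered by Propositions \ref{upperbound2}--\ref{upperbound3} as stated; the paper's device in Lemma \ref{appliBPZ2} of re-reading Proposition \ref{upperbound2} with $N+1$ fixed insertions is unavailable here, since $u$ and $z$ are $\epsilon$-close and cannot both serve as $\delta$-separated insertions. Finally, your claimed order $O(\epsilon^{\gamma^2/2})$ for the tail is not uniform in $\gamma$: running Proposition \ref{upperbound3}(c) through the annulus for $\gamma^2>2$ produces a contribution of order $\epsilon^{\gamma^2/2+2/\gamma^2-1}\gg\epsilon^{\gamma^2/2}$. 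This last point is harmless --- only $O(\epsilon^{\gamma^2/2-1})$ is needed for the gradient bound, and that weaker estimate does hold --- but the two structural issues above mean the tail estimate is not complete as written, even though the strategy is repairable.
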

Similarly, we get the following:  
 \begin{proposition}\label{holdercorrelbis} For any $\delta>0$, there exists a constant $C_\delta>0$ s.t. for all  $x,z\in B(0,\delta^{-1})\cap O_{N,\delta}(\z)$
$$ |\bar{\caF}_\epsilon(x,z)-\bar{\caF}_\epsilon(z,z)|\leq \left\{\begin{array}{ll}C_\delta 
\epsilon |x-z|, & \text{if }\quad |x-z|\leq \epsilon \\  C_\delta |x-z|^{2}&  \text{if }\quad |x-z|\geq \epsilon\end{array}\right..$$
\end{proposition}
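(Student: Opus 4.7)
The strategy parallels the proof of the companion Proposition \ref{holdercorrel}, with the exponent $\tfrac{\gamma^2}{2}$ there replaced by the Wick exponent $-\gamma\cdot(-\tfrac{2}{\gamma})=2$ here. Using the Girsanov representation \eqref{Z1} applied to $\bar{\caF}_\epsilon(x,z)$ with weights $(\gamma,-\tfrac{2}{\gamma},\alpha_1,\ldots,\alpha_N)$ at $(x,z,z_1,\ldots,z_N)$, one can factorise
\begin{equation*}
\bar{\caF}_\epsilon(x,z)=H_\epsilon(x-z)\,R_\epsilon(x,z),
\end{equation*}
where $H_\epsilon(u):=e^{-2G_\epsilon(z+u,z)}$ isolates the sole $|x-z|$-singular Wick pairing (between $V_{\gamma,\epsilon}(x)$ and $V_{-2/\gamma,\epsilon}(z)$), and $R_\epsilon(x,z)$ gathers the remaining smooth deterministic Wick, metric and Gamma-type factors, together with the Girsanov-shifted GMC moment of the form $\E[Z_\epsilon(x,z)^{-s'}]$ with $s'=(\gamma-\tfrac{2}{\gamma}+\sum_k\alpha_k-2Q)/\gamma>0$ and $Z_\epsilon(x,z)$ a $C^\infty$-in-$(x,z)$ integral against $M_{\gamma,\epsilon}$.

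I would next record two elementary building blocks. \emph{(i) Regularity of $H_\epsilon$.} Since $G_\epsilon$ is $C^\infty$ and satisfies $G_\epsilon(x,z)=-\log(|x-z|\vee\epsilon)+O(1)$ with spatial derivatives of size $O(1/\max(|x-z|,\epsilon))$, one has $|H_\epsilon(u)|\leq C(|u|^2+\epsilon^2)$ and $|\partial_u H_\epsilon(u)|\leq C\max(|u|,\epsilon)$, and hence by Taylor's formula
\begin{equation*}
|H_\epsilon(u)-H_\epsilon(0)|\leq C\epsilon|u|\text{ if }|u|\leq\epsilon,\qquad |H_\epsilon(u)-H_\epsilon(0)|\leq C|u|^2\text{ if }|u|\geq\epsilon.
\end{equation*}
\emph{(ii) Lipschitz regularity of $R_\epsilon$.} I would show $|R_\epsilon(x,z)-R_\epsilon(z,z)|\leq C_\delta|x-z|$, uniformly in $\epsilon$, on $B(0,\delta^{-1})\cap O_{N,\delta}(\z)$. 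The deterministic prefactors are manifestly smooth; for the GMC moment, the $\partial_x$ derivative is rewritten, exactly as in the proof of Lemma \ref{1stder}, using the key Beurling identity \eqref{Theexactidt} and the KPZ identity \eqref{kpzid}, as an absolutely convergent combination of Liouville correlations with at most one additional $V_\gamma$ insertion, which is then bounded uniformly in $\epsilon$ by the fusion estimates of Propositions \ref{upperbound2}--\ref{upperbound3}.

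Granted (i) and (ii), the decomposition
\begin{equation*}
\bar{\caF}_\epsilon(x,z)-\bar{\caF}_\epsilon(z,z)=(H_\epsilon(x-z)-H_\epsilon(0))\,R_\epsilon(z,z)+H_\epsilon(x-z)\,(R_\epsilon(x,z)-R_\epsilon(z,z))
\end{equation*}
yields the claim. For $|x-z|\leq\epsilon$ the two summands are bounded by $C\epsilon|x-z|$ and $C\epsilon^2\cdot|x-z|\leq C\epsilon|x-z|$; for $|x-z|\geq\epsilon$ they are bounded by $C|x-z|^2$ and $C|x-z|^2\cdot|x-z|\leq C_\delta|x-z|^2$ (using $|x-z|\leq 2\delta^{-1}$ on the domain at hand). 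In the regime $|x-z|\geq\epsilon$ one may alternatively bypass the decomposition altogether: Proposition \ref{upperbound2} with $\beta_1=\gamma$, $\beta_2=-\tfrac{2}{\gamma}$ (so $\beta_1+\beta_2<Q$ always, with $c_{\beta_1,\beta_2}=0$, i.e.\ no log correction) gives $|\bar{\caF}_\epsilon(x,z)|\leq C_\delta|x-z|^2$ directly, while $\bar{\caF}_\epsilon(z,z)=(A\epsilon)^2\langle V_{\gamma-2/\gamma,\epsilon}(z)\prod_k V_{\alpha_k,\epsilon}(z_k)\rangle_\epsilon$ is trivially $O(\epsilon^2)\leq O(|x-z|^2)$.

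The hard part is step (ii). A brute-force differentiation in $x$ introduces a factor $\gamma^2\partial_x\log|y-x|_\epsilon$ of size $\min(|y-x|^{-1},\epsilon^{-1})$, for which dominated convergence in $y$ is not available out of the box, the formal $\epsilon\to 0$ limit being a non-absolutely-convergent Beurling transform of the GMC. The remedy is precisely the contour-averaging argument used in the proof of Lemma \ref{1stder}: the exact identity \eqref{Theexactidt} trades $\partial_x$ for correlations with at most two additional $V_\gamma$ insertions, all of which are absolutely convergent and estimated by the fusion bounds of Section \ref{sectionestimates}. Everything else is bookkeeping, entirely parallel to the proof of the companion Proposition \ref{holdercorrel}.
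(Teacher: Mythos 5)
Your architecture---peel off the singular Wick pairing $H_\epsilon(x-z)\asymp|x-z|_\epsilon^{2}$, prove the stated bounds for $H_\epsilon(u)-H_\epsilon(0)$ from $|\partial H_\epsilon(u)|\leq C|u|_\epsilon$, and dispose of the regime $|x-z|\geq\epsilon$ directly via Proposition \ref{upperbound2}---matches the paper, and step (i) is fine. The gap is step (ii). First, the tools you invoke do not deliver it: the contour-averaged rewriting of Lemma \ref{1stder} produces kernels whose sup-norms are only controlled by $C\delta_i^{-1}$ with $\delta_i$ the distance to the nearest other insertion, which here is $|x-z|$; combined with the KPZ identity \eqref{kpzid} and the fusion bound $\bar{\caF}_\epsilon(x,z)\leq C|x-z|_\epsilon^{2}$ this yields only $|\partial_x R_\epsilon(x,z)|\leq C|x-z|^{-1}$ (this is exactly Corollary \ref{derest}), which integrates to a logarithmically divergent quantity in the crucial regime $|x-z|\leq\epsilon$ and therefore cannot produce the clean $C_\delta\,\epsilon\,|x-z|$. (That precise form matters downstream: a bound $C\epsilon^{2}\ln(\epsilon/|x-z|)$ would not make the Beurling-type integrals of Lemma \ref{appliBPZ1} converge.) Second, the uniform Lipschitz claim for $R_\epsilon$ is itself doubtful for large $\gamma$: differentiating the Girsanov representation of the GMC moment in $x$ and removing the chaos measure by a further Girsanov shift leaves the Lebesgue integral $\int_{|u-x|\leq 1}|u-x|_\epsilon^{-1-\gamma^{2}}|u-z|_\epsilon^{2}\,d^2u\asymp 1+|x-z|_\epsilon^{3-\gamma^{2}}$, so one only expects $R_\epsilon$ to be $(4-\gamma^{2})$-H\"older at $x=z$ when $\gamma^{2}>3$; your final assembly survives only because this loss is absorbed by the prefactor $H_\epsilon\asymp|x-z|_\epsilon^{2}$, a compensation the intermediate Lipschitz claim hides rather than proves.

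The paper's proof is more elementary and sidesteps the singular-integral machinery entirely. Starting from \eqref{Z1reg} it writes $\bar{\caF}_\epsilon$ as a prefactor $e^{-2G_\epsilon(\cdot)}$ times a reduced moment and differentiates with respect to the location of the \emph{negative}-weight insertion (the statement as written then follows by translation invariance, the residual $\partial_{z_i}$ derivatives being harmless by Corollary \ref{derest}). The derivative kernel then carries that insertion's own Wick factor $e^{-2G_\epsilon(u-z)}\asymp|u-z|_\epsilon^{2}$, which tames the $|u-z|_\epsilon^{-1}$ singularity of $\partial_z G_\epsilon$; after a single Girsanov transform one is left with the explicit, absolutely convergent integral $\int_{|u|\leq1}|u|_\epsilon^{1-\gamma^{2}}\,d^2u\leq C(1+\epsilon^{3-\gamma^{2}})$. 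Multiplying by the prefactor $O(\epsilon^{2})$ and adding the $O(\epsilon)$ contribution of the differentiated prefactor gives $|\partial\bar{\caF}_\epsilon|\leq C\epsilon$ on $|x-z|\leq\epsilon$ (using $\gamma^{2}\leq4$), hence the claimed bound. No Beurling transform, contour averaging, or cancellation of non-absolutely-convergent integrals is needed; you should replace step (ii) by this direct computation.
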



\subsection{Proof of Lemma \ref{appliBPZ1}.}

The mapping $x\mapsto  \frac{1}{(z-x)^2}\caF(x,z)$ is obviously continuous on $\C\setminus\{z,z_1,\dots,z_N\}$. By Proposition \ref{lpinfty}, it is  integrable near $\infty$. Furthermore, it possesses  singularities at the points $\{z,z_1,\dots,z_N\}$. From Proposition \ref{upperbound2} we infer a bound $C|x-z_i|^{-2+\zeta}$ with $\zeta>0 $ near $z_i$ and from  \eqref{holderb} we get a bound $C|x-z|^{-2+\tfrac{\gamma^2}{2}}$ near  $z$. Hence integrability 
of $\bar A$ follows. 

The uniform convergence of  $\bar{A}_\epsilon(z)$ (defined in \eqref{defAbar}) on compact sets $K\subset\C\setminus\{z_1,\dots,z_N\}$ towards  $\bar A(z)$ follows now easily from these bounds and the corresponding convergence of $\caF_\epsilon(x,z)$. Consider the first term in  the definition \eqref{defAbar}, call it  $a_\epsilon(z)$.  Let $\delta>0$ and define
\begin{equation}\label{defA1}
a_{\epsilon,\delta}(z)=-\int_{\C}\frac{1}{(z-x)^2}(\caF_\epsilon(x,z)-\mathbf{1}_{|x-z|\leq 1}\caF_\epsilon(z,z))1_\delta(x)
\,d^2x
\end{equation}
where $1_\delta$ is the indicator of the set $\{x: |x-z|>\delta, |x|<\delta^{-1}, \forall i: |x-z_i|>\delta \}$. Since $\caF_\epsilon(x,z)$ converges uniformly on compacts in $(\C\setminus\{z_1,\dots,z_N\})^2\cap\{x\neq z\}$ we infer that $a_{\epsilon,\delta}(z)$ converges uniformly on $K$ to a limit $ a_{0,\delta}(z)$. From the above bounds on the function $x\to\caF_\epsilon(x,z)$ near the points $z_i$, $\infty$ and $z$ we infer
$$
\sup_\epsilon|a_\epsilon(z)-a_{\epsilon,\delta}(z)|\leq C_K\delta^b$$
uniformly on $K$ with $b>0$. Since $\delta$ was arbitrary the uniform convergence of  $a_\epsilon(z)$ on $K$ follows.
Convergence of the second term in   \eqref{defAbar} can be dealt with similarly.

\qed

\subsection{Proof of Lemma \ref{appliBPZ2}.}

Let us first discuss the integrability of the mapping $$(x,y)\mapsto \phi_\epsilon(x,y,z):=\frac{1}{(z-x)(z-y)}\langle V_{-\frac{\gamma}{2},\epsilon}(z)V_{\gamma,\epsilon}(x) V_{\gamma,\epsilon}(y)  \prod_{k=1}^N  V_{\alpha_k,\epsilon}(z_k)\rangle_{\epsilon}.$$

 If $x,y$ belong to a ball $B(0,\delta^{-1})$ then integrability in a neighborhood of the diagonal and away from the points $\{(w,w);w\in\{z,z_1,\dots,z_N)\}$ results from Proposition \ref{upperbound2} (with $N+1$ insertions at $z,z_1,\dots,z_N$ instead of $N$ insertions). We let the reader check this point (distinguish the cases $2\gamma>Q$, $2\gamma=Q$ and $2\gamma<Q$). Integrability when $x$ and/or $y$ approach $\infty$ results from Propositions \ref{lpinfty} and \ref{2merginfinity}.

We are left with the case when both $x$ and $y$ get close to $z$. This situation is described by Proposition \ref{upperbound3}. We proceed as in the proof of Proposition  \ref{1and2point} but this time paying attention to the fact that one weight is negative. 
There are once again two cases depending on $2\gamma\geq Q$ or $2\gamma<Q$. The most intricate  is the case $2\gamma\geq Q$ and this is the only case that we will discuss. Proposition \ref{upperbound3} with $r=0$ and $\lbrace \beta_1,\beta_2,\beta_3\rbrace=\lbrace \gamma, -\frac{\gamma}{2} \rbrace $ thus gives:

\medskip

$\bullet$ $|x-y|\leq |x-z|\leq |y-z|$ :
$$|\phi_\epsilon(x,y,z)|\leq 
 C_\delta |x-y|_\epsilon^{-4+\frac{Q^2}{2}}\frac{|y-z|_\epsilon^{1+\frac{\gamma^2}{4} } }{|x-z||y-z|}
 $$
 
\medskip

$\bullet$ $  |x-z|\leq |y-z|\leq |x-y| $: 
$$|\phi_\epsilon(x,y,z)|\leq C_\delta \frac{|x-y|_\epsilon^{-\gamma^2}}{|x-z||y-z|}
 \left\{\begin{array}{ll} 
|y-z|_\epsilon^{{\gamma^2}+\frac{(Q-3\gamma/2)^2}{2}} ,&\text{ if }\quad 3\gamma/2 \geq Q,\\
 |y-z|_\epsilon^{\gamma^2}      ,&\text{ if }\quad   3\gamma/2< Q .                                                                      
 \end{array}\right..
 $$
 
\medskip

$\bullet$  $  |x-z| \leq |x-y| \leq |y-z|$:
$$|\phi_\epsilon(x,y,z)|\leq 
 \left\{\begin{array}{ll} 
C_\delta\frac{|x-z|_\epsilon^{\frac{\gamma^2}{2}}}{|x-z|}|x-y|^{-\gamma^2+\frac{(Q-3\gamma/2)^2}{2}} \frac{(\epsilon+|y-z|)^{\frac{\gamma^2}{2}}}{|y-z|}   ,&\text{ if }\quad  3\gamma/2\geq Q,\\
     C_\delta\frac{ |x-z|_\epsilon^{\frac{\gamma^2}{2}} }{|x-z|}   (\epsilon+ |x-y|)^{- \gamma^2}      \frac{|y-z|_\epsilon^{\frac{\gamma^2}{2}}}{|y-z|}                        ,&\text{ if }\quad  3\gamma/2< Q .                                                                                         
 \end{array}\right.
 $$
Integrability of $\sup_\epsilon|\phi_\epsilon(x,y,z)|$ is now readily checked.
\medskip

In the same way,
for the $-\frac{2}{\gamma}$-insertion, i.e.  when considering the mapping
$$(x,y)\mapsto \psi_\epsilon(x,y,z):=\frac{1}{(z-x)(z-y)}\langle V_{-\frac{2}{\gamma} ,\epsilon}(z)V_{\gamma,\epsilon}(x) V_{\gamma,\epsilon}(y)  \prod_{k=1}^N  V_{\alpha_k,\epsilon}(z_k)\rangle_{\hat{g},\epsilon},$$ we get the following bounds:

\medskip

$\bullet$  $|x-y|\leq |x-z|\leq |y-z|$: 
$$|\psi_\epsilon(x,y,z)|\leq 
 C_\delta |x-y|_\epsilon^{-4+\frac{Q^2}{2}}\frac{|x-z|_\epsilon^{ \frac{4}{\gamma^2}-1 } }{|x-z|}\frac{|y-z|_\epsilon^{2}}{|y-z|} , $$

$\bullet$ $  |x-z|\leq |y-z|\leq |x-y| $: 
$$|\psi_\epsilon(x,y,z)|\leq 
 \left\{\begin{array}{ll} 
 C_\delta\frac{|x-z|_\epsilon^{2} }{|x-z|}\frac{ |y-z|_\epsilon^{2+\hf(Q-2\gamma+\frac{2}{\gamma})^2}}{|y-z|}( |x-y|_\epsilon)^{-\gamma^2}   ,&\text{ if }\quad 2\gamma-\frac{2}{\gamma} \geq Q,\\
 C_\delta  \frac{|x-z|_\epsilon^{2} }{|x-z|}\frac{|y-z|_\epsilon^{2}}{|y-z|}  |x-y|_\epsilon^{-\gamma^2}                                       ,&\text{ if }\quad   3\gamma/2< Q .                                                                      
 \end{array}\right..
 $$

$\bullet$ $  |x-z| \leq |x-y| \leq |y-z|$: 
$$|\psi_\epsilon(x,y,z)|\leq 
 \left\{\begin{array}{ll} 
C_\delta\frac{|x-z|_\epsilon^{\frac{\gamma^2}{2}}}{|x-z|}|x-y|_\epsilon^{-\gamma^2+\frac{(Q-2\gamma+\frac{2}{\gamma})^2}{2}} \frac{|y-z|_\epsilon^{\frac{\gamma^2}{2}}}{|y-z|}   ,&\text{ if }\quad  2\gamma-\frac{2}{\gamma}\geq Q,\\
     C_\delta\frac{ |x-z|_\epsilon^{2} }{|x-z|}   |x-y|_\epsilon^{- \gamma^2}      \frac{|y-z|_\epsilon^{2}}{|y-z|}                        ,&\text{ if }\quad  2\gamma-\frac{2}{\gamma}< Q .                                                                                         
 \end{array}\right.
 $$
Again, integrability follows.\qed

\section{Proof of Fusion Estimates}\label{sec:prem}

\subsection{Regularized correlations and Proof of Lemma  \ref{trinv}}\label{regcor}

The regularized version of \eqref{Z1} reads
\begin{equation} 
G_\epsilon(\z)=B(\pmb{\alpha})
 e^{\sum_{j < k}^N \alpha_j \alpha_kC_\epsilon(z_j-z_k)}\mu^{-s} \gamma^{-1}\Gamma(s)\E ( {Z}_\epsilon(\z)^{-s}  )\label{Z1reg}
\end{equation}
where
\begin{equation}\label{Z1rega}
Z_\epsilon(\z)=  (A\epsilon)^{\frac{\gamma^2}{2}} \int_{\C}   
e^{\gamma\sum_{k=1}^N \alpha_k  C_\epsilon(x-z_k)} \hat{g}_\epsilon(x)^{ 
-\frac{\gamma^2}{4}s}e^{\gamma X_\epsilon(x)} d^2x,
\end{equation}
$C_\epsilon(z)=
(\ln\frac{1}{|z|})_{\epsilon,\epsilon}$, $\ln \hat g_\epsilon=\rho_\epsilon\ast \ln \hat g$
 and $s=(\sum_{k=1}^N \alpha_k-2Q)/\gamma$. We have the estimate
$$
\ln{|x|_\epsilon}^{-1}-C\leq {C_\epsilon(x)}\leq \ln{|x|_\epsilon}^{-1}+C
$$
where $C$ is uniform in $\epsilon$ and $|x|_\epsilon:=|x|\vee\epsilon$.  
Moreover $\|\partial_x^n G_\epsilon\|_\infty<\infty$ if  $\epsilon>0$. From this  we deduce  the smoothness of the correlations if $\epsilon>0$ claimed in Proposition \ref{1and2point} {(a)}.

\medskip
\noindent{\it Proof of Lemma \ref{trinv}.} From the expression \eqref{Z1reg}, it suffices to prove $\E  {Z}_\epsilon(\z+y)^{-s}=\E {Z}_\epsilon(\z)^{-s}  $. We have
\begin{equation*}
{Z}_\epsilon(\z+y)
=(A\epsilon)^{\frac{\gamma^2}{2}} \int_{\C}
\prod_{k=1}^N 
e^{\gamma \alpha_kC_\epsilon(x-z_k)} \hat{g}_\epsilon(x+y)^{ 
-\frac{\gamma^2}{4}s}e^{\gamma X_\epsilon(x+y)} d^2x.
\end{equation*}
Now $X(\cdot+y)$ is equal in law to  $X(\cdot)-m_y(X)$ where $m_y(X):=\frac{1}{4\pi}\int_{\C} X(x)\hat g(x+y)d^2x$. Hence, making the change of variables $x\mapsto x+y$ in the integral, we get
\begin{equation*}
\E {Z}_\epsilon(\z+y)^{-s}=\E e^{ \gamma s m_y(X)  }
\Big((A\epsilon)^{\frac{\gamma^2}{2}} \int_{\C}
\prod_{k=1}^N 
e^{\gamma \alpha_kC_\epsilon(x-z_k)} \hat{g}_\epsilon(x+y)^{ 
-\frac{\gamma^2}{4}s}e^{\gamma X_\epsilon(x)} d^2x\Big)^{-s}
\end{equation*}
By the Girsanov theorem  this equals
\begin{equation*}
\E {Z}_\epsilon(\z+y)^{-s}=e^{D}
\E
((A\epsilon)^{\frac{\gamma^2}{2}}  \int_{\C}
\prod_{k=1}^N 
e^{\gamma \alpha_kG_\epsilon(x-z_k)} \hat{g}_\epsilon(x+y)^{ 
-\frac{\gamma^2}{4}s}e^{\gamma^2s\, \rho_\epsilon\ast h(x)}e^{\gamma X_\epsilon(x)} d^2x)^{-s}
\end{equation*}
where $D$ is the variance of the Girsanov transform, namely $$D:=\hf \gamma^2s^2(4\pi)^{-2}\int_{\C}\int_{\C} G(u,v)\hat g(u+y)\hat g(v+y)d^2ud^2v,$$ and $h$ the shift, i.e.  $$h(x)=(4\pi)^{-1}\int_{\C} G(x,u) \hat g(u+y) d^2u.$$ 
It is proved in  \cite[proof of theorems 3.5 and 3.7]{DKRV} that $h(x)=\frac{_1}{^4}(\ln\frac{\hat g(x+y)}{\hat g(x)} -\int_{\C}\ln \frac{\hat g(u+y)}{\hat g(u)}\hat g(u) d^2u)$ so that $\rho_\epsilon\ast h (x)=\frac{_1}{^4}(\ln\frac{\hat g_\epsilon(x+y)}{\hat g_\epsilon(x)}-\int\ln \frac{\hat g(u+y)}{\hat g(u)}\hat g(u) d^2u)$ and then
\begin{equation*}
\E {Z}_\epsilon(\z+y)^{-s}=e^{D+\gamma^2s^2\int\ln \frac{\hat g(u+y)}{\hat g(u)}\hat g(u) d^2u}\E
((A\epsilon)^{\frac{\gamma^2}{2}}  \int_{\C}
\prod_{k=1}^N 
e^{\gamma \alpha_kC_\epsilon(x-z_l)} \hat{g}_\epsilon(x)^{ 
-\frac{\gamma^2}{4}s}e^{\gamma X_\epsilon(x)} d^2x)^{-s}
\end{equation*}
The claim follows from $D=-\gamma^2s^2\int_{\C}\ln \frac{\hat g(u+y)}{\hat g(u)}\hat g(u) d^2u$, see again \cite[proof of theorems 3.5 and 3.7]{DKRV}.\qed

\subsection{Radial decomposition of  the  chaos measure}
Here we prepare for the proof of the fusion estimates. As the correlation functions are translationally invariant, we may assume that the points merge at $0$. We will use some decomposition of the correlation functions developed in  \cite{DKRV2}, which we summarize now. 

First, we want to trade the GFF $X$ for a field that is more appropriate to a local analysis around $0$. By shifting the mean of the GFF $X$, we can replace the GFF $X$ in \eqref{Liouville measure} by the GFF $X_0$ with vanishing mean on the unit circle, i.e. the Gaussian distribution with covariance structure
\begin{equation}\label{G0}
G_0(x,y)=\ln\frac{1}{|x-y|}+\ln|x|\mathbf{1}_{\{|x|\geq 1\}}+\ln|y|\mathbf{1}_{\{|y|\geq 1\}}.
\end{equation}
This covariance kernel is of exact log type in the ball $B(0,1)$, hence facilitates the analysis around $0$.  Furthermore, we write  $G_{0,\epsilon}$ for the mollification (with $\rho_\epsilon$) of the kernel $G_0$ with respect to both variables $x$ and $y$ and we introduce the function 
$$\tilde{H}(x):=\ln|x|1_{\{|x|\geq 1\}}-\tfrac{1}{4}\ln\hat{g}(x),$$
which is bounded over $\C$. $\tilde{H}_\epsilon$ will stand for the mollification of $\tilde{H}$ with respect  to $\rho_\epsilon$.

The regularized correlation functions then read
\begin{equation}\label{correl0}
\langle \prod_{k=1}^NV_{\alpha_k,\epsilon}(z_k)\rangle_\epsilon=
K_\epsilon(\mathbf{z})\mu^{-s}\Gamma(s) \gamma^{-1}\E\big[\big(\int_{\C}e^{\gamma H^0_\epsilon}dM^0_{\gamma,\epsilon}\big)^{-s}\big]
\end{equation}
where     
\begin{equation}\label{def:Hepsilon}
H^0_\epsilon(x)=\tilde{H}_\epsilon(x)+ \sum_k\alpha_kG_{0,\epsilon}(z_k,x) 
\end{equation}
\begin{equation}\label{def:Kepsilon}
K_\epsilon(\mathbf{z})=4 e^{\chi\big(Q^2+\sum_k\tfrac{\alpha_k^2}{2}\big)}e^{\sum_{k,k'}\alpha_k\alpha_{k'}G_{0,\epsilon}(z_k,z_{k'})+\sum_{k}\alpha_k\tilde{H}_\epsilon(z_k)}\Big(\prod_{k=1}^N\hat{g}(z_k)^{\triangle_{\alpha_k}}\Big)(1+o(1)),
\end{equation}
with $o(1)$ uniform on $\C$ as $\epsilon\to 0$, and the regularized potential is given by
\begin{align*}
M^0_{\gamma,\epsilon}(d^2x):=&(A\epsilon)^{\frac{\gamma^2}{2}}e^{\gamma (X_{0,\epsilon}(x)+\tfrac{Q}{2}\ln\hat{g}_\epsilon(x))}\,d^2x\\
 M^0_{\gamma}(d^2x):=&\lim_{\epsilon\to 0}M^0_{\gamma,\epsilon}(d^2x)\\
 =&A^{\frac{\gamma^2}{2}}e^{\gamma X_0(x)-\frac{\gamma^2}{2}\E[X_0^2(x)]}(|x|\vee 1)^{\gamma^2}\hat{g}(x)^{\frac{\gamma Q}{2}}\,d^2x .
 \end{align*}

We  denote  by $\mathcal{F}_\eta$ ($\eta>0$) the sigma-algebra generated by the field $X_0$ "away from the disc $B(0,\eta)$", namely
\begin{equation}
\mathcal{F}_\eta=\sigma\{X_0(f); f\text{ smooth}, {\rm supp}(f)\in B(0,\eta)^c\}.
\end{equation}
$\mathcal{F}_\infty$ stands for the sigma algebra generated by $\bigcup_{\eta>0}\mathcal{F}_\eta$. 

The following radial decomposition of the field $X_0$ will be useful for the analysis (this observation was already made in \cite{DMS}):

\begin{lemma}\label{independence} The field $X_0$  may be decomposed (in the sense of distributions) as 
\begin{equation}
 X_0(z)= X^r(|z|) + Y(z)
\end{equation} 
where the centered Gaussian fields $X^r,Y$ are independent have the following covariance
\begin{equation*}
\E[    Y(z) Y(z') ]  = \ln \frac{ |z| \vee |z'|}{|z-z'|}\quad \text{ and }\quad \E[X^r(|z|)X^r(|z'|)]=\ln\frac{1}{ |z| \vee |z'|}+\ln|z|\mathbf{1}_{\{|z|\geq 1\}}+\ln|z'|\mathbf{1}_{\{|z'|\geq 1\}}.
\end{equation*}
In particular, the process $t\mapsto X^r(e^{-t})$  evolves as a Brownian motion independent of $\mathcal{F}_1$.
\end{lemma}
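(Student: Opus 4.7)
The natural strategy is to define $X^r$ as the radial (circle-average) part of $X_0$ and to verify the claimed covariances and independence by direct Gaussian computation. Concretely, I would set
\begin{equation*}
X^r(r):=\frac{1}{2\pi}\int_0^{2\pi}X_0(re^{i\theta})\,d\theta,\qquad Y(z):=X_0(z)-X^r(|z|),
\end{equation*}
where the integral is interpreted, as usual for the GFF, via a regularization $X_{0,\epsilon}$ together with the observation that circle averages converge in $L^2$ thanks to the logarithmic singularity of $G_0$. Since $X_0$ is Gaussian, so are $X^r$ and $Y$, and independence of $X^r$ from $Y$ is equivalent to joint zero covariance; similarly independence from $\mathcal{F}_1$ reduces to showing that $\E[X^r(r)X_0(f)]=0$ for any smooth $f$ supported in $\{|w|\geq 1\}$.

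The core computation uses the classical identity $\frac{1}{2\pi}\int_0^{2\pi}\ln|re^{i\theta}-w|\,d\theta=\ln(r\vee |w|)$, which is just the mean value property applied to the harmonic function $w\mapsto\ln|z-w|$ outside the disk of radius $r$ (combined with its conjugate version inside). Applying this twice to the explicit covariance
\begin{equation*}
G_0(x,y)=\ln\tfrac{1}{|x-y|}+\ln|x|\mathbf{1}_{|x|\geq 1}+\ln|y|\mathbf{1}_{|y|\geq 1}
\end{equation*}
yields
\begin{equation*}
\E[X^r(r)X^r(r')]=\ln\tfrac{1}{r\vee r'}+\ln r\,\mathbf{1}_{r\geq 1}+\ln r'\,\mathbf{1}_{r'\geq 1},
\end{equation*}
which matches the stated covariance. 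An analogous one-sided computation gives
\begin{equation*}
\E[X^r(r)X_0(w)]=\ln\tfrac{1}{r\vee|w|}+\ln r\,\mathbf{1}_{r\geq 1}+\ln|w|\,\mathbf{1}_{|w|\geq 1},
\end{equation*}
and a crucial cancellation occurs when $r<1\leq|w|$: the right-hand side becomes $\ln\tfrac{1}{|w|}+\ln|w|=0$. Thus $X^r(r)$ is uncorrelated with $X_0$ tested against any $f$ supported in $B(0,1)^c$; by joint Gaussianity, this gives the independence of $\{X^r(r):r<1\}$ from $\mathcal{F}_1$. The same identity, with $w$ replaced by a circle-average, combined with the formula for $\E[X^r(r)X^r(|z|)]$, gives $\E[X^r(r)Y(z)]=0$, hence independence of $X^r$ and $Y$; the covariance of $Y$ is then obtained by subtraction from $G_0$ and simplifies to $\ln\tfrac{|z|\vee|z'|}{|z-z'|}$.

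For the Brownian motion statement, substituting $r=e^{-s}$, $r'=e^{-t}$ with $s,t\geq 0$ into the covariance of $X^r$ gives $\E[X^r(e^{-s})X^r(e^{-t})]=s\wedge t$, which characterizes standard Brownian motion; combined with the previous paragraph this Brownian motion is independent of $\mathcal{F}_1$. The only delicate point I anticipate is justifying the circle-average manipulations in the sense of distributions (in particular Fubini for $X^r$ and the passage $\E[X_{0,\epsilon}(re^{i\theta})X_{0,\epsilon}(r'e^{i\theta'})]\to G_0$), but this is standard for logarithmically correlated Gaussian fields and follows by taking $\epsilon\to 0$ in the identities above, the relevant angular integrals being uniformly controlled by the mean value property.
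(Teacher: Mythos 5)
Your proof is correct and is exactly the standard circle-average (radial/lateral) decomposition argument that the paper itself does not spell out but attributes to \cite{DMS}; all the key computations (the mean-value identity $\frac{1}{2\pi}\int_0^{2\pi}\ln|re^{i\theta}-w|\,d\theta=\ln(r\vee|w|)$, the vanishing cross-covariances, and the resulting covariance of $Y$) check out. Nothing further is needed beyond the routine regularization remarks you already flag.
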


Using the above lemma, we get the following decomposition of the chaos measure
$$
M^0_\gamma(d^2x)= c_\gamma  \hat g(x)  |x|^{\frac{\gamma^2}{2}}e^{\gamma X^r(|x|)}\,M_\gamma(d^2x,Y)
$$
 where $M_\gamma(d^2x,Y)$ is the multiplicative chaos measure of the field $Y$ with respect to the Lebesgue measure (i.e. $\E\, M_\gamma(d^2x,Y)=d^2x$) and $c_\gamma:= A^{\frac{\gamma^2}{2} }$ is a constant.

If $z$ belongs to the unit disk $\D$, we make change of variables $z=e^{-s+i\sigma}$, $s\in \R_+$, $\sigma\in [0,2\pi)$ and let $\mu_{Y}(ds,d\sigma)$ be  the multiplicative chaos measure of the field $Y(e^{-s+i\sigma})$ with respect to the measure $dsd\sigma$. We denote by $x_s$ the process
\begin{equation}\label{radialpr}
s\in \R_+\to x_s:=X^r(e^{-s}).
\end{equation}
 We  arrive  at the following useful "radial" decomposition of the chaos measure:  

\begin{lemma}\label{decompmeas} Let $f\in C_0(\D)$. Then
$$
 \int_{\D} f(x)\,M^0_\gamma(d^2x) =c_\gamma \int_0^\infty \int_0^{2\pi}f(e^{-s}e^{i\sigma})e^{\gamma x_s-\gamma Q s}\hat g(e^{-s})\,\mu_{Y}(ds,d\sigma)$$
where 
$\mu_{Y}(ds,d\sigma)$ is a measure independent of the whole process $(x_s)_{s\geq 0}$. 
\end{lemma}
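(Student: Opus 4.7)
The plan is to start from the identity
\[
M^0_\gamma(d^2x)=c_\gamma\,\hat g(x)\,|x|^{\gamma^2/2}\,e^{\gamma X^r(|x|)}\,M_\gamma(d^2x,Y)
\]
displayed immediately before the lemma, then perform the change of variables $x=e^{-s+i\sigma}$ on the unit disk, and identify the resulting chaos measure with $\mu_Y$. For $|x|<1$ we have $|x|=e^{-s}$, $|x|^{\gamma^2/2}=e^{-\gamma^2 s/2}$, $X^r(|x|)=x_s$, and the Jacobian is $d^2x=e^{-2s}\,ds\,d\sigma$. Since $\hat g(x)=4(1+|x|^2)^{-2}$ depends only on $|x|$, it becomes $\hat g(e^{-s})$.

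The central step is to check that the Lebesgue-based chaos $M_\gamma(d^2x,Y)$ transforms under $x=e^{-s+i\sigma}$ into $e^{-2s}\mu_Y(ds,d\sigma)$. This follows by working at the regularized level: with the common mollifier $\rho_\epsilon$,
\[
M_{\gamma,\epsilon}(d^2x,Y)=e^{\gamma Y_\epsilon(x)-\tfrac{\gamma^2}{2}\E[Y_\epsilon(x)^2]}\,d^2x,\qquad \mu_{Y,\epsilon}(ds,d\sigma)=e^{\gamma Y_\epsilon(e^{-s+i\sigma})-\tfrac{\gamma^2}{2}\E[Y_\epsilon(e^{-s+i\sigma})^2]}\,ds\,d\sigma,
\]
and substituting $x=e^{-s+i\sigma}$ gives the exact pointwise identity of densities $M_{\gamma,\epsilon}(d^2x,Y)=e^{-2s}\mu_{Y,\epsilon}(ds,d\sigma)$. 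Since $\gamma\in[0,2)$, standard GMC convergence (cf.\ \cite{Ber}) transfers this to the limiting measures as $\epsilon\to 0$.

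Assembling the pieces, the exponent in $s$ becomes $-\tfrac{\gamma^2}{2}s-2s=-(\tfrac{\gamma^2}{2}+2)s$, and the key algebraic identity $\tfrac{\gamma^2}{2}+2=\gamma(\tfrac{\gamma}{2}+\tfrac{2}{\gamma})=\gamma Q$ reproduces precisely the factor $e^{-\gamma Q s}$ in the statement of the lemma. Plugging $f(e^{-s+i\sigma})$ and integrating yields the claimed formula.

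The independence assertion is immediate from Lemma \ref{independence}: $\mu_Y$ is a measurable functional of $Y$ alone (constructed from mollifications $Y_\epsilon$ and a deterministic limit procedure), while the process $(x_s)_{s\ge 0}$ is a measurable functional of $X^r$ alone, and $X^r$ and $Y$ were shown to be independent Gaussian fields. The only technical point meriting care is the transfer of the identity $M_{\gamma,\epsilon}(d^2x,Y)=e^{-2s}\mu_{Y,\epsilon}(ds,d\sigma)$ to the $\epsilon\to 0$ limit when testing against $f\in C_0(\D)$; this is routine because $f$ has compact support away from $|x|=1$ (after replacing $C_0(\D)$ by continuous functions vanishing near the boundary, which is the intended meaning), so both sides converge against $f$ by the usual weak convergence of GMC on compact sets.
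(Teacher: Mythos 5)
Your argument is correct and follows exactly the route the paper intends: the paper gives no separate proof of this lemma, treating it as an immediate consequence of Lemma \ref{independence}, the displayed factorization $M^0_\gamma(d^2x)=c_\gamma\,\hat g(x)\,|x|^{\gamma^2/2}e^{\gamma X^r(|x|)}M_\gamma(d^2x,Y)$, and the substitution $x=e^{-s+i\sigma}$, which is precisely what you carry out, including the key cancellation $\tfrac{\gamma^2}{2}+2=\gamma Q$ and the independence of $\mu_Y$ from $(x_s)_{s\geq 0}$ via the independence of $Y$ and $X^r$. The only step you rightly single out — identifying the $\epsilon\to0$ limit of the $x$-mollified density, written in $(s,\sigma)$ coordinates, with the chaos $\mu_Y$ of the field $Y(e^{-s+i\sigma})$ relative to $ds\,d\sigma$ — is indeed covered by the universality of GMC limits, so your write-up is a complete and faithful elaboration of the paper's assertion.
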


\subsection{On the supremum of a drifted Brownian motion}
With Lemma \ref{decompmeas} in mind, we will  start with  two elementary  lemmas on Brownian motion with drift.  
 \begin{lemma}\label{lem:renorm} Let $B$ be a standard Brownian motion staring from $0$.\\
1) For $\beta,\alpha> 0$,  we have
$$\P(\sup_{u\leq t}B_u+\alpha u\leq \beta)\leq \frac{e^{-\frac{\alpha^2 t}{2}}}{ \alpha^2 t^{3/2}} \sqrt{\frac{2}{\pi}} \beta e^{\alpha\beta}.  $$
2) For $\beta > 0$,  we have
$$\P(\sup_{u\leq t}B_u\leq \beta)=\sqrt{\frac{2}{\pi}}\int_0^{\frac{\beta}{\sqrt{t}}} e^{- \frac{u^2}{2}}\,du\leq \sqrt{\frac{2}{\pi}} \frac{\beta}{\sqrt{t}}.$$
\end{lemma}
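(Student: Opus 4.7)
My plan treats the two parts separately. Part (2) is essentially textbook: the reflection principle gives $\P(\sup_{u\leq t}B_u\geq\beta)=2\P(B_t\geq\beta)$, whence $\P(\sup_{u\leq t}B_u\leq\beta)=\P(|B_t|\leq\beta)$, and the change of variable $v=B_t/\sqrt{t}$ delivers the stated identity. The terminal bound is just $e^{-v^2/2}\leq 1$.

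The heart of the lemma is part (1), and my strategy is to reformulate the one-sided barrier event as a first-passage event for a drifted Brownian motion. Since $\widetilde{B}:=-B$ is again a standard Brownian motion, one has
\begin{equation*}
\{\sup_{u\leq t}(B_u+\alpha u)\leq\beta\}=\{\beta+\widetilde{B}_u-\alpha u\geq 0\ \forall u\in[0,t]\}=\{\tau>t\},
\end{equation*}
where $\tau$ is the first hitting time of $0$ by $W_u:=\beta+\widetilde{B}_u-\alpha u$, a Brownian motion with drift $-\alpha$ issued from $\beta>0$. I would then invoke the classical first-passage density
\begin{equation*}
f_\tau(s)=\frac{\beta}{\sqrt{2\pi s^3}}\exp\Big(-\frac{(\beta-\alpha s)^2}{2s}\Big),
\end{equation*}
and the crucial algebraic step is to expand $(\beta-\alpha s)^2/(2s)=\beta^2/(2s)-\alpha\beta+\alpha^2 s/2$, which yields the factorization $e^{-(\beta-\alpha s)^2/(2s)}=e^{\alpha\beta}e^{-\alpha^2 s/2}e^{-\beta^2/(2s)}$.

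The final step is an elementary tail estimate: dropping the harmless factor $e^{-\beta^2/(2s)}\leq 1$ and bounding $s^{-3/2}\leq t^{-3/2}$ on $[t,\infty)$ gives
\begin{equation*}
\P(\tau>t)\leq\frac{\beta\, e^{\alpha\beta}}{\sqrt{2\pi}}\,t^{-3/2}\int_t^\infty e^{-\alpha^2 s/2}\,ds=\sqrt{\frac{2}{\pi}}\,\frac{\beta\,e^{\alpha\beta}e^{-\alpha^2 t/2}}{\alpha^2 t^{3/2}},
\end{equation*}
which matches the claimed bound exactly. The only slightly delicate point is that a naive Girsanov argument combined with part (2) would only produce a bound with $t^{-1/2}$ in place of $(\alpha^2 t^{3/2})^{-1}$; the sharper decay comes precisely from working with the explicit first-passage density to isolate the Gaussian factor $e^{-\alpha^2 s/2}$ and then exploiting the $\alpha^{-2}$ gain obtained by integrating it over $[t,\infty)$, rather than merely bounding $e^{\alpha B_t}$ on the constraint event $\{B_t\leq \beta\}$.
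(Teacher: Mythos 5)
Your proof is correct and follows essentially the same route as the paper: the paper quotes the explicit density of $\sup_{u\leq t}(B_u+\alpha u)$ from Borodin--Salminen (equivalent to your first-passage reformulation), expands $(\beta-\alpha s)^2/(2s)$ to extract the factor $e^{\alpha\beta}e^{-\alpha^2 s/2}e^{-\beta^2/(2s)}$, and then bounds $e^{-\beta^2/(2s)}\leq 1$, $s^{-3/2}\leq t^{-3/2}$ and integrates $e^{-\alpha^2 s/2}$ over $[t,\infty)$ exactly as you do. Part (2) is likewise the standard reflection-principle computation.
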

\begin{proof} 
 The density of $\sup_{u \leq t} (B_u+\alpha u) $ is explicit and one has (see \cite{BoSal} for example)
\begin{align*}
\P(\sup_{u\leq t}B_u+\alpha u\leq \beta) & = \frac{\beta}{\sqrt{2 \pi}}  \int_t^{\infty}  \frac{e^{- \frac{(\beta-\alpha s)^2}{2s}}}{s^{3/2}}  ds =    \frac{\beta e^{\beta \alpha}}{\sqrt{2 \pi}}  \int_t^{\infty}  \frac{e^{-\frac{\beta^2}{2 s}} e^{- \frac{\alpha^2 s}{2}}}{s^{3/2}}  ds \leq \frac{e^{-\frac{\alpha^2 t}{2}}}{ \alpha^2 t^{3/2}} \sqrt{\frac{2}{\pi}} \beta e^{\alpha\beta} .
\end{align*}

\end{proof}
%
For the next lemma  we fix some constants $\alpha,\tilde{\alpha}\in\R$ and   $s>0$ and define for $u\geq 0$
\begin{equation}\label{Fdefini} F(u):=\alpha u+\tilde{\alpha}\int_0^u  \mathbf{1}_{[0,s]}(v)\,dv
=\alpha u+\tilde\alpha \,u\wedge s.
\end{equation}
\begin{lemma}\label{warrior1}
We have the following estimates for $\beta\geq 0$ and $s<r$:\\
1) if $\alpha\geq 0$, $\tilde\alpha\geq 0$ then
$$\P(\sup_{u\in[0,r]}(B_u+F(u)) \leq \beta)\leq e^{-\frac{(\alpha+\tilde{\alpha})^2}{2}s-\frac{ \alpha^2}{2}(r-s)} e^{(\alpha  +\tilde{\alpha})\beta}.$$
2) if $\alpha \geq 0$, $\tilde\alpha\leq 0$ then 
$$\P(\sup_{u\in[0,r]}(B_u+F(u))\leq \beta)\leq e^{-\frac{ (\alpha+\tilde{\alpha})^2-\alpha^2-\tilde\alpha^2}{2}s-\frac{ \alpha^2}{2}r} e^{\alpha \beta}.$$
\end{lemma}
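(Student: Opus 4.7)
The plan is to apply a Girsanov change of measure that removes the drift $F$, and then to bound the resulting exponential factor by hand according to the sign of $\tilde{\alpha}$. Set $\theta(u) := \alpha + \tilde{\alpha}\mathbf{1}_{[0,s]}(u)$, so that $F(u) = \int_0^u \theta(v)\,dv$, and write $M_u := B_u + F(u)$. Since $\theta$ is bounded, there is an equivalent probability measure $\mathbb{Q}$ on $\mathcal{F}_r$ under which $M$ is a standard Brownian motion, with
\begin{equation*}
\frac{d\P}{d\mathbb{Q}} = \exp\Bigl(\int_0^r \theta(u)\,dM_u - \tfrac{1}{2}\int_0^r \theta(u)^2\,du\Bigr).
\end{equation*}
Because $\theta$ is a step function with a single jump of height $-\tilde{\alpha}$ at $u=s$, integration by parts yields $\int_0^r \theta(u)\,dM_u = \alpha M_r + \tilde{\alpha}M_s$, while $\int_0^r \theta(u)^2\,du = (\alpha+\tilde{\alpha})^2 s + \alpha^2(r-s)$. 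Hence
\begin{equation*}
\P\bigl(\sup_{u\in[0,r]} M_u \le \beta\bigr) = e^{-\tfrac{1}{2}[(\alpha+\tilde{\alpha})^2 s + \alpha^2(r-s)]}\; \E^{\mathbb{Q}}\!\Bigl[\mathbf{1}_{\{\sup_u M_u\le\beta\}}\, e^{\alpha M_r + \tilde{\alpha}M_s}\Bigr].
\end{equation*}

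In Case 1, both $\alpha,\tilde{\alpha} \ge 0$, so on the event $\{\sup_u M_u \le \beta\}$ both $M_r \le \beta$ and $M_s \le \beta$, whence $e^{\alpha M_r + \tilde{\alpha}M_s} \le e^{(\alpha+\tilde{\alpha})\beta}$; discarding the indicator and bounding the residual $\mathbb{Q}$-probability by one gives the first announced inequality immediately. In Case 2 only $\alpha \ge 0$, so $e^{\alpha M_r}\le e^{\alpha\beta}$ still holds on the event, but $e^{\tilde{\alpha}M_s}$ cannot be controlled pointwise; I instead apply only the bound $e^{\alpha M_r}\le e^{\alpha\beta}$, drop the indicator, and compute the Gaussian Laplace transform $\E^{\mathbb{Q}}[e^{\tilde{\alpha}M_s}] = e^{\tilde{\alpha}^2 s/2}$ using that $M_s \sim \mathcal{N}(0,s)$ under $\mathbb{Q}$. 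Adding $\tilde{\alpha}^2 s/2$ to the Girsanov exponent and using $(\alpha+\tilde{\alpha})^2 - \alpha^2 - \tilde{\alpha}^2 = 2\alpha\tilde{\alpha}$ one checks that the combined exponent is exactly $-\frac{(\alpha+\tilde{\alpha})^2 - \alpha^2 - \tilde{\alpha}^2}{2}s - \frac{\alpha^2}{2}r + \alpha\beta$, as announced.

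The argument is essentially routine once the Girsanov reduction is in place, so there is no serious technical obstacle. The only conceptual point worth noting is why Case 2 is genuinely weaker than Case 1: when $\tilde{\alpha}\le 0$, the quantity $\tilde{\alpha}M_s$ can be arbitrarily large on $\{\sup_u M_u \le \beta\}$, so one cannot bound it pointwise and must instead trade the conditioning against $\{M_s \le \beta\}$ for an unconditional Gaussian expectation. This is precisely the reason the resulting bound lacks the $\tilde{\alpha}\beta$ term present in Case 1 and carries the extra $\tilde{\alpha}^2 s/2$ correction visible in the exponent.
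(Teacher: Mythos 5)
Your proof is correct and follows essentially the same route as the paper: a Girsanov change of measure reducing the drifted supremum to a functional of a standard Brownian motion, the identity $\int_0^r\theta\,dM=\alpha M_r+\tilde\alpha M_s$, and then pointwise bounds on the exponential according to the signs of $\alpha,\tilde\alpha$. The only cosmetic difference is in Case 2, where the paper completes the square and recognizes $\E[e^{\tilde\alpha B_s-\tilde\alpha^2 s/2}\mathbf{1}_{\{\sup_{[0,s]}B\le\beta\}}]$ as the probability $\P(\sup_{[0,s]}(B_u+\tilde\alpha u)\le\beta)\le 1$, whereas you simply drop the indicator and evaluate the Gaussian Laplace transform $\E[e^{\tilde\alpha M_s}]=e^{\tilde\alpha^2 s/2}$ --- these yield the identical bound.
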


\begin{proof}Let us set $f(u)=\alpha  +\tilde{\alpha} \mathbf{1}_{[0,s]}(u)$. Using the Girsanov transform  yields
\begin{align}
\P(\sup_{u\in[0,r]} (B_u+F(u)) \leq \beta)= & \E\Big[e^{\int_0^rf(u)\,dB_u-\frac{1}{2}\int_0^rf(u)^2\,du}\mathbf{1}_{\{\sup_{u\in[0,r]}B_u\leq \beta\}}\Big]\nonumber\\
= &   e^{-\frac{(\alpha+\tilde{\alpha})^2}{2}s-\frac{\alpha^2}{2}(r-s)}\E\Big[e^{\alpha B_r+\tilde{\alpha} B_s}\mathbf{1}_{\{\sup_{u\in[0,r]}B_u\leq \beta\}}\Big].\label{heart}
\end{align}
 1) follows then by using $B_r\leq \beta$, $B_s\leq\beta$
and for  2), we plug the following estimate into \eqref{heart}
\begin{align*}
\E\Big[e^{\alpha B_r+\tilde{\alpha} B_s}\mathbf{1}_{\{\sup_{u\in[0,r]}B_u\leq \beta\}}\Big]\leq &e^{\alpha\beta}\E\Big[e^{\tilde{\alpha} B_s}\mathbf{1}_{\{\sup_{u\in[0,s]}B_u\leq \beta\}}\Big]\\
=&e^{\alpha\beta+\frac{\tilde\alpha^2}{2}s}\E\Big[e^{\tilde{\alpha} B_s-\frac{\tilde\alpha^2}{2}s}\mathbf{1}_{\{\sup_{u\in[0,s]}B_u\leq \beta\}}\Big]\\
=&e^{\alpha\beta+\frac{\tilde\alpha^2}{2}s}\E\Big[ \mathbf{1}_{\{\sup_{u\in[0,s]}B_u+\tilde\alpha u\leq \beta\}}\Big]
\leq  e^{\alpha\beta+\frac{\tilde\alpha^2}{2}s}.
\end{align*}
 \end{proof}

\subsection{Main technical lemma}
 We are now in position to state the main technical lemma  of this section. 
Let $x_u$ be the radial process \eqref{radialpr} and set 
$$y_u=x_u+  F(u).
$$
where the drift $F(u)$ is defined in \eqref{Fdefini}. Then

\begin{lemma}\label{corner}
Let   $q>0$, $\beta\geq 0$ and $r\geq s$ and define
 $$E_{r,\beta}:=\E\Big[\frac{\mathbf{1}_{\{\sup_{u\in [0,r]}y_u\in [\beta-1;\beta]\}}}{\Big( \int_{0}^{r}\int_0^{2\pi} e^{\gamma y_s} \,\mu_{Y}(ds,d\sigma) \Big)^{q}}\Big].
 $$
 Then we have the following estimates depending on the values of the parameters $\alpha,\tilde\alpha$:

\vskip 2mm

\noindent 1) if $\alpha> 0$ and $\tilde\alpha=0$
$$
E_{r,\beta}
 \leq C(\beta+1)  e^{(\alpha-q \gamma) \beta} r^{-3/2}e^{-\frac{\alpha^2}{2}r}.$$
2)   if $\alpha=\tilde\alpha= 0$ then
$$
E_{r,\beta}
\leq C(\beta  +1)e^{-q \gamma \beta}r^{-1/2}. $$
3) if $\alpha \geq 0$, $\tilde\alpha\geq 0$
$$
E_{r,\beta} \leq C e^{(\alpha+\tilde\alpha-q\gamma)\beta} e^{-\frac{(\alpha+\tilde\alpha)^2}{2}s-\frac{\alpha^2}{2}(r-s)}.$$
4) if $\alpha\geq 0$, $\tilde\alpha<0$ then
$$
E_{r,\beta}
\leq C e^{(\alpha -q\gamma)\beta} e^{-\frac{(\alpha+\tilde\alpha)^2-\alpha^2-\tilde\alpha^2}{2}s-\frac{\alpha^2}{2}r}.$$
\end{lemma}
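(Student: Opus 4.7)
The plan is to combine three ingredients: (i) the independence of the radial Brownian motion $(x_u)_{u \geq 0}$ from the chaos measure $\mu_Y$ (Lemma \ref{independence}), (ii) a lower bound on the denominator $I := \int_0^r\int_0^{2\pi} e^{\gamma y_u}\,\mu_Y(du, d\sigma)$ valid on the event $A := \{\sup_{u \in [0,r]} y_u \in [\beta-1, \beta]\}$, and (iii) the probabilistic estimates on $\mathbb{P}(\sup_{u \in [0,r]} y_u \leq \beta)$ already furnished by Lemmas \ref{lem:renorm} and \ref{warrior1}.

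For (ii), I would set $\tau := \inf\{u \geq 0 : y_u \geq \beta - 1\}$, so that $\tau \leq r$ on $A$. Since $F$ is piecewise affine, $y$ is a strong Markov process, and conditionally on $\mathcal{F}_\tau$ the process $(y_{\tau + v} - y_\tau)_{v \geq 0}$ is a Brownian motion with bounded drift. For a suitably fixed $\delta_0 > 0$ the event $G := \{|y_u - y_\tau| \leq 1, \ \forall u \in [\tau, \tau + \delta_0]\}$ has conditional probability bounded below by a positive constant uniformly in the parameter range. On $A \cap G \cap \{\tau \leq r - \delta_0\}$ one has $y_u \geq \beta - 2$ for $u \in [\tau, \tau + \delta_0]$, hence
\begin{equation*}
I \,\geq\, e^{\gamma(\beta - 2)}\,\mu_Y\bigl([\tau, \tau + \delta_0] \times [0, 2\pi]\bigr).
\end{equation*}
Conditioning on $(x_u)$ and using the independence of $\mu_Y$ and $x$ together with the standard fact that $\mu_Y([a, a + \delta_0] \times [0, 2\pi])$ has finite negative moments of all orders uniformly in $a$ (subcritical GMC with $\gamma<2$), one obtains
\begin{equation*}
E_{r,\beta} \,\leq\, C\,e^{-q\gamma\beta}\,\mathbb{P}(A).
\end{equation*}

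The four target bounds follow from $\mathbb{P}(A) \leq \mathbb{P}(\sup_{u \in [0,r]} y_u \leq \beta)$ combined with the relevant estimate: Case~1 uses Lemma~\ref{lem:renorm}~1) to produce a factor $(\beta + 1)e^{\alpha\beta}\,r^{-3/2}e^{-\alpha^2 r/2}$; Case~2 uses Lemma~\ref{lem:renorm}~2) to give $(\beta + 1)r^{-1/2}$; Cases~3 and~4 use Lemma~\ref{warrior1}~1) and~2) to give respectively $e^{(\alpha + \tilde\alpha)\beta}e^{-(\alpha+\tilde\alpha)^2 s/2 - \alpha^2(r-s)/2}$ and $e^{\alpha\beta}e^{-\frac{(\alpha+\tilde\alpha)^2 - \alpha^2 - \tilde\alpha^2}{2} s - \alpha^2 r/2}$. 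After multiplication by $e^{-q\gamma\beta}$ and regrouping the exponentials of $\beta$, these agree with the four claimed expressions.

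The main obstacle is the lower bound on $I$: one must accommodate the edge case $\tau > r - \delta_0$, where there is no room after $\tau$ for the strong Markov argument. This can be handled either by a symmetric argument on the interval $[\tau - \delta_0, \tau]$ preceding $\tau$ (using a Gaussian time-reversal computation to force $y_u$ to remain near $\beta - 1$ from below on that interval), or by observing that $A \cap \{\tau > r - \delta_0\}$ is itself a subdominant event that can be absorbed into the probability estimate. A secondary point requiring care is the uniform-in-$\tau$ control of the negative moments of $\mu_Y([\tau, \tau + \delta_0] \times [0, 2\pi])$, which follows from the $\sigma$-translation invariance of the field $Y$ together with Kahane-type negative-moment estimates for subcritical GMC, and from a global estimate $\sup_{a\geq 0}\E[\mu_Y([a,a+\delta_0]\times[0,2\pi])^{-q'}]<\infty$ for $q'$ slightly larger than $q$.
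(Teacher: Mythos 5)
Your overall strategy is the same as the paper's: stop the process at the first time $\tau=T_\beta$ it reaches level $\beta-1$, lower-bound the denominator by the chaos mass accumulated in a window just after $\tau$, use negative moments of $\mu_Y$ together with the independence of $(x_u)$ and $\mu_Y$, and finish with Lemmas \ref{lem:renorm} and \ref{warrior1}. The endgame (translating $\P(\sup_{u\le r}y_u\le\beta)$ into the four stated bounds after multiplying by $e^{-q\gamma\beta}$) is correct. However, there is a genuine gap in the middle step. Your lower bound $I\ge e^{\gamma(\beta-2)}\mu_Y([\tau,\tau+\delta_0]\times[0,2\pi])$ holds only on $A\cap G\cap\{\tau\le r-\delta_0\}$, where $G$ is the event that the path stays within $1$ of $y_\tau$ on $[\tau,\tau+\delta_0]$. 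The fact that $\P(G\mid\caF_\tau)\ge c>0$ does \emph{not} let you discard $A\cap G^c$: the quantity being bounded is $\E[\mathbf{1}_A I^{-q}]$, not a probability, and on $A\cap G^c$ the path may dip far below $\beta-2$ immediately after $\tau$, so $I^{-q}$ is uncontrolled there. Since $G$, $A$ and $I$ are all functions of the same path (plus $\mu_Y$), no monotonicity or FKG-type argument rescues the inequality $\E[\mathbf{1}_AI^{-q}]\le C\,\E[\mathbf{1}_{A\cap G}I^{-q}]$ as written. The paper closes exactly this hole by using the quantitative conditional estimate $\E[I(a)^{-q}\mid y_{a+1}-y_a]\le C\big(e^{-\gamma q(y_{a+1}-y_a)}+1\big)$ (\cite[Lemma 6.1]{DKRV2}), which is valid for \emph{every} value of the increment, and then integrating against the Gaussian law of $y_{T_\beta+1}-y_{T_\beta}$ (whose mean $F(T_\beta+1)-F(T_\beta)$ is bounded by $|\alpha|+|\tilde\alpha|$); this produces the extra factor $\E[H(1-N+c)(e^{-q\gamma N+q\gamma c}+1)]$ which is then fed into Lemmas \ref{lem:renorm} and \ref{warrior1}. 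To repair your argument you would need either this increment-dependent negative-moment bound, or an iteration over the dyadic layers $\{y_{\tau+\cdot}-y_\tau\in[-k-1,-k]\}$ summing the resulting geometric series.

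The edge case $\tau>r-\delta_0$, which you flag as the main obstacle, is a second real issue, and of your two proposed fixes only the first is viable. Declaring $A\cap\{\tau>r-\delta_0\}$ ``subdominant'' does not help, because the problem on that event is again the denominator (the path has spent all of $[0,r-\delta_0]$ below $\beta-1$, possibly very low, so $I$ may be tiny) rather than the probability of the event. The paper's term $B$ implements your first suggestion: it lower-bounds $I$ by the contribution on $[r-1,r]$, i.e.\ by $e^{\gamma q y_{r-1}}I(r-1)^q$, and runs the same conditional-moment argument backwards from the endpoint, using that a very negative $y_{r-1}$ forces an unlikely large excursion of $y-y_{r-1}$ on $[r-1,r]$ in order for the supremum to land in $[\beta-1,\beta]$. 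Your observation that the negative moments of $\mu_Y([a,a+\delta_0]\times[0,2\pi])$ are finite uniformly in $a$ (by stationarity of the lateral field in the $s$ variable and standard subcritical GMC estimates) is correct and is implicitly what underlies the cited estimate \eqref{bridge}.
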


\noindent {\it Proof.}   We  introduce the stopping time 
$$T_{\beta}=\inf\{s\geq 0;y_u\geq \beta-1\}.$$
 Then
\begin{align*}
E_{r,\beta} &=\E\Big[\frac{\mathbf{1}_{\{T_\beta<r-1\}}\mathbf{1}_{\{\sup_{u\in [0,r]}y_u\in [\beta-1;\beta]\}}}{\Big( \int_{0}^{r}\int_0^{2\pi} e^{\gamma y_s} \,\mu_{Y}(ds,d\sigma) \Big)^{q}}\Big] +\E\Big[\frac{\mathbf{1}_{\{T_\beta\geq r-1\}}\mathbf{1}_{\{\sup_{u\in [0,r]}y_u\in [\beta-1;\beta]\}}}{\Big( \int_{0}^{r}\int_0^{2\pi} e^{\gamma y_s} \,\mu_{Y}(ds,d\sigma) \Big)^{q}}\Big]\nonumber\\
&\leq \E\Big[\frac{\mathbf{1}_{\{T_\beta<r-1\}}\mathbf{1}_{\{\sup_{u\in [0,r]}y_u\in [\beta-1;\beta]\}}}{e^{\gamma q y_{T_\beta}}I(T_\beta)^{q}}\Big] +\E\Big[\frac{\mathbf{1}_{\{T_\beta\geq r-1\}}\mathbf{1}_{\{\sup_{u\in [0,r]}y_u\in [\beta-1;\beta]\}}}{e^{\gamma q y_{r-1}}I(r-1)^{q}}\Big]=:A+B,
\end{align*}
where we have set $$I(a)=\int_{a}^{a+1}\int_0^{2\pi} e^{\gamma  (y_s-y_a)}\,\mu_Y(ds,d\sigma).$$ 
We only  treat $ A$ because the same argument holds for $B$. Obviously,
\begin{align}\label{tam}
A\leq &  e^{-q\gamma (\beta-1)} \E\Big[
\mathbf{1}_{\{\max_{u\in [T_\beta+1,r]}y_u-y_{T_\beta+1}\leq \beta-y_{T_\beta+1}\}}  \frac{\mathbf{1}_{\{T_\beta+1<r\}} }{I(T_\beta)^{q}} \Big]. 
 \end{align}
 By the strong Markov property of the Brownian motion, we get
\begin{align*}
 \E\Big[\mathbf{1}_{\{\max_{u\in [T_\beta+1,r]}y_u-y_{T_\beta+1}\leq \beta-y_{T_\beta+1}\}}  \frac{\mathbf{1}_{\{T_\beta+1<r\}} }{I(T_\beta)^{q}} \Big]=& \E\Big[\mathbf{1}_{\{T_\beta+1<r\}}H(\beta-y_{T_\beta+1})\E\Big[   \frac{1 }{I(T_\beta)^{q}} |\beta-y_{T_\beta+1} \Big]\Big]
 \end{align*} 
 where we have set $H(t)=\E[\mathbf{1}_{\{\max_{u\in [T_\beta+1,r]}y_u-y_{T_\beta+1}\leq t\}} ]$. Now we use the  estimate 
\begin{equation}\label{bridge}
\E[I(a)^{-q}|y_{a+1}-y_a]\leq C\big(e^{- \gamma q(y_{a+1} -y_a)}+1\big),
\end{equation}
 which has been proven \cite[Lemma 6.1]{DKRV2}. Denote $\mathcal{F}_t=\sigma\{x_u;u\leq t\}$. Conditionally on $\mathcal{F}_{T_\beta}$, the random variable $\{y_{T_\beta+1}-y_{T_\beta}\}$ is a  Gaussian random variable with variance $1$ and mean $F(T_{\beta}+1)-F(T_\beta)$. Furthermore,  observe that the random variable  $F(T_{\beta}+1)-F(T_\beta)$ is bounded, indeed $|F(T_{\beta}+1)-F(T_\beta)|\leq |\alpha|+|\tilde\alpha|$. Let us set $c:= |\alpha|+|\tilde\alpha|$. We deduce
 \begin{align*}
A\leq & Ce^{-q\gamma \beta}  \E\big[  H(1-N+c) \big(e^{-q \gamma N+q \gamma c}+1\big) \big]
 \end{align*} 
 where $N$ is a centered standard Gaussian variable independent of everything. This can be estimated as
  \begin{align*}
A\leq &  Ce^{-q\gamma \beta} \E\Big[ \mathbf{1}_{\{\max_{u\in [0,r-1]}y_u\leq \beta+\max(0,N+3c+1)\}}  \big(e^{-q \gamma N+q \gamma c}+1\big) \big] \Big] .
 \end{align*}
 It suffices now to combine with the various items of  Lemmas \ref{lem:renorm}  and \ref{warrior1} depending on the values of $\alpha,\tilde\alpha$ to complete the proof.\qed

\subsection{Proof of Proposition \ref{upperbound2}}

Let us denote by $\caA(x,r)$ the annulus with center $x$, inner radius $r<1$ and outer radius $1$.
Let $\caA=\caA(x_1,|x_1-x_2|_\epsilon)$.   From \eqref{correl0} and \eqref{def:Kepsilon} we get
\begin{align}\label{key}
\langle  V_{\beta_1,\epsilon}(x_1)V_{\beta_2,\epsilon}(x_2) \prod_k  V_{\alpha_k,\epsilon  }(z_k)\rangle_\epsilon
 \leq C(\delta)\hat{g}(z_2)^{\Delta_{\alpha_2}}|x_1-x_2|_\epsilon^{-\beta_1\beta_2}I_{\caA}
 \end{align}
where $q=(\sum\alpha_i+\beta_1+\beta_2-2Q)/\gamma$ and
\begin{align}\label{keya}
I_{\caA}=\E\Big[\big(\int_{\caA}\frac{1}{|x-x_1|_\epsilon^{\gamma \beta_1}|x-x_2|_\epsilon^{\gamma\beta_2}}M^0_{\gamma,\epsilon} (d^2x)\big)^{-q}\Big].
\end{align} 
We have   obtained a lower bound for the expectation in \eqref{correl0} by restricting the integral in the expectation to $\caA$ and then used the fact that the part of the integrand that we dropped  is bounded from below by a $\delta$ dependent constant, uniformly in $\epsilon$ if $x\in \caA$.   
Furthermore, for  $x\in \caA $,  we have $|x-x_2|\leq 2|x-x_1|$ so that
\begin{align}\label{ret1}
I_{\caA}
\leq C\, \E\Big[\big(\int_{\caA}\frac{1}{|x-x_1|_\epsilon^{\gamma(\beta_1+\beta_2)}}M^0_{\gamma,\epsilon} (d^2x)\big)^{-q}\Big].
\end{align} 

It is convenient at this point to work with the measure $M^0_\gamma$ instead of its regularization $M^0_{\gamma,\epsilon}$.  We have 
$$
\E X_{0,\epsilon}(u)X_{0,\epsilon}(v)\leq C+\E X_{0}X_{0}(v)
$$
where $C$ is uniform in $\epsilon$. Hence
by Kahane convexity \eqref{ret1} holds if we replace $M^0_{\gamma,\epsilon}$ by $M^0_{\gamma}$ in this relation (with a different constant $C$).

The expectation in the right-hand side of  \eqref{ret1}  may now be written in terms of 
 the radial decomposition of the chaos measure  Lemma \ref{decompmeas}:
 $$
  \E\Big[\big(\int_{\caA}\frac{1}{|x-x_1|_\epsilon^{\gamma(\beta_1+\beta_2)}}M^0_{\gamma} (d^2x)\big)^{-q}\Big]= \E\Big[\big(\int_{0}^{-\ln|x_1-x_2|_\epsilon}\int_0^{2\pi} e^{\gamma y_s} \,\mu_{Y}(ds,d\sigma)\big)^{-q}\Big]
 $$
 where  
 $$ y_s=x_s+(\beta_1+\beta_2-Q)s .$$
Now we partition the probability space according to the values of the maximum of $y_s$. Define
\begin{align}
M_{n}= &\big\{\max_{s\in[0,-\ln|x_1-x_2|_\epsilon]} y_s \in [n-1,n]\big\},\,\,\,\,n\geq 1,\label{def:mn}\\
M_{0}= &\big\{\max_{s\in[0,-\ln|x_1-x_2|_\epsilon]} y_s  \leq 0\big\}\label{def:m0}.
\end{align}
Then
\begin{align}\label{ret12}
I_{\caA}
\leq C\sum_{n\geq 0} \E\Big[\mathbf{1}_{M_{n}}\big(\int_{0}^{-\ln|x_1-x_2|_\epsilon}\int_0^{2\pi} e^{\gamma y_s} \,\mu_{Y}(ds,d\sigma)\big)^{-q}\Big] :=C\sum_{n\geq 0}A_n.
\end{align}
By Lemma \ref{corner} item 1, with $\alpha=\beta_1+\beta_2-Q>0$, $\tilde{\alpha}=0$, $q= \frac{\beta_1+\beta_2+\sum_i\alpha_i-2Q}{\gamma}$ and  $\beta=n$, we deduce
$$A_n\leq C(n+1) e^{-(\sum_i\alpha_i-Q)n}|\ln|x_1-x_2|_\epsilon|^{-3/2}|x_1-x_2|_\epsilon^{\frac{(\beta_1+\beta_2-Q)^2}{2}}.$$
Combining this with \eqref{ret12} and \eqref{key} we arrive at the claim since  $\frac{(\beta_1+\beta_2-Q)^2}{2}-\beta_1\beta_2=2(\Delta_{(\beta_1+\beta_2)\wedge Q}-\Delta_{\beta_1}-\Delta_{\beta_2})$ if $\beta_1+\beta_2>Q$. 

For $\beta_1+\beta_2=Q$ we use Lemma \ref{corner} 2) and for $\beta_1+\beta_2<Q$ $I_{\caA}$ is uniformly bounded in $\epsilon$.\qed

\subsection{Proof of Proposition \ref{upperbound3}}
We proceed as in the previous section, starting with
\begin{align}\label{key1}
\langle  V_{\beta_1,\epsilon}(x_1)V_{\beta_2,\epsilon}(x_2) V_{\beta_3,\epsilon}(x_3) \prod_k  V_{\alpha_k,\epsilon  }(z_k)\rangle_\epsilon
 \leq C(\delta)|x_1-x_2|_\epsilon^{-\beta_1\beta_2}|x_1-x_3|_\epsilon^{-\beta_1\beta_3}|x_2-x_3|_\epsilon^{-\beta_2\beta_3}I_{\caA}
 \end{align}
where 
\begin{align}\label{qdefi}
q=(\sum_k\alpha_k+\beta_1+\beta_2+\beta_3-2Q)/\gamma
 \end{align}
 and
\begin{align}\label{keya}
I_{\caA}=\E\Big[\big(\int_{\caA}\frac{1}{|x-x_1|_\epsilon^{\gamma \beta_1}|x-x_2|_\epsilon^{\gamma\beta_2}|x-x_3|_\epsilon^{\gamma \beta_3}}M_\gamma^0 (d^2x)\big)^{-q}\Big].
\end{align} 
The choice of the region $\caA$ will depend on the $\beta_i$'s.  We need to consider several cases.

\medskip

\noindent {\bf 1. }$\beta_1+\beta_2\geq Q$, $\beta_3\geq 0$. 
We take $\caA=\caA(x_1,|x_1-x_2|_\epsilon)$. 
 Inserting the estimates, valid  for $x\in \caA$,
\begin{equation} \label{zxybounds}
|x-x_2|_\epsilon \leq 2|x-x_1|_\epsilon,
\quad \quad |x-x_3|_\epsilon\leq 2|x_1-x_3|_\epsilon\mathbf{1}_{\{|x-x_1|_\epsilon\leq |x_1-x_3|_\epsilon}+2|x-x_1|_\epsilon \mathbf{1}_{\{|x-x_1|_\epsilon> |x_1-x_3|_\epsilon}
\end{equation}
into \eqref{key1} and then use the polar decomposition of the chaos measure around $0$ we deduce  that the expectation in the right-hand side of \eqref{key1} is bounded by
\begin{align}\label{ret13}
&I_{\caA}\leq C\sum_{n\geq 0}\E\Big[\mathbf{1}_{M_{n}}\big(\int_{0}^{-\ln|x_1-x_2|_\epsilon}\int_0^{2\pi} e^{\gamma y_s} \,\mu_{Y}(ds,d\sigma)\big)^{-q}\Big]:=
 C\sum_{n\geq 0}A_n
\end{align}
 where $y_s$ is the process
\begin{align}
y_s=x_s+(\beta_1+ \beta_2-Q)s+\beta_3 \,s\wedge\ln{|x_1-x_3|_\epsilon}^{-1}\end{align}
and $M_{n}$ is as in \eqref{def:mn} and \eqref{def:m0}. 

We can now apply item 3 of Lemma \ref{corner} with $\alpha=\beta_1+\beta_2-Q$, $\tilde\alpha=\beta_3$, $\beta=n$, $r=\ln|x_1-x_2|_\epsilon^{-1}$ and $s =\ln|x_1-x_3|_\epsilon^{-1}$  and $q$ as in \eqref{qdefi} to bound
\begin{equation} \label{anbound}
A_n \leq     C  e^{-n(\sum_i\alpha_i-Q)}|x_1-x_2|_\epsilon^{\frac{(\beta_1+\beta_2 -Q)^2}{2}}|x_1-x_3|_\epsilon^{\frac{(\beta_1+\beta_2+\beta_3-Q)^2}{2}-\frac{(\beta_1+\beta_2-Q)^2}{2}}   .
\end{equation}
Combining \eqref{key1},\eqref{ret13} and \eqref{anbound} with $|x_2-x_3|\geq |x_1-x_3|$ the claim follows.

\medskip
 
\noindent {\bf 2.} $\sum_i\alpha_i+\beta_3-Q>0$, $\beta_1+\beta_2\geq Q$, $\beta_3<0$.
We replace $\caA$ in the   previous case by the half annulus $\caA':=\caA\cap \{x: \arg(\frac{x-x_1}{x_3-x_1} )\in [\frac{_\pi}{^2},\frac{_{3\pi}}{^2}]\}$. This has the virtue that for $x\in\caA'$
$$
C|x-x_3|_\epsilon\geq |x_1-x_3|_\epsilon\mathbf{1}_{\{|x-x_1|_\epsilon\leq |x_1-x_3|_\epsilon\}}+|x-x_1|_\epsilon \mathbf{1}_{\{|x-x_1|_\epsilon> |x_1-x_3|_\epsilon\}}.
$$
Then  we  apply Lemma \ref{corner} item 4 since $\tilde\alpha=\beta_3<0$ to get
\begin{equation} \label{anbound1}
A_n \leq     C  e^{-n(\sum_i\alpha_i+\beta_3-Q)}|x_1-x_2|_\epsilon^{\frac{(\beta_1+\beta_2 -Q)^2}{2}}|x_1-x_3|_\epsilon^{(\beta_1+\beta_2-Q)\beta_3}  .
\end{equation}
The claim follows by using  $ |x_1-x_3|\leq|x_2-x_3|$ and $(\beta_2-Q)\beta_3\geq 0$.
\medskip
 

\medskip
 
\noindent {\bf 3.} $\sum_i\alpha_i>Q$, $\beta_1+\beta_2< Q$, $\beta_2>0$ and  $\beta_1+\beta_2+\beta_3\geq Q$.
Now we take $\caA=\caA(x_3,|x_3-x_1|)$. On $\caA$ the integrand in \eqref{keya} is bounded by
$|x-x_3|^{-\gamma(\beta_1+\beta_2+\beta_3)}$ and we arrive at
\begin{equation*}
A_n \leq     Ce^{-n(\sum_i\alpha_i-Q)}|x_1-x_3|_\epsilon^{\frac{(\beta_1+\beta_2+\beta_3-Q)^2}{2}}   .
\end{equation*}

\medskip
 
\noindent {\bf 4.} $\sum_i\alpha_i>Q$, $\beta_1+\beta_2< Q$, $\beta_2<0$ and  $\beta_1+\beta_2+\beta_3\geq Q$. By taking  $\caA$ an appropriate half of the annulus in  {\bf 3.} we can guarantee that $C|x-x_2|>|z-x_3|$. Then we may repeat the argument in  {\bf 3.}

\medskip
 
\noindent {\bf 5.}  $\beta_1+\beta_2< Q$, $\beta_1+\beta_2+\beta_3< Q$. Take $\caA$ a fixed unit ball with distance at least $1$ to $\{x_1,x_2,x_3\}$. Then $I_\caA\leq C$ and we get from \eqref{key1} the desired bound.

\medskip 
We have listed all the possibilities so that the proof of Proposition \ref{upperbound3} is complete.\qed


\subsection{Proof of Propositions \ref{holdercorrel} and \ref{holdercorrelbis}}

Without loss of generality we can supoose that $x=0$ and $|z_i|\geq 2$.   From \eqref{Z1reg} we obtain
\begin{align}\label{cafe}
\caF_\epsilon(0,z)=Ce^{-\frac{\gamma^2}{2}G_\epsilon(z)}e^{-\frac{\gamma}{2}\sum_k\alpha_kG_\epsilon(z-z_k)} h_\epsilon(z)
\end{align}
with
\begin{align*}
h_\epsilon(z)=\E\big[\big(\int e^{-\frac{\gamma^2}{2}G_\epsilon(u-z)+\gamma^2G_\epsilon(u)}r_\epsilon(u)M_\epsilon(d^2u)
\big)^{-s}\big]
\end{align*}
Here $s=\sum\alpha_i+\hf\gamma-2Q$ and $r_\epsilon(z)\leq C$ on $|z|\leq 1$. Since 
$h_\epsilon(z)\leq C$ and  
$$C^{-1}|z|_\epsilon\leq e^{-G_\epsilon(z)}\leq C|z|_\epsilon
$$ we only need to consider the case $|z|\leq \epsilon$. Set $k_\epsilon(z,u)=\partial_ze^{-\frac{\gamma^2}{2}G_\epsilon(u-z)+\gamma^2G_\epsilon(u)}$. We compute
\begin{align}\nonumber
\partial_z h_\epsilon(z)&= -s\int_{|u|\leq 1}k_\epsilon(z,u)r_\epsilon(u)\E\big[\big(\int e^{-\frac{\gamma^2}{2}G_\epsilon(v-z)+\gamma^2G_\epsilon(v)+{\gamma^2}G_\epsilon(v-u)}r_\epsilon(v)M_\epsilon(d^2v)
\big)^{-s-1}\big] du\\
&-s\E \int_{|u|\geq 1} k_\epsilon(z,u) r_\epsilon(u)\big(\int e^{-\frac{\gamma^2}{2}G_\epsilon(v-z)+\gamma^2G_\epsilon(v)}r_\epsilon(v) M_\epsilon(d^2v)
\big)^{-s-1}M_\epsilon(d^2u)\label{split}
\end{align}
where in the first term we got rid of $M_\epsilon(d^2u)$ by Girsanov transform. Now use 
$$|\partial_zG_\epsilon(u-z)|\leq C|u-z|_\epsilon^{-1}
$$
 whereby 
$$
|k_\epsilon(z,u)|\leq Ce^{-\frac{\gamma^2}{2}G_\epsilon(u-z)+\gamma^2G_\epsilon(u)}
$$
for $|u|\geq 1$, $|z|\leq\epsilon$ so that the second term in \eqref{split} is bounded by
$$
\E \big[\Big(\int_{|u|\geq 1} e^{-\frac{\gamma^2}{2}G_\epsilon(v-z)+\gamma^2G_\epsilon(v)}r_\epsilon(v)M_\epsilon(d^2v)
\Big)^{-s}\big]\leq C.
$$
In the first term the expectation is bounded by $C$ and 
$$
|k_\epsilon(z,u)|\leq C|u|_\epsilon^{-\frac{\gamma^2}{2}-1}
$$
since  $|z|\leq\epsilon$. We obtain then
\begin{align*}
|\partial_z h_\epsilon(z)|\leq C(\epsilon^{1-\frac{\gamma^2}{2}}+1).
\end{align*}
The second term in  \eqref{cafe} satisfies
$$
|\partial_z e^{-\frac{\gamma}{2}\sum_i\alpha_iG_\epsilon(z-z_i)} |\leq C
$$
and the first term 
\begin{align*}
|\partial_ze^{-\frac{\gamma^2}{2}G_\epsilon(z)}|\leq C\epsilon^{\frac{\gamma^2}{2}-1}.
\end{align*}
Altogether we get for $|z|\leq\epsilon$
$$
|\caF_\epsilon(0,z)-\caF_\epsilon(0,0)|\leq C\epsilon^{\frac{\gamma^2}{2}-1}|z|.
$$
The proof of Proposition \ref{holdercorrelbis} is similar.
\qed

\subsection{Proof of Proposition \ref{lpinfty}}

From \eqref{Z1} we get 
\begin{align*} 
  \langle V_{\gamma,\epsilon  }(y) \prod_k  V_{\alpha_k,\epsilon  }(z_k)\rangle_{\epsilon}  \leq  &C(\delta)|y|^{-\gamma\sum\alpha_i}f(y)
  \end{align*}
where
\begin{equation*}
f(y)=\E\big[ \int_{A_{\delta} } e^{\gamma X(x)- \frac{\gamma^2}{2} \E[X(x)^2] }  \frac{1}{|x-y|^{\gamma^2 }}  \prod_l \frac{1}{|x-z_l|^{\gamma \alpha_l}}   \hat{g}(x)^{1 - \frac{\gamma}{4} \sum_l \alpha_l } d^2x\big]^{-\frac{\sum\alpha_i+\gamma-2Q}{\gamma}} 
\end{equation*}
where $A_\delta$ is the annulus around origin with radi $2/\delta$ and $3/\delta$.  We get 
$$
\lim_{y\to\infty}|y|^{-\gamma(\sum\alpha_i+\gamma-2Q)}f(y)<\infty
$$
  so that 
  \begin{align*} 
  \langle V_{\gamma,\epsilon  }(y) \prod_k  V_{\alpha_k,\epsilon  }(z_k)\rangle_{\epsilon}  \leq  &C(\delta)|y|^{\gamma^2-2Q\gamma}=C(\delta)|y|^{-4}.
  \end{align*}
  \qed

\subsection{Proof of Proposition \ref{2merginfinity}}

 Let us assume that all the insertion points $z_1,\dots,z_N$ are  distinct from $0$ (otherwise replace the M\"obius transform $z\mapsto 1/z$ in what follows by $z\mapsto 1/(z-a)$ for some $a$ distinct from all the $(z_i)_i$). 
 
The first step is to replace the regularized potential \eqref{Meps1} in the regularized correlation functions \eqref{Z1reg} by the limiting potential $M_\gamma$ of \eqref{law}. This can be done with the help  of Kahane's convexity inequalities (see \cite[Theorem 2.2]{RV}) as we have
$$\E X_\epsilon(u)X_\epsilon(v)\leq C+\E X(u)X(v)$$ for some global constant $C>0$ and all $u,v\in\C$. Thus we have
we have
\begin{align}\label{psiexp}
 \langle V_{\beta_1,\epsilon}(x_1)V_{\beta_2,\epsilon}(x_2)  \prod_{k=r+1}^N V_{\alpha_k,\epsilon  }(z_k)\rangle_{\epsilon}\leq &C  \langle V_{\beta_1,\epsilon}(x_1)V_{\beta_2,\epsilon}(x_2)  \prod_{k=r+1}^N  V_{\alpha_k,\epsilon  }(z_k)\rangle.
\end{align}
for some irrelevant constant $C$ (which may change along lines). 
 M\"obius invariance of the Liouville field $\phi$ (see \cite[section 3.2]{DKRV}) gives 
 \begin{align*}
 \langle V_{\beta_1,\epsilon}(x_1)V_{\beta_2,\epsilon}(x_2) \prod_{k=r+1}^N  V_{\alpha_k,\epsilon  }(z_k)\rangle=
 \langle V^\psi_{\beta_1,\epsilon}(x_1)V^\psi_{\beta_2,\epsilon}(x_2)  \prod_{k=r+1}^N  V^\psi_{\alpha_k,\epsilon  }(z_k)\rangle\end{align*}
 where 
 $$
 V^\psi_{\beta,\epsilon}(x)=(A\epsilon)^{\frac{\beta^2}{2}}e^{\beta ((X\circ\psi)_\epsilon(x)+\hf Q(\ln \hat g\circ\psi)_\epsilon(x)+Q(\ln|\psi'|)_\epsilon+c)}.
 $$

%

As in \eqref{funamental} we get
\begin{align}\label{funamental1}
\eqref{psiexp} \leq &C \Big(\prod_{k=1}^N e^{2\Delta_{\beta_k}(\ln|\psi'|)_\epsilon(x_k)}\Big) 
e^{\frac{1}{2}\sum_{k\not =k'}\beta_k\beta_{k'} G_\epsilon (x_k,x_{k'})}  \E \Big[\big(\int_{\C} e^{ \gamma \sum_k \beta_k   G_\epsilon^\psi(x,x_k)}  M_\gamma(d^2x)\big)^{-s}\Big]  
\end{align}
where $\{\beta_1,\dots,\beta_{N}\}:=\{\beta_1,\beta_2,\alpha_{r+1},\dots,\alpha_n\}$, $\{x_1,\dots,x_{N}\}=\{x,y,z_{r+1}\dots,z_n\}$ and
 \begin{align*}
  G_\epsilon^\psi(x,y)&=\int G(x,v)\rho_\epsilon(y-\frac{_1}{^v})\frac{d^2v}{|v|^4}.
 \end{align*}
Note that 
 \begin{align*}
 \lim_{\epsilon\to 0}  G_\epsilon^\psi(x,y)&=G({x},\frac{_1}{^y})\ \ \ x\neq \frac{_1}{^y}.
  \end{align*}
If $x,1/y$  belong to a ball $B(0,R)$ for some $R>0$ we have the estimate (for some $R$-dependent constant $C_R>0$)
\begin{equation}\label{estgreeninf2}
  G_\epsilon^\psi(x,y)\geq \ln\frac{1}{\big|x-\frac{1}{y}\big|+\epsilon C R^2}-C_R.
\end{equation}
Indeed, by applying Jensen inequality to $-\ln|\cdot|$ and the triangle inequality, we get
\begin{equation*}
 G_\epsilon^\psi(x,y)= -\int_\C  \ln | x-\frac{1}{y+\epsilon u}   | \:  \rho(|u|^2) d^2u \geq  - \ln  \left (  | x- \frac{1}{y}|+ \int_\C | \frac{1}{y}- \frac{1}{y+\epsilon u}   | \:  \rho(|u|^2) d^2u \right ).
\end{equation*}
Now, notice that $ \int | \frac{1}{y}- \frac{1}{y+\epsilon u}   | \:  \rho(|u|^2) du= \frac{\epsilon }{y} \int | \frac{u}{y+\epsilon u}   | \:  \rho(|u|^2) du \leq C \epsilon R^2  $.

Now we focus on the item 1 (items 2 and 3 are dealt with in the same way). In that case $\beta_1,\beta_2>0$. The expectation 
in \eqref{funamental1} is then bounded by
$$C_{\delta} \E \Big[\big(\int_{\mathcal{A}} |z-\frac{1}{x_1}|^{-\gamma\beta_1} |z-\frac{1}{x_2}|^{-\gamma\beta_2}M_\gamma(dx)  \big)^{-s}\Big]  $$
where the set $\mathcal{A}$ is given by $\mathcal{A}:=\{z\in\C;|\frac{1}{x_2}-\frac{1}{x_1}|_\epsilon\leq  |z-\frac{1}{x_1}|\leq 1\}$. We can then complete the proof as done in the proof of Proposition \ref{upperbound2}.\qed
%
%


\section{Appendix}

\subsection{Proof of Lemma \ref{lemmaPDEsbis}}
We start with an a'priori bound on the dimension of the set of solutions:
\begin{lemma} The space of real solutions to equation  \eqref{hypergeo} on $\mathbb{C} \setminus \lbrace 0,1 \rbrace$ is at most four dimensional.
\end{lemma}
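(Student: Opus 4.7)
The plan is to exploit the reality of the coefficients $a,b,c$, which forces a real-valued solution $\caT$ to satisfy \emph{two} ODEs simultaneously. Since the coefficients of \eqref{hypergeo} are rational functions of $z$ with real coefficients, complex-conjugating the equation and using the identities $\overline{\partial_z\caT}=\partial_{\bar z}\caT$ and $\overline{\partial_z^2\caT}=\partial_{\bar z}^2\caT$ valid for any real $\caT$ yields the companion antiholomorphic ODE
\begin{equation*}
\partial_{\bar z}^2\caT+\frac{c-\bar z(a+b+1)}{\bar z(1-\bar z)}\partial_{\bar z}\caT-\frac{ab}{\bar z(1-\bar z)}\caT=0.
\end{equation*}
Thus $\caT$ satisfies a second-order ODE in $z$ with $\bar z$ as a passive parameter, and simultaneously a second-order ODE in $\bar z$ with $z$ as a passive parameter.

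Next I would work locally on a simply connected open subset $U\subset\C\setminus\{0,1\}$, and pick two holomorphic solutions $\phi_1(z),\phi_2(z)$ of the hypergeometric ODE that are linearly independent on $U$ (such a pair exists by the general theory of linear ODEs with meromorphic coefficients). Since the holomorphic equation is a linear second-order ODE in $z$, every $C^2$ solution $\caT$ on $U$ admits the representation
\begin{equation*}
\caT(z,\bar z)=A(\bar z)\,\phi_1(z)+B(\bar z)\,\phi_2(z),
\end{equation*}
with $A(\bar z),B(\bar z)$ determined by the initial data $\caT(z_0,\bar z),\partial_z\caT(z_0,\bar z)$ at some base point $z_0\in U$; standard smooth-dependence theorems guarantee $A,B$ inherit enough regularity to be differentiated twice. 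Substituting this expression into the antiholomorphic equation and using the linear independence of $\phi_1(z)$ and $\phi_2(z)$ as functions of $z$ forces $A$ and $B$ to each satisfy the antiholomorphic hypergeometric ODE in $\bar z$, which has a 2-complex-dimensional solution space. Hence $\caT$ lies in the 4-complex-dimensional space spanned by the four products $\phi_i(z)\overline{\phi_j(z)}$, $i,j\in\{1,2\}$.

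Finally I would impose reality $\caT=\bar\caT$ on this 4-complex-dimensional space. Writing a general complex solution as $a_{11}|\phi_1|^2+a_{12}\phi_1\bar\phi_2+a_{21}\phi_2\bar\phi_1+a_{22}|\phi_2|^2$ and matching with its conjugate gives $a_{11},a_{22}\in\R$ and $a_{21}=\overline{a_{12}}$, leaving exactly four real parameters; a real basis is $|\phi_1|^2$, $|\phi_2|^2$, $2\mathrm{Re}(\phi_1\bar\phi_2)$, $2\mathrm{Im}(\phi_1\bar\phi_2)$. To pass from the local bound on $U$ to a global bound on $\C\setminus\{0,1\}$, I would use that the ODE in $z$ (with $\bar z$ a parameter) propagates initial data along any path, so any two real solutions agreeing on $U$ agree on all of $\C\setminus\{0,1\}$; hence the restriction map from global solutions to germs at $z_0$ is injective, and the global dimension is at most $4$. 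The main delicate point is the regularity bookkeeping for $A(\bar z),B(\bar z)$ — one must check that a $C^2$ solution $\caT$ actually yields $A,B\in C^2$ so that the antiholomorphic equation can legitimately be applied term by term; this is handled by the smooth dependence of ODE solutions on initial data combined with the fact that the initial data themselves are the restrictions of $\caT$ and $\partial_z\caT$ to a single value of $z$.
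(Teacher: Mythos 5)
Your route is genuinely different from the paper's. The paper first establishes real analyticity of any solution via analytic hypoellipticity (applying $\partial_{\bar z}^2$ to \eqref{hypergeo} produces a system with principal symbol $\Delta^2$), then restricts to a neighbourhood of a point of $\R\setminus\{0,1\}$ and counts Cauchy data on the real axis: the real part of \eqref{hypergeo} is hyperbolic, so $u$ is determined by $u(x,0)$ and $u_y(x,0)$, and the imaginary part confines $u_y(x,0)$ to a one-dimensional space and $u(x,0)$ to a three-dimensional one. You instead separate variables: reality of $a,b,c$ and of $\caT$ yields the conjugate antiholomorphic equation, and the solution is locally a combination $\sum_{i,j} a_{ij}\,\phi_i(z)\overline{\phi_j(z)}$. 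This is arguably more informative, since it exhibits the local solution space explicitly in the form that Lemma \ref{lemmaPDEsbis} ultimately needs, whereas the paper's argument is a pure dimension count.

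There is, however, a gap at your central step. You justify the representation $\caT=A(\bar z)\phi_1(z)+B(\bar z)\phi_2(z)$ by treating \eqref{hypergeo} as an ODE in $z$ with $\bar z$ a passive parameter and invoking "initial data $\caT(z_0,\bar z),\partial_z\caT(z_0,\bar z)$". For a function defined on an open subset of $\C$ (not of $\C^2$) this is meaningless: $z$ and $\bar z$ cannot be varied independently, and $\caT(z_0,\bar z)$ is defined only for $\bar z=\bar z_0$, unless $\caT$ has first been complexified, which requires the real analyticity you have not established. The representation is nonetheless true and is proved by variation of parameters: set $A=(\phi_2'\caT-\phi_2\partial_z\caT)/W$ and $B=(-\phi_1'\caT+\phi_1\partial_z\caT)/W$ with $W$ the (nonvanishing) Wronskian; then $A\phi_1+B\phi_2=\caT$ and $A\phi_1'+B\phi_2'=\partial_z\caT$, and a direct computation using \eqref{hypergeo} gives $(\partial_zA)\phi_1+(\partial_zB)\phi_2=0=(\partial_zA)\phi_1'+(\partial_zB)\phi_2'$, whence $\partial_zA=\partial_zB=0$, i.e.\ $A$ and $B$ are antiholomorphic and in particular smooth (which also settles your regularity worry). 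This antiholomorphy is exactly what you tacitly use in the next step: the identity $[\cdots]\phi_1+[\cdots]\phi_2=0$ forces the brackets to vanish only because they are antiholomorphic (apply $\partial_z$ and use the Wronskian); for arbitrary coefficient functions, linear independence of $\phi_1,\phi_2$ would not suffice, as $\phi_2\cdot\phi_1-\phi_1\cdot\phi_2=0$ shows. The same ill-posed "ODE in $z$ with $\bar z$ frozen" picture reappears in your globalization; the clean statement is that the local decomposition makes every solution real analytic, so on the connected set $\C\setminus\{0,1\}$ a solution is determined by its germ at one point, and the germs form a space of real dimension at most four.
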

\begin{proof}
First observe that applying $\partial^2_{\bar{z}}$ to \eqref{hypergeo}  we see that $\caT$ is a distributional solution of a linear PDE with analytic coefficients in $\mathbb{C} \setminus \lbrace 0,1 \rbrace$ whose highest degree symbol is $\Delta^2$ hence of an analytic hypoelliptic system. Therefore, $\caT$ is real analytic in $\mathbb{C} \setminus \lbrace 0,1 \rbrace$. Hence it suffices to bound the dimension of the solution set locally, in any open subset of $\mathbb{C} \setminus \lbrace 0,1 \rbrace$.

Thus let $\caT(x+iy):=u(x,y)$ be a real valued solution of  \eqref{hypergeo} in a neighborhood of $x_0\in\R\setminus\{0,1\}$. The real part  of equation \eqref{hypergeo} reads:
\begin{equation}\label{realpart}
u_{xx}-u_{yy}+L_1u_x+L_2u_y+L_3u=0
\end{equation} 
where $L_1,L_2,L_3$ are real analytic functions. This is a  hyperbolic equation and hence $u$ is determined in a neighborhood of $x_0$ by the data $u_0(x)= u(x,0)$ and $v_0(x)=u_y(x,0)$. The imaginary part of \eqref{hypergeo} restricted to $\R$ (recall that $a,b,c$ are real) yields
\begin{equation*}
-\frac{1}{2} v_0'(x)+  \frac{(c-x(a+b+1))}{x(1-x)} v_0(x)=0
\end{equation*}
Hence $v_0$ lives in a space of dimension $1$. We know want to show that given the data $v_0$, the function $u_0$ lives at most in a space of dimension 3. The imaginary part of \eqref{hypergeo} gives an equation
\begin{equation}\label{impart}
u_{xy}+M_1u_x+M_2u_y+M_3u=0
\end{equation}
where $M_1,M_2,M_3$  are real analytic. The $y$ derivative of this equation can be written as
\begin{equation*}
\partial_xu_{yy}+N_1u_{xy}+N_2u_{yy}+N_3u_x+N_4u_y+N_5u=0.
\end{equation*}
Now, plug into this last equation $u_{yy}$ solved from \eqref{realpart}  and $u_{xy}$ solved from \eqref{impart}. The resulting equation evaluated at  $y=0$ yields an equation of the form
\begin{equation*}
 u_0^{(3)}(x)+f_1(x) u_0^{(2)}(x)+ f_2(x) u_0'(x)+f_3(x) u_0(x)=f_4(x)
\end{equation*}
 where the functions $f_i(x)$ can depend on $v_0(x)$ and its derivatives. The solution space of this equation is three dimensional and hence,  the space of real local solutions  is four dimensional. Hence the set of global solutions is at most four dimensional.
  \end{proof}
 Recall next the definitions \eqref{Fpmdef}. We have $F_-=F_1$ and $F_+=z^{1-c}F_2$ where $F_1$ and $F_2$ Gauss hypergeometric functions. The latter  have an analytic continuation to $ \C \setminus (1,\infty)  $ and they take real values on the negative real axis. We define the function  $z^{1-c}$ in the cut plane $ \C \setminus   (-\infty,0] $. Then $F_\pm$ give  two linearly independent complex solutions to \eqref{hypergeo} on  $ \C \setminus \{  (-\infty,0] \cup [1,\infty)  \}$.  Hence  
 $$\{|F_{\pm}(z)|^2, \  \text{Re}(F_{-}(z)F_{+}(z)), \  \text {Im}(F_{-}(z)F_{+}(z))\}
 $$ 
 provide four linearly independent real solutions to   \eqref{hypergeo} on the set $ \C \setminus \{  (-\infty,0] \cup [1,\infty)  \}$. Our task is to inquire which of them extend to to the set $\mathbb{C} \setminus \lbrace 0,1 \rbrace$. For this we need to check continuity of the functions and their derivatives at $ (-\infty,0) \cup (1,\infty) $.
 
Consider first continuity across the negative real axis. Obviously $|F_{-}|^2=|F_1|^2$ and $ |F_{+}|^2=|z|^{2-2c}|F_2|^2$ are continuos together with their derivatives. Next, we have
$$
F_{-}(z)F_{+}(z)=z^{1-c}F_1(z)F_2(z).
$$
Recall $z^{1-c}$ was defined with a cut on $\R_-$ i.e. for $z=e^{i\varphi}r$ with $\varphi\in (-\pi,\pi)$ $z^{1-c}=e^{(1-c)i\varphi}r^{1-c}$. Since $F_i$ are real on the negative real axis we conclude that $\text{Re}(F_{-}(z)F_{+}(z))$ is continuous but $ \text {Im}(F_{-}(z)F_{+}(z))$ is not.  Next, consider the derivative $H(z):=\partial_y(F_{-}(z)F_{+}(z))$. Since $F_i(0)=1$ we obtain for small $z$
 $$
 H(z)=(\partial_y (F_1(z)F_2(z))z^{1-c}+i(1-c)F_1(z)F_2(z)z^{-c}=i(1-c)z^{-c}(1+\caO(|z|))
 $$
 Hence $\text{Re}\, H$ is not continuous on $\R_-$ near origin.  Thus only the solutions $|F_{\pm}(z)|^2$ extend to $\C\setminus  [1,\infty)$. Next we need to inquire about their continuity across $[1,\infty) $.

To do this we need to  introduce another pair of hypergeometric functions:
 \begin{equation*}
G_1(z):={_2}F_1(a,b,1+a+b-c,1-z), \;  G_2(z):={_2}F_1(c-a,c-b,1+c-a-b,1-z) .
 \end{equation*}
Then $G_1$ and  $(1-z)^{c-a-b}G_2$ are linearly independent solutions of eq. \eqref{hypergeo} on $\C\setminus  (-\infty,1]$   and we have the following relation 
 for $z \in \C \setminus \{(-\infty,0] \cup [1,\infty) \}$ (see e.g. \cite{Rib}):
\begin{equation*}
F_{-}(z)= \frac{\Gamma (c) \Gamma (c- a- b) }{\Gamma (c- a) \Gamma (c- b)} G_1(z)   + \frac{\Gamma (c) \Gamma ( a+ b- c)  }{\Gamma ( a) \Gamma ( b)}  (1-z)^{c-a-b}G_2(z)
\end{equation*}
and 
\begin{equation*}
F_{+}(z)= \frac{\Gamma (2- c) \Gamma ( c- a- b) }{\Gamma (1- a) \Gamma (1- b)}  G_1(z)  + \frac{\Gamma (2-c) \Gamma ( a+ b- c)  }{\Gamma ( a- c+1) \Gamma ( b- c+1)} (1-z)^{c-a-b}G_2(z) .
\end{equation*}
Consider the linear combination $K(z):=\lambda_1|F_{-}(z)|^2+\lambda_2|F_{+}(z)|^2$. This becomes in the other basis
\begin{equation*}
K(z)=A |G_1(z)|^2 + B|1-z|^{2(c-a-b)} |G_2(z)|^2 + D\,  \text{Re}  (     (1-z)^{c-a-b} {G}_1(z){G}_2 (z)   )  
\end{equation*}   
with  $D$  given by
\begin{equation*}
D=2 \Gamma (c-a-b) \Gamma (a+b-c) \left ( \lambda_1   \frac{\Gamma(c)^2}{   \Gamma(c-a)  \Gamma(c-b)  \Gamma (a)  \Gamma(b) }+ \lambda_2   \frac{\Gamma(2-c)^2}{   \Gamma(1-a)  \Gamma(1-b)  \Gamma(a-c+1)  \Gamma(b-c+1) }  \right ).
\end{equation*}
As in the case of $H(z)$ above, studying $\partial_yK(z)$ for $z$ close to $1$ we conclude that if $D\neq 0$ then $\partial_yK(z)$ is not continuous across $(1,\infty)$ near $1$. For $D=0$ $K$ and its derivatives are continuous across $(1,\infty)$. Hence the relation \eqref{Fundrelation} follows. 
\qed

\subsection{Proof of \eqref{selberg} }
We start from the formula 
\begin{equation*}
\int_{\R^2}  |z|^{2(\alpha-1)} |z-1|^{2(\beta-1)}      dz  =  \left ( \frac{\Gamma(\alpha) \Gamma(\beta) }{\Gamma (\alpha+\beta)}\right )^2  \frac{\sin (\alpha \pi) \sin (\beta \pi)}{\sin( (\alpha+\beta) \pi )}
\end{equation*}   
(see p. 504 in \cite{selberg}) which holds for  $\alpha, \beta>0$ and $\alpha+\beta<1$.
Using $\Gamma (z)\Gamma (1-z)=\frac{\pi}{\sin \pi z} $ this becomes  
\begin{equation}\label{formuleint}
\int_{\R^2}  |z|^{2(\alpha-1)} |z-1|^{2(\beta-1)}      dz  =  \pi \frac{l(\alpha) l (\beta) }{l(\alpha+\beta)}= \pi \frac{1}{l(1-\alpha) l(1-\beta)  l(\alpha+\beta) }.
\end{equation}   
This formula can be analytically continued to get for $\alpha,\beta>0$ and $1<\alpha+\beta<3/2$
\begin{equation}\label{formuleint2}
\int_{\R^2}  |z|^{2(\alpha-1)} \big(|z-1|^{2(\beta-1)}-|z|^{2(\beta-1)}   \big)   dz  = \pi \frac{1}{l(1-\alpha) l(1-\beta)  l(\alpha+\beta) }.
\end{equation} 
Analytic continuation is a consequence of the following observation.  Consider the function
$$F(\alpha,\beta):=\int_{\R^2} |z|^{2(\alpha-1)} \big(|z-1|^{2(\beta-1)}-\mathbf{1}_{\{|z|\geq 1\}}|z|^{2(\beta-1)}   \big)   dz -\frac{\pi}{\alpha+\beta-1} .$$
A simple check shows that it is analytic for $\alpha,\beta>0$ and $\alpha+\beta \in ]0, 3/2[ \setminus \lbrace 1 \rbrace$. Furthermore it coincides with the integral in \eqref{formuleint} for  $\alpha, \beta>0$ and $\alpha+\beta<1$ and with the integral  \eqref{formuleint2} for  $\alpha, \beta>0$ and $1<\alpha+\beta<3/2$.

\subsection{A special function entering the DOZZ formula} \label{expressionUpsilon}
Here, for the sake of completeness, we recall the definition of $\Upsilon_{\frac{\gamma}{2}}$ which enters in an essential way the DOZZ formula \eqref{DOZZformula}. The function $\Upsilon_{\frac{\gamma}{2}}$ is defined for $0<\Re (z)<Q$ (recall that $Q=\frac{2}{\gamma}+\frac{\gamma}{2}$) by the formula
\begin{equation*}
\ln \Upsilon_{\frac{\gamma}{2}} (z)  = \int_{0}^\infty  \left ( \Big (\frac{Q}{2}-z \Big )^2  e^{-t}-  \frac{( \sinh( (\frac{Q}{2}-z )\frac{t}{2}  )   )^2}{\sinh (\frac{t \gamma}{4}) \sinh( \frac{t}{\gamma} )}    \right ) \frac{dt}{t}
\end{equation*}
The function $\Upsilon_{\frac{\gamma}{2}}$ can be analytically continued to $\C$ and the zeros are simple and given by the discrete set $(-\frac{\gamma}{2} \N-\frac{2}{\gamma} \N) \cup (Q+\frac{\gamma}{2} \N+\frac{2}{\gamma} \N )$: for more on the function $\Upsilon_{\frac{\gamma}{2}}$ and its properties, see the reviews \cite{nakayama,Rib} for instance.  

{\small 
}

 \end{document}